\newcommand{\coloneqq}{:=}
\newcommand{\eqqcolon}{=:}
\newcommand{\gr}{gr}
\newcommand{\EE}{\mathcal E}
\newcommand{\FF}{\mathcal F}
\newcommand{\OO}{\mathcal O}
\newcommand{\QQ}{\mathcal Q}
\newcommand{\GG}{\mathcal G}
\newcommand{\XX}{\mathcal X}
\newcommand{\UU}{\mathcal U}
\newcommand{\HH}{\mathcal H}
\newcommand{\Gm}{{\mathbb G}_m}
\newcommand{\fs}{\!\!\!\fatslash}
\newcommand{\leftsub}[2]{\vphantom{#2}_{#1}{#2}}
\DeclareMathOperator{\Grass}{Grass}
\DeclareMathOperator{\Quot}{Quot}
\DeclareMathOperator{\GL}{GL}
\DeclareMathOperator{\PGL}{PGL}
\DeclareMathOperator{\SL}{SL}
\DeclareMathOperator{\reg}{reg}
\DeclareMathOperator{\Proj}{Proj}
\DeclareMathOperator{\rk}{rk}
\newcommand{\pp}{\mathbb{P}} 
\DeclareMathOperator{\Tor}{Tor}%
\DeclareMathOperator{\supp}{supp}%
\DeclareMathOperator{\Hom}{Hom}%
\DeclareMathOperator{\Ext}{Ext}%
\DeclareMathOperator{\coker}{coker}%
\DeclareMathOperator{\ev}{ev}%
\DeclareMathOperator{\Id}{id}%
\DeclareMathOperator{\et}{\text{\'et}}%
\DeclareMathOperator{\Spec}{Spec}%
\DeclareMathOperator{\spec}{Spec}
\DeclareMathOperator{\Aut}{Aut}
\DeclareMathOperator{\ch}{ch}
\DeclareMathOperator{\Td}{Td}
\DeclareMathOperator{\Ass}{Ass}
\DeclareMathAlphabet{\mathpzc}{OT1}{pzc}{m}{it}
\DeclareMathOperator{\HOM}{\mathcal{H}\!\mathpzc{om}}
\DeclareMathOperator{\ISO}{\mathcal{I}\!\mathpzc{so}}
\DeclareMathOperator{\END}{\mathcal{E}\!\mathpzc{nd}}
\DeclareMathOperator{\EXT}{\mathcal{E}\!\mathpzc{xt}}
\DeclareMathOperator{\AUT}{\mathcal{A}\!\mathpzc{ut}}
\DeclareMathOperator{\stab}{stab}
\theoremstyle{plain}
\newtheorem{thm}{Theorem}[section]
\newtheorem*{thmn}{Theorem}
\newtheorem{cor}[thm]{Corollary}
\newtheorem{lem}[thm]{Lemma}
\newtheorem{prop}[thm]{Proposition}
\theoremstyle{definition} 
\newtheorem{defn}[thm]{Definition}
\newtheorem{exmp}[thm]{Example}%
\newtheorem*{ass}{Assumption}
\theoremstyle{remark}
\newtheorem{rem}[thm]{Remark}
\def\cf{\textit{cf.}\kern.3em}
\def\ie{\textit{i.e.},\ }
\begin{document}

  \title{Moduli Spaces of Semistable Sheaves \\ on Projective Deligne-Mumford Stacks}
  \author{Fabio Nironi}
 \email{fabio.nironi@gmail.com}
  \address{Columbia University, 2990 Broadway,  New York, NY 10027}

\thanks{The author has been supported by a PhD grant of the Scuola Internazionale Superiore di Studi Avanazati.}

 \begin{abstract}
  In this paper we introduce a notion of Gieseker stability for coherent sheaves on tame Deligne-Mumford stacks with projective moduli scheme and some chosen generating sheaf on the stack in the sense of Olsson and Starr \cite{MR2007396}. We prove that this stability condition is open, and pure dimensional semistable sheaves form a bounded family. We explicitly construct the moduli stack of semistable sheaves as a finite type global quotient, and study the moduli scheme of  stable sheaves and its natural compactification in the same spirit as the seminal paper of Simpson \cite{MR1307297}. With this general machinery we are able to retrieve, as special cases, results of Lieblich \cite{MR2309155} and Yoshioka \cite{MR2306170} about moduli of twisted sheaves and results of Maruyama-Yokogawa \cite{MR1162674} about moduli of parabolic bundles.
\end{abstract}

\maketitle

\section*{Overview}

We define a notion of stability for coherent sheaves on stacks, and construct a moduli stack of semistable sheaves. The class of stacks that is suitable to approach this problem is the class of projective stacks:  tame stacks (for instance Deligne-Mumford stacks in characteristic zero) with projective moduli scheme and a  locally free sheaf that is ``very ample'' with respect to the map to the moduli scheme (a \textit{generating sheaf} in the sense of \cite{MR2007396}). The hypothesis of tameness lets us reproduce useful scheme-theoretic results such as a \textit{cohomology and base change} theorem, \textit{semicontinuity} for cohomology and $\Ext$ functors and other results related to flatness.  The class of projective stacks includes for instance every DM toric stack with projective moduli scheme and more generally every  smooth DM stack proper over an algebraically closed field  with projective moduli scheme.
 We also introduce a notion of \textit{family of projective stacks} parameterized by a noetherian finite-type scheme: it is a separated tame global quotient whose geometric fibers are projective stacks. These objects will play the role of projective morphisms.

In the first section we recall the notion of tame stack and projective stack and some results about their geometry taken from \cite{MR2427954}, \cite{geomDM} and \cite{MR2007396}. Moreover we collect all the results about flatness, like cohomology and base change and semicontinuity for cohomology, that we are going to use in the following. In the second section we prove that the stack of coherent sheaves on a projective Deligne-Mumford stack is algebraic and we provide it with an explicit smooth atlas . The algebraicity of the stack is an already known result and it is stated in more generality (no projectivity is required) in \cite{MR2233719}; however, we have decided to include this proof since it is elementary explicit and  it is also a first example of a practical usage of generating sheaves on a stack. 

Let $\XX\to S$ be a family of projective Deligne-Mumford stacks with moduli scheme $X$, a chosen polarization $\OO_X(1)$ and $\EE$ a generating sheaf of $\XX$; we will call this collection of data a projective stack with a chosen polarization $(\OO_X(1),\EE)$. We denote with $\QQ_{N,m}\coloneqq\Quot_{\XX/S}(\EE^{\oplus N}\otimes\pi^\ast\OO_X(-m))$ the functor of flat quotient sheaves on $\XX$ of the locally free sheaf $\EE^{\oplus N}\otimes\pi^\ast\OO_X(-m))$ ($N,m$ are integers). It is proven in \cite{MR2007396} that this functor is representable and a disjoint union of projective schemes on $S$. 
\begin{thmn}[\ref{prop:alg-stack-coherent}]
Let $\mathcal{U}_{N,m}$ be the universal quotient sheaf of $\QQ_{N,m}$.
  For every couple of  integers $N,m$ there is an open subscheme $\QQ^0_{N,m}\subseteq\QQ_{N,m}$ (possibly empty) such that:
  \begin{displaymath}
\xymatrix{
    \coprod_{N,m}\QQ^0_{N,m}\subseteq\coprod_{N,m}\QQ_{N,m}\ar[rr]^-{\coprod \mathcal{U}_{N,m}} && \mathfrak{Coh}_{\XX/S} \\
} 
\end{displaymath}
is a smooth atlas.

The stack $\mathfrak{Coh}_{\XX/S}$ is a locally finite-type Artin stack.
\end{thmn}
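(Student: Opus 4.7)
The plan is to exhibit $\coprod_{N,m}\QQ^0_{N,m}$ as a disjoint union of $\GL_N$-torsors over open substacks of $\mathfrak{Coh}_{\XX/S}$ whose images cover the whole stack; once this is done, algebraicity and local finite-typeness follow because $\mathfrak{Coh}_{\XX/S}$ will admit a smooth surjection from the locally finite-type scheme $\coprod_{N,m}\QQ_{N,m}$. The strategy is the stacky analogue of Grothendieck's classical construction for the stack of coherent sheaves on a projective scheme, with $\EE^{\oplus N}\otimes\pi^*\OO_X(-m)$ playing the role of $\OO_X(-m)^{\oplus N}$.

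First I would define the tautological morphism $\alpha_{N,m}\colon \QQ_{N,m}\to \mathfrak{Coh}_{\XX/S}$ that sends a quotient $q\colon \EE^{\oplus N}\otimes\pi^*\OO_X(-m)\twoheadrightarrow\FF$ on $\XX_T$ to the $T$-flat sheaf $\FF$. The essential ingredient is the Olsson--Starr functor
\[
F_\EE\colon \Coh(\XX)\longrightarrow \Coh(X),\qquad \FF\longmapsto \pi_*\HOM_{\OO_\XX}(\EE,\FF),
\]
together with the fact that, by the definition of a generating sheaf, the adjunction counit $\pi^*F_\EE(\FF)\otimes\EE\twoheadrightarrow \FF$ is surjective for every coherent $\FF$. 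Via the adjunction $\Hom_{\OO_\XX}(\EE^{\oplus N}\otimes\pi^*\OO_X(-m),\FF)\cong \Hom_{\OO_X}(\OO_X^{\oplus N},F_\EE(\FF)(m))$, the datum of $q$ is equivalent to that of an $\OO_X$-linear map $\varphi_q\colon \OO_X^{\oplus N}\to F_\EE(\FF)(m)$ such that the composite $\pi^*\varphi_q\otimes\EE$ followed by the counit is surjective onto $\FF$.

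I would then take $\QQ^0_{N,m}\subseteq \QQ_{N,m}$ to be the locus where the universal quotient $\mathcal U=\mathcal U_{N,m}$ satisfies (i) $R^i\pi_*\bigl(\HOM(\EE,\mathcal U)\otimes\pi^*\OO_X(m)\bigr)=0$ for every $i>0$, and (ii) the induced map $\varphi_{\mathcal U}\colon \OO_X^{\oplus N}\to F_\EE(\mathcal U)(m)$ is an isomorphism. Openness is a direct consequence of the semicontinuity and cohomology-and-base-change results for tame stacks recalled in Section~1: the vanishing (i) is open, and on that locus the formation of $F_\EE(\mathcal U)(m)$ commutes with base change, so the isomorphism condition (ii) is fiberwise and hence open. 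Over $\QQ^0_{N,m}$ the fiber of $\alpha_{N,m}$ above a $T$-point $\FF$ is precisely the $\GL_{N,T}$-torsor of isomorphisms $\OO_T^{\oplus N}\xrightarrow{\sim}\Gamma(X_T,F_\EE(\FF)(m))$; thus $\alpha_{N,m}|_{\QQ^0_{N,m}}$ is representable by a $\GL_N$-torsor onto its image and is in particular smooth.

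It remains to verify surjectivity of $\coprod_{N,m}\alpha_{N,m}|_{\QQ^0_{N,m}}$ onto $\mathfrak{Coh}_{\XX/S}$. Given $\FF\in\mathfrak{Coh}_{\XX/S}(T)$ and working Zariski-locally on $T$, I would apply Serre vanishing on the projective morphism $X_T\to T$ to the coherent sheaf $F_\EE(\FF)$: for $m\gg 0$ the twist $F_\EE(\FF)(m)$ is globally generated, has vanishing higher direct image and, after a further Zariski localization, its module of sections trivialises as $\OO_T^{\oplus N}$; any such trivialisation produces $\varphi$ and, by the generating-sheaf counit, a surjection $q$ representing a $T$-point of $\QQ^0_{N,m}$ over $\FF$. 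The main technical obstacle is to justify that conditions (i)--(ii) genuinely organise the fiber into a $\GL_N$-torsor, that is, that the linearisation $q\leftrightarrow\varphi_q$ is a bijection on the nose over $\QQ^0_{N,m}$; this uses in an essential way both the cohomology-and-base-change package for tame stacks and the defining property of generating sheaves, and reduces the construction to a direct stacky translation of the classical scheme-theoretic argument.
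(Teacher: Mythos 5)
Your overall strategy is the one the paper follows: linearise a quotient $q$ into a map $\varphi_q\colon\OO_X^{\oplus N}\to F_\EE(\FF)(m)$ via the $(G_\EE,F_\EE)$-adjunction, cut out the open locus where cohomology and base change applies and the trivialisation is an isomorphism, identify the fibre of $\QQ^0_{N,m}\to\mathfrak{Coh}_{\XX/S}$ over a $T$-point with a $\GL_N$-torsor, and get surjectivity from Serre's theorem applied to $F_\EE(\FF)$ on the moduli scheme followed by the counit $\theta_\EE$. Two points need repair, one of which is a genuine error as written. Your condition (ii) --- that $\varphi_{\mathcal U}\colon\OO_X^{\oplus N}\to F_\EE(\mathcal U)(m)$ be an isomorphism of sheaves on $X$ --- would make $\QQ^0_{N,m}$ essentially empty and destroy surjectivity: $F_\EE(\mathcal U)(m)$ is almost never a trivial bundle on $X$ (take $\mathcal U$ a skyscraper). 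The condition the paper imposes, and the one your own torsor description $\OO_T^{\oplus N}\xrightarrow{\sim}\Gamma(X_T,F_\EE(\FF)(m))$ actually requires, is that the pushforward to the base, ${\rho_U}_\ast E_{N,m}(\mu)\colon\OO_U^{\oplus N}\to{\rho_U}_\ast\bigl(F_{\EE_U}(\mathcal U)(m)\bigr)$, be an isomorphism of locally free $\OO_U$-modules; likewise the $R^i\pi_\ast$ in your condition (i) must be the derived pushforward to the base of the Quot scheme, since $\pi_\ast$ itself is exact on a tame stack and its higher derived functors carry no information.

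Separately, the step you defer at the end --- that $q\leftrightarrow\varphi_q$ is a genuine bijection over $\QQ^0_{N,m}$ --- is where most of the paper's proof lives. The paper constructs an explicit inverse $L_{N,m}$ to the comparison transformation $I_{N,m}$ between the two $\ISO$-functors, and the verification that both composites are identities rests on the triangle identities of the adjunction, $F_\EE(\theta_\EE(\FF))\circ\varphi_\EE(F_\EE(\FF))=\Id$ and $\theta_\EE(G_\EE(H))\circ G_\EE(\varphi_\EE(H))=\Id$ from Lemma \ref{lem:idenity-theta-phi}, together with ${\rho_W}_\ast\rho_W^\ast=\Id$ and the naturality of $\theta_\EE$ and $\varphi_\EE$. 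This is a diagram chase rather than a deep point, but it is not automatic from ``cohomology and base change plus the generating property,'' and it is also the ingredient needed to check that the quotient $\tilde g$ produced in your surjectivity step actually lands in $\QQ^0_{N,m}$. With those corrections your argument coincides with the paper's.
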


Sections $3-6$ contain the definition of stability and the study of the moduli space of semistable sheaves.
The motivation for the kind of stability we propose  comes from studies of stability in two well known examples  of \textit{decorated sheaves} on projective schemes that can be interpreted as sheaves on algebraic stacks: \textit{twisted sheaves} and  \textit{parabolic bundles}. In the case of twisted sheaves it is possible to associate to a projective scheme $X$ and a chosen twisting cocycle $\alpha\in H^2_{\et}(X,\Gm)$, an abelian $\Gm$-gerbe $\mathcal{G}$ on $X$ such that the category of coherent $\alpha$-twisted sheaves on $X$ is equivalent to the category of coherent sheaves on $\mathcal{G}$ (Donagi-Pantev \cite{donagi-2003}, C\u{a}ld\u{a}raru \cite{cualduararu-dcotsocm2000},  Lieblich \cite{MR2309155}). In the case of a parabolic bundle on $X$, with parabolic structure defined by an effective Cartier divisor $D$ and some rational weights, it is possible to construct an algebraic stack whose moduli scheme is $X$ by a root construction. It was proven (Biswas \cite{MR1455522}, Borne \cite{borne-2006}, \cite{borne-2007}) that the category of parabolic bundles on $X$ with parabolic structure on $D$ and fixed rational weights is equivalent to the category of vector bundles on the associated root stack.

Since intersection theory on algebraic stacks was established in \cite{Vitas-1989} and \cite{MR1719823} it is possible to define $\mu$-stability for a stack in the usual way. It is proven in \cite{borne-2006} that the degree of a sheaf on a root stack is the same as the parabolic-degree defined in \cite{MR1162674} and used there (and also in \cite{MR575939}) to study stability. It is also well known that the degree of a sheaf on a gerbe, banded by a cyclic group, can be used to study stability \cite{MR2309155}, or equivalently it can be defined a modified degree for the corresponding twisted sheaf on the moduli scheme of the gerbe \cite{MR2306170}. From these examples it looks reasonable that the degree of a sheaf on a stack could be a good tool to study stability, and we are lead to think  that the na\"{\i}ve definition of $\mu$-stability should work in a broad generality. 

However it is already well known that a Gieseker stability defined in the na\"{\i}ve way doesn't work. Let $\XX$ be a projective Deligne-Mumford stack with moduli scheme $\pi\colon\XX\to X$ and $\OO_X(1)$ a polarization of the moduli scheme and $\FF$ a coherent sheaf on $\XX$. Since $\pi_\ast$ is exact and preserves coherence of sheaves and cohomology groups,  we can define a Hilbert polynomial:
\begin{displaymath}
  P(\FF,m)=\chi(\XX,\FF\otimes\pi^\ast\OO_X(m))=\chi(X,\pi_\ast\FF(m))
\end{displaymath} 
We could use this polynomial to define Gieseker stability in the usual way. We observe immediately that in the case of gerbes banded by a cyclic group this definition is not reasonable at all. A quasicoherent sheaf on such a gerbe splits in a direct sum of eigensheaves of the characters of the cyclic group, however every eigensheaf with non trivial character does not contribute to the Hilbert polynomial and eventually semistable sheaves on the gerbe, according to this definition, are the same as semistable sheaves on the moduli scheme of the gerbe.
In the case of root stacks there is a definition of a parabolic Hilbert polynomial and a parabolic Gieseker stability (see \cite{MR1162674}) which is not the na\"{\i}ve Hilbert polynomial or equivalent to a na\"{\i}ve Gieseker stability; moreover it is proven in \cite{MR1162674} and in \cite{borne-2006} that the parabolic degree can be retrieved from the parabolic Hilbert polynomial, while it is quite unrelated to the na\"{\i}ve Hilbert polynomial.
We introduce a new notion of Hilbert polynomial and Gieseker stability which depends not only on the polarization of the moduli scheme, but also on a chosen generating sheaf on the stack (see Def \ref{def:generating-sheaf}). If $\EE$ is a generating sheaf on $\XX$  we define a functor from $\mathfrak{Coh}_{\XX/S}$ to $\mathfrak{Coh}_{X/S}$:
\begin{displaymath}
  F_\EE\colon \FF \mapsto F_\EE(\FF)=\pi_\ast\HOM_{\OO_\XX}(\EE,\FF) 
\end{displaymath}
and the \textit{modified Hilbert polynomial}:
\begin{displaymath}
   P_{\EE}(\FF,m)=\chi(\XX,\HOM_{\OO_\XX}(\EE,\FF)\otimes\pi^\ast\OO_X(m))=\chi(X,F_{\EE}(\FF)(m))
\end{displaymath}
which is a polynomial if $\XX$ is tame and the moduli space of $\XX$ is a projective scheme. Using this polynomial we can define a Gieseker stability in the usual way. It is also easy to prove that given $\XX$ with orbifold structure along an effective Cartier divisor, there is a choice of $\EE$ such that this is the parabolic stability, and if $\XX$ is a gerbe banded by a cyclic group this is the same stability condition defined in \cite{MR2309155} and \cite{MR2306170} (the twisted case is developed with some detail in the appendix). There is also a wider class of examples where the degree of a sheaf can be retrieved from this modified Hilbert polynomial (see proposition \ref{prop:degree-mu}).

 In order to prove that semistable sheaves form an algebraic stack we need to prove that Gieseker stability is an open condition. To prove that the moduli stack of semistable sheaves is a finite type global quotient we need to prove that semistable sheaves form a bounded family. 
To achieve these results  we first prove a version of the well known Kleiman criterion,  suitable for sheaves on stacks, that is Theorem \ref{thm:kleiman-criterion}.  In particular we prove that a set-theoretic family $\mathfrak{F}$ of sheaves on a projective stack $\XX$ is bounded if and only if the family $F_{\EE}(\mathfrak{F})$ on the moduli scheme $X$ is bounded. From this result follows that the stack of semistable sheaves with fixed modified Hilbert polynomial is open in the stack of coherent sheaves, in particular it is algebraic. 
We are then left with the task of proving that the family of semistable sheaves is mapped by the functor $F_{\EE}$ to a bounded family.
First we prove that the functor $F_{\EE}$ maps pure dimensional sheaves to pure dimensional sheaves of the same dimension (Proposition \ref{prop:pure-sheaves}): it preserves the torsion filtration. However it doesn't map semistable sheaves on $\XX$ to semistable sheaves on $X$: it preservers neither the Harder-Narasimhan nor the Jordan-H\"older filtration. For this reason the boundedness of the family $F_{\EE}(\mathfrak{F})$ is not granted for free. 

Given $\FF$ a semistable sheaf on $\XX$ with chosen modified Hilbert polynomial, we study the maximal destabilizing subsheaf of $F_{\EE}(\FF)$ and prove that its slope has an upper bound which doesn't depend on the sheaf $\FF$. This numerical estimate, together with the Kleiman criterion for stacks and results of Langer \cite{MR2051393} and \cite{MR2085175} (applied on the moduli scheme), is enough to prove that semistable sheaves on a projective  stack with fixed modified Hilbert polynomial form a bounded family. The theorem of Langer we use here, replaces the traditional Le Potier-Simpson's result \cite[Thm 3.3.1]{MR1450870} in characteristic zero.  

The result of boundedness leads in section $5$ to the following  explicit construction of the moduli stack of semistable sheaves as a global quotient of a quasiprojective scheme  by the action of a reductive group. 
Let $\XX$ be a projective Deligne-Mumford stack over an algebraically closed field $k$ with a chosen polarization $(\EE,\OO_X(1))$ where $\EE$ is a generating sheaf and $\OO_X(1)$ is a very ample line bundle on the moduli scheme $X$. Fix an integer $m$, such that semistable sheaves on $\XX$ with chosen modified Hilbert polynomial $P$ are $m$-regular. Denote with $V$ the linear space $k^{\oplus N}\cong H^0(X,F_{\EE}(\FF)(m))$ where $N=h^0(X,F_{\EE}(\FF)(m))=P(m)$ for every semistable sheaf $\FF$.
\begin{thmn}[\ref{prop:stack-dei-moduli}]
  There is an open subscheme $\QQ$ in $\Quot_{\XX/k}(V\otimes\EE\otimes\pi^\ast\OO_X(-m),P)$, such that the algebraic stack of pure dimensional semistable sheaves on $\XX$ with modified Hilbert polynomial $P$ is the global quotient:
  \begin{displaymath}
    [\QQ/GL_{N,k}]\subseteq [\Quot_{\XX/k}(V\otimes\EE\otimes\pi^\ast\OO_X(-m),P)/GL_{N,k}]
  \end{displaymath}
where the group $\GL_{N,k}$ acts in the evident way on $V$.
\end{thmn}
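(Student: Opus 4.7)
The plan is to follow the classical construction of the moduli stack of semistable sheaves (in the spirit of Simpson and Huybrechts--Lehn), reducing the stacky aspects to the boundedness, openness and regularity results already proved in Sections~3--4 via the exact functor $F_{\EE}$. Write $\mathfrak{M}$ for the algebraic stack of pure dimensional semistable sheaves on $\XX$ with modified Hilbert polynomial $P$; by the openness of semistability it is an open substack of $\mathfrak{Coh}_{\XX/k}$.

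First, I would define $\QQ$ inside $\Quot_{\XX/k}(V\otimes\EE\otimes\pi^\ast\OO_X(-m),P)$ as the locus of quotients $q\colon V\otimes\EE\otimes\pi^\ast\OO_X(-m)\twoheadrightarrow\FF$ such that $\FF$ is pure dimensional and semistable, the higher cohomology $H^i(X,F_\EE(\FF)(m))$ vanishes for $i>0$, and the induced linear map $V\to H^0(X,F_\EE(\FF)(m))$ is an isomorphism. Openness of $\QQ$ inside the Quot scheme follows by combining the openness of semistability with cohomology and base change for tame DM stacks from Section~1: the higher vanishing is open by semicontinuity, after which the remaining condition amounts to asking that a map between locally free sheaves on the base of the same rank $N=P(m)$ be an isomorphism.

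Next, the universal quotient on $\QQ\times\XX$ yields a morphism $\QQ\to\mathfrak{M}$ which is $\GL_{N,k}$-invariant (the action changes the basis of $V$ but not the underlying quotient sheaf), hence descends to a comparison morphism $[\QQ/\GL_{N,k}]\to\mathfrak{M}$. To show this is an isomorphism I would exhibit $\QQ\to\mathfrak{M}$ as a $\GL_{N,k}$-torsor. For essential surjectivity on points: a semistable $\FF$ with polynomial $P$ is $m$-regular by the choice of $m$, so $F_\EE(\FF)(m)$ is globally generated with $h^0=N$ and no higher cohomology; picking an isomorphism $\phi\colon V\xrightarrow{\sim}H^0(X,F_\EE(\FF)(m))$ and composing the global generation of $F_\EE(\FF)(m)$ with the adjunction counit $\pi^\ast F_\EE(\FF)\otimes\EE\to\FF$, which is surjective because $\EE$ is a generating sheaf, provides a point of $\QQ$ lying over $\FF$. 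In families, for a flat family of semistable sheaves on $\XX\times T$ cohomology and base change makes $(\pi_T)_\ast F_\EE(-)(m)$ a locally free $\OO_T$-module of rank $N$; local trivializations by $V\otimes\OO_T$ are precisely local liftings to $\QQ$, and two trivializations differ by a $\GL_{N,k}$-valued function, which is exactly the torsor statement.

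The main obstacle is matching the $2$-categorical structure: one must verify that automorphisms in the quotient stack $[\QQ/\GL_{N,k}]$ biject with automorphisms in $\mathfrak{M}$. Given an automorphism $\alpha\colon\FF\xrightarrow{\sim}\FF$ of a semistable sheaf, the induced automorphism of $H^0(X,F_\EE(\FF)(m))\cong V$ is an element of $\GL_{N,k}$, and conversely a $\GL_{N,k}$-equivariant automorphism of a local lift to $\QQ$ descends (by taking the associated quotient) to an automorphism of the underlying sheaf on $\XX$; the compatibility of these two directions is again cohomology and base change applied to the tame projective DM stack $\XX$. Once the torsor structure and the automorphism matching are in place, the descent description of $[\QQ/\GL_{N,k}]$ identifies it with $\mathfrak{M}$, and the evident inclusion of $\QQ$ inside the full Quot scheme gives the final inclusion displayed in the statement.
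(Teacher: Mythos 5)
Your proposal is correct and follows essentially the same route as the paper: there too $\QQ$ is defined as the open locus of quotients for which the induced map $V\to H^0(X,F_{\EE}(\FF)(m))$ is an isomorphism, and its points are identified with pairs $(\FF,\rho)$ of a semistable ($m$-regular) sheaf together with a framing, which is exactly your $\GL_{N,k}$-torsor description. The one step you leave implicit --- that the quotient built from a framing $\phi$ via global generation and the counit $\theta_{\EE}$ induces back the isomorphism $\phi$ on $H^0$, so the two constructions are mutually inverse --- is carried out in the paper by the triangle identities for the adjunction (Lemma \ref{lem:idenity-theta-phi}).
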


In the sixth section we study the quotient $\QQ/GL_{N,k}$ using GIT techniques. We prove that the open substack of pure stable sheaves has a moduli scheme which is a quasiprojective scheme, while the whole GIT quotient provides a natural compactification of this moduli scheme,  and  parameterizes classes of $S$-equivalent semistable sheaves. As in the case of sheaves on a projective scheme the GIT quotient is a moduli scheme of semistable sheaves if and only if there are no strictly semistable sheaves. 

In the last section we make explicit the equivalence between our approach to stability for sheaves and stability of certain decorated sheaves. We first compare  stability of sheaves on a gerbe banded by a diagonalizable group scheme $G$ and stability of twisted sheaves according to  Yoshioka and Lieblich. Chosen a generating sheaf $\EE$, we prove that our moduli space of semistable sheaves is a disjoint union of moduli spaces of twisted sheaves according to Yoshioka, where the disjoint union runs over all the irreducible representations of $G$. It is a disjoint union of moduli spaces of twisted sheaves according to Lieblich for opportune choices of $\EE$. 
The second case  we study is the equivalence between our stability condition for torsion free sheaves on a root stack and parabolic stability in the sense of Maruyama and Yokogawa. In particular we prove that for a specific choice of the generating sheaf the two stability conditions are equivalent and the corresponding moduli spaces of semistable objects are isomorphic in a natural way.

\section*{Limits and future developments}

This article leaves some interesting questions open. The first and probably most obvious one is the dependence of the stability condition from the choice of a polarization.
In the case of schemes a change of polarization modifies the geometry of the moduli space of sheaves, in the case of stacks we expect both a change of $\OO_X(1)$ and a change of generating sheaf to produce modifications to the moduli space. For the moment we have not investigated this kind of problem but it is known that ``tensoring by a vector bundle'' affects Gieseker stability even in the case of schemes (every vector bundle on a scheme is a generating sheaf). See for instance [Thm 1.2]\cite{MR2019443} that is actually an immediate consequence of the general construction in \cite{MR1433203} or \cite{MR1355920}.

The second problem, that is quite complementary to the first one, is the connectedness of the $\Quot$-scheme with fixed modified Hilbert polynomial. The modified Hilbert polynomial is good enough to provide us with a $\Quot$-scheme that is  projective, however the projective scheme we obtain can be not connected in a very unnatural way. In the case of connected integral schemes, the Hilbert scheme of points of length $n$ turns out to be connected, that basically means that we can deform every point of length $n$ to any other of the same length. Beside, the Hilbert scheme of points does not depend on a choice of polarization. In the case of stacks there is not an evident way to fix numerical data for  the Hilbert scheme of points independently from the choice of a generating sheaf, moreover a Hilbert scheme with fixed modified Hilbert polynomial cannot be connected in general because orbifold points are ``more rigid'' than ordinary points. We can see this in a very simple integral example: consider the weighted projective stack $\XX=\mathbb{P}(3,3,2)$. We can try to classify points of $\XX$ whose coarse moduli space is a reduced point. We have only four cases: the generic non orbifold point $\mathbb{P}(1)$, the reduced point $\mathbb{P}(2)$ with stabilizer $\mu_2$, the reduced point $\mathbb{P}(3)$ with stabilizer $\mu_3$ and a double point $2\mathbb{P}(3)$ with stabilizer $\mu_3$ and non reduced structure orthogonal to the orbifold divisor. We can make a minimal choice for the generating sheaf $\EE = \OO_\XX\oplus \OO_\XX(1)\oplus\OO_\XX(2)$. Denote with $n$ the modified Hilbert polynomial $P_\EE(\OO_Z)=\chi(\XX ,\OO_Z\otimes\EE^\vee)$ where $\OO_Z$ is the structure sheaf of a point $Z$. Denote also with $n_i$ the integers $\chi(\XX,\OO_Z\otimes\OO_\XX(-i))$ so that $n=\sum_{i=0}^2 n_i$. We can produce the following table:
\begin{center}
\tablefirsthead{%
\hline
\multicolumn{1}{|l}{$Z$} &
\multicolumn{1}{|c}{$n_0$} &
\multicolumn{1}{|c}{$n_1$} &
\multicolumn{1}{|c}{$n_2$} &
\multicolumn{1}{|c|}{$n$} \\
\hline
}
\begin{supertabular}{|l|c|c|c|c|}
$\mathbb{P}(1)$ & 1 & 1 & 1 & 3 \\
$\mathbb{P}(2)$ & 1 & 0 & 1 & 2 \\
$\mathbb{P}(3)$ & 1 & 0 & 0 & 1\\
$2\mathbb{P}(3)$ & 1 & 0 & 1 & 2\\
\hline
\end{supertabular} 
\end{center}         
We can observe at least two undesired phenomena: first of all the generic point $\mathbb{P}(1)$ has the same modified Hilbert polynomial $n$ as $\mathbb{P}(2)\coprod\mathbb{P}(3)$, but the first correspond to a single point in the coarse moduli space and the second to a double point, so that they belong to two distinct connected components that $n$ cannot distinguish. We can easily overcome this problem considering the whole collection of numbers $n_i$ instead of $n$ alone. The second problem is that both $\mathbb{P}(2)$ and $2\mathbb{P}(3)$ have the same $n$ and the same collection of $n_i$'s but they cannot be deformed one into the other. We can solve this problem making a better a choice of the generating sheaf. Let $\EE=\OO_\XX\oplus\OO_\XX(2)\oplus\OO_\XX(4)\oplus\OO_\XX(3)$. We rewrite the table according to the new choice of $\EE$:
\begin{center}
\tablefirsthead{%
\hline
\multicolumn{1}{|l}{$Z$} &
\multicolumn{1}{|c}{$n_0$} &
\multicolumn{1}{|c}{$n_2$} &
\multicolumn{1}{|c}{$n_4$} &
\multicolumn{1}{|c}{$n_3$} &
\multicolumn{1}{|c|}{$n$} \\
\hline
}
\begin{supertabular}{|l|c|c|c|c|c|}
$\mathbb{P}(1)$ & 1 & 1 & 1 & 1 & 4 \\
$\mathbb{P}(2)$ & 1 & 1 & 1 & 0 & 3 \\
$\mathbb{P}(3)$ & 1 & 0 & 0 & 1 & 2 \\
$2\mathbb{P}(3)$ & 1 & 1 & 0 & 1 & 3 \\
\hline
\end{supertabular} 
\end{center}         
After this new choice the collection of the $n_i$'s is adequate to distinguish points that cannot be deformed into one another. We want also to remark that there is a natural bijective correspondence between the summands of $\EE$ and the connected components of the inertia stack of $\XX$, the correspondence being provided by the decomposition in eigensheaves of $\EE$ restricted to the inertia stack. 

Given $\XX$ a connected integral tame Deligne-Mumford stack that is a global quotient, we believe that it is always possible to associate to each connected component of the inertia stack a vector bundle $\EE_i$ such that the sum $\bigoplus_i \EE_i$ is a generating sheaf and the collection of integers $n_i=\chi(\XX,\OO_Z\otimes\EE_i^\vee)$ distinguishes each connected component of the Hilbert scheme of points with fixed modified Hilbert polynomial $n=\sum_i n_i$. The choice of the generating sheaf becomes unique, and the Hilbert scheme of points intrinsic once we have normalized the rank of each $\EE_i$. This kind of approach has evident applications to the study of Donaldson-Thomas invariants and Pandharipande-Thomas invariants and it will be the content of a work in preparation.

\section*{Acknowledgements}
First of all I want to express my gratitude to Barbara Fantechi who turned me from a physicist into mathematician. She introduced me to algebraic geometry, a completely mysterious discipline to me till the age of 25, and to the joy of rigorous thinking. Last but not least I am grateful for the beautiful problem she found for me and for all the assistance and encouragements during 4 years at SISSA;  I have to acknowledge her for the brilliant intuition that there should have been some analogy between moduli of twisted sheaves and moduli of parabolic bundles, that is actually the foundation of this article.
I am  grateful to Angelo Vistoli who assisted, encouraged  and technically helped me when this work was still at an embryonal stage, I have to acknowledge him for the idea that the use of a generating sheaf should have been the correct choice to study stability of sheaves on stacks and for Lemma \ref{lem:pure-submodules}.
I am also thankful to Scuola Normale Superiore (Pisa) for ospitality during May 2007.
I also say thanks to all the people in Trieste who spent some of their time talking to me about mathematics, among these deserve a special mention Etienne Mann who introduced me to toric stuff and for a whole year spent working together, and Elena Andreini for uncountable discussions during the last three years.



\section*{Conventions and notations}\label{sec:conv-notat}

Every scheme is assumed to be noetherian and also every tame stack (Def \ref{def:tame-stack}) is assumed noetherian if not differently stated. 
Unless differently stated every scheme, stack is defined over an algebraically closed field.
With $S$ we will denote a generic base scheme of finite type over the base field; occasionally it could be an algebraic space but in that case it will be explicitly stated. We will just say \textit{moduli space} for the \textit{coarse moduli space} of an algebraic stack and we will call it \textit{moduli scheme} if it is known to be a scheme. We will always denote with $\pi\colon \XX\to X$ the map from an $S$-stack $\XX$ to its moduli space $X$, with $p\colon \XX\to S$ the structure morphism of $\XX$. With the name \textit{orbifold} we will always mean a smooth Deligne-Mumford stack of finite type over a field and with generically trivial stabilizers. 

We will call a \textit{root stack} an orbifold whose only orbifold structure is along a simple normal crossing divisor. To be more specific let $X$ be a scheme over a field $k$ of characteristic zero. Let $\mathcal{D}=\sum_{i=1}^n\mathcal{D}_i$  be a simple normal crossing  divisor. Let $\mathbf{a}=(a_1,\ldots, a_n)$ a collection of positive integers . We associate to this collection of data a  stack: 
\begin{displaymath}
  \sqrt[\mathbf{a}]{D/X}\coloneqq \sqrt[a_1]{\mathcal{D}_1/X}\times_{X}\ldots \times_{X}\sqrt[a_n]{\mathcal{D}_n/X}
\end{displaymath}
that we will call a root stack. See \cite{Cstc-2007} and \cite{MR2450211} for a comprehensive treatment of the subject.

 A projective morphism of schemes $f:X\to Y$ will be projective in the sense of Grothendieck, that is $f$ is projective if there exists a coherent sheaf $E$ on $Y$ such that $f$ factorizes as a closed immersion of $X$ in $\mathbb{P}(E)$ followed by the structure morphism $\mathbb{P}(E)\to Y$.





\section{Cohomology and base change}
The natural generality to state a \textit{Cohomology and base change} result for algebraic stacks is provided by the concept of \textit{tame stack}. We recall the definition of tame stack from \cite{MR2427954}. Let $S$ be a scheme and $\XX\to S$ an algebraic stack locally of finite type over $S$. Assume that the stack has finite stabilizer, that is the natural morphism $I_{\XX}\to\XX$ is finite. Under this hypothesis $\XX$ has a moduli space $\pi:\XX\to X$ and the morphism $\pi$ is proper \cite{MR1432041}.

\begin{defn}\label{def:tame-stack}
Let $\XX$ be an algebraic stack with finite stabilizer as above and moduli space $\pi:\XX\to X$.  The stack $\XX$ is \textit{tame} if the functor $\pi_\ast:\text{QCoh}(\XX)\to\text{QCoh}(X)$ is exact where $\text{QCoh}$ is the category of quasicoherent sheaves. 
\end{defn}

We recall also the main result in \cite[Thm 3.2]{MR2427954}:
\begin{thm}\label{thm:tame-stacks-1}
  The following conditions are equivalent:  
  \begin{enumerate}
  \item $\XX$ is tame\label{item:4}.
  \item  For every $k$  algebraically closed field with a morphism $\Spec k\to S$ and every $\xi\in\XX(\Spec k)$ an object,  the stabilizer at the point $\xi$ (which is the group scheme $\AUT_k(\xi)\to\Spec k$) is linearly reductive.\label{item:5}
  \item There exists an $\text{fppf}$ cover $X'\to X$, a linearly reductive group scheme $G\to X'$ acting on a finite and finitely presented scheme $U\to X'$, together with an isomorphism
    \begin{displaymath}
      \XX\times_X X'\cong [U/G]
    \end{displaymath}
of algebraic stacks over $X'$. \label{item:6}
\item The same as the previous statement but $X'\to X$ is an \'etale cover. \label{item:7}
  \end{enumerate}
\end{thm}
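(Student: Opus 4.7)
The plan is to establish the cycle of implications $(4) \Rightarrow (3) \Rightarrow (1) \Rightarrow (2) \Rightarrow (4)$, following the strategy of Abramovich--Olsson--Vistoli, since this is quoted as their main result. The first implication is tautological. For $(3) \Rightarrow (1)$, I would check exactness of $\pi_\ast$ fppf-locally on $X$, reducing to the case $\XX = [U/G]$ with $G \to X'$ linearly reductive and $U \to X'$ finite. A quasicoherent sheaf on $\XX$ then corresponds to a $G$-equivariant quasicoherent sheaf on $U$, and $\pi_\ast$ is the functor of $G$-invariants. Since $U \to X'$ is affine and $G$ is linearly reductive, this invariants functor is exact on $G$-equivariant quasicoherent sheaves, so $(1)$ follows.

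For $(1) \Rightarrow (2)$, I would fix a geometric point $\xi \in \XX(\Spec k)$ with stabilizer $G_\xi$ and argue that, after passing to a strictly henselian local ring at $\pi(\xi)$ and using a Keel--Mori-type local structure result, $\XX$ has a local model of the form $[\Spec A / G_\xi]$ near $\xi$. Then $\pi_\ast$ is the functor of $G_\xi$-invariants on $A$-modules. Taking any short exact sequence of finite-dimensional $G_\xi$-representations and extending scalars to $A$ produces a short exact sequence whose image under $\pi_\ast$ is exactly the invariants of the original representation; exactness of $\pi_\ast$ therefore forces $G_\xi$ to be linearly reductive.

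The hardest implication is $(2) \Rightarrow (4)$, i.e.\ building an \'etale local presentation $\XX \times_X X' \cong [U/G]$ under the sole hypothesis that every geometric stabilizer is linearly reductive. My plan here is in three steps. First, reduce to $X = \Spec R$ with $R$ strictly henselian, and choose a finite flat cover of $\XX$ to work with its associated groupoid. Second, invoke the classification of finite flat linearly reductive group schemes: \'etale locally they are extensions of a tame constant \'etale group by a diagonalizable group scheme; this is what makes the promotion from fppf to \'etale in $(4)$ possible, since linearly reductive group schemes of this shape can be spread out \'etale-locally rather than only fppf-locally. Third, use a deformation-theoretic argument to lift the stabilizer at the closed point to a linearly reductive group scheme $G$ acting on a finite $X'$-scheme $U$ with $[U/G] \cong \XX \times_X X'$; the linear reductivity of the closed-fiber stabilizer is what lets one kill obstructions at each order.

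The main obstacle will be precisely this step $(2) \Rightarrow (4)$: both the classification of finite flat linearly reductive group schemes (which genuinely fails without the linear reductivity hypothesis and which is false over bases of mixed characteristic without some care) and the deformation-theoretic extension of the stabilizer to a neighborhood are substantial. Since the author explicitly cites \cite{MR2427954}, I expect that the paper will not reprove this theorem but simply appeal to it; my sketch would thus serve only as a conceptual roadmap, with the real content being the two nontrivial inputs described above.
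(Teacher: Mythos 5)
The paper does not prove this theorem at all: it is explicitly recalled as the main result of \cite[Thm 3.2]{MR2427954}, so there is no internal proof to compare against. Your own final paragraph correctly anticipates this. As a roadmap of the Abramovich--Olsson--Vistoli argument, your sketch is broadly faithful: the cycle of implications is the right one, $(3)\Rightarrow(1)$ by fppf-local reduction to invariants of a linearly reductive group acting on an affine scheme is how the easy direction goes, and you correctly isolate $(2)\Rightarrow(4)$ as the hard step together with its two real inputs (the classification of finite flat linearly reductive group schemes as extensions of tame constant groups by diagonalizable ones, and the deformation-theoretic spreading out of the stabilizer, where linear reductivity kills the obstruction groups).

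There is, however, one genuine logical problem in your sketch of $(1)\Rightarrow(2)$. You propose to invoke ``a Keel--Mori-type local structure result'' presenting $\XX$ near $\xi$ as $[\Spec A/G_\xi]$ with $G_\xi$ the stabilizer. For a general Artin stack with finite inertia in positive or mixed characteristic, such a presentation by the \emph{actual stabilizer group} is precisely the content of condition $(4)$, and its proof requires the linear reductivity you are trying to establish; using it here is circular. The correct argument avoids any local structure theorem: one base changes $\XX\to X$ along the geometric point $\pi(\xi)$, identifies the resulting fiber (the residual gerbe) with $B G_\xi$ over the algebraically closed field $k$, and observes that pushforward from $BG_\xi$ to $\Spec k$ is the functor of $G_\xi$-invariants on representations. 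Compatibility of $\pi_\ast$ with this base change (the analogue of Proposition \ref{prop:cohom-base-change-spcoarse}, which in \cite{MR2427954} is established for the relevant exactness statement) then transfers exactness of $\pi_\ast$ to exactness of invariants, i.e.\ to linear reductivity of $G_\xi$. With that substitution your outline matches the cited proof.
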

For the definition of a \textit{linearly reductive} group scheme see in the same paper the second section and in particular definition $2.4$.  

We recall also the results in \cite[Cor 3.3]{MR2427954}
\begin{cor} \label{cor:tame-stack-2}
  Let $\XX$ be a tame stack over a scheme $S$ and let $\XX\to X$ be its moduli space:
  \begin{enumerate}
  \item If $X'\to X$ is a morphism of algebraic spaces, then $X'$ is the moduli space of $X'\times_X \XX$. \label{item:1}
  \item If $\XX$ is flat over $S$ then $X$ is flat over $S$. \label{item:2}
  \item Let $\FF\in\text{QCoh}(\XX)$ be a flat sheaf over $S$, then $\pi_\ast\FF$ is flat over $S$. \label{item:3}
  \end{enumerate}
\end{cor}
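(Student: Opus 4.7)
The plan is to reduce all three statements to an étale-local model for $\XX$ using Theorem~\ref{thm:tame-stacks-1}(\ref{item:7}): there is an étale cover $X'' \to X$ and a linearly reductive group scheme $G/X''$ acting on a finite finitely-presented scheme $U = \Spec_{X''}\mathcal{A}$ with $\XX \times_X X'' \cong [U/G]$; affine-locally over $X''$ the moduli space is $\Spec(A^G)$. The central input is that for a linearly reductive $G$, the functor $M \mapsto M^G$ on quasi-coherent $G$-modules is exact and the inclusion $M^G \hookrightarrow M$ is split by a canonical Reynolds-type projector onto the trivial isotypic component. Two consequences will be used repeatedly: $M^G$ is a direct summand of $M$ as a module over any base ring over which $G$ acts trivially (in particular over $\OO_S$), and the formation of $M^G$ commutes with arbitrary base change of $A^G$-algebras.

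I would prove (\ref{item:3}) first. Flatness over $S$ is étale-local on $X$, so it suffices to check after pulling back along $X'' \to X$. There $\FF$ corresponds to a $G$-equivariant quasi-coherent sheaf on $U$, represented affine-locally by a $G$-module $M$ over $A$, and $\pi_*\FF$ corresponds to $M^G$ as an $A^G$-module. Flatness of $\FF$ over $S$ is flatness of $M$ over $\OO_S$; the Reynolds projector is $\OO_S$-linear because $G$ acts $S$-linearly, so $M^G$ is a direct summand of $M$ as an $\OO_S$-module and hence $\OO_S$-flat. Statement (\ref{item:2}) is then immediate by applying (\ref{item:3}) to $\FF = \OO_\XX$ (which is $S$-flat by assumption) and using $\pi_*\OO_\XX = \OO_X$ from the definition of the moduli space.

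For (\ref{item:1}), pull back the étale cover $X'' \to X$ along $X' \to X$ to obtain an étale cover $X'' \times_X X' \to X'$, over which $\XX \times_X X'$ is presented as $[U \times_{X''} (X'' \times_X X') / G]$. Affine-locally this reads $[\Spec(A \otimes_{A^G} B)/G]$ for $B$ a ring of $X'' \times_X X'$, and the candidate moduli space is $\Spec((A \otimes_{A^G} B)^G)$. The base-change identity $(A \otimes_{A^G} B)^G = A^G \otimes_{A^G} B = B$ identifies this candidate with $\Spec B$; the universal property against maps to algebraic spaces and the bijection on geometric orbits are standard for finite quotients by linearly reductive actions, and the conclusion descends along the étale cover to $X'$. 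The main obstacle is precisely this statement: for general stacks with finite inertia the formation of the moduli space is known only to commute with \emph{flat} base change (Keel--Mori), and the stronger commutation with arbitrary base change used here relies crucially on the split exactness (not merely exactness) of invariants under linear reductivity, via the key identity $(A \otimes_{A^G} B)^G = B$.
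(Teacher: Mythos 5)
Your overall strategy --- reduce to the \'etale-local presentation $[\Spec A/G]$ provided by Theorem \ref{thm:tame-stacks-1}(\ref{item:7}) and work with invariants of a finite linearly reductive group scheme --- is the same one the paper relies on, but note that the paper does not actually prove this corollary: parts (\ref{item:1}) and (\ref{item:2}) are quoted verbatim from \cite[Cor 3.3]{MR2427954}, and part (\ref{item:3}) is dispatched with a one-line pointer to \cite[Lem 2.3.4]{MR1862797} in the Deligne--Mumford case and to Lemma \ref{lem:pure-submodules} in general. Your proposal is therefore considerably more explicit than the source, and its skeleton --- invariants compute $\pi_\ast$ affine-locally, the base-change identity $(A\otimes_{A^G}B)^G=B$, descent of the coarse-space property along the \'etale cover, and deducing (\ref{item:2}) from (\ref{item:3}) via $\pi_\ast\OO_\XX=\OO_X$ --- is sound.

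The one step you should not wave through is the ``canonical Reynolds projector on arbitrary quasi-coherent $G$-modules.'' Split exactness of $(-)^G$ is \emph{not} a formal consequence of its exactness: to lift $\Id_{M^G}$ along $\Hom_A(M,M^G)^G\to\Hom_A(M^G,M^G)$ one must first know that restriction $\Hom_A(M,M^G)\to\Hom_A(M^G,M^G)$ is surjective, i.e.\ that $M^G\hookrightarrow M$ already splits $A$-linearly --- which is what is to be proved. This is precisely why Lemma \ref{lem:pure-submodules} exists and why it is stated only for \emph{free} $A$-modules: there the splitting is first extracted from purity and the pure-submodule theorem, and linear reductivity is invoked only afterwards to make the splitting equivariant. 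For a finite linearly reductive group scheme the projector does exist on all modules (via normalized integrals, or the local structure theory of \cite{MR2427954}), so your argument is reparable by citation; but a cleaner fix, entirely within the paper's toolkit, is to avoid the splitting altogether in part (\ref{item:3}): establish $(M\otimes_{\OO_S}N)^G\cong M^G\otimes_{\OO_S}N$ for every $\OO_S$-module $N$ by the free/finitely-presented/filtered-colimit bootstrap used in the proof of Proposition \ref{prop:cohom-base-change-spcoarse}, and then deduce flatness of $M^G$ by applying the (always left-exact) invariants functor to the injection $M\otimes_{\OO_S}N'\hookrightarrow M\otimes_{\OO_S}N$ coming from flatness of $M$. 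That route uses only exactness of invariants, which is the definition of tameness, and the same identity with $M=A$ is exactly what your proof of part (\ref{item:1}) needs.
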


Actually the third point is not proven  in \cite[Cor 3.3]{MR2427954}. The result in the tame Deligne-Mumford case is an immediate consequence of \cite[Lem 2.3.4]{MR1862797}. It can be extended to tame stacks using Lemma \ref{lem:pure-submodules}.
\begin{rem} \label{item:18}
  For the convenience of the reader we recall also the following properties:
\begin{enumerate} 
\item the functor $\pi_\ast$ maps coherent sheaves to coherent sheaves. A proof can be found in \cite[Lem 2.3.4]{MR1862797}
\item the natural map $\OO_X\to\pi_\ast\OO_\XX$ is an isomorphism
\item since $\pi_\ast$ is an exact functor on $\text{QCoh}(\XX)$ and maps injective sheaves to flasque sheaves (Lem \ref{lem:cohomo-pushf}), we have that $H^\bullet(\XX,\FF)\cong H^\bullet(X,\pi_\ast\FF)$ for every quasicoherent sheaf $\FF$. 
\end{enumerate}
\end{rem}

In order to reproduce the \textit{Cohomology and base change} theorem as in \cite{Hag} or \cite{MR0282985} for an algebraic stack we need the following statement about tame  stacks:
\begin{prop}\label{prop:cohom-base-change-spcoarse}
  Let $\XX$ be a tame stack with moduli space $\pi:\XX\to X$ and $\rho:X'\to X$ a morphism of algebraic spaces. Consider the $2$-cartesian diagram:
  \begin{displaymath}
    \xymatrix@1{
      \XX\times_X X' \ar[d]^{\pi'}\ar[r]^-{\sigma} & \XX \ar[d]^{\pi} \\
      X'\ar[r]^{\rho} & X \\
    }
  \end{displaymath}
For every quasicoherent sheaf $\FF$ on $\XX$ the natural morphism $\rho^{\ast}\pi_{\ast}\FF\to\pi'_{\ast}\sigma^{\ast}\FF$ is an isomorphism.
\end{prop}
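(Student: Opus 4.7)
The plan is to reduce the problem to an algebraic statement about invariants under a linearly reductive group scheme, using the local structure of tame stacks recalled in Theorem~\ref{thm:tame-stacks-1}.

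\emph{Reduction.} Whether the canonical arrow of quasicoherent sheaves on $X'$ is an isomorphism may be checked fppf-locally on $X'$, hence also after any fppf base change on $X$. Using Theorem~\ref{thm:tame-stacks-1}~(\ref{item:6}) pick an fppf (or \'etale, by (\ref{item:7})) cover $Y\to X$ together with a linearly reductive $G\to Y$ acting on a finite $Y$-scheme $U$ and an isomorphism $\XX\times_X Y\cong [U/G]$. Ordinary flat base change along the flat morphism $Y\to X$ identifies the pullback to $X'\times_X Y$ of the arrow in the statement with the analogous base-change arrow for the tame stack $[U/G]\to Y$ along $X'\times_X Y\to Y$, so it suffices to treat the latter. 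After further Zariski localization we may therefore assume $Y=\Spec R$, $X'=\Spec R'$, $U=\Spec A$, and $\XX=[U/G]$ with $G/R$ linearly reductive.

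\emph{Reformulation.} In this affine local setup a quasicoherent sheaf $\FF$ on $[U/G]$ is a $G$-equivariant $A$-module $M$. Then $\pi_{\ast}\FF = M^{G}$ as an $R$-module, $\sigma^{\ast}\FF$ corresponds to the $G$-equivariant $(A\otimes_{R} R')$-module $M\otimes_{R} R'$, and hence $\pi'_{\ast}\sigma^{\ast}\FF = (M\otimes_{R} R')^{G}$. Under these identifications the canonical base-change morphism is the natural $R'$-linear map
\[
  M^{G}\otimes_{R} R' \longrightarrow (M\otimes_{R} R')^{G}.
\]

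\emph{Main obstacle.} The only substantive content is to show this last arrow is an isomorphism for every $G$-equivariant $A$-module $M$ and every $R$-algebra $R'$, i.e.\ that the formation of invariants under a linearly reductive group scheme commutes with arbitrary base change on the ring of coefficients. This is a structural feature of linear reductivity as defined in \cite[\S2]{MR2427954}: the functorial Reynolds decomposition $M = M^{G}\oplus M_{0}$ with $M_{0}^{G}=0$ is preserved by $-\otimes_{R} R'$ because the $G$-action on $R'$ is trivial, yielding $(M\otimes_{R} R')^{G} = M^{G}\otimes_{R} R'$. Combined with the reduction step and fppf descent, this produces the desired isomorphism on $X'$.
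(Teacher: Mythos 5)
Your reduction to the affine picture and the reformulation of the base-change arrow as $M^{G}\otimes_{R}R'\to(M\otimes_{R}R')^{G}$ are correct and coincide with what the paper does. The gap is in the step you label the ``main obstacle'': you dispose of it by asserting that linear reductivity provides a functorial Reynolds decomposition $M=M^{G}\oplus M_{0}$ for \emph{every} equivariant quasicoherent module, compatible with arbitrary base change. Neither claim follows from the definition in \cite[\S 2]{MR2427954}, which only says that the invariants functor is exact. Exactness of $(-)^{G}$ does not by itself produce an equivariant splitting of $M^{G}\hookrightarrow M$ for a general $M$ (the relevant extension group is not controlled by $H^{1}(G,-)$ alone), and even granting a decomposition $M=M^{G}\oplus M_{0}$ with $M_{0}^{G}=0$, you still need $(M_{0}\otimes_{R}R')^{G}=0$; the vanishing of invariants is itself a base-change statement, so as written the argument is circular. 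Your parenthetical ``because the $G$-action on $R'$ is trivial'' only explains why $M^{G}\otimes_{R}R'$ lands inside $(M\otimes_{R}R')^{G}$, not why it exhausts it.

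The paper's proof shows exactly where the work lies. Lemma \ref{lem:pure-submodules} establishes the equivariant splitting only for \emph{free} equivariant modules $P$, and even that requires a genuine argument: one first checks $P^{G}\otimes_{A}R\to(P\otimes_{A}R)^{G}$ for finitely presented coefficient modules $R$ (the argument of \cite[Cor 3.3]{MR2427954}), deduces that $P^{G}\subset P$ is a pure submodule, invokes the pure-submodule theorem to get a direct summand, and then uses linear reductivity to lift the identity of $P^{G}$ to an equivariant retraction. The Proposition then passes from free modules and finitely presented coefficients to arbitrary $A$-algebras $A'$ and arbitrary finitely generated equivariant modules $M$ by choosing an equivariant finite free presentation $P_{2}\to P_{1}\to M\to 0$ (available because $B$ is finite over $A$, so $M$ is a finitely generated $A$-module) and comparing the two right-exact rows; quasicoherent sheaves are handled by filtered colimits. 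If you want to salvage your approach, you must either prove the existence of a base-change-compatible Reynolds operator (which essentially requires the local structure theorem for linearly reductive group schemes) or fall back on the presentation-and-purity argument above.
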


In order to prove this we first need the following lemma that has been suggested to us by Angelo Vistoli:
\begin{lem}\label{lem:pure-submodules}
  Let $P$ be a free $A$-module acted on by a linearly reductive group scheme $G\to \spec{A}$. The $G$-invariant part $P^G$ is a direct summand of $P$ and it is possible to choose a $G$-equivariant splitting of $P^G\to P$.
\end{lem}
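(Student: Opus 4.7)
The strategy is to construct a $G$-equivariant, $A$-linear Reynolds projection $R \colon P \to P^G$ which restricts to the identity on $P^G$; such an $R$ immediately gives the required equivariant splitting, since $P = P^G \oplus \ker R$ decomposes $G$-equivariantly as $A$-modules.

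To produce $R$, I would write $G = \Spec H$ for a flat commutative Hopf algebra $H$ over $A$, encode the $G$-action on $P$ as a coaction $\rho \colon P \to P \otimes_A H$ (so that $P^G = \{p \in P : \rho(p) = p \otimes 1\}$), and construct an $A$-linear \emph{cointegral} $\lambda \colon H \to A$ with $\lambda(1) = 1$ and the left-invariance identity $(\Id_H \otimes \lambda) \circ \Delta = \lambda(\cdot)\, 1_H$, where $\Delta$ is the comultiplication of $H$. Setting $R \coloneqq (\Id_P \otimes \lambda) \circ \rho$, a routine Sweedler-notation manipulation using coassociativity of $\rho$ and the invariance property of $\lambda$ shows that $R$ factors through $P^G$, restricts to $\Id_{P^G}$ on $P^G$, and is $G$-equivariant with respect to the trivial action on $P^G$.

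The cointegral $\lambda$ is produced from linear reductivity via the structure theory recalled in Theorem~\ref{thm:tame-stacks-1}: fppf-locally on $\Spec A$, $G$ is an extension of a finite tame étale group scheme by a group scheme of multiplicative type $D(M) = \Spec A[M]$. For $D(M)$, $P$ is naturally $M$-graded and $\lambda$ is the projection $A[M] \to A$ onto the trivial character (so that $P^G = P_0$); for a finite tame étale group one takes $\lambda = \tfrac{1}{|G|}\sum_{g \in G}(\cdot)$, which is well defined because $|G|$ is invertible on $A$ by tameness. A cointegral for the extension is assembled by composing the two building blocks, and then descended along the fppf cover using that a cointegral is uniquely determined by its defining properties. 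The main obstacle is precisely this descent step; once it is handled the lemma follows formally. I note that the freeness hypothesis on $P$ plays no real role in this proof, so the statement extends to arbitrary $G$-modules; freeness presumably enters the applications of the lemma by pinning $P^G$ down as a concrete isotypic summand.
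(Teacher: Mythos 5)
Your Reynolds-operator strategy is correct in outline, but it is a genuinely different route from the paper's, and the comparison is instructive. The paper never invokes the structure theory of linearly reductive group schemes: it first checks that $P^G\otimes_A R\to (P\otimes_A R)^G$ is an isomorphism for every finitely presented $R$ with trivial coaction (obvious for free $R$, then a finite presentation plus exactness of invariants), deduces that $P^G$ is a \emph{pure} submodule of $P$, and applies a pure-submodule theorem of commutative algebra to conclude that $P^G$ is a direct summand; the equivariant splitting is then obtained by applying $\Hom_A(-,P^G)$ to the now-split sequence $0\to P^G\to P\to P/P^G\to 0$, taking $G$-invariants (exact by linear reductivity) and lifting $\Id_{P^G}$ through the surjection $\Hom_A(P,P^G)^G\to\Hom_A(P^G,P^G)$. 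That argument is soft, stays over $\Spec{A}$, and uses freeness of $P$ exactly where you suspect: to make the purity theorem and the $\Hom$-comodules available. Your construction is heavier --- it needs the local classification of finite linearly reductive group schemes underlying Theorem~\ref{thm:tame-stacks-1} and an fppf descent of the cointegral --- but the descent does go through, since for a finite flat Hopf algebra the module of integrals in $\Hom_A(H,A)$ is invertible, so a normalized one is unique and the two pullbacks to $A'\otimes_A A'$ must agree; and in exchange you correctly observe that freeness of $P$ becomes irrelevant, so your version splits $M^G\hookrightarrow M$ for an arbitrary comodule $M$. One point you must make explicit: the single identity $(\Id_H\otimes\lambda)\circ\Delta=\lambda(\cdot)\,1_H$ only shows that $R=(\Id_P\otimes\lambda)\circ\rho$ lands in $P^G$ and fixes it, which yields an $A$-linear splitting; for $\ker R$ to be a subcomodule, i.e.\ for the splitting to be $G$-equivariant, the Sweedler computation requires the opposite-handed identity $(\lambda\otimes\Id_H)\circ\Delta=\lambda(\cdot)\,1_H$ as well. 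Your explicit $\lambda$'s (projection onto the trivial character for a diagonalizable group, averaging for a constant tame group) are two-sided, so this is a bookkeeping omission rather than a fatal gap, but the properties of $\lambda$ as you state them do not by themselves deliver the equivariance you claim.
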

\begin{proof}
  We start observing that the  natural morphism $P^G\otimes_A R\to (P\otimes_A R)^G$ is an isomorphism for every free $A$-module $R$ with a trivial coaction of $G$. With the same argument used in \cite[Cor 3.3]{MR2427954} we can deduce that it is an isomorphism for every finitely presented $A$-module $R$. As a consequence of this, the morphism $P^G\otimes_A R\to P\otimes_A R$ is always injective and in different words the submodule $P^G$ is pure. Using the theorem on pure submodules \cite[Thm 7.14]{MR1011461} we deduce that $P^G$ is a direct summand of $P$. Now we prove that there is an equivariant splitting. Using that $P^G$ is a free $A$-module and $G$ is linearly reductive we have the following exact sequence:
  \begin{displaymath}
   \xymatrix{
     0 \ar[r] & \Hom_A(P/P^G,P^G)^G \ar[r] & \Hom_A(P,P^G)^G \ar[r] & \Hom_A(P^G,P^G) \ar[r] & 0 \\
}
  \end{displaymath}
The identity homomorphism in $\Hom_A(P^G,P^G)$ lifts to an equivariant splitting in \\ $\Hom_A(P,P^G)^G=\Hom^G_A(P,P^G)$.
\end{proof}

\begin{proof}[Proof of proposition \ref{prop:cohom-base-change-spcoarse}.]
Since the problem is local in both $X$ and $X'$ we can assume that $X=\Spec A$ and $X'=\Spec A'$ are affine schemes and the base scheme $S$ is $X$. Applying theorem \ref{thm:tame-stacks-1}.\ref{item:6} we may assume that $\XX=[\Spec B/G]$ where $\rho\colon G\to \spec{A}$ is a linearly reductive group scheme acting on $\Spec B$, the map $\spec{B}\to\spec{A}$ is finite and of finite presentation and $A=B^G$. By the same theorem we obtain that the fibered product $\XX'\times_{X}X'$ is $[Spec{(B\otimes_A A')}/G]$ where the action of $G$ is induced by the action of $G$ on $B$ and $A'=(B\otimes_A A')^G$. In this setup a coherent sheaf $\FF$ is a finitely generated $B$-module $M$ which is equivariant for the groupoid $\xymatrix{G\times_{\spec{A}}\spec{B} \ar@<1ex>[r]^-{p} \ar@<0ex>[r]_-{a} & \spec{B}}$ where the two arrows $p,a$ are respectively the projection and the action. 
To prove the proposition is the same as proving that the natural morphism:
\begin{equation}\label{eq:affine-basechange}
  A'\otimes_A(\leftsub{A}{M})^G\xrightarrow{\psi_M} (M\otimes_A A')^G
\end{equation}
is an isomorphism.
The equivariant structure of the $B\otimes_A A'$-module $M\otimes_A A'$ is the obvious one; the $G$-invariant part of a module can be computed as follows: take the coaction $\leftsub{A}{M}\xrightarrow{\alpha} M\otimes_A \rho_\ast\OO_G$ and the trivial coaction $\leftsub{A}{M}\xrightarrow{\iota} M\otimes_A \rho_\ast\OO_G$ mapping $m\mapsto m\otimes 1$; the $G$-invariant part $\leftsub{A}{M}^G$ is the equalizer $\ker{\alpha-\iota}$. 
Since $B$ is finitely generated  as an $A$-module, the $A$-module $\leftsub{A}{M}$ is finitely generated  (the push forward of a coherent sheaf to the moduli space is coherent). Moreover $\leftsub{A}{M}$ admits a finite free presentation $P_2\to P_1\to \leftsub{A}{M}\to 0$. Since the tensor product $P_i\otimes \rho_\ast\OO_G$ is a flat resolution of $ M\otimes_A\rho_\ast\OO_G$ the  resolution $P_i$ inherits an equivariant structure from $\leftsub{A}{M}$. 

First we prove the statement for $P$ a projective $A$-module. 
To construct the natural map $\psi_P$ we start from the following exact diagram of $A$-modules:
\begin{displaymath}
  \xymatrix@1{
    0 \ar[r] & P^G \ar[r]\ar[d] & P \ar[rr]^-{\alpha-\iota}\ar[d] & & P\otimes_A\rho_\ast\OO_G \ar[d] \\
    0 \ar[r] &  (P\otimes_A A')^G \ar[r] & P\otimes_A A'\ar[rr]^-{(\alpha-\iota)\otimes\Id} & &  P\otimes_A\rho_\ast\OO_G\otimes_A A'
  }
\end{displaymath}
where the vertical map $P\to P\otimes_A A'$ is induced by $A\to A'$. We apply now the functor $\otimes_A A'$ and obtain th following diagram of $A'$-modules:
\begin{displaymath}
  \xymatrix@1{
     &  A'\otimes_A P^G \ar[r]\ar[d]^{\psi_P} & A'\otimes_A P \ar[r]\ar[d]^{\wr} & A'\otimes_A P\otimes_A\rho_\ast\OO_G \ar[d]^{\wr} \\
    0 \ar[r] &  (P\otimes_A A')^G \ar[r] & P\otimes_A A'\ar[r] & P\otimes_A\rho_\ast\OO_G\otimes_A A'
  }
\end{displaymath}
Since $G$ is linearly reductive and  $P$ is a free $A$-module $P^G$ is a direct summand of $P$ (as an equivariant module) according to Lemma \ref{lem:pure-submodules}, and the $A'$-module $(P\otimes_A A')^G$ is also a direct summand of $P\otimes_A A'$ of the same rank. Since the morphism $\psi_P$ is a surjective morphism  between two free $A'$-modules of the same rank it is an isomorphism.  


Since the formation of $\psi_M$ is functorial and the free resolution of $M$ is compatible with the coaction we obtain:
\begin{displaymath}
  \xymatrix@1{
    A'\otimes_A P_2^G \ar[r]\ar[d]_{\wr}^{\psi_2} & A'\otimes_A P_1^G \ar[r]\ar[d]_{\wr}^{\psi_1} & A'\otimes_A (\leftsub{A}{M})^G\ar[r]\ar[d]^{\psi_M} & 0\\
    (A'\otimes_A P_2)^G\ar[r] & (A'\otimes_A P_1)^G\ar[r] & (A'\otimes_A M)^G\ar[r] & 0\\
  }
\end{displaymath}
We have exactness on the right since $G$ is linearly reductive. Eventually $\psi_M$ is an isomorphism since the other two columns are isomorphisms. To extend the proof to quasicoherent sheaves we first observe that a quasi coherent sheaf is just a  $B$-module $N$ with a coaction. Quasi coherent sheaves on algebraic stacks are filtered limits of coherent sheaves, so we can assume that $N$ and the coaction are a filtered limit of coherent equivariant $B$-modules $M_\lambda$. We first observe that the tensor product commutes with filtered limits because it has a right adjoint. The functor $()^G$ commutes with filtered limits because it involves a tensor product and a kernel (which is a finite limit). The result follows now from the statement in the coherent case.  
\end{proof}
\begin{thm}[Cohomology and base change]\label{thm:cohom-base-change-stack}
 Let $p:\XX\to S$ be a tame  stack over $S$ with  moduli scheme $\pi:\XX\to X$ and such that $q:X\to S$ is proper. Let $\Spec{k(y)}\to S$ be a point. Let $\FF$ be a quasicoherent sheaf on $\XX$ flat over $S$. Then:
 \begin{enumerate}
 \item if the natural map
   \begin{displaymath}
     \phi^i(y):R^ip_{\ast}\FF\otimes k(y)\to H^i(\XX_y,\FF_y)
   \end{displaymath}
   is surjective, then it is an isomorphism, and the same is true for all $y'$ in a suitable neighborhood of $y$;
 \item Assume that $\phi^{i}(y)$ is surjective. Then the following conditions are equivalent:
   \begin{enumerate}
   \item $\phi^{i-1}(y)$ is also surjective;
   \item $R^ip_\ast\FF$ is locally free in a neighborhood of $y$.
   \end{enumerate}
 \end{enumerate}
\end{thm}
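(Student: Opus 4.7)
The plan is to reduce the statement for $p\colon\XX\to S$ to the classical cohomology and base change theorem for the proper morphism of schemes $q\colon X\to S$ applied to the pushforward sheaf $\pi_{\ast}\FF$. All the machinery needed to do this has been set up earlier in the section: tameness makes $\pi_{\ast}$ exact, it preserves flatness over $S$ (Corollary \ref{cor:tame-stack-2}), it commutes with base change (Proposition \ref{prop:cohom-base-change-spcoarse}), and it sends injectives to flasque (hence $q_\ast$-acyclic) sheaves by the remark after Corollary \ref{cor:tame-stack-2}.

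Concretely, set $\GG\coloneqq \pi_{\ast}\FF$, a quasicoherent sheaf on $X$. First I would observe, via the factorization $p=q\circ\pi$ together with the acyclicity of $\pi_{\ast}$ on injectives and the exactness of $\pi_{\ast}$, that the Leray spectral sequence degenerates and yields canonical isomorphisms
\[
R^{i}p_{\ast}\FF \;\cong\; R^{i}q_{\ast}\GG
\qquad\text{for all }i\ge 0.
\]
By Corollary \ref{cor:tame-stack-2}.\ref{item:3} the sheaf $\GG$ is flat over $S$, so the hypotheses of the classical theorem (\textit{cf.} \cite{Hag} or \cite{MR0282985}) apply to the proper morphism $q$ and the quasicoherent $S$-flat sheaf $\GG$ on $X$.

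Next I would identify the fiber cohomologies. Consider the two-step cartesian diagram
\[
\xymatrix{
\XX_{y}\ar[d]_{\pi_{y}}\ar[r]^{\sigma} & \XX\ar[d]^{\pi}\\
X_{y}\ar[d]\ar[r]^{\rho} & X\ar[d]^{q}\\
\Spec k(y)\ar[r] & S.
}
\]
The fiber $\XX_{y}$ is again a tame stack with moduli space $X_{y}$ (the moduli space formation commutes with flat base change by Corollary \ref{cor:tame-stack-2}.\ref{item:1}), so $(\pi_{y})_{\ast}$ is still exact and acyclic on injectives, giving $H^{i}(\XX_{y},\FF_{y})\cong H^{i}(X_{y},(\pi_{y})_{\ast}\FF_{y})$. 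Proposition \ref{prop:cohom-base-change-spcoarse} applied to $\rho$ identifies $(\pi_{y})_{\ast}\FF_{y}\cong \rho^{\ast}\pi_{\ast}\FF = \GG_{y}$. Putting the two identifications together yields a canonical isomorphism $H^{i}(\XX_{y},\FF_{y})\cong H^{i}(X_{y},\GG_{y})$, and by naturality of the edge maps of the Leray spectral sequence and of the base change morphism, the comparison map $\phi^{i}(y)$ fits into a commutative square with the classical base change map
\[
\psi^{i}(y)\colon R^{i}q_{\ast}\GG\otimes k(y)\;\longrightarrow\;H^{i}(X_{y},\GG_{y})
\]
whose vertical arrows are the isomorphisms constructed above.

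Once $\phi^{i}(y)$ is identified with $\psi^{i}(y)$, both (1) and (2) follow directly from the classical cohomology and base change theorem for $q$ and $\GG$. The main obstacle I expect is bookkeeping rather than a genuine mathematical difficulty: one must verify carefully that the scheme-theoretic base change map $\psi^{i}(y)$ really corresponds under the isomorphisms $R^{i}p_{\ast}\FF\cong R^{i}q_{\ast}\GG$ and $H^{i}(\XX_{y},\FF_{y})\cong H^{i}(X_{y},\GG_{y})$ to the stacky map $\phi^{i}(y)$; this is a naturality/compatibility check of the Leray spectral sequence with base change, and is the only place where one has to be meticulous.
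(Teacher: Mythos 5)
Your proposal is correct and follows essentially the same route as the paper: reduce to the classical theorem for $q\colon X\to S$ applied to $\pi_\ast\FF$, using exactness of $\pi_\ast$ (hence degeneration of the Leray spectral sequence, $R^ip_\ast\FF\cong R^iq_\ast\pi_\ast\FF$), flatness of $\pi_\ast\FF$ from Corollary \ref{cor:tame-stack-2}, and Proposition \ref{prop:cohom-base-change-spcoarse} together with Corollary \ref{cor:tame-stack-2}.\ref{item:1} to identify $H^i(\XX_y,\FF_y)$ with $H^i(X_y,(\pi_\ast\FF)_y)$. The only difference is that you make the naturality/compatibility check of the two base change maps explicit, which the paper leaves implicit.
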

\begin{proof}
It follows from \ref{cor:tame-stack-2}.\ref{item:3} that $\pi_\ast\FF$ is flat over $S$ and according to \cite[Thm 12.11]{Hag}  the statement is true for the quasicoherent sheaf $\pi_\ast\FF$ and the natural map $\psi^i(y):R^iq_{\ast}(\pi_\ast(\FF))\otimes k(y)\to H^i(X_y,(\pi_\ast\FF)_y)$. Since $\pi_\ast$ is exact we have $R^iq_\ast\circ \pi_\ast\cong R^i(q_\ast\circ\pi_\ast)$. Applying   \ref{prop:cohom-base-change-spcoarse} we deduce that $(\pi_\ast\FF)_y$ is isomorphic to ${\pi_y}_\ast(\FF_y)$. According to \ref{cor:tame-stack-2}.\ref{item:1} the morphism $\pi_y:\XX_y\to X_y$ is the moduli scheme of $\XX_y$ so that $\pi_y$ is exact and we can conclude that $H^i(X_y,{\pi_y}_\ast(\FF_y))\cong H^i(\XX_y,\FF_y)$.
\end{proof}
Repeating exactly the same proof we can reproduce the \textit{Semicontinuity} theorem and a standard result of \textit{Flat base change}.
\begin{thm}[Semicontinuity]\label{thm:semicontinuity-stack}
  Let $p:\XX\to S$ be a tame  stack over $S$ with moduli scheme $\pi:\XX\to X$ and $q:X\to S$ is proper. Let $\FF$ be a quasicoherent sheaf on $\XX$ flat over $S$. Denote with $y$ a point of $S$. For every $i\geq 0$ the function $y\mapsto h^i(\XX_y,\FF_y)$ is upper semicontinuous on $S$.
\end{thm}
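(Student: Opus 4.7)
The plan is to mimic exactly the strategy used in Theorem \ref{thm:cohom-base-change-stack}: reduce the statement on the tame stack $\XX$ to the classical semicontinuity theorem for schemes applied to the pushforward $\pi_\ast\FF$ on $X$, and then use the identifications provided by Proposition \ref{prop:cohom-base-change-spcoarse} and Corollary \ref{cor:tame-stack-2} to translate the classical output back into cohomology of the fiber $\XX_y$.

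First I would invoke Corollary \ref{cor:tame-stack-2}.\ref{item:3}: since $\FF$ is flat over $S$ and $\XX$ is tame, the quasicoherent sheaf $\pi_\ast\FF$ on $X$ is also flat over $S$. Because $q\colon X\to S$ is proper, the classical semicontinuity theorem for a flat, quasicoherent sheaf over a proper scheme (\cite[Thm 12.8]{Hag}) applies to $\pi_\ast\FF$ and gives, for every $i\geq 0$, upper semicontinuity of the function
\begin{displaymath}
y\longmapsto h^i\bigl(X_y,(\pi_\ast\FF)_y\bigr)
\end{displaymath}
on $S$. The task is then to identify this function with $y\mapsto h^i(\XX_y,\FF_y)$.

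The identification proceeds in two steps, both already assembled in the proof of Theorem \ref{thm:cohom-base-change-stack}. By Proposition \ref{prop:cohom-base-change-spcoarse} applied to the base change $\Spec k(y)\to S\to X$ (more precisely, to the morphism $X_y\to X$ of algebraic spaces), the natural comparison map is an isomorphism $(\pi_\ast\FF)_y\cong(\pi_y)_\ast(\FF_y)$, where $\pi_y\colon\XX_y\to X_y$ is the induced morphism on fibers. Next, Corollary \ref{cor:tame-stack-2}.\ref{item:1} tells us that $X_y$ is the moduli space of $\XX_y$, so $\pi_y$ is again a coarse moduli morphism of a tame stack; consequently $(\pi_y)_\ast$ is exact and sends injectives to flasques (Remark \ref{item:18}), yielding the cohomology comparison $H^i(X_y,(\pi_y)_\ast\FF_y)\cong H^i(\XX_y,\FF_y)$. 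Chaining the two isomorphisms gives $h^i(X_y,(\pi_\ast\FF)_y)=h^i(\XX_y,\FF_y)$, and upper semicontinuity on the right follows from the classical result on the left.

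The main point that requires care, rather than a genuine obstacle, is that the formalism has to be verified on the fiber $\XX_y$: tameness is preserved by arbitrary base change (it is equivalent to the geometric stabilizers being linearly reductive by Theorem \ref{thm:tame-stacks-1}), so the ingredients (exactness of $(\pi_y)_\ast$, that $X_y$ is the moduli space, flatness of the pushforward) remain available fiberwise. Everything else is a direct transcription of the proof of Theorem \ref{thm:cohom-base-change-stack} with the classical base-change statement replaced by the classical semicontinuity statement.
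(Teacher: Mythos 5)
Your proposal is correct and follows exactly the route the paper intends: the paper states only that the semicontinuity theorem is obtained by ``repeating exactly the same proof'' as Theorem \ref{thm:cohom-base-change-stack}, and your argument is precisely that transcription — flatness of $\pi_\ast\FF$ via Corollary \ref{cor:tame-stack-2}, classical semicontinuity on $X$, and the fiberwise identifications $(\pi_\ast\FF)_y\cong(\pi_y)_\ast(\FF_y)$ and $H^i(X_y,(\pi_y)_\ast\FF_y)\cong H^i(\XX_y,\FF_y)$ from Proposition \ref{prop:cohom-base-change-spcoarse} and Corollary \ref{cor:tame-stack-2}. No gaps; you have simply written out the details the paper leaves implicit.
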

\begin{thm}\label{thm:flat-base-change}
  Let $p:\XX\to S$ be a separated tame  stack over $S$; let $u:S'\to S$ be a flat morphism and $\FF$ a quasicoherent sheaf on $\XX$.
  \begin{displaymath}
    \xymatrix@1{
      \XX\times_S S' \ar[d]^{p'}\ar[r]^{v} & \XX\ar[d]^p \\
      S' \ar[r]^u & S \\
    }
  \end{displaymath}
For all $i\geq 0$ the natural morphisms $u^*R^ip_\ast\FF\to R^ip'_\ast(v^\ast\FF)$ are isomorphisms.
\end{thm}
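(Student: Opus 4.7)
\medskip

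\noindent\textbf{Proof plan.} The strategy is identical in spirit to the proof of Theorem \ref{thm:cohom-base-change-stack}: factor through the moduli scheme and reduce to the classical flat base change for algebraic spaces. Write $p = q\circ\pi$ with $\pi\colon\XX\to X$ the moduli space map and $q\colon X\to S$. Form the fiber product $X' \coloneqq X\times_S S'$ with projections $\rho\colon X'\to X$ and $q'\colon X'\to S'$. Since $X'\to X$ is a morphism of algebraic spaces, Corollary \ref{cor:tame-stack-2}.\ref{item:1} identifies $X'$ with the moduli space of $\XX\times_S S' = \XX\times_X X'$, and this factors $p'$ as $q'\circ\pi'$ with $\pi'\colon\XX\times_S S'\to X'$ the moduli map. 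The cartesian square defining $v$ factors as two stacked cartesian squares
\begin{displaymath}
\xymatrix{
\XX\times_S S' \ar[d]_{\pi'}\ar[r]^-{v} & \XX \ar[d]^{\pi} \\
X' \ar[d]_{q'}\ar[r]^-{\rho} & X \ar[d]^{q} \\
S' \ar[r]^{u} & S
}
\end{displaymath}
with $\rho$ flat (being the base change of the flat $u$).

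\noindent First I would reduce from $p$ to $q$ on higher direct images. Since $\XX$ is tame, $\pi_\ast$ is exact on quasicoherent sheaves (Definition \ref{def:tame-stack}) and sends injectives to flasques (Remark \ref{item:18}). The Grothendieck spectral sequence for $p = q\circ\pi$ therefore degenerates and yields canonical isomorphisms
\begin{displaymath}
R^i p_\ast \FF \;\cong\; R^i q_\ast(\pi_\ast \FF),
\qquad
R^i p'_\ast(v^\ast \FF) \;\cong\; R^i q'_\ast(\pi'_\ast v^\ast \FF),
\end{displaymath}
compatible with the base change maps.

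\noindent Next I would handle the pushforward on the upper square by invoking Proposition \ref{prop:cohom-base-change-spcoarse}: the natural map $\rho^\ast \pi_\ast \FF \to \pi'_\ast v^\ast \FF$ is an isomorphism. Substituting this into the second displayed isomorphism above converts the problem into showing that the natural morphism
\begin{displaymath}
u^\ast R^i q_\ast(\pi_\ast \FF) \;\longrightarrow\; R^i q'_\ast\bigl(\rho^\ast \pi_\ast \FF\bigr)
\end{displaymath}
is an isomorphism for every $i\geq 0$. But this is the classical flat base change theorem for the separated morphism $q\colon X\to S$ of algebraic spaces along the flat morphism $u\colon S'\to S$, applied to the quasicoherent sheaf $\pi_\ast\FF$ (which is quasicoherent by Remark \ref{item:18}). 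Chasing the three isomorphisms together and verifying that they compose to the natural base change morphism $u^\ast R^i p_\ast \FF \to R^i p'_\ast(v^\ast \FF)$ finishes the proof.

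\noindent The main (mild) obstacle is bookkeeping: confirming that all the canonical identifications above are actually the base change maps and not merely abstract isomorphisms. This is standard once one writes out the comparison of the two Grothendieck spectral sequences associated to $p\circ v = u\circ p'$ and uses that both Proposition \ref{prop:cohom-base-change-spcoarse} and the scheme-theoretic flat base change are obtained from the same adjunction $(v^\ast,v_\ast)$ (respectively $(u^\ast, u_\ast)$).
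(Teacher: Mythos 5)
Your proposal is correct and follows essentially the same route the paper intends: the paper gives no separate argument for Theorem \ref{thm:flat-base-change}, stating only that one repeats the proof of Theorem \ref{thm:cohom-base-change-stack}, which is precisely your reduction via exactness of $\pi_\ast$, Proposition \ref{prop:cohom-base-change-spcoarse} on the coarse space, and classical flat base change for $q\colon X\to S$.
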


We conclude the section  with the following lemma, proving  that $\pi_\ast$ maps injectives to flasque sheaves (as anticipated in Remark \ref{item:18}). We guess it is well known to experts since many years, nevertheless we prefer to write a proof for lack of references. 
\begin{lem}\label{lem:cohomo-pushf}
  Assume $\pi\colon\XX\to X$ is a tame stack  and $\FF$ is an abelian sheaf on $\XX$. 
  If  $\mathcal{I}$ is an injective sheaf on $\XX$, the pushforward $\pi_\ast\mathcal{I}$ is \textit{flasque}\footnotemark \footnotetext{A sheaf on a site is flasque if it is acyclic on every object of the site (in agreement with Milne)} on $X$. 
\end{lem}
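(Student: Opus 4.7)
The plan is to pass from injectivity of $\mathcal{I}$ on $\XX$ to flasqueness of $\pi_\ast\mathcal{I}$ on $X$ via the Leray spectral sequence, using the standard fact that injective abelian sheaves on a site are acyclic for every left-exact functor and, in particular, have vanishing higher cohomology on every object of the site.

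First I would record two purely formal consequences of the injectivity of $\mathcal{I}$:
\begin{enumerate}
\item[(a)] $R^q\pi_\ast\mathcal{I}=0$ for every $q>0$, because injectives are acyclic for every left-exact additive functor;
\item[(b)] $H^i(W,\mathcal{I})=0$ for every object $W\to\XX$ of the \'etale site of $\XX$ and every $i>0$, because injective abelian sheaves on any site are flasque in the sense of the footnote.
\end{enumerate}

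Next, fix an object $V\to X$ of the \'etale site of $X$ and form the $2$-fibered product $\XX_V:=V\times_{X}\XX$, which is \'etale over $\XX$ and hence again an object of the \'etale site of $\XX$. I would apply the Leray spectral sequence associated with the factorization $\Gamma(V,-)\circ\pi_\ast=\Gamma(\XX_V,-)$ of functors on abelian sheaves of $\XX$,
\[
 E_2^{p,q}=H^p\bigl(V,R^q\pi_\ast\mathcal{I}\bigr)\Longrightarrow H^{p+q}\bigl(\XX_V,\,\mathcal{I}|_{\XX_V}\bigr).
\]
By (a) this collapses to isomorphisms $H^i(V,\pi_\ast\mathcal{I})\cong H^i(\XX_V,\mathcal{I}|_{\XX_V})$, and by (b) the right-hand side vanishes for every $i>0$. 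This gives $H^i(V,\pi_\ast\mathcal{I})=0$ for every \'etale $V\to X$ and every $i>0$, which is exactly flasqueness of $\pi_\ast\mathcal{I}$ in the sense demanded.

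The only point of substance is the validity of the Leray sequence in this form, i.e.\ the identification of the localization $(\pi_\ast\mathcal{I})|_V$ with the pushforward along $\pi_V\colon\XX_V\to V$ of $\mathcal{I}|_{\XX_V}$. This is the standard flat base change along the \'etale morphism $V\to X$ and is formal once one observes that $\pi$ induces a morphism of \'etale topoi with exact inverse image functor. Tameness does not really enter the argument here; it is simply the natural setting in which this acyclicity is used, for instance to identify $H^\bullet(\XX,\FF)$ with $H^\bullet(X,\pi_\ast\FF)$ as recalled in Remark~\ref{item:18}.
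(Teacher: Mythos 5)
Your route is genuinely different from the paper's: you run the Leray spectral sequence for $\pi$ itself, whereas the paper chooses a smooth presentation $X_0\to\XX$, forms the simplicial nerve $X^\bullet$, uses Olsson's resolution $0\to\pi_\ast\mathcal{I}\to f^0_\ast\mathcal{I}_0\to\cdots$ and the descent spectral sequence $E_1^{p,q}=H^p(X_q,\mathcal{I}_q)\Rightarrow H^{p+q}(\XX,\mathcal{I})$, together with the fact that the restrictions $\mathcal{I}_q$ remain injective, and only then localizes. Your argument is shorter, but as written it is either vacuous or circular, depending on which category ``injective'' refers to, and that ambiguity sits exactly on the point that makes the lemma non-trivial.

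If $\mathcal{I}$ is an injective \emph{abelian} sheaf (the reading your steps (a) and (b) presuppose), then $\pi_\ast\mathcal{I}$ is already injective, because $\pi_\ast$ has the exact left adjoint $\pi^{-1}$; flasqueness is immediate and no spectral sequence is needed. The lemma is only of interest for injective objects in the category of $\OO_\XX$-modules (or quasicoherent sheaves) --- this is how it is used in Remark~\ref{item:18}, and it is the point of the ``psychological remark'' following the lemma: there the left adjoint of $\pi_\ast$ is $\pi^\ast$, which is not exact unless $\pi$ is flat, so preservation of injectives is precisely what is unknown. In that setting the Leray (Grothendieck) spectral sequence for $\Gamma(V,-)\circ\pi_\ast$, set up in $\OO$-modules, is circular: its hypothesis is that $\pi_\ast$ carries injectives to $\Gamma(V,-)$-acyclic objects, which is the statement being proved. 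The argument can be repaired --- run Leray in abelian sheaves, where $\pi_\ast$ does preserve injectives, apply it to the underlying abelian sheaf of $\mathcal{I}$, and use that injective $\OO_\XX$-modules are flasque on the source to kill $R^q\pi_\ast\mathcal{I}$ for $q>0$ as well as the abutment --- but this bookkeeping is the actual content of the proof and has to be made explicit. A second, milder gap: tame stacks need not be Deligne--Mumford in positive characteristic, and for those the ``\'etale site of $\XX$'' and the formal functoriality of topoi you invoke for the base change $(\pi_\ast\mathcal{I})|_V\cong{\pi_V}_\ast(\mathcal{I}|_{\XX_V})$ are exactly where care is required; this is why the paper routes the argument through a smooth presentation and simplicial descent rather than through $\pi$ directly.
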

\begin{proof}
  We choose a smooth presentation  $X_0\to\XX$ and we associate to it  the simplicial nerve $X^\bullet$. Let $f^i\colon X_i\to X$ be the obvious composition. For every sheaf $\mathcal{I}$ on $\XX$ represented by $\mathcal{I}^\bullet$ on $X^\bullet$ we have a resolution (see \cite[Lem 2.5]{Osas05}):
\begin{equation}\label{eq:6}
  0\to\pi_\ast\mathcal{I}\to f^0_\ast\mathcal{I}_0\to f^1_\ast\mathcal{I}_1\to \ldots
\end{equation}
Assume now that $\mathcal{I}$ is injective, according to \cite[Cor 2.5]{Osas05} the sheaves $\mathcal{I}_i$ are injective for every $i$ so that $H^p(X_q,\mathcal{I}_q)$ is zero for every $p>0$ and every $q$ and $H^p(\XX,\mathcal{I})$ is zero for every $p>0$. 
Using \cite[Cor 2.7]{Osas05} and \cite[Th 4.7]{Osas05} we have a spectral sequence $E_1^{p,q}=H^p(X_q,\mathcal{I}_q)$ abutting to $H^{p+q}(\XX,\mathcal{I})$; for our previous observation this sequence reduces to the complex:
\begin{equation}\label{eq:7}
  H^0(X_0,\mathcal{I}_0)\to H^0(X_1,\mathcal{I}_1)\to H^0(X_2,\mathcal{I}_2)\to\ldots
\end{equation}
Now we observe that, being $\mathcal{I}_q$ injective, $R^pf^q_\ast\mathcal{I}_q=0$ for every $p>0$ \cite[III 1.14]{Met}. Using the Leray spectral sequence \cite[III 1.18]{Met} we have that $H^0(X_i,\mathcal{I}_q)=H^0(X,f^q_\ast\mathcal{I}_q)$. The resolution (\ref{eq:6}) is actually a flasque resolution of $\pi_\ast\mathcal{I}$ (apply Lemma \cite[III 1.19]{Met}) and applying the functor $\Gamma(X,\cdot)$ it becomes resolution (\ref{eq:7}). This proves that $H^i(X,\pi_\ast\mathcal{I})=H^i(\XX,\mathcal{I})$ and eventually zero for $i>0$. With the same argument (and actually applying Proposition \ref{prop:cohom-base-change-spcoarse}) we can prove that $\pi_\ast\mathcal{I}$ is acyclic on every open of the \'etale site of $X$ and conclude that $\pi_\ast\mathcal{I}$ is flasque using \cite[III 2.12.c]{Met}. 
\end{proof}
\begin{rem}[psychological]
  We don't know if $\pi_\ast$ maps injectives to injectives. If $\pi$ is flat (gerbes and root stacks) the answer is trivially yes, but in non flat cases we guess it could be false.
\end{rem}

\subsection{More results related to flatness}

In this part of the work we collect a few technical results taken from EGA which are related to flatness. We put these in a separate section since they are technically necessary but not so interesting in their own right. First of all we can reproduce for algebraic stacks a result of generic flatness \cite[6.9.1]{MR0199181}.
\begin{lem}[a result of hard algebra {\cite[6.9.2]{MR0199181}}]
  Let $A$ be a noetherian and integral ring and $B$ a finite type $A$-algebra; $M$ a finitely generated $B$-module. There is a principal open subscheme $\spec{A_f}$ such that $M_f$ is a free $A_f$-module. 
\end{lem}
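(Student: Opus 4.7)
The plan is a two-level d\'evissage combined with an induction on transcendence degree, in the spirit of Bourbaki. The organizing principle is that a free $A_f$-module is projective, so any short exact sequence of $A_f$-modules with free quotient splits; hence $M_f$ will be $A_f$-free as soon as I exhibit a finite filtration of $M_f$ whose successive quotients are all $A_f$-free.

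First I would filter the finitely generated $B$-module $M$ by a chain $0=M_0\subset\cdots\subset M_n=M$ whose successive quotients have the form $B/\mathfrak p_i$ for primes $\mathfrak p_i\subset B$; such a filtration exists by standard noetherian algebra. The splitting principle reduces the problem to the case $M=B/\mathfrak p$. Replacing $B$ by $B/\mathfrak p$, I may assume $B$ is an integral domain. If the structure map $A\to B$ has a nonzero element $a$ in its kernel, then $B_a=0$ is vacuously $A_a$-free; otherwise $A\hookrightarrow B$ is an inclusion of integral domains with $B$ finitely generated over $A$ as an algebra.

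Next I would induct on the transcendence degree $d=\mathrm{trdeg}_K\mathrm{Frac}(B\otimes_A K)$, where $K=\mathrm{Frac}(A)$. When $d=0$, Zariski's lemma shows $B\otimes_A K$ is a finite field extension of $K$; choosing a $K$-basis $b_1,\ldots,b_n\in B$ gives an $A$-linear map $A^n\to B$ whose kernel and cokernel are $K$-torsion, hence killed by some nonzero $f\in A$, so $B_f\cong A_f^n$. For the inductive step, Noether normalization applied to $B\otimes_A K$ supplies algebraically independent elements $t_1,\ldots,t_d\in B$ (after clearing denominators) such that $B\otimes_A K$ is finite over $K[t_1,\ldots,t_d]$; inverting finitely many more elements of $A$ produces an $f_0\neq 0$ for which $B_{f_0}$ is a finite module over the polynomial ring $B_0:=A_{f_0}[t_1,\ldots,t_d]$.

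Finally I would d\'evissage once more, now viewing $B_{f_0}$ as a finitely generated $B_0$-module filtered by $B_0$-submodules whose quotients are $B_0/\mathfrak q_j$ for primes $\mathfrak q_j\subset B_0$. For each $j$ exactly one of three things happens: either $\mathfrak q_j\cap A_{f_0}\neq 0$ (invert any nonzero element of the intersection to annihilate the quotient), or $\mathfrak q_j=0$ (so $B_0/\mathfrak q_j=A_{f_0}[t_1,\ldots,t_d]$ is tautologically $A_{f_0}$-free), or $\mathfrak q_j$ is a nonzero prime with $\mathfrak q_j\cap A_{f_0}=0$, in which case $B_0/\mathfrak q_j$ is a finitely generated $A_{f_0}$-algebra domain whose generic fibre has transcendence degree strictly less than $d$, so the inductive hypothesis applies. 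Multiplying together the finitely many localizations produced along the way gives the desired single $f$. The main obstacle is keeping the bookkeeping finite: every filtration that appears must have finite length (which holds by noetherianity), and the transcendence degree must strictly drop in the third case above (which it does, because a nonzero polynomial relation eliminates at least one degree of freedom), so that the recursive d\'evissage terminates after finitely many steps.
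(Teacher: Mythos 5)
The paper offers no proof of this lemma at all---it is quoted directly from EGA IV 6.9.2 and used as a black box---so the only thing to compare your argument against is Grothendieck's original one, which your proposal reproduces in its essentials: d\'evissage of $M$ to quotients $B/\mathfrak p$, reduction to an inclusion of domains, Noether normalization of the generic fibre, induction on the transcendence degree $d$, and a second d\'evissage over the polynomial ring $A_{f_0}[t_1,\dots,t_d]$ with the three-way case split on the primes $\mathfrak q_j$. The architecture is sound, the splitting principle for filtrations with free quotients is used correctly, and you correctly identify the two termination mechanisms (finiteness of the filtrations by noetherianity, strict drop of transcendence degree in the third case).

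There is one step that fails as literally written: in the base case $d=0$ you assert that the cokernel of $A^n\to B$ is $K$-torsion and ``hence killed by some nonzero $f\in A$''. A torsion module is killed by a single element only if it is finitely generated over $A$, and this cokernel is a priori only a quotient of $B$ as an $A$-module, where $B$ is merely of finite type as an \emph{algebra}. Concretely, take $A=\mathbb{Z}$ and $B=\mathbb{Z}[1/2]$: here $d=0$, the basis is $b_1=1$, and the cokernel $\mathbb{Z}[1/2]/\mathbb{Z}$ is torsion but annihilated by no single nonzero integer. The conclusion of the lemma of course still holds ($B_2$ is free of rank one over $A_2$), but your sentence does not reach it. The standard repair is to insert one more localization before comparing with $A^n$: each algebra generator of $B$ is algebraic over $K$, hence, after clearing denominators and inverting the product $g$ of the leading coefficients, integral over $A_g$; then $B_g$ is a \emph{finite} $A_g$-module, its torsion cokernel is finitely generated, and your argument goes through. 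This is the same clearing-of-denominators device you already invoke in the inductive step to make $B_{f_0}$ finite over $B_0$, so the fix is entirely consistent with the rest of your proof.
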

\begin{prop}\label{prop:generic-flatness}
  Let $\XX\to S$ be a finite type noetherian algebraic stack. Let $\FF$ be a coherent $\OO_\XX$-module. There is  a finite stratification $\coprod S_i\to S$ where $S_i$ are locally closed in $S$  such that, denoted with $\XX_{S_i}$ the fibered product $\XX\times_S S_i$ the $\OO_{\XX_i}$-module $\FF\otimes_{\OO_S}\OO_{S_i}$ is flat on $\OO_{S_i}$.  
\end{prop}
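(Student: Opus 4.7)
My plan is to reduce the statement to the classical generic flatness theorem for schemes \cite[6.9.1]{MR0199181} (equivalently the affine module lemma cited just above) by pulling everything back to a smooth atlas of $\XX$. First I would use noetherian induction on $S$: it is enough to show that there exists a nonempty open $U\subseteq S$ over which $\FF\otimes_{\OO_S}\OO_U$ is flat, because iterating the argument on the reduced complement of $U$ terminates after finitely many steps (by noetherianity of $S$) and produces the desired finite locally closed stratification. After replacing $S$ by an irreducible component of such a complement (with its reduced structure) and then by an affine open, I may further assume $S=\spec A$ with $A$ an integral domain.

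Next I would choose a smooth surjective morphism $V\to\XX$ with $V$ a scheme of finite type over $S$, which exists because $\XX$ is an algebraic stack of finite type over $S$. Let $\FF_V$ be the pullback of $\FF$ to $V$, a coherent $\OO_V$-module. By the very definition of flatness of an $\OO_\XX$-module over $S$, tested along any smooth presentation, the sheaf $\FF$ is $S$-flat if and only if $\FF_V$ is $S$-flat; so it suffices to find a nonempty open $U\subseteq S$ on which $\FF_V$ becomes flat. Covering $V$ by finitely many affines $\spec B_j$ and applying the cited hard-algebra lemma to each finitely generated $B_j$-module $\Gamma(\spec B_j,\FF_V)$, I obtain principal opens $\spec A_{f_j}\subseteq S$ on which each of these modules becomes $A_{f_j}$-free. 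Since $A$ is a domain, the finite intersection $U\coloneqq\bigcap_j \spec A_{f_j}$ is a nonempty open of $S$ on which $\FF_V$, and hence $\FF$, is flat, completing the induction step.

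The only non-routine ingredient is the equivalence ``$\FF$ is $S$-flat if and only if $\FF_V$ is $S$-flat along a smooth cover $V\to\XX$'', which amounts to unwinding the definition of a flat $\OO_\XX$-module together with the fact that a smooth morphism is flat and that flatness descends along faithfully flat morphisms of schemes. Once this equivalence is in hand, the argument is a direct transcription of the scheme-theoretic proof to the stacky setting, so I do not expect any further obstacle.
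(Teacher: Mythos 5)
Your proposal is correct and is exactly the argument the paper has in mind: the paper's proof consists of the single sentence ``as in the case of a finite type noetherian schemes, the proof is just an implementation of the previous lemma,'' and your write-up is the standard implementation — noetherian induction on $S$, reduction to an affine integral base, passage to a smooth atlas (using that $S$-flatness of $\FF$ is equivalent to $S$-flatness of its pullback to the atlas), and the EGA algebra lemma applied to finitely many affine charts. No gaps.
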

\begin{proof}
As in the case of a finite type noetherian schemes, the proof is just an implementation of the previous lemma.
\end{proof}
\begin{rem}
The previous result is obviously weaker then a flattening-stratification result. In the case of a projective scheme it is possible to prove the existence of the flattening-stratification using generic-flatness with some cohomology and base change and some extra feature coming from the projective structure.  In \cite{MR2007396} Olsson and Starr proved a deeper result for stacks, that is the existence of \textit{the} flattening stratification\footnotemark \footnotetext{The flattening stratification is the one coming with a nice universal property.}; with no  assumption of  noetherianity they can  produce a surjective quasi-affine morphism to $S$ (which seems to be the optimal result in such a generality). They conjectured also that the flattening stratification is labeled by ``generalized'' Hilbert polynomial (as defined in the same paper). 
\end{rem}
We state  a stack theoretic version of \cite[6.9.9.2]{MR0163911} which is similar to \ref{thm:flat-base-change} but it can be used in the case of an arbitrary base change.
\begin{prop}\label{prop:arbitrary-base-change}
    Let $p:\XX\to S$ be a separated tame   stack over $S$ with $S$-projective moduli scheme $\pi:\XX\to X$; let $u:S'\to S$ be a  morphism of schemes and $\FF$ a coherent sheaf on $\XX$ which is flat on $\OO_S$ and such that $R^ip_\ast\FF$ are locally free for every $i\geq 0$, then for all $i\geq 0$ the natural morphisms $u^*R^ip_\ast\FF\to R^ip'_\ast(v^\ast\FF)$ are isomorphisms.
\end{prop}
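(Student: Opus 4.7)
The strategy is to reduce to the analogous statement on the moduli scheme $X$, where it is a classical result of EGA, by exploiting the exactness of $\pi_\ast$ together with Proposition \ref{prop:cohom-base-change-spcoarse}.

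First I would identify the geometry on the base change side. Set $X'\coloneqq X\times_S S'$ and let $u_X\colon X'\to X$ and $q'\colon X'\to S'$ be the two projections; by hypothesis $q'$ is projective. Applying Corollary \ref{cor:tame-stack-2}.\ref{item:1} to the morphism of algebraic spaces $u_X\colon X'\to X$ yields that $X'$ is the moduli space of $X'\times_X\XX=\XX\times_S S'=\XX\times_S S'$, so the moduli morphism of $\XX'\coloneqq\XX\times_S S'$ is the base change $\pi'\colon\XX'\to X'$. In particular we have a commutative factorization $p'=q'\circ\pi'$ and the natural square
\begin{displaymath}
\xymatrix{
\XX' \ar[r]^{v}\ar[d]^{\pi'} & \XX \ar[d]^{\pi} \\
X' \ar[r]^{u_X} & X \\
}
\end{displaymath}
is $2$-cartesian. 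By Proposition \ref{prop:cohom-base-change-spcoarse}, the natural map $u_X^\ast\pi_\ast\FF\to\pi'_\ast v^\ast\FF$ is an isomorphism.

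Next I would transfer the hypotheses on $\FF$ to the sheaf $\GG\coloneqq\pi_\ast\FF$ on $X$. The sheaf $\GG$ is coherent by Remark \ref{item:18}.(1), flat over $S$ by Corollary \ref{cor:tame-stack-2}.\ref{item:3}, and since $\pi_\ast$ is exact on quasicoherent sheaves one has $R^iq_\ast\GG=R^i(q\circ\pi)_\ast\FF=R^ip_\ast\FF$, which is locally free for every $i\geq 0$ by hypothesis. Similarly $R^iq'_\ast(u_X^\ast\GG)\cong R^iq'_\ast(\pi'_\ast v^\ast\FF)=R^ip'_\ast v^\ast\FF$.

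Now the scheme-theoretic version of the statement (EGA III, 6.9.9.2) applies to the projective morphism $q\colon X\to S$, the morphism $u\colon S'\to S$, and the coherent sheaf $\GG$, which is flat over $S$ with all higher direct images $R^iq_\ast\GG$ locally free. It yields that the base change morphism
\begin{displaymath}
u^\ast R^iq_\ast\GG\longrightarrow R^iq'_\ast(u_X^\ast\GG)
\end{displaymath}
is an isomorphism for every $i\geq 0$. Translating this via the identifications above gives the desired isomorphism $u^\ast R^ip_\ast\FF\to R^ip'_\ast(v^\ast\FF)$.

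The only technical point that requires care is ensuring that the various natural base-change maps (on the stack and on the moduli space) are compatible, so that the isomorphism produced on the moduli-space side really represents the stack-theoretic one. This is where Proposition \ref{prop:cohom-base-change-spcoarse} plays a central role: it guarantees that pushing forward and base-changing commute at the level of $\pi_\ast$, which is exactly what allows the two computations of $R^ip'_\ast v^\ast\FF$ to be identified. Apart from this bookkeeping, the argument reduces entirely to the classical projective case on the moduli scheme.
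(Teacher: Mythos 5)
Your proposal is correct and is precisely the argument the paper intends: the paper's proof is the one-line statement that the result ``follows from EGA III 6.9.9.2 applying Proposition \ref{prop:cohom-base-change-spcoarse},'' and your write-up simply fills in the same reduction to the moduli scheme via exactness of $\pi_\ast$ and the identification $u_X^\ast\pi_\ast\FF\cong\pi'_\ast v^\ast\FF$. The only (minor) point left implicit is that $\XX'$ is again tame so that $\pi'_\ast$ is exact and $R^iq'_\ast\circ\pi'_\ast\cong R^ip'_\ast$, but this is immediate from Theorem \ref{thm:tame-stacks-1}.
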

\begin{proof}
  It follows from \cite[6.9.9.2]{MR0163911} applying proposition \ref{prop:cohom-base-change-spcoarse}. 
\end{proof}
We put here also a classical criterion about flatness of fibers which is theorem \cite[11.3.10]{MR0217086}. This will be used to fix a detail in the proof of the Kleiman criterion \ref{thm:kleiman-criterion}. This kind of result cannot be deduced from the analogous result for the moduli space, since flatness of $\XX$ on $S$ implies flatness of $X$ on $S$, but the contrary is not true. First we recall the statement in the affine case:
\begin{lem}[Lemme {\cite[11.3.10.1]{MR0217086}}]\label{lem:flatness-fibers-affine}
  Let $A\to B$ be a local homomorphism of noetherian local rings. Let $k$ be the residue field of $A$ and $M$ be a finitely generated non zero $B$-module. The following two conditions are equivalent:
  \begin{enumerate}
  \item $M$ is $A$ flat and $M\otimes_Ak$ is a flat $B\otimes_Ak$-module.
  \item $B$ is a flat $A$-module and $M$ is $B$-flat.
  \end{enumerate}
\end{lem}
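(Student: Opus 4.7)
The implication $(2)\Rightarrow(1)$ is routine: $M$ is $A$-flat by transitivity of flatness, while $M\otimes_A k\cong M\otimes_B(B\otimes_A k)$ is flat over $B\otimes_A k$ by base change along $B\to B\otimes_A k$ of the $B$-flat module $M$. For the converse my plan is to deploy the change-of-rings (Cartan--Eilenberg) spectral sequence
\begin{displaymath}
E^2_{p,q}=\Tor^B_p(\Tor^A_q(k,B),M)\Longrightarrow \Tor^A_{p+q}(k,M),
\end{displaymath}
whose abutment vanishes in total degree $\geq 1$ by the $A$-flatness of $M$.

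Before starting, the key auxiliary observation is that $\bar M\coloneqq M\otimes_A k$ is \emph{faithfully flat} over $\bar B\coloneqq B\otimes_A k$: we have $\mathfrak{m}_A M\subseteq\mathfrak{m}_B M\subsetneq M$ by Nakayama applied to the local hypothesis on $A\to B$, so $\bar M$ is a nonzero finitely generated flat module over the noetherian local ring $\bar B$, hence free, hence faithfully flat. This is the pivot that will later convert vanishing statements ``after tensoring with $\bar M$'' into honest vanishing statements for $\bar B$-modules.

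Step 1 is to show that $M$ is $B$-flat. The entry $E^2_{1,0}=\Tor^B_1(\bar B,M)$ supports no nontrivial differentials, because every $d_r$ out of it lands in bidegree $(1-r,r-1)$ with $1-r<0$ and every $d_r$ into it originates in bidegree $(1+r,1-r)$ with $1-r<0$, so $E^\infty_{1,0}=E^2_{1,0}$. Being a subquotient of $\Tor^A_1(k,M)=0$, this group vanishes. Combined with the hypothesis that $\bar M=M/\mathfrak{m}_A M$ is flat over $\bar B=B/\mathfrak{m}_A B$, the local criterion of flatness applied to the ideal $\mathfrak{m}_A B\subseteq\mathfrak{m}_B$ of $B$ (applicable because $M$ is finitely generated over the noetherian local ring $B$, hence $\mathfrak{m}_A B$-ideal-separated via Krull's intersection theorem) yields that $M$ is $B$-flat.

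Step 2 then exploits this newly acquired $B$-flatness of $M$ to make the spectral sequence collapse: all rows $p\geq 1$ of $E^2$ now vanish, so $\Tor^A_q(k,B)\otimes_B M\cong\Tor^A_q(k,M)=0$ for every $q\geq 1$. The module $\Tor^A_1(k,B)$ is annihilated by $\mathfrak{m}_A$, hence a $\bar B$-module, and $\Tor^A_1(k,B)\otimes_B M\cong\Tor^A_1(k,B)\otimes_{\bar B}\bar M$; faithful flatness of $\bar M$ over $\bar B$ forces $\Tor^A_1(k,B)=0$, and a final appeal to the local criterion of flatness (applicable because $\bigcap_n\mathfrak{m}_A^n B\subseteq\bigcap_n\mathfrak{m}_B^n=0$ by Krull) gives that $B$ is $A$-flat. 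The main obstacle I expect is psychological rather than technical: one must resist trying to extract both conclusions of $(2)$ simultaneously and instead run the spectral sequence twice in sequence, first to obtain the $B$-flatness of $M$ from the edge piece $E^2_{1,0}$, and only then, with that information in hand, let the sequence degenerate so as to read off $\Tor^A_1(k,B)=0$.
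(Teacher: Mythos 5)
Your argument is correct. Note that the paper does not actually prove this lemma: it is quoted verbatim from EGA (\cite[11.3.10.1]{MR0217086}) and used as a black box, so there is no internal proof to compare against. Your proof is a clean and standard derivation of that EGA statement: the direction $(2)\Rightarrow(1)$ by composition and base change of flatness is fine, and in $(1)\Rightarrow(2)$ the two passes through the change-of-rings spectral sequence are both sound --- $E^2_{1,0}=\Tor^B_1(B/\mathfrak{m}_AB,M)$ indeed receives and emits no nonzero differentials, so it is a subquotient of $\Tor^A_1(k,M)=0$, and the local criterion (legitimately applicable, since a finitely generated module over a noetherian local ring is ideal-separated for any ideal contained in the maximal ideal) upgrades this to $B$-flatness of $M$; the subsequent collapse gives $\Tor^A_1(k,B)\otimes_{B/\mathfrak{m}_AB}(M/\mathfrak{m}_AM)=0$, and your observation that $M/\mathfrak{m}_AM$ is nonzero (Nakayama, using that $A\to B$ is local), finitely generated and flat, hence free and faithfully flat over $B/\mathfrak{m}_AB$, correctly kills $\Tor^A_1(k,B)$. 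The classical EGA/Matsumura proof reaches the same two conclusions in the same order but runs the local criterion directly on the exact sequences coming from a filtration, avoiding the explicit Cartan--Eilenberg spectral sequence; your version trades that bookkeeping for the (equally standard) spectral-sequence formalism and is, if anything, easier to audit.
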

\begin{prop}[flatness for fibers]\label{prop:flatness-for-fibers}
  Let $p\colon\XX\to S$ be a tame stack locally of finite type with moduli space $\pi\colon\XX\to X$. Let $\FF$ be a coherent $\OO_\XX$-module flat on $\OO_S$. Let $x$ be a point of $\XX$ and $s=p(x)$.The following statements are equivalent:
  \begin{enumerate}
  \item $\FF$ is flat at the point $x$ and the fiber $\FF_s$ is flat at $x$.
  \item The morphism $\pi$ is flat at the point $x$ and $\FF$ is flat at $x$.
  \end{enumerate}
If one of the two conditions is satisfied for a point $x$ then there is an open substack of $\XX$  such that for every point in it, the condition is satisfied.
\end{prop}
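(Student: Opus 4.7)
The plan is to reduce the statement to the affine scheme-theoretic Lemma \ref{lem:flatness-fibers-affine} via the local structure theorem for tame stacks.

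First I would invoke Theorem \ref{thm:tame-stacks-1}.(\ref{item:7}) to choose an étale neighbourhood $X' \to X$ of $\pi(x)$ over which $\XX \times_X X' \cong [U/G]$, with $U \to X'$ finite and finitely presented and $G \to X'$ linearly reductive. Since flatness is étale-local on the target and $U \to [U/G]$ is a flat $G$-torsor, I can lift $x$ to a point $\tilde x \in U$ and translate each of the flatness conditions on $\FF$ at $x$ into a scheme-theoretic condition on the pullback $\FF|_U$ at $\tilde x$: $\OO_S$-flatness of $\FF$ at $x$ corresponds to $\OO_S$-flatness of $\FF|_U$ at $\tilde x$; flatness of the fibre $\FF_s$ at $x$ becomes flatness of $(\FF|_U)_s$ at $\tilde x$; flatness of $\pi$ at $x$ corresponds, after unwinding through the torsor $U \to [U/G]$ and the moduli-space factorization, to flatness of the relevant intermediate scheme morphism at $\tilde x$; and flatness of $\FF$ over the moduli space at $x$ becomes the analogous statement for $\FF|_U$. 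Proposition \ref{prop:cohom-base-change-spcoarse} guarantees that taking fibres commutes with pushforward to $X'$, which is what makes these translations coherent.

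With this dictionary in place, the equivalence of (1) and (2) becomes a direct application of Lemma \ref{lem:flatness-fibers-affine} to the local rings $A = \OO_{S,s}$ and $B$ the local ring at the image of $\tilde x$ in the intermediate scheme, with $M = (\FF|_U)_{\tilde x}$ viewed as a finitely generated $B$-module through the factorization $U \to X' \to S$; the linearly reductive $G$-action is harmless since flatness is a stalk-level condition and is independent of the choice of orbit representative $\tilde x$. The openness assertion then follows because the classical openness of flatness for finite-type schemes (\textit{cf.}\ \cite[6.9.1]{MR0199181} and the subsequent results of EGA on openness) applies to $U$, and open substacks of $\XX$ are detected on a smooth atlas.

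The principal subtlety I anticipate is making the translation of ``$\pi$ is flat at $x$'' rigorous: since $\pi$ need not be flat globally (for instance when the order of the stabilizer jumps), one must compute on the local presentation $[U/G]$ precisely which scheme-theoretic flatness this condition corresponds to, and verify that the match with the hypothesis ``$B$ is $A$-flat'' of Lemma \ref{lem:flatness-fibers-affine} is exact. Once this dictionary is verified, the affine lemma supplies the conclusion immediately, and the same bookkeeping transports the openness statement back to $\XX$.
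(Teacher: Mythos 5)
Your proposal is correct and follows essentially the same route as the paper, whose entire proof is the one-line remark that, as for schemes, the statement reduces to Lemma \ref{lem:flatness-fibers-affine}; your reduction via the étale local quotient presentation of Theorem \ref{thm:tame-stacks-1} and descent of flatness along the atlas is exactly the implicit content of that remark, worked out in detail.
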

\begin{proof}
As in the case of schemes we can reduce the problem to the previous lemma.
\end{proof}

\section{The algebraic stack of coherent sheaves on a projective stack}
In this section we prove that $\mathfrak{Coh}_{\XX /S}$, the stack of coherent sheaves over an algebraic stack $\XX\to S$, is algebraic if the stack $\XX$ is tame   and satisfies some additional conditions. For every $S$-scheme  $U$, the objects in $\mathfrak{Coh}_{\XX /S}$ are all the coherent sheaves on $\XX_U=\XX\times_S U$ which are $\OO_U$-flat. Morphisms are isomorphisms of $\OO_{\XX_U}$-modules. We can also define a functor of flat quotients of a given coherent sheaf $\FF$, and we will denote it by $\Quot_{\XX /S}(\FF)$ in the usual way.
We have seen in the previous section that, if $\XX$ is tame, we have the same results of cohomology and base change and semicontinuity we have on schemes. To prove that $\mathfrak{Coh}_{\XX/S}$ is  algebraic we need some more structure. We need a polarization on the moduli scheme of $\XX$ and a \textit{very ample} sheaf on $\XX$. It is known that there are no very ample invertible sheaves on a stack unless it is an algebraic space, however it was proven in \cite{MR2007396}  that, under certain hypothesis, there exist locally free sheaves, called \textit{generating sheaves}, which behave like ``very ample sheaves''. Moreover in \cite{MR1844577} is introduced another class of locally free sheaves that could be interpreted as ``ample'' sheaves on stacks. Relations between these two classes of sheaves and the ordinary concept of ampleness are explained with some details in \cite{geomDM}. We briefly recall these notions. Let $\pi\colon\XX\to X$ be a Deligne-Mumford  $S$-stack with moduli space $X$:
\begin{defn} 
 A locally free sheaf $\mathcal{V}$ on $\XX$ is $\pi$-\textit{ample} if for every geometric point of $\XX$ the representation of the stabilizer group at that point on the fiber is faithful.
\end{defn}
\begin{defn}\label{def:generating-sheaf}
 A locally free sheaf $\mathcal{\EE}$ on $\XX$ is $\pi$-\textit{very ample} if for every geometric point of $\XX$ the representation of the stabilizer group at that point contains every irreducible representation.
\end{defn}
The following proposition is the reason why we have decided to use the word ``ample'' for the first class of sheaves.
\begin{prop}[{\cite[5.2]{geomDM}}]
  Let $\mathcal{V}$ be a  $\pi$-ample sheaf on $\XX$, there is a positive integer $r$ such that the locally free sheaf $\bigoplus_{i=0}^r\mathcal{V}^{\otimes i}$ is $\pi$-very ample.
\end{prop}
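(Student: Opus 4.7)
The plan is to reduce the statement to a representation-theoretic fact at each geometric point of $\XX$, and then use the finite-type hypothesis to extract a uniform bound on $r$.

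First I would translate the defining conditions pointwise. For a locally free sheaf $\FF$ on $\XX$ and a geometric point $\bar x\to\XX$ with stabilizer $G_{\bar x}$, the fiber $\FF_{\bar x}$ carries a natural action of $G_{\bar x}$; since taking fibers commutes with both direct sums and tensor products, the fiber of $\bigoplus_{i=0}^{r}\mathcal{V}^{\otimes i}$ at $\bar x$ is $\bigoplus_{i=0}^{r} V^{\otimes i}$, with $V\coloneqq\mathcal{V}_{\bar x}$. The $\pi$-ampleness hypothesis says that $V$ is a \emph{faithful} representation of $G_{\bar x}$, while $\pi$-very ampleness of $\bigoplus_{i=0}^{r}\mathcal{V}^{\otimes i}$ asks that every irreducible representation of $G_{\bar x}$ appear as a subrepresentation of $\bigoplus_{i=0}^{r} V^{\otimes i}$.

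Next I would invoke the classical fact (a theorem of Brauer, which also follows from Burnside's theorem on characters) that if $G$ is a finite group over an algebraically closed field and $V$ is a faithful representation of $G$, then every irreducible representation of $G$ appears as a direct summand of some $V^{\otimes k}$ with $0\leq k\leq r_G$, where $r_G$ may be taken to be the number of distinct values assumed by the character of $V$, and in particular satisfies $r_G\leq |G|$. Applied at each geometric point of $\XX$, this settles the statement pointwise.

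To upgrade the pointwise result to a single $r$ valid simultaneously at all geometric points, I would use that $\XX$ is a Deligne--Mumford stack of finite type: the inertia $I_{\XX}\to\XX$ is finite, and by a standard constructible stratification argument the isomorphism class of the stabilizer is locally constant on a finite stratification of $\XX$. Only finitely many isomorphism classes of finite groups therefore occur as geometric stabilizers, and taking $r$ to be the maximum of $r_{G_{\bar x}}$ over this finite collection yields the desired integer. The main obstacle is precisely this last step: the pointwise representation-theoretic statement is entirely classical, but promoting the bound to a uniform one requires the finiteness of isomorphism types of stabilizers across $\XX$, extracted from finiteness of the inertia and the finite-type hypothesis via the constructible stratification.
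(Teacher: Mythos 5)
The paper does not actually prove this proposition: it is quoted verbatim from \cite[5.2]{geomDM}, and your argument is the standard one behind that reference --- reduce to the Burnside--Brauer theorem on the fiber at each geometric point (where $\pi$-ampleness becomes faithfulness of the stabilizer representation and $\pi$-very ampleness becomes the appearance of every irreducible in $\bigoplus_{i=0}^{r}V^{\otimes i}$) and then make $r$ uniform using finiteness of the inertia. Your proof is correct; the only two points worth flagging are that in positive characteristic the Burnside--Brauer step requires the stabilizers to be linearly reductive, i.e.\ of order prime to the characteristic, which is exactly the tameness hypothesis in force throughout the paper, and that the uniformity can be obtained more cheaply than by your stratification argument: your own bound $r_{G}\leq |G|$ combined with the boundedness of the fiber degree of the finite morphism $I_{\XX}\to\XX$ over the noetherian stack $\XX$ already yields a single $r$ that works at every geometric point.
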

We recall here the notion of generating sheaf together with the existence result in \cite{MR2007396}.
Let $\XX$ be a tame Deligne-Mumford $S$-stack.
\begin{defn}\label{def:FE-GE}
 Let $\EE$ be a locally free sheaf on $\XX$. We define a functor $F_{\EE}:\mathfrak{QCoh}_{\XX /S}\to\mathfrak{QCoh}_{X/S}$ mapping $\FF\mapsto\pi_{\ast}\HOM_{\OO_\XX}(\EE,\FF)$ and a second functor $G_{\EE}:\mathfrak{QCoh}_{X/S}\to\mathfrak{QCoh}_{\XX /S}$ mapping $F\mapsto\pi^\ast F\otimes\EE$.
\end{defn}
\begin{rem}\label{rem:FE-exact}
\begin{enumerate}
\item  The functor $F_{\EE}$ is exact since the dual $\EE^\vee$ is locally free and the pushforward $\pi_\ast$ is exact. The functor $G_{\EE}$ is not exact unless the morphism $\pi$ is flat. This happens for instance if the stack is a flat gerbe over a scheme or in the case of root stacks.
\item (Warning) The notation $F_\EE$ is the same as in \cite{MR2007396} but $G_\EE$ is not. What they called $G_\EE$ there, is actually our $G_\EE\circ F_\EE$.
\end{enumerate}
\end{rem}
\begin{defn}
  A locally free sheaf $\EE$ is said to be a \textit{generator} for the quasi coherent sheaf $\FF$ if the adjunction morphism (left adjoint of the identity $\pi_\ast \FF\otimes\EE^\vee \xrightarrow{\Id} \pi_\ast \FF\otimes\EE^\vee $): 
  \begin{equation}\label{eq:4}
    \theta_{\EE}(\FF):\pi^\ast\pi_\ast\HOM_{\OO_\XX}(\EE,\FF)\otimes\EE\to \FF
  \end{equation}
is surjective.
It is a \textit{generating} sheaf of $\XX$ if it is a generator for every quasicoherent sheaf on $\XX$.
\end{defn}
\begin{prop}[{\cite[5.2]{MR2007396}}]
  A locally free sheaf on a tame Deligne-Mumford stack $\XX$ is a generating sheaf if and only if it is $\pi$-very ample.
\end{prop}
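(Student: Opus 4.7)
My plan is to verify both implications by reducing to an étale-local, equivariant calculation at geometric points, exploiting the local structure theorem for tame stacks (Theorem \ref{thm:tame-stacks-1}) together with base change (Proposition \ref{prop:cohom-base-change-spcoarse}).

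\textbf{Local reduction.} Surjectivity of the adjunction $\theta_{\EE}(\FF)$ is étale-local on $\XX$, and by Theorem \ref{thm:tame-stacks-1} I may assume $\XX = [U/G]$ with $U = \Spec B$, $A = B^G$ and $G$ a finite linearly reductive group scheme over $\Spec A$. In this model a quasicoherent sheaf $\FF$ is a $G$-equivariant $B$-module $M$, the locally free sheaf $\EE$ is a $G$-equivariant projective $B$-module $E$, and using that $\pi_*$ is taking $G$-invariants together with the computation $\pi^*N \leftrightarrow N \otimes_A B$ (with trivial $G$-action on $N$), the morphism $\theta_{\EE}(\FF)$ becomes the evaluation map
\begin{equation*}
    \mathrm{ev}_{E,M} \colon \Hom_B^G(E,M) \otimes_A E \longrightarrow M, \qquad \phi \otimes e \longmapsto \phi(e).
\end{equation*}
So the proposition becomes: $\mathrm{ev}_{E,M}$ is surjective for every $(B,G)$-module $M$ if and only if for every geometric point $x$ of $\XX$ the $G_x$-representation $E_x$ contains every irreducible.

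\textbf{From $\pi$-very ample to generating.} Surjectivity of $\mathrm{ev}_{E,M}$ can be detected at geometric points: if $\mathrm{coker}(\mathrm{ev}_{E,M})$ is nonzero it has a nonzero fiber at some geometric point $x$ of $\XX$, so by right-exactness it suffices to show $\mathrm{ev}_{E,M} \otimes k(x)$ is surjective. Using Proposition \ref{prop:cohom-base-change-spcoarse} to commute $\pi_*$ with the base change to the residual gerbe $BG_x$, this fiber is identified with the $k$-linear evaluation
\begin{equation*}
    \Hom_{G_x}(E_x, M_x) \otimes_k E_x \longrightarrow M_x.
\end{equation*}
Now linear reductivity of $G_x$ writes $M_x = \bigoplus_i V_i^{\oplus m_i}$ as a sum of irreducibles. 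By hypothesis each $V_i$ embeds as a summand of $E_x$; projecting $E_x$ onto one copy of $V_i$ and including into $M_x$ gives elements of $\Hom_{G_x}(E_x,M_x)$ whose images sweep out all of $V_i$ under evaluation, and therefore all of $M_x$. Hence $\mathrm{ev}_{E,M}$ is surjective, and $\EE$ generates $\FF$.

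\textbf{From generating to $\pi$-very ample.} I argue by contrapositive. Suppose that at some geometric point $x$ of $\XX$ some irreducible $G_x$-representation $V$ does not appear in $E_x$. Let $i\colon BG_x \hookrightarrow \XX$ be the closed embedding of the residual gerbe and put $\FF = i_* V$. Then $\HOM_{\OO_\XX}(\EE,\FF)$ is supported on $BG_x$ with fiber $\Hom_k(E_x,V)$ as $G_x$-representation, so
\begin{equation*}
    F_\EE(\FF) = \pi_*\HOM_{\OO_\XX}(\EE,\FF) \cong \Hom_{G_x}(E_x, V) = 0
\end{equation*}
because $V$ does not occur in the decomposition of $E_x$. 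Hence $\theta_\EE(\FF)=0$ while $\FF\neq 0$, contradicting the generating property.

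\textbf{Main obstacle.} The only non-formal point is justifying the identification of the fiber of $\theta_\EE(\FF)$ at a geometric point $x$ with the evaluation map $\Hom_{G_x}(E_x,M_x)\otimes E_x \to M_x$. This is precisely where base change for $\pi_*$ is used, and one must check that everything is compatible with passing to the residual gerbe; the content of Lemma \ref{lem:pure-submodules} and Proposition \ref{prop:cohom-base-change-spcoarse} makes this step rigorous in the tame setting.
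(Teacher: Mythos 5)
This proposition is imported from Olsson--Starr (\cite[5.2]{MR2007396}); the paper states it with a citation and gives no proof of its own, so there is no internal argument to compare against. Your proposal is, in substance, the standard proof and it is essentially correct: the local model $[\Spec{B}/G]$ turns $\theta_{\EE}(\FF)$ into the equivariant evaluation map, Nakayama reduces surjectivity to geometric fibers, and the two representation-theoretic steps (if every irreducible occurs in $E_x$ then evaluation onto $M_x$ is surjective; if some irreducible $V$ is missing then $F_{\EE}(i_\ast V)=0$ while $i_\ast V\neq 0$) are exactly the right ones.

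Two points deserve a little more care than you give them. First, surjectivity of $\theta_{\EE}(\FF)$ is \'etale-local on the coarse space $X$ rather than on $\XX$; that is the locality which Theorem \ref{thm:tame-stacks-1} and Proposition \ref{prop:cohom-base-change-spcoarse} actually provide, since $\pi^\ast\pi_\ast$ is only functorial under base change of $X$. Second, Proposition \ref{prop:cohom-base-change-spcoarse} identifies the fiber of $\pi_\ast\HOM_{\OO_\XX}(\EE,\FF)$ at a geometric point of $X$ with the $G_k$-invariants over the full scheme-theoretic fiber $[U_k/G_k]$, which is in general a nonreduced thickening of the residual gerbe whose underlying $G$-orbit may consist of several points; one must still pass from $G_k$-invariants over this fiber to $G_x$-invariants of the fiber at $x$, using linear reductivity (exactness of invariants, as in Lemma \ref{lem:pure-submodules}) and the identification of the restriction to the reduced orbit with an induced representation. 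This is routine, but it is precisely the step you label the ``main obstacle,'' and it is not literally a base change to $BG_x$. With those refinements the argument is complete.
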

In the following we will use the word generating sheaf or $\pi$-very ample (or just very ample) sheaf interchangeably. The property expressed by (\ref{eq:4}) suggests that a generating sheaf should be considered as a very ample sheaf relatively to the morphism $\pi\colon\XX\to X$. Indeed the fundamental theorem of Serre \cite[Thm 2.2.1]{MR0217085}  says that: if $f\colon Y\to Z$ is a proper morphism and $\OO_Y(1)$ is a very ample invertible sheaf on $Y$ with respect to $f$, then there is a positive integer $n$ such that  the adjunction morphism $f^\ast f_\ast\HOM(\OO_Y(-n),\FF)\otimes\OO_Y(-n)\to \FF$ is surjective for every coherent sheaf $\FF$ on $Y$.

As we have defined  $\theta_{\EE}$ as the left adjoint of the identity we can define $\varphi_{\EE}$ the right adjoint of the identity. In order to do this we recall the following lemma from \cite{MR2007396}:
\begin{lem}\label{lem:coherent-projection}
  Let $\FF$ be a quasicoherent  $\OO_\XX$-module and $G$ a coherent $O_X$-module. A projection formula holds: 
  \begin{displaymath}
    \pi_\ast(\pi^\ast G\otimes\FF)=G\otimes\pi_\ast\FF
  \end{displaymath}
Moreover it is functorial in the sense that if $\alpha\colon \FF\to\FF'$ is a morphism of quasicoherent sheaves and $b\colon G\to G'$ is a morphism of coherent sheaves we have 
\begin{displaymath}
  \pi_\ast(\pi^\ast b \otimes \alpha)=b\otimes\pi_\ast\alpha
\end{displaymath}
\end{lem}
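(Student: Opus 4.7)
The plan is to construct a natural comparison morphism
\begin{displaymath}
\varphi_{G,\FF}\colon G\otimes\pi_\ast\FF\longrightarrow\pi_\ast(\pi^\ast G\otimes\FF)
\end{displaymath}
using the adjunction $(\pi^\ast,\pi_\ast)$, and then to verify that it is an isomorphism. Concretely, $\varphi_{G,\FF}$ is obtained by adjunction from the composition $\pi^\ast(G\otimes\pi_\ast\FF)\cong\pi^\ast G\otimes\pi^\ast\pi_\ast\FF\to\pi^\ast G\otimes\FF$ whose second arrow is induced by the counit $\pi^\ast\pi_\ast\FF\to\FF$. Since this construction is built out of natural transformations, it is automatically functorial in both variables, so once we prove $\varphi_{G,\FF}$ is an isomorphism the second half of the lemma follows with no extra work.

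To prove $\varphi_{G,\FF}$ is an isomorphism I would argue \'etale-locally on $X$. By Proposition \ref{prop:cohom-base-change-spcoarse} the formation of $\pi_\ast$ commutes with arbitrary base change on the moduli space, so both sides of the asserted equality commute with restriction to an \'etale affine neighborhood; in particular the statement is \'etale-local on $X$. On such a neighborhood the coherent sheaf $G$ admits a finite free presentation $\OO_X^{\oplus m}\to\OO_X^{\oplus n}\to G\to 0$. For $G=\OO_X$ the claim is tautological since $\pi^\ast\OO_X=\OO_\XX$ makes both sides equal to $\pi_\ast\FF$, and additivity handles $G=\OO_X^{\oplus n}$.

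For a general coherent $G$, apply $\pi^\ast$ to the presentation (right exact), tensor with $\FF$ (right exact), and then apply $\pi_\ast$, which is exact on quasicoherent sheaves by tameness (Definition \ref{def:tame-stack}). This produces a right-exact sequence
\begin{displaymath}
\pi_\ast(\pi^\ast\OO_X^{\oplus m}\otimes\FF)\to\pi_\ast(\pi^\ast\OO_X^{\oplus n}\otimes\FF)\to\pi_\ast(\pi^\ast G\otimes\FF)\to 0,
\end{displaymath}
which naturality of $\varphi$ compares term by term with the right-exact sequence $\OO_X^{\oplus m}\otimes\pi_\ast\FF\to\OO_X^{\oplus n}\otimes\pi_\ast\FF\to G\otimes\pi_\ast\FF\to 0$. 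The free case supplies isomorphisms on the first two terms, and the right-exact four-lemma then forces $\varphi_{G,\FF}$ to be an isomorphism on the third term, as required.

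The main obstacle is really just assembling the correct local setup: the proof relies in an essential way both on the exactness of $\pi_\ast$ provided by tameness and on its compatibility with arbitrary (not merely flat) base change on $X$ guaranteed by Proposition \ref{prop:cohom-base-change-spcoarse}; without these two inputs one could not reduce to the case of a free presentation. Coherence of $G$ on the algebraic space $X$ is what makes such a finite presentation available \'etale-locally; the statement is genuinely false in general if one drops either the tameness of $\XX$ or the coherence of $G$.
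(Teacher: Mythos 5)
Your proof is correct and follows essentially the same route as the paper's: reduce to a local situation, take a finite free presentation of the coherent sheaf $G$, use right exactness of $\pi^\ast$ and $\otimes\FF$ together with exactness of $\pi_\ast$ (tameness), and conclude from the trivially true free case by comparing the two right-exact sequences. Your version is more detailed (explicit construction of the comparison map by adjunction, which also makes functoriality automatic), but it contains no new idea beyond the paper's argument.
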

\begin{proof}
 We can prove the statement working locally. If we assume that  $G$ is coherent it has a finite free presentation and we conclude using exactness of $\pi_\ast$, right exactness of $\pi^\ast$ and $\otimes_{\OO_\XX}\FF$ and the projection formula in the free case. Functoriality follows with a similar argument. We can extend the result to quasicoherent sheaves with a standard limit argument.
\end{proof}

Let $F$ be a quasicoherent $\OO_X$-module:
\begin{displaymath}
\xymatrix{
  F \ar[rr]^-{\varphi_{\EE}(F)} && \pi_\ast\HOM_{\OO_\XX}(\EE,\pi^\ast F\otimes\EE)=F_{\EE}(G_{\EE}(F))
}
\end{displaymath}
According to lemma \ref{lem:coherent-projection} it can be rewritten as:
\begin{equation}\label{eq:2}
\xymatrix{
  F \ar[rr]^-{\varphi_{\EE}(F)}  && F\otimes\pi_\ast\END_{\OO_\XX}(\EE) }
\end{equation}
and it is the map given by tensoring a section with the identity endomorphism and in particular it is injective.

\begin{lem}\label{lem:idenity-theta-phi}
  Let $\FF$ be a quasicoherent sheaf on $\XX$. The following composition is the identity:
  \begin{displaymath}
\xymatrix{
    F_{\EE}(\FF) \ar[rr]^-{\varphi_{\EE}(F_{\EE}(\FF))} && F_{\EE}\circ G_{\EE}\circ F_{\EE}(\FF)\ar[rr]^-{F_{\EE}(\theta_{\EE}(\FF))} && F_{\EE}(\FF) }
  \end{displaymath}
Let $H$ be a coherent sheaf on $X$ then the following is the identity
 \begin{displaymath}
\xymatrix{
    G_{\EE}(H) \ar[rr]^-{G_{\EE}(\varphi_{\EE}(H))} && G_{\EE}\circ F_{\EE}\circ G_{\EE}(H)\ar[rr]^-{(\theta_{\EE}(G_\EE(H)))} && G_{\EE}(H) }
  \end{displaymath}
\end{lem}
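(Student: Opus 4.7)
The plan is to recognize both stated compositions as the classical triangle (zig-zag) identities of an adjoint pair, and then exhibit $(G_{\EE}, F_{\EE})$ as such a pair with $\theta_{\EE}$ serving as the counit and $\varphi_{\EE}$ as the unit. Once this identification is in place, both identities are a formal consequence of the axioms of an adjunction, independently of any geometric input.

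First, I would verify that $G_{\EE}$ is left adjoint to $F_{\EE}$. This is obtained by composing two standard adjunctions: the pullback-pushforward adjunction $\pi^{\ast}\dashv\pi_{\ast}$ for the morphism of ringed sites $\pi\colon\XX\to X$, and the tensor-Hom adjunction $(-\otimes_{\OO_{\XX}}\EE)\dashv\HOM_{\OO_{\XX}}(\EE,-)$ on $\XX$ (valid because $\EE$ is locally free). Concretely, for any quasicoherent $F$ on $X$ and $\FF$ on $\XX$,
\begin{align*}
\Hom_{\OO_{\XX}}(G_{\EE}(F),\FF)
&= \Hom_{\OO_{\XX}}(\pi^{\ast}F\otimes\EE,\FF) \\
&\cong \Hom_{\OO_{\XX}}(\pi^{\ast}F,\HOM_{\OO_{\XX}}(\EE,\FF)) \\
&\cong \Hom_{\OO_{X}}(F,\pi_{\ast}\HOM_{\OO_{\XX}}(\EE,\FF))
= \Hom_{\OO_{X}}(F,F_{\EE}(\FF)),
\end{align*}
and this bijection is natural in both variables.

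By the very definitions used in the paper, $\theta_{\EE}(\FF)$ is the image of $\Id_{F_{\EE}(\FF)}$ under the above bijection (it is defined as the left adjoint of the identity), so it is the counit of the adjunction at $\FF$; dually, $\varphi_{\EE}(F)$ is the image of $\Id_{G_{\EE}(F)}$ under the inverse bijection, hence the unit at $F$. Granted this identification, the two compositions stated in the lemma become precisely the zig-zag identities
\[
\Id_{F_{\EE}} \;=\; (F_{\EE}\,\theta_{\EE})\circ(\varphi_{\EE}\,F_{\EE}),\qquad
\Id_{G_{\EE}} \;=\; (\theta_{\EE}\,G_{\EE})\circ(G_{\EE}\,\varphi_{\EE}),
\]
which hold in any adjunction by pure abstract nonsense.

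The only point that is not entirely formal is that in equation \eqref{eq:2} the paper describes $\varphi_{\EE}(F)$ concretely, via the projection formula of Lemma \ref{lem:coherent-projection}, as the map $F\to F\otimes\pi_{\ast}\END_{\OO_{\XX}}(\EE)$ obtained by tensoring with $\Id_{\EE}$. I would therefore devote a short verification to checking that this concrete description agrees with the image of $\Id_{G_{\EE}(F)}$ under the composite adjunction — a routine diagram chase using the functoriality clause of Lemma \ref{lem:coherent-projection} and the compatibility of the two unit/counit morphisms of the tensor-Hom and $\pi^{\ast}\dashv\pi_{\ast}$ adjunctions. This is the only step where any actual work is required, and no serious obstacle is anticipated.
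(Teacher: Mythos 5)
Your proposal is correct and matches the paper's own argument: the paper proves this lemma by citing \cite[IV Thm 1]{MR1712872} (Mac Lane's characterization of adjunctions via unit, counit, and the triangle identities), with $\theta_{\EE}$ and $\varphi_{\EE}$ defined exactly as the counit and unit of the adjunction $G_{\EE}\dashv F_{\EE}$, and then notes the same explicit description of the maps that you flag as the only non-formal verification.
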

\begin{proof}
This statement is precisely \cite[IV Thm 1]{MR1712872}. It's also easy to explicitly compute the composition because in the first statement  the second map is the composition of $\END_{\OO_\XX}(\EE)$ with $\HOM_{\OO_\XX}(\EE,\FF)$ while the first one is tensoring with the identity; in the second  the first map is tensoring with the identity endomorphism of $\EE$ while the second is $\Id_{\pi^\ast H}\otimes\theta_\EE(\EE)$.
\end{proof}

 As we have said before there are no very ample invertible sheaves on a stack which is not an algebraic space, however there can be ample invertible sheaves. 
 \begin{exmp}
 Let $\XX$ be a global quotient $[U/G]$ where $U$ is a scheme and $G$ a linearly reductive finite group. We have a natural morphism $\iota\colon [U/G]\to BG$. Let $V$ be the sheaf on $BG$ given by the left regular representation; the sheaf $\iota^\ast V$ is a generating sheaf of $\XX$.
 \end{exmp}
\begin{exmp}
A tame root stack $\XX\coloneqq\sqrt[r]{\mathcal{D}/X}$ over a scheme $X$ has an obvious ample invertible sheaf which is the tautological bundle $\OO_{\XX}(\mathcal{D}^{\frac{1}{r}})$ associated to the orbifold divisor. If the orbifold divisor has order $r$ the locally free sheaf $\bigoplus_{i=0}^{r-1} \OO_{\XX}(\mathcal{D}^{\frac{i}{r}})$ is obviously very ample and it has minimal rank. 
 \end{exmp}
 \begin{exmp}
A gerbe over a scheme banded by a cyclic group $\mu_r$ has an obvious class of ample locally free sheaves which are the twisted bundles, and there is an ample invertible sheaf if and only if the gerbe is essentially trivial (see \cite[Lem 2.3.4.2]{MR2309155}). As in the previous example if  $\mathcal{T}$ is a twisted locally free sheaf, $\bigoplus_{i=0}^{r-1} \mathcal{T}^{\otimes i}$ is very ample.
 \end{exmp}
 \begin{exmp}
  Let $\XX$ be a weighted projective space, the invertible sheaf $\OO_\XX(1)$ is ample, and denoted with $m$ the least common multiple of the weights, $\bigoplus_{i=1}^m\OO_\XX(i)$ is very ample. Usually it is not of minimal rank.
 \end{exmp}
 \begin{exmp}
   If $\XX$ is a toric orbifold with $\mathcal{D}_i,\,1\leq i\leq n$ the $T$-divisors associated to the coordinate hyperplanes, the locally free sheaf $\bigoplus_{i=1}^n\OO_\XX(\mathcal{D}_i)$ is  ample. Indeed if $\XX=[Z/G]$ where $Z$ is quasi affine in $\mathbb{A}^n$ and $G$ is a diagonalizable group scheme and the action of $G$ on $Z$ is given by  irreducible representations $\chi_i$ for $i=1,\ldots,n$ then the map 
   \begin{displaymath}
\xymatrix{
     1\ar[r] & G \ar[r]^-{\chi} & (\mathbb{C}^\ast)^n \\
}
   \end{displaymath}
is injective. To complete the argument we just notice that $\OO_\XX(\mathcal{D}_i)$ is the invertible sheaf given by the character $\chi_i$ and the structure sheaf of $Z$. 
\end{exmp}
With the following theorem  Olsson and Starr proved the existence of generating sheaves, and proved also that the notion of generating sheaf is stable for arbitrary base change on the moduli space.
\begin{defn}[{\cite[Def 2.9]{MR1844577}}]
  An $S$-stack $\XX$ is a global quotient if it is isomorphic to a stack $[Z/G]$ where $Z$ is an algebraic space of finite type over $S$ and $G\to S$ is a flat group scheme which is a subgroup scheme (a locally closed subscheme which is a subgroup) of $\GL_{N,S}$ for some integer $N$.
\end{defn}
\begin{thm}[{\cite[Thm. 5.7]{MR2007396}}]\label{thm:existence-generating-sheaf}
\begin{enumerate}
\item  Let   $\XX$  be a Deligne-Mumford tame  stack which is a separated global quotient over $S$, then there is a locally free sheaf $\EE$ over $\XX$ which is a generating sheaf for $\XX$. \label{item:15}
\item Let $\pi:\XX\to X$ be the moduli space of $\XX$ and $f:X'\to X$ a morphism of algebraic spaces. Moreover let $p:\XX':=\XX\times_X X'\to \XX$ be the natural projection from the fibered product, then $p^\ast\EE$ is a generating sheaf for $\XX'$.  \label{item:16}
\end{enumerate}
\end{thm}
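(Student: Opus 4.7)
My plan is to treat the two parts separately. Part (\ref{item:16}) should follow from the fiberwise characterization of generating sheaves combined with the observation that base change along the coarse moduli morphism preserves stabilizer groups. For (\ref{item:16}), note that the $2$-Cartesian square defining $\XX' = \XX \times_X X'$ lies over the coarse moduli morphism $\pi$, and since $X$ and $X'$ are algebraic spaces (hence have trivial inertia), for every geometric point $x'$ of $\XX'$ the natural map $\Aut(x') \to \Aut(p(x'))$ is an isomorphism, and the fiber $(p^{\ast}\EE)_{x'}$ coincides with $\EE_{p(x')}$ as a representation of this common stabilizer. As $\EE$ is $\pi$-very ample by hypothesis, every irreducible of $\Aut(p(x'))$ already appears in $\EE_{p(x')}$, so the same is true at $x'$, giving $\pi'$-very ampleness of $p^{\ast}\EE$ and hence the generating property by the equivalence cited immediately before the theorem.

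For part (\ref{item:15}), I would write $\XX = [Z/G]$ with $G \subseteq \GL_{N,S}$ and factor the structure morphism as $[Z/G] \to BG$. Any $G$-representation $W$ produces a locally free sheaf on $BG$ whose pullback $\EE_W$ to $\XX$ has fiber at a geometric point $z$ equal to $W$ regarded as a representation of the stabilizer $G_z \hookrightarrow G$. Again using the equivalence generating $\Leftrightarrow$ $\pi$-very ample, it is enough to exhibit a single $G$-representation $W$ whose restriction to every such $G_z$ contains every irreducible representation of $G_z$. The candidate I would propose is a truncated mixed tensor algebra of the standard representation: letting $V = \OO^{\oplus N}$ denote the standard $\GL_N$-representation restricted to $G$, set
\[
W_M \;=\; \bigoplus_{a+b \leq M} V^{\otimes a}\otimes (V^{\vee})^{\otimes b}.
\]

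The representation-theoretic input needed is classical: for any linearly reductive closed subgroup $H \subseteq \GL_N$, the restriction $V|_H$ is faithful, and every irreducible $H$-representation occurs as a summand of $V^{\otimes a}\otimes (V^{\vee})^{\otimes b}|_H$ for some $a,b$ (a consequence of Peter--Weyl together with the fact that matrix coefficients of a faithful representation separate the points of $H$). The main obstacle is to make the truncation parameter $M$ uniform across all stabilizers occurring in $\XX$. This is where tameness and the finite-type hypothesis enter: tameness makes each geometric stabilizer $G_z$ linearly reductive, the Deligne--Mumford hypothesis makes it finite, and finite-type bounds its order by some integer $d$. Since there are only finitely many $\GL_N$-conjugacy classes of finite subgroups of $\GL_N$ of order at most $d$ (finitely many abstract groups of each order, each admitting finitely many faithful $N$-dimensional representations up to equivalence, a Jordan-type finiteness), a uniform choice of $M$ exists, and $\EE \coloneqq \EE_{W_M}$ is the required generating sheaf.
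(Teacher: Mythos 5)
This theorem is imported verbatim from Olsson--Starr (\cite[Thm.~5.7]{MR2007396}); the paper itself gives no proof, so there is nothing internal to compare against. Your sketch follows the same line as the original argument and is essentially correct. Part (2) is fine: since $X$ and $X'$ are algebraic spaces, a geometric point of $\XX\times_X X'$ has automorphism group canonically identified with that of its image in $\XX$, the fiber of $p^\ast\EE$ is the same representation, and the fiberwise criterion (``generating $\Leftrightarrow$ every irreducible of the stabilizer occurs in the fiber'', quoted in the paper as \cite[5.2]{MR2007396}) transfers the property; one should also invoke Corollary \ref{cor:tame-stack-2}(1) so that $X'$ really is the moduli space of $\XX'$ and the statement is about the right morphism. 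For part (1), the reduction to exhibiting a single $G$-representation containing all irreducibles of all geometric stabilizers, via the standard representation $V=\OO^{\oplus N}$ of $G\subseteq\GL_{N,S}$ and its mixed tensor powers, is exactly the intended mechanism.

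Two points deserve more care than you give them. First, the bound $d$ on the stabilizer orders comes from finiteness of the inertia $I_\XX\to\XX$ (which uses separatedness) together with quasi-compactness of $\XX$; under the paper's standing noetherian conventions this is fine, but it is the place where the hypotheses ``separated'' and ``finite type'' actually enter, and you should say so. Second, your uniformity argument for $M$ via ``finitely many $\GL_N$-conjugacy classes of finite subgroups of order $\le d$'' is legitimate here only because the stack is Deligne--Mumford, so the geometric stabilizers are honest constant finite groups of order invertible in the residue field (for general tame Artin stacks they would be finite linearly reductive group schemes such as $\mu_p$ in characteristic $p$, and the classification argument would need to be redone for group schemes). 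A cleaner and fully uniform route is the Burnside--Brauer theorem: if $V$ is a faithful representation of a finite group $H$ with $|H|$ invertible in $k$, every irreducible of $H$ occurs in $V^{\otimes j}$ for some $j<|H|$, so $M=d-1$ works with no duals and no classification of subgroups. With either repair the proposal is a correct reconstruction of the quoted result.
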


In order to produce a smooth atlas of $\mathfrak{Coh}_{\XX /S}$ we need to study the representability of the $\Quot$ functor. Fortunately this kind of difficult study\footnotemark \footnotetext{We are not interested in quasicoherent sheaves so we don't state \cite[Thm 4.4]{MR2007396} in its full generality } can be found in \cite{MR2007396}.
\begin{thm}[{\cite[Thm. 4.4]{MR2007396}}]\label{thm:quot-closed}
  Let $S$ be a noetherian scheme of finite type over a field. Let $p\colon\XX\to S$ be a Deligne-Mumford tame stack  which is a separated global quotient and $\pi:\XX\to X$ the moduli space which is a scheme with a projective morphism  $\rho\colon X\to S$ with $p=\rho\circ\pi$. Suppose $\FF$ is a coherent sheaf on $\XX$ and $P$ a generalized Hilbert polynomial in the sense of Olsson and Starr \cite[Def 4.1]{MR2007396} then the functor $\Quot_{\XX /S}(\FF,P)$ is represented by a projective $S$-scheme. 
\end{thm}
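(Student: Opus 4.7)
The plan is to represent $\Quot_{\XX/S}(\FF,P)$ as a closed subscheme of the classical Grothendieck Quot scheme on the projective moduli space $X$, using the generating sheaf $\EE$ as the bridge. Specifically, I would construct a natural transformation
\begin{displaymath}
\Phi\colon \Quot_{\XX/S}(\FF,P)\longrightarrow \Quot_{X/S}(F_{\EE}(\FF),Q),
\end{displaymath}
sending a $T$-flat quotient $q\colon\FF_T\to\QQ$ to $F_{\EE_T}(q)\colon F_{\EE_T}(\FF_T)\to F_{\EE_T}(\QQ)$, where $Q$ is the ordinary Hilbert polynomial on $X$ determined by the generalized polynomial $P$ and by $\EE$ (using the identity $F_\EE(\FF\otimes\pi^\ast\OO_X(m))=F_\EE(\FF)(m)$). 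This $\Phi$ is well-defined: $F_\EE$ is exact because $\EE^\vee$ is locally free and $\pi_\ast$ is exact on quasi-coherent sheaves by tameness, so $F_{\EE_T}(q)$ is still surjective, and $F_{\EE_T}(\QQ)$ is $T$-flat by Corollary \ref{cor:tame-stack-2}.\ref{item:3}. Since $\Quot_{X/S}(F_\EE(\FF),Q)$ is projective over $S$ by Grothendieck's classical theorem, projectivity of $\Quot_{\XX/S}(\FF,P)$ will follow once $\Phi$ is identified with a closed immersion of functors.

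I would next show that $\Phi$ is a monomorphism. Given two $T$-flat quotients $q_i\colon\FF_T\to\QQ_i$ with equal kernels $K\subseteq F_{\EE_T}(\FF_T)$ under $F_{\EE_T}$, the surjectivity of the counit $\theta_{\EE_T}(\FF_T)\colon G_{\EE_T}F_{\EE_T}(\FF_T)\to\FF_T$ coming from the generating property lets me recover each $\ker q_i\subseteq\FF_T$ as the image of $\pi_T^\ast K\otimes\EE_T$ under $\theta_{\EE_T}(\FF_T)$. Hence $\ker q_1=\ker q_2$ and $q_1=q_2$ as quotients; the triangular identities of Lemma \ref{lem:idenity-theta-phi} in fact produce $\QQ_i$ intrinsically from $F_{\EE_T}(q_i)$, so a point of $\Quot_{\XX/S}$ is completely determined by its image in $\Quot_{X/S}$.

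Finally, to cut out the image of $\Phi$ by closed conditions inside $\Quot_{X/S}(F_\EE(\FF),Q)$: given $K\hookrightarrow F_{\EE_T}(\FF_T)$ with cokernel $\mathcal{G}$, I would form the candidate $\mathcal{K}\subseteq\FF_T$ as the image of $\pi_T^\ast K\otimes\EE_T$ under $\theta_{\EE_T}(\FF_T)$, set $\widetilde\QQ\coloneqq\FF_T/\mathcal{K}$, and note that $K$ lies in the image of $\Phi$ precisely when (a) the natural surjection $\mathcal{G}\twoheadrightarrow F_{\EE_T}(\widetilde\QQ)$ (coming from exactness of $F_{\EE_T}$ and the surjection $K\to F_{\EE_T}(\mathcal{K})$) is an isomorphism and (b) $\widetilde\QQ$ is $T$-flat with fiberwise generalized Hilbert polynomial $P$. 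Both are closed conditions: $T$-flatness with constant generalized Hilbert polynomial is closed by Theorem \ref{thm:semicontinuity-stack} combined with Proposition \ref{prop:cohom-base-change-spcoarse} applied to the twists $F_{\EE_T}(\widetilde\QQ\otimes\pi_T^\ast\OO_X(m))$, while the isomorphism condition is closed because a surjection of coherent sheaves is an isomorphism exactly on the vanishing locus of its kernel. The main obstacle I expect is the bookkeeping relating Olsson-Starr's generalized Hilbert polynomial $P$ on $\XX$ to the ordinary polynomial $Q$ on $X$ through $F_\EE$ in a way compatible with these flattening/isomorphism conditions; once this is in place, $\Quot_{\XX/S}(\FF,P)$ is representable by a closed subscheme of the projective scheme $\Quot_{X/S}(F_\EE(\FF),Q)$ and therefore is itself projective over $S$.
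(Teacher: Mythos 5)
Your overall strategy is exactly the one the paper uses: reduce to Grothendieck's classical $\Quot_{X/S}(F_\EE(\FF))$ on the projective moduli scheme via the natural transformation induced by $F_\EE$, which is a closed immersion of functors. The paper does not reprove this; it cites \cite[Prop 6.2]{MR2007396} for the closed immersion and then invokes Grothendieck's representability over a noetherian base. Your well-definedness and monomorphism arguments (exactness of $F_\EE$, flatness of pushforwards by Corollary \ref{cor:tame-stack-2}, and recovery of $\ker q$ as the image of $G_{\EE_T}(K)$ under $\theta_{\EE_T}(\FF_T)$) are correct and are the right ingredients.

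The gap is in the last step, where you claim the image of $\Phi$ is cut out by \emph{closed} conditions. Neither of your two conditions is closed. For (a): the locus in $T$ over which a surjection of coherent sheaves on $X_T$ is an isomorphism is the locus where the fibers of its kernel $A$ vanish; since $\supp A$ is closed in $X_T$ and $X_T\to T$ is proper, that locus is the \emph{complement} of the closed set $p(\supp A)$, hence open, not closed. For (b): flatness with prescribed Hilbert polynomial is a locally closed condition (a stratum of the flattening stratification), not a closed one. So your argument as written only exhibits $\Quot_{\XX/S}(\FF,P)$ as a locally closed subfunctor of $\Quot_{X/S}(F_\EE(\FF),Q)$, which yields quasi-projectivity but not projectivity. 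To close the gap you need a properness argument: check the valuative criterion for $\Quot_{\XX/S}(\FF,P)\to S$ directly (a flat quotient over the fraction field of a DVR extends uniquely to a flat quotient over the DVR by saturating the kernel, using that the generic and special fibers have the same generalized Hilbert polynomial), so that the immersion, being a proper monomorphism, is in fact closed. This is how Olsson and Starr conclude in the result the paper cites.
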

The theorem we have stated here is slightly different from the theorem in the paper of Olsson and Starr. They have no noetherian assumption but they ask the scheme $S$ to be affine. Actually the proof doesn't change. 

First they prove this statement: 
\begin{prop}{\cite[Prop 6.2]{MR2007396}}
  Let $S$ be an algebraic space and $\XX$ a tame Deligne-Mumford stack over $S$ which is a separated global quotient. Let $\EE$ be a generating sheaf on $\XX$ and $P$ a generalized Hilbert polynomial: the natural transformation $F_{\EE}:\Quot_{\XX /S}(\FF,P)\to \Quot_{X/S}(F_{\EE}(\FF),P_V)$ is relatively representable by schemes and  a closed immersion (see the original paper for the definition of $P_V$; we are not going to use it). 
\end{prop}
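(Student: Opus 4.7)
My plan proceeds in three stages: construction of the natural transformation, verification that it is a monomorphism, and upgrading to a closed immersion via the valuative criterion.

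\textbf{Construction.} Because $\EE$ is locally free, $F_\EE = \pi_\ast \HOM_{\OO_\XX}(\EE, -)$ is the composition of the exact functor $(-)\otimes\EE^\vee$ with the exact functor $\pi_\ast$ (exact by tameness), and it preserves $\OO_S$-flatness by Corollary \ref{cor:tame-stack-2}.\ref{item:3}. Applying $F_{\EE,T}$ to a $T$-point $\FF_T \twoheadrightarrow \QQ$ of $\Quot_{\XX/S}(\FF,P)$ produces an $\OO_T$-flat quotient $F_\EE(\FF)_T\twoheadrightarrow F_\EE(\QQ)$ with Hilbert polynomial $P_V$ by the definition of $P_V$. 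This yields the natural transformation.

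\textbf{Monomorphism.} Suppose $K_1, K_2\subseteq\FF_T$ are two subsheaves whose quotients have the same $F_\EE$-image; by exactness of $F_\EE$ this means $F_\EE(K_1) = F_\EE(K_2)$ as subsheaves of $F_\EE(\FF)_T$. The naturality square
\[
\xymatrix{
G_\EE F_\EE(K_i) \ar[r] \ar@{->>}[d]_-{\theta_\EE(K_i)} & G_\EE F_\EE(\FF)_T \ar[d]^-{\theta_\EE(\FF_T)} \\
K_i \ar@{^{(}->}[r] & \FF_T
}
\]
has surjective left vertical arrow because $\EE$ is a generating sheaf, so the image of $G_\EE F_\EE(K_i)\to \FF_T$ equals $K_i$. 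But this image depends only on $F_\EE(K_i)$, hence $K_1 = K_2$.

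\textbf{Closed immersion.} Given a $T$-point of $\Quot_{X/S}(F_\EE(\FF),P_V)$ represented by a flat quotient $q\colon F_\EE(\FF)_T\twoheadrightarrow Q$, I define a candidate quotient $\QQ_T \coloneqq \FF_T/N_T$, where $N_T \subseteq \FF_T$ is the image of $G_\EE(\ker q)\to G_\EE F_\EE(\FF)_T\xrightarrow{\theta_\EE(\FF_T)}\FF_T$. The triangle identity $F_\EE(\theta_\EE)\circ\varphi_\EE(F_\EE) = \Id$ from Lemma \ref{lem:idenity-theta-phi}, combined with naturality of $\varphi_\EE$, gives $\ker q \subseteq F_\EE(N_T)$, hence a canonical surjection $Q \twoheadrightarrow F_\EE(\QQ_T)$. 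Let $Z\subseteq T$ be the locus on which $\QQ_T$ is $T$-flat with Hilbert polynomial $P$; by the flattening stratification and the semicontinuity Theorem \ref{thm:semicontinuity-stack}, $Z$ is locally closed. On $Z$, both $Q$ and $F_\EE(\QQ_T)$ are flat with fibrewise Hilbert polynomial $P_V$, so the canonical surjection is forced to be an isomorphism by Nakayama. To upgrade $Z$ to a closed subscheme I invoke the valuative criterion: given a DVR $R$ with fraction field $K$ and a morphism $\spec R \to T$ whose generic point lies in $Z$, one forms $\QQ_R$ and passes to the quotient $\QQ'_R$ by its $R$-torsion, which is automatically $R$-flat and agrees with $\QQ_K$ on the generic fibre. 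The induced surjection $Q_R \twoheadrightarrow F_\EE(\QQ'_R)$ is an isomorphism on both fibres—generic by hypothesis, special by comparing Hilbert polynomials ($P_V$ on both sides by $R$-flatness)—hence globally, and $\QQ'_R$ has Hilbert polynomial $P$ by the defining correspondence between $P$ and $P_V$. Thus $\spec R \to T$ factors through $Z$, and combined with the monomorphism property this exhibits $Z\hookrightarrow T$ as a closed immersion.

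\textbf{Main obstacle.} The delicate step is the closed-immersion one. Since $G_\EE = \pi^\ast(-)\otimes\EE$ is only right exact (as $\pi^\ast$ fails to be exact on a non-flat stack), the candidate $\QQ_T$ can acquire torsion on special loci and thereby see its Hilbert polynomial jump; this is precisely why $Z$ is only a priori locally closed. The key point making the valuative argument succeed is that, after passing to the $R$-flat quotient $\QQ'_R$, the flatness of $F_\EE(\QQ'_R)$ and $Q_R$ over $R$ with common generic-fibre Hilbert polynomial $P_V$ forces the canonical surjection between them to be an isomorphism.
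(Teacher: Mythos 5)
This proposition is not proved in the paper: it is imported verbatim from Olsson--Starr \cite[Prop 6.2]{MR2007396}, and the text only records how Theorem \ref{thm:quot-closed} follows from it. So there is no internal proof to compare against; judged on its own terms, your reconstruction is essentially the argument Olsson and Starr give, and it is the one the surrounding machinery of this paper is built for. Your monomorphism step is exactly the content of Lemma \ref{lem:left-inverse} ($\eta_T\circ F_{\EE}=\Id$, via surjectivity of $\theta_{\EE}$), your candidate quotient $\QQ_T=\FF_T/N_T$ is precisely $\eta_T(q)$, and the inclusion $\ker q\subseteq F_{\EE}(N_T)$ via the triangle identity of Lemma \ref{lem:idenity-theta-phi} is the same bookkeeping that reappears in the paper's ``$9$-roman'' lemma. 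The reduction of the fibre product to the flatness locus of $\QQ_T$, the appeal to the (stacky) flattening stratification for local closedness, and the DVR argument with the torsion-free quotient $\QQ'_R$ to get closedness under specialization are all sound; in particular the surjection $Q_R\twoheadrightarrow F_{\EE}(\QQ'_R)$ has $R$-torsion kernel inside the $R$-flat sheaf $Q_R$, hence vanishes, as you say.

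One justification should be repaired. You conclude that $\QQ'_R$ has generalized Hilbert polynomial $P$ ``by the defining correspondence between $P$ and $P_V$.'' The generalized Hilbert polynomial is not recovered from $P_V$ alone --- two flat quotients can have the same $P_V$ and different generalized polynomials (this is exactly why $F_{\EE}$ is only a closed immersion of $\Quot$ functors rather than an isomorphism onto a union of components). The correct argument is direct: $\QQ'_R$ is $R$-flat over the connected scheme $\spec R$, so its generalized Hilbert polynomial is locally constant (\cite[Lem 4.3]{MR2007396}, the analogue of Lemma \ref{lem:hilbert-poly-fibers}), hence equals its value $P$ at the generic fibre. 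With that substitution the proof is complete.
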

We obtain theorem \ref{thm:quot-closed} from this proposition and using the classical result of Grothendieck about the representability of $\Quot_{X/S}(F_{\EE}(\FF))$ when $S$ is a noetherian scheme. 
\begin{rem}
  As in the case of schemes the functor $\Quot_{\XX/S}(\FF)$ is the disjoint union of projective schemes $\Quot_{\XX/S}(\FF,P)$ where $P$ ranges through all generalized Hilbert polynomial.
\end{rem}
Assume now that $\XX$ is defined over a field; it is known that $\XX$ has a generating sheaf and projective moduli scheme if and only if $\XX $  is a global quotient and has a projective moduli scheme. In characteristic zero this is also equivalent to the stack $\XX$ to be a closed embedding in a smooth proper Deligne-Mumford stack with projective moduli scheme \cite[Thm 5.3]{geomDM}; in general this third property implies the first twos. This motivates the definition of projective stack:
\begin{defn}
  Let $k$ be a field. We will say $\XX \to \Spec{k}$ is a \textit{projective stack}  (quasi projective) over $k$ if it is a  tame separated global quotient with moduli space which is a projective scheme (quasi projective). 
\end{defn}
For the reader convenience we summarize here  equivalent definitions in characteristic zero:
\begin{thm}[{\cite[Thm 5.3]{geomDM}}]
  Let $\XX\to\spec{k}$ be a Deligne-Mumford stack over a field $k$ of characteristic zero. The following are equivalent:
  \begin{enumerate}
  \item the stack $\XX$ is projective (quasi projective)
  \item the stack $\XX$ has a projective (quasi projective) moduli scheme and there exists a generating sheaf
  \item the stack $\XX$ has a closed embedding (locally closed) in a smooth Deligne-Mumford stack over $k$ which is proper over $k$ and has projective moduli scheme.
  \end{enumerate}

\end{thm}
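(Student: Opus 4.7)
The theorem is a collation of known equivalences, so the plan is to produce a cyclic chain $(1)\Rightarrow(2)\Rightarrow(1)$ and $(1)\Leftrightarrow(3)$, invoking the machinery set up earlier in the section. The implication $(1)\Rightarrow(2)$ is immediate: the definition of projective stack gives the tame separated global quotient hypotheses of Theorem \ref{thm:existence-generating-sheaf}.\ref{item:15}, which produces the generating sheaf, while the projective moduli scheme is part of the definition. The implication $(3)\Rightarrow(1)$ is essentially formal once $(1)\Rightarrow(2)$ is known for the ambient smooth stack $\YY$: a closed immersion $\XX\hookrightarrow\YY$ with $\YY=[U/\GL_N]$ smooth and proper with projective moduli scheme gives $\XX=[U'/\GL_N]$ for a $\GL_N$-invariant closed subscheme $U'\subset U$, so $\XX$ is a global quotient; its moduli scheme is the image of the coarse moduli of the closed substack in that of $\YY$, which is a closed subscheme of a projective scheme and hence projective; tameness is preserved under closed embedding and is in any case automatic for DM stacks in characteristic zero (every finite group is linearly reductive).

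For $(2)\Rightarrow(1)$, the non-trivial content is producing the global quotient presentation. Tameness is automatic in characteristic zero and separatedness is part of the hypothesis on being a DM stack with projective (hence separated) moduli scheme, together with finiteness of stabilizers. The key observation is that a generating sheaf $\EE$ yields the resolution property: for any coherent $\FF$ on $\XX$, the surjection $\theta_\EE(\FF)\colon \pi^\ast F_\EE(\FF)\otimes\EE\twoheadrightarrow\FF$ of Definition \ref{def:generating-sheaf}, combined with a surjection from a locally free sheaf on the projective scheme $X$ onto $F_\EE(\FF)$, exhibits $\FF$ as a quotient of a locally free sheaf of the form $\pi^\ast\OO_X(-n)^{\oplus r}\otimes\EE$. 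Invoking a Totaro-type theorem for DM stacks (every noetherian normal DM stack with the resolution property is a quotient of a quasi-affine scheme by a faithful action of $\GL_N$) one obtains that $\XX$ is a global quotient by a subgroup scheme of $\GL_N$; this is the definition of \cite[Def 2.9]{MR1844577} needed to conclude.

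The main obstacle, and the reason the full argument ultimately rests on \cite{geomDM}, is the construction in $(1)\Rightarrow(3)$ of the smooth ambient stack. The strategy is to use the generating sheaf $\EE$ of rank $N$ provided by $(1)\Rightarrow(2)$ together with sufficient twists by $\pi^\ast\OO_X(m)$ to produce a closed embedding of $\XX$ into a stack of the form $\mathbb{P}(V)\!/\!/\GL_N$ or a weighted projective-type stack $\mathcal{P}$ that is smooth and has a projective moduli scheme. Concretely, one first presents $\XX=[U/\GL_N]$ via the frame bundle of $\EE$, then embeds $U$ $\GL_N$-equivariantly into an open subscheme of a $\GL_N$-representation on which $\GL_N$ acts with finite stabilizers (using the faithfulness provided by $\EE$ being $\pi$-ample), and finally descends this to a closed embedding of $\XX$ into the quotient stack, which is smooth because the total space is and $\GL_N$ acts with finite stabilizers. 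The characteristic-zero hypothesis enters at this stage via averaging to produce equivariant projective embeddings; in positive characteristic the construction breaks down, which is exactly why only the implications $(3)\Rightarrow(2)\Leftrightarrow(1)$ survive in general.
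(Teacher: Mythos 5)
The paper does not prove this statement at all: it is imported verbatim as \cite[Thm 5.3]{geomDM} (``For the reader convenience we summarize here equivalent definitions''), so there is no in-paper argument to compare yours against; your sketch has to be judged against what would actually be needed, and there it has two genuine gaps.

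First, in $(2)\Rightarrow(1)$ you invoke a Totaro-type theorem for stacks \emph{with the resolution property}, but the version you quote carries a normality hypothesis that is not available here (and the paper explicitly wants non-normal examples such as nodal curves). The standard, and much lighter, route is the one this circle of ideas is built on: a generating sheaf is $\pi$-very ample, so the stabilizer of every geometric point acts faithfully on its fibers, hence the frame bundle $\mathrm{Fr}(\EE)$ has trivial stabilizers and is an algebraic space, exhibiting $\XX\cong[\mathrm{Fr}(\EE)/\GL_N]$ directly; no resolution-property machinery is needed. Second, and more seriously, your $(1)\Rightarrow(3)$ stops exactly where the content of the cited theorem begins. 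Embedding $U$ equivariantly into a $\GL_N$-representation and passing to the open locus of finite (in characteristic zero, automatically \'etale and linearly reductive) stabilizers does produce a smooth DM quotient stack containing $\XX$ as a closed substack, but that ambient stack is an \emph{open} substack of $[\mathbb{P}(V)/\GL_N]$ and is in general neither proper nor possessed of a projective moduli scheme, which is precisely what statement $(3)$ demands. The real work in \cite{geomDM} is compactifying while staying smooth, Deligne--Mumford and with projective coarse space, and that is also where characteristic zero genuinely enters (reducedness of finite group schemes, tameness, and resolution-type input), not through ``averaging to produce equivariant projective embeddings'' --- equivariant embeddings into $\mathbb{P}(V)$ for a linearized $\GL_N$-action exist in any characteristic. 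A smaller inaccuracy: in $(3)\Rightarrow(1)$ the coarse space of the closed substack is finite and universally injective over that of the ambient stack, which suffices for projectivity, but it need not be a closed subscheme as you assert.
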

\begin{rem}
  It could seem more natural to define a projective stack via the third statement in the previous theorem since it has an immediate geometric meaning; however we prefer to use a definition that is well behaved in families, also in mixed  characteristic.
\end{rem}
We give a relative version of the definition of projective stack. We first observe that if $\XX=[Z/G]$ is a global quotient over a scheme $S$, for every geometric point $s$ of $S$ the fiber $\XX_s$ is the global quotient $[Z_s/G_s]$ where $Z_s$ and $G_s$ are the fibers of $Z$ and $G$. Moreover if $X\to S$ is a projective morphism the fibers $X_s$ are projective schemes and according to Corollary \ref{cor:tame-stack-2}~\ref{item:1} they are the moduli schemes of $\XX_s$. This consideration leads us to the definition:
\begin{defn}\label{def:projective-family}
  Let $p\colon\XX\to S$ be a  tame stack on $S$ which is a separated global quotient with moduli scheme $X$ such that $p$ factorizes as $\pi\colon\XX\to X$ followed by $\rho\colon X\to S$ which is a projective morphism. We will call $p\colon\XX \to S$ a \textit{family of projective stacks}. 
\end{defn}
\begin{rem}
  \begin{enumerate}
  \item we don't say it is a \textit{projective morphism} from $\XX$ to $S$ since this is already defined and means something else. There are no projective morphisms in the sense of \cite[14.3.4]{LMBca} from $\XX$ to $S$, indeed such a morphism cannot be representable unless $\XX$ is a scheme.   
  \item Each fiber over a geometric point of $S$ is actually a projective stack, which motivates the definition.
  \item A family of projective stacks $\XX\to S$ has a generating sheaf $\EE$ according to Theorem \ref{thm:existence-generating-sheaf}, and according to the same theorem the fibers of $\EE$ over geometric points of $S$ are generating sheaves for the fibers of $\XX$.
  \end{enumerate}
\end{rem}

\subsection{A smooth atlas for the stack of coherent sheaves}

  Let $\pi :\XX\to X$ be a family of projective Deligne-Mumford stacks. Choose a polarization $\OO_X(1)$ and a generating sheaf $\EE$ on $\XX$. Consider the disjoint union of projective schemes $Q_{N,m}:=\Quot_{\XX /S}(\EE^{\oplus N}\otimes\pi^\ast\OO_X(-m))$ where $N$ is a non negative integer and $m$ is an integer and let $\EE_Q^{\oplus N}\otimes\pi_Q^\ast\OO_{X_Q}(-m))\xrightarrow{u_{N,m}}\mathcal{U}_{N,m}$ be the universal quotient sheaf. We can define the morphism:
  \begin{displaymath}
    \xymatrix@1{
    \mathcal{U}_{N,m} \colon Q_{N,m}\; \ar[rr] & &  \;\mathfrak{Coh}_{\XX /S}
  } 
 \end{displaymath}
Denote with $\rho\colon X\to S$, with $p$ the composition $\rho\circ\pi$ and with $\pi_U,\rho_U,p_U$ every map obtained by base change from a scheme $U$ with a map to $S$. We assume that for every base change $U\to S$ it is satisfied ${\rho_U}_\ast\OO_{X_U}=\OO_U$ so that we have also ${\rho_U}_\ast \rho_U^\ast =\Id $.
   We define an open subscheme $Q_{N,m}^0\to Q_{N,m}$.
Let $U$ be an $S$ scheme with a map to $Q_{N,m}$ given by a quotient $\EE_U^{\oplus N}\otimes \pi_U^\ast \OO_{X_U}(-m)\xrightarrow{\mu} \mathcal{M}$. In order for the map to factor through $Q^0_{N,m}$ it must satisfy the following conditions:
\begin{enumerate}
\item The higher derived functors $R^i{\rho_U}_\ast(F_{\EE_U}(\mathcal{M})(m))$ vanish for every positive $i$, and for $i=0$ it is a free sheaf. This condition is open because of proposition \ref{thm:cohom-base-change-stack}.
\item The $\OO_U$-module ${\rho_U}_\ast(F_{\EE_U}(\mathcal{M})(m))$ is locally free and has constant rank $N$. This is an open condition because of \ref{thm:semicontinuity-stack}.
\item Consider the morphism:
  \begin{displaymath}
    E_{N,m}(\mu)\colon \OO_{X_U}^{\oplus N}\xrightarrow{\varphi_{\EE}(\OO_{X_U}^{\oplus N})} F_{\EE_U}\circ G_{\EE_U}(\OO_{X_U}^{\oplus N}) \xrightarrow{F_{\EE}(\mu)} F_{\EE_U}(\mathcal{M})(m)
  \end{displaymath}
The pushforward ${\rho_U}_\ast E_{N,m}$ is a morphism of locally free $\OO_{U}$-modules of the same rank because of the previous point. We ask this map to be an isomorphism which is an open condition since it is a map of locally free modules.  
\end{enumerate}

\begin{prop}\label{prop:alg-stack-coherent}

The following composite morphism:
  
\begin{displaymath}
    \xymatrix{
       Q_{N,m}^0\subseteq Q_{N,m}\; \ar[rr]^-{\mathcal{U}_{N,m}} & & \;\mathfrak{Coh}_{\XX /S} 
    }
  \end{displaymath}

denoted with $\mathcal{U}^0_{N,m}$ is representable locally of finite type and smooth for every couple of integers $m,N$. 
\end{prop}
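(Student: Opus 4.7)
The plan is to verify representability, being locally of finite type, and smoothness by computing the $2$-fiber product $Q^0_{N,m}\times_{\mathfrak{Coh}_{\XX/S}}T$ for an arbitrary morphism $T\to\mathfrak{Coh}_{\XX/S}$ classified by a coherent sheaf $\FF$ on $\XX_T$ which is flat over $T$. Unwinding the universal properties, the $T'$-points of this fiber product are exactly the surjections $\mu\colon\EE_{T'}^{\oplus N}\otimes\pi_{T'}^\ast\OO_{X_{T'}}(-m)\twoheadrightarrow\FF_{T'}$ for which the three defining conditions of $Q^0_{N,m}$ hold.

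Conditions (1) and (2) depend only on the quotient sheaf, so I would first isolate the open locus $T_0\subseteq T$ over which they are satisfied. Since $\pi_\ast$ is exact (Definition \ref{def:tame-stack}) and the projection formula (Lemma \ref{lem:coherent-projection}) gives $R^ip_{T\ast}(\HOM(\EE_T,\FF)\otimes\pi_T^\ast\OO_{X_T}(m))=R^i\rho_{T\ast}(F_{\EE_T}(\FF)(m))$, semicontinuity (Theorem \ref{thm:semicontinuity-stack}) and cohomology-and-base-change (Theorem \ref{thm:cohom-base-change-stack}) applied to the tame stack $p_T\colon\XX_T\to T$ immediately exhibit $T_0$ as open. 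On $T_0$ the coherent sheaf $\mathcal{V}\coloneqq\rho_{T_0\ast}(F_{\EE_{T_0}}(\FF_{T_0})(m))$ is locally free of rank $N$, and Proposition \ref{prop:arbitrary-base-change} yields $\mathcal{V}_{T'}\cong\rho_{T'\ast}(F_{\EE_{T'}}(\FF_{T'})(m))$ for every morphism $T'\to T_0$ without any flatness hypothesis.

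With these identifications the adjunction chain
\begin{align*}
\Hom_{\OO_{\XX_{T'}}}\!\bigl(\EE_{T'}^{\oplus N}\otimes\pi_{T'}^\ast\OO_{X_{T'}}(-m),\,\FF_{T'}\bigr)
&=H^0\!\bigl(X_{T'},F_{\EE_{T'}}(\FF_{T'})(m)\bigr)^{\oplus N}\\
&=\Gamma\!\bigl(T',\mathcal{V}_{T'}^{\oplus N}\bigr)
\end{align*}
represents the functor of arbitrary such $\mu$'s (ignoring surjectivity and condition (3)) on $T_0$-schemes by the geometric vector bundle $W\coloneqq\mathbb{V}(\mathcal{V}^{\oplus N})\to T_0$. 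Over $W$, the universal map $\mu_W$ has coherent cokernel on $\XX_W$, whose support projects to a closed subset of $W$ via the proper morphism $p_W$, so surjectivity of $\mu_W$ defines an open $W^0\subseteq W$. Applying $\rho_{W\ast}$ to $E_{N,m}(\mu_W)$ over $W^0$ produces a map of two locally free $\OO_{W^0}$-modules of rank $N$, and the condition that it be an isomorphism is open via non-vanishing of its determinant, carving out a further open $W^{00}\subseteq W^0$ that represents the fiber product.

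Consequently $W^{00}$ is a scheme, open inside a geometric vector bundle over the open $T_0\subseteq T$, hence locally of finite type and smooth over $T$. The principal subtlety I foresee is verifying that Proposition \ref{prop:arbitrary-base-change} can indeed be invoked under arbitrary base changes $T'\to T_0$; this is the very reason the local freeness built into the definition of $Q^0_{N,m}$ is imposed, for without it the key identification $\Hom_{\OO_{\XX_{T'}}}(\cdots,\FF_{T'})\cong\Gamma(T',\mathcal{V}_{T'}^{\oplus N})$ would fail universally and the candidate $W^{00}$ would not represent the correct functor.
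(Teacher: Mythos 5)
Your argument is correct, but it is organized quite differently from the paper's. The paper computes the same $2$-fibered product, identifies it with the sheaf-Isom functor $\ISO_{\XX_{QV}}\bigl(\eta_Q^\ast\mathcal{U}^0_{N,m},\eta_V^\ast\mathcal{N}\bigr)$, and then writes down an explicit natural transformation $I_{N,m}$ to the frame bundle $\ISO_V\bigl(\OO_V^{\oplus N},{\rho_V}_\ast(F_{\EE_V}(\mathcal{N})(m))\bigr)$ together with an explicit candidate inverse $L_{N,m}$; the real work there is the verification, via the adjunction identities of Lemma \ref{lem:idenity-theta-phi} and the two large commutative diagrams, that $I_{N,m}$ and $L_{N,m}$ are mutually inverse, after which smoothness follows because the target is a $\GL(N,\OO_V)$-torsor. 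You instead linearize: after carving out the open $T_0$ where conditions (1)--(2) hold (this is the paper's $V_{N,m}$, except that you omit its global-generation condition, which you legitimately do not need because you impose surjectivity of $\mu$ later as an open condition on the total space rather than deducing it on the base), you represent the functor of \emph{all} homomorphisms by a geometric vector bundle via the adjunction $\Hom_{\OO_{\XX_{T'}}}(\EE_{T'}^{\oplus N}\otimes\pi_{T'}^\ast\OO_{X_{T'}}(-m),\FF_{T'})\cong\Gamma(T',\mathcal{V}_{T'}^{\oplus N})$ --- which silently uses $H^0(\XX,-)\cong H^0(X,\pi_\ast-)$ and the projection formula, both available in the paper --- and then cut out surjectivity and condition (3) as open loci. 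What your route buys is the elimination of the explicit inverse transformation and the diagram chases; what it costs is that the burden shifts entirely onto base-change compatibility of ${\rho}_\ast F_{\EE}(-)(m)$ over $T_0$ (Proposition \ref{prop:arbitrary-base-change}), which you correctly flag as the crux and which is precisely why the local-freeness clause is built into the definition of $Q^0_{N,m}$. In the end the two constructions produce the same scheme: your condition (3) says exactly that the tautological map $\OO^{\oplus N}\to\mathcal{V}$ is an isomorphism, so $W^{00}$ is the frame bundle of $\mathcal{V}$ restricted to the locus where $\mu$ is surjective, i.e.\ the paper's $\GL_N$-torsor intersected with the surjectivity locus.
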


\begin{proof}
This proof follows the analogous one for schemes in \cite[Thm 4.6.2.1]{LMBca} with quite a  number of necessary modifications.

Let $V$ be an $S$-scheme with a map $\mathcal{N}$ to $\mathfrak{Coh}_{\XX /S}$. In order to study the representability and smoothness of $\mathcal{U}^0_{N,m}$ we compute the fibered product $Q^0_{N,m}\!\!\!\!\!\!\!\!\!\!\underset{\phantom{AAA}\mathfrak{Coh}_{\XX /S},\mathcal{N}}{\times}\!\!\!\!\!\!\! V$. Denote with $QV$ the fibered product $Q^0_{N,m}\times_S V$, with $\sigma_Q,\sigma_V$ its two projections and with $\tau_Q\colon X_{QV}\to X_{Q^0_{N,m}},$ $\tau_V\colon X_{QV}\to X_V$ the two projections induced by base change and with $\eta_Q$ and $\eta_V$ the two analogous projections from $\XX_{QV}$. It follows almost from the definition that the fibered product  is given by:
\begin{displaymath}
  \ISO_{\XX_{QV}}\bigl(\eta_Q^\ast \mathcal{U}^0_{N,m},\eta_V^\ast \mathcal{N}\bigr)\xrightarrow{p_1} V
\end{displaymath}
As in \cite{LMBca} we observe that there is a maximal open subscheme $V_{N,m}\subseteq V$ such that the following conditions are satisfied (here and in the following we write $V$ instead of $V_{N,m}$ since it is open in $V$ and in particular smooth):
\begin{enumerate}
\item The higher derived functors $R^i{\rho_{V}}_\ast(F_{\EE_V}(\mathcal{N})(m))$ vanish for all $i>0$.
\item The coherent sheaf ${\rho_V}_\ast(F_{\EE_V}(\mathcal{N})(m))$ is locally free of rank $N$ (not free as we have assumed before).
\item The following adjunction morphism is surjective: 
\begin{displaymath}
\rho_V^\ast {\rho_V}_\ast F_{\EE_{V}}(\mathcal{N})(m) \xrightarrow{\psi_V} F_{\EE_{V}}(\mathcal{N})(m)
\end{displaymath}
\end{enumerate}
The last condition is a consequence of Serre's fundamental theorem about projective morphisms \cite[2.2.1]{MR0217085} applied to the moduli scheme. 
Keeping in mind the conditions we have written we can define a natural transformation:
\begin{displaymath}
  \ISO_{\XX_{QV}}\bigl(\eta_Q^\ast \mathcal{U}^0_{N,m},\eta_V^\ast \mathcal{N}\bigr)\xrightarrow{I_{N,m}} \ISO_{V}(\OO_V^{\oplus N},{\rho_V}_\ast(F_{\EE_V}(\mathcal{N})(m)))
\end{displaymath}
factorizing the projection $p_1$ to $V$.
An object of the first functor over a scheme $W$ is a morphism $f\colon W\to V$, a morphism $g\colon W\to Q^0_{N,m}$ and an isomorphism:
\begin{displaymath}
  \alpha\colon g^\ast\mathcal{U}^0_{N,m}\longrightarrow f^\ast\mathcal{N}
\end{displaymath}
The transformation $I_{N,m}(W)$  associates to these data the morphism $f$ and the isomorphism:
\begin{displaymath}
  {\rho_W}_\ast (F_{\EE_W}(\alpha)\circ (g^\ast E_{N,m}(u^0_{N,m})))\colon \OO_W^{\oplus N}\to {\rho_W}_\ast (F_{\EE_W}(f^\ast\mathcal{N})(m)) 
\end{displaymath}
This transformation is \textit{relatively representable}\footnotemark \footnotetext{This notion appears in some notes of Grothendieck, it just means that for every natural transformation from a scheme to the second functor, the fibered product is a scheme.}, moreover we can prove that it is an isomorphism of sets for every $f\colon W\to V$. To do this we construct an explicit inverse of $I_{N,m}$, call it $L_{N,m}$. The map $L_{N,m}$ is defined in this way: to an isomorphism $\beta:\OO_W^{\oplus N}\to {\rho_W}_\ast (F_{\EE_W}(f^\ast\mathcal{N})(m))$ associate  the following surjective map:
\begin{displaymath}
  \tilde{g}\coloneqq\theta_{\EE}(f^\ast\mathcal{N})\circ G_{\EE_W}(\psi_W\circ\rho_W^\ast\beta)\colon \EE_W^{\oplus N} \longrightarrow f^\ast(\mathcal{N}\otimes\pi_V^\ast\OO_{X_V}(m))
\end{displaymath}
To give an object in $\ISO_{\XX_{QV}}$ we need to verify that this quotient is a map to $Q^0_{N,m}$: we have to check that ${\rho_W}_\ast E_{N,m}(\tilde{g})$ is an isomorphism. To achieve this we analyze the morphism with the following diagram:
\begin{equation}\label{eq:10}
  \xymatrix{
  &&& \OO_{X_W}^{\oplus N}  \ar[ddddlll]_-{\psi_W\circ\rho_W^\ast\beta} \ar[dd]^-{\varphi_{\EE_W}(\OO_{X_W}^{\oplus N})}\\
  &&&  \\
  &&& F_{\EE_W}\circ G_{\EE_W}(\OO_{X_W}^{\oplus N})\ar[dd]^-{F_{\EE}\circ G_{\EE}(\psi_W\circ\rho_W^\ast\beta)} \\
  &&& \\
  F_{\EE_W}(f^\ast(\mathcal{N}\otimes\pi_V^\ast\OO_{X_V}(m)))\ar[rrr]_-{\varphi_{\EE_W}} \ar@{=}[ddrrr] &&&   F_{\EE_W}\circ G_{\EE_W}\circ F_{\EE_W}(f^\ast(\mathcal{N}\otimes\pi_V^\ast\OO_{X_V}(m)))\ar[dd]^-{F_{\EE_W}(\theta_{\EE_W}(f^\ast(\mathcal{N}\otimes\pi_V^\ast\OO_{X_V}(m))))} \\
  &&& \\
  &&& F_{\EE_W}(f^\ast(\mathcal{N}\otimes\pi_V^\ast\OO_{X_V}(m))) \\
}
\end{equation}
where the upper triangle is commutative because $\varphi_{\EE}$ is a natural transformation, the lower triangle is commutative according to Lemma \ref{lem:idenity-theta-phi}. We can conclude that $E_{N,m}(\tilde{g})=\psi_W\circ \rho_W^\ast\beta$; then we have to apply ${\rho_W}_\ast$ and we obtain exactly $\beta$ (recall that ${\rho_W}_\ast\rho_W^\ast=\Id $).
It is now immediate to verify that  $\tilde{g}^\ast\mathcal{U}^0_{N,m}$ is isomorphic to $f^\ast\mathcal{N}$, to explicitly obtain the isomorphism we must compare the universal quotient $\tilde{g}^\ast u^0_{N,m}$ and $\theta_{\EE}(f^\ast\mathcal{N})\circ G_{\EE_W}(\psi_W\circ\rho_W^\ast\beta)$. The identity $I_{N,m}(W)\circ L_{N,m}(W)(\beta)=\beta$ is implicit in the construction. 
To prove that $L_{N,m}(W)\circ I_{N,m}(W)(\alpha)=\alpha$ we use the following diagram:
\begin{displaymath}
  \xymatrix{
 &&& G_{\EE_W}(\OO_{X_W}^{\oplus N})  \ar@{=}[ddlll] \ar[dd]^-{G_{\EE_W}(\varphi_{\EE_W}(\OO_{X_W}^{\oplus N}))}\\
  &&&  \\
  G_{\EE_W}(\OO_{X_W}^{\oplus N}) \ar[dd]^-{g^\ast u^0_{N,m}} &&& G_{\EE_W}\circ F_{\EE_W}\circ G_{\EE_W}(\OO_{X_W}^{\oplus N}) \ar[lll]^-{\theta_{\EE_W}} \ar[dd]^-{G_{\EE_W}\circ F_{\EE_W}(g^\ast u^0_{N,m})} \\
  &&& \\
 g^\ast(\mathcal{U}^0_{N,m}\otimes\pi_Q^\ast\OO_{X_{Q}}(m)) \ar[dd]^-{\alpha} &&&   G_{\EE_W}\circ F_{\EE_W}(g^\ast(\mathcal{U}^0_{N,m}\otimes\pi_Q^\ast\OO_{X_{Q}}(m))) \ar[lll]^-{\theta_{\EE_W}} \ar[dd]_-{G_{\EE_W}\circ F_{\EE_W}(\alpha)} \\
  &&& \\
 f^\ast(\mathcal{N}\otimes\pi_V^\ast\OO_{X_V}(m)) &&& G_{\EE_W}\circ F_{\EE_W}(f^\ast(\mathcal{N}\otimes\pi_V^\ast\OO_{X_V}(m))) \ar[lll]^-{\theta_{\EE_W}} \\
}
\end{displaymath}
where the first triangle is commutative because of Lemma \ref{lem:idenity-theta-phi} and the two squares are commutative because $\theta_{\EE}$ is natural.

Since the functor $\ISO_{V}(\OO_V^{\oplus N},{\rho_V}_\ast F_{\EE_V}(\mathcal{N})(m))$ is represented by a scheme of finite type according to \cite[Thm. 4.6.2.1]{LMBca} and $I_{N,m}$ is a relatively representable isomorphism we deduce that the functor $ \ISO_{\XX_{QV}}\bigl(\eta_Q^\ast \mathcal{U}^0_{N,m},\eta_V^\ast \mathcal{N}\bigr)$ is represented by a scheme of finite type and it is a $\GL(N,\OO_{V})$-torsor over $V$ so that it is represented by a scheme\footnotemark \footnotetext{It is an application of \cite[7.7.8-9]{MR0163911} as explained in detail in the proof of \cite[Thm 4.6.2.1]{LMBca}} smooth over $V$.
\end{proof}
\begin{prop}
  The morphism:
\begin{displaymath}
    \xymatrix{
      \underset{N,m}{\coprod} Q_{N,m}^0\subseteq\underset{N,m}{\coprod} Q_{N,m}\; \ar[rr]^-{\coprod \mathcal{U}^0_{N,m}} & & \;\mathfrak{Coh}_{\XX /S} 
    }
  \end{displaymath}
is surjective.
\end{prop}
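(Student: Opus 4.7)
The plan is to show that any object $\FF \in \mathfrak{Coh}_{\XX/S}(U)$ is, Zariski-locally on $U$, isomorphic to the pullback of the universal sheaf along some map $U \to Q_{N,m}^0$; since surjectivity of a morphism of stacks can be checked fppf-locally on the target, this suffices.

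First I would arrange favorable cohomological hypotheses on $\FF$. The sheaf $\EE_U^\vee$ is locally free, so $\HOM_{\OO_{\XX_U}}(\EE_U,\FF) = \EE_U^\vee \otimes \FF$ is flat over $U$, and by Corollary~\ref{cor:tame-stack-2}.\ref{item:3} so is $F_{\EE_U}(\FF)$. Applying Serre vanishing together with cohomology and base change on the projective morphism $\rho_U \colon X_U \to U$, for $m \gg 0$ one obtains $R^i\rho_{U\ast}(F_{\EE_U}(\FF)(m))=0$ for $i>0$ and $\rho_{U\ast}(F_{\EE_U}(\FF)(m))$ locally free of some rank $N$. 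After shrinking $U$ by a Zariski cover, I can trivialize this sheaf by a chosen isomorphism $\beta \colon \OO_U^{\oplus N} \xrightarrow{\sim} \rho_{U\ast}(F_{\EE_U}(\FF)(m))$.

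Next I would mimic the construction of $L_{N,m}$ from the proof of Proposition~\ref{prop:alg-stack-coherent} and set
\[
  \tilde\mu \coloneqq \theta_{\EE_U}\bigl(\FF \otimes \pi_U^\ast\OO_{X_U}(m)\bigr) \circ G_{\EE_U}(\psi_U \circ \rho_U^\ast \beta) \colon \EE_U^{\oplus N} \lra \FF \otimes \pi_U^\ast\OO_{X_U}(m),
\]
where $\psi_U \colon \rho_U^\ast \rho_{U\ast} F_{\EE_U}(\FF)(m) \to F_{\EE_U}(\FF)(m)$ is adjunction (surjective for $m$ large, by Serre's theorem on the projective morphism $\rho_U$) and $\theta_{\EE_U}$ is surjective because $\EE$ is a generating sheaf. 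Twisting by $\pi_U^\ast\OO_{X_U}(-m)$ yields a surjection $\mu \colon \EE_U^{\oplus N} \otimes \pi_U^\ast\OO_{X_U}(-m) \twoheadrightarrow \FF$, and hence a classifying morphism $U \to Q_{N,m}$.

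It then remains to verify that this morphism factors through the open subscheme $Q_{N,m}^0$. Conditions (1) and (2) in its definition hold by the choice of $m$ and of the trivialization. For condition (3), diagram~\eqref{eq:10} from the proof of Proposition~\ref{prop:alg-stack-coherent}, together with Lemma~\ref{lem:idenity-theta-phi}, identifies $E_{N,m}(\mu)$ with $\psi_U \circ \rho_U^\ast \beta$; pushing forward then gives $\rho_{U\ast} E_{N,m}(\mu) = \beta$, which is an isomorphism. By construction the pullback of the universal quotient along $U \to Q_{N,m}^0$ recovers $\FF$. The only potential obstacle is this last diagram chase, but it is precisely the verification already performed above in checking $I_{N,m} \circ L_{N,m} = \Id$, so no new ingredient is needed.
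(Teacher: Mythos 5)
Your proof is correct and follows essentially the same route as the paper's: apply Serre's theorem on the projective moduli scheme to generate $F_{\EE_U}(\FF)(m)$, transport the surjection back to $\XX_U$ via $G_{\EE_U}$ and $\theta_{\EE_U}$, and verify membership in $Q^0_{N,m}$ by the diagram-chase already done for $I_{N,m}\circ L_{N,m}=\Id$. Your version is slightly more careful in making the trivialization of $\rho_{U\ast}(F_{\EE_U}(\FF)(m))$ explicit by shrinking $U$, a point the paper's proof glosses over by writing $H^0$ directly, but this is a refinement rather than a different argument.
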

\begin{proof}
 To prove surjectivity of the map $\coprod \mathcal{U}^0_{N,m}$ we observe that given an $S$-scheme $U$ and an object $\mathcal{N}\in\mathfrak{Coh}_{\XX /S}(U)$, we can construct the coherent $\OO_{X_U}$-module $F_{\EE_U}(\mathcal{N})$, and according to Serre \cite[Thm 2.2.1]{MR0217085} there is $m$ big enough such that the adjunction morphism is surjective:
\begin{displaymath}
  H^0(F_{\EE_U}(\mathcal{N})(m))\otimes \OO_{X_U}(-m)\longrightarrow F_{\EE_U}(\mathcal{N})
\end{displaymath}
Now we apply the functor $G_{\EE_U}$ and the adjunction morphism $\theta_{\EE_u}$ and we obtain the surjection:
\begin{displaymath}
  H^0(F_{\EE_U}(\mathcal{N})(m))\otimes \EE_{U}\otimes \pi_U^\ast \OO_{X_U}(-m)\longrightarrow \mathcal{N}
\end{displaymath}
We will denote this composition with $\widetilde{\ev}(\mathcal{N},m)$.
Now let $N$ be the dimension of $H^0(F_{\EE_U}(\mathcal{N})(m))$; the point $\mathcal{N}$ in the stack of coherent sheaves is represented on the chart $Q^0_{N,m}$. To prove it is in $Q_{N,m}$ we use the fundamental theorem of projective morphisms of Serre; to prove it is in $Q^0_{N,m}$ we use the same argument depicted in diagram (\ref{eq:10}). 
\end{proof}
\begin{cor}\label{cor:alg-stack-of-coh-sh}
The stack $\mathfrak{Coh}_{\XX/S}$ is an Artin stack locally of finite type with atlas $\underset{N,m}{\coprod} Q_{N,m}^0$.
\end{cor}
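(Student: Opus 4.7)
The plan is to verify the standard definition of an Artin stack locally of finite type: (a) $\mathfrak{Coh}_{\XX/S}$ is a stack for the fppf topology, (b) it admits a smooth, surjective, representable morphism from a scheme locally of finite type over $S$, and (c) its diagonal is representable.

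Conditions (a) and (b) are already in hand. For (a), the property of being a coherent $\OO_U$-flat sheaf on $\XX_U$ descends fppf-locally on $U$ and on an étale atlas of $\XX$, which is classical descent theory. For (b), the atlas is $P\coloneqq\coprod_{N,m} Q^0_{N,m}$: the smoothness, representability, and local finite type of each $\mathcal{U}^0_{N,m}$ are given by Proposition \ref{prop:alg-stack-coherent}, while surjectivity of the disjoint union is the preceding proposition. Each $Q^0_{N,m}$ is an open subscheme of a disjoint union of projective $S$-schemes by Theorem \ref{thm:quot-closed}, so $P$ is locally of finite type over $S$.

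The substantive step is (c). Given schemes $U,V$ over $S$ with objects $\FF\in\mathfrak{Coh}_{\XX/S}(U)$ and $\GG\in\mathfrak{Coh}_{\XX/S}(V)$, the fibered product $U\times_{\mathfrak{Coh}_{\XX/S}}V$ is represented by the sheaf $\ISO_{\XX_{U\times_S V}}(\pr_U^\ast\FF,\pr_V^\ast\GG)$. I would prove its representability by mimicking the argument in the proof of Proposition \ref{prop:alg-stack-coherent}: Serre's theorem on the projective moduli $X_{U\times_S V}$ lets us choose, locally on $U\times_S V$, an integer $m$ so that $F_\EE(\pr_U^\ast\FF)(m)$ and $F_\EE(\pr_V^\ast\GG)(m)$ have vanishing higher direct images under $\rho$ and surjective adjunctions. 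The natural maps $\theta_\EE$ and $\varphi_\EE$ of Lemma \ref{lem:idenity-theta-phi} then realize the stack-theoretic Iso functor as a locally closed subfunctor of the classically representable $\ISO$ on the projective scheme $X_{U\times_S V}$, the cutting-out condition being $\OO_\XX$-linearity.

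The main obstacle is the bookkeeping in this last reduction. A cleaner alternative, which I would prefer in practice, is to pull back along the atlas $P\to\mathfrak{Coh}_{\XX/S}$: the double fiber product $P\times_{\mathfrak{Coh}_{\XX/S}}P$ was essentially computed in the proof of Proposition \ref{prop:alg-stack-coherent}, where each component was identified as a $\GL_N$-torsor over an open subscheme of a product $Q^0_{N,m}\times_S Q^0_{N',m'}$, and is therefore representable by a scheme smooth over each factor. Representability of the diagonal of $\mathfrak{Coh}_{\XX/S}$ then follows by an fppf descent argument, completing the verification that $\mathfrak{Coh}_{\XX/S}$ is Artin and locally of finite type with the claimed atlas.
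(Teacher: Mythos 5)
Your proposal is correct and follows essentially the same route as the paper: the corollary is obtained by combining the representability, smoothness and local finite type of each $\mathcal{U}^0_{N,m}$ from Proposition \ref{prop:alg-stack-coherent} with the surjectivity of the disjoint union from the subsequent proposition. Your explicit verification of the diagonal is a standard bootstrap that the paper leaves implicit (the computation of $\ISO_{\XX_{QV}}(\eta_Q^\ast\mathcal{U}^0_{N,m},\eta_V^\ast\mathcal{N})$ in the proof of Proposition \ref{prop:alg-stack-coherent} already shows every morphism from a scheme to $\mathfrak{Coh}_{\XX/S}$ factoring through the atlas data is representable), and is consistent with the paper's argument.
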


\section{Gieseker stability}

In the first part we have developed the setup we use here to define a good notion of stability for coherent sheaves. We define a concept of  \textit{Gieseker stability} that relies on  a modified Hilbert polynomial.
\begin{ass}
In this section $p\colon\XX\to \spec{k}$ is a projective Deligne-Mumford stack over an algebraically closed field $k$ with moduli scheme $\pi\colon\XX\to X$; a very ample invertible sheaf $\OO_X(1)$ and a generating sheaf $\EE$ are chosen. We will call this couple of sheaves a \textit{polarization} of $\XX$
\end{ass}

\subsection{Pure sheaves}
As in the case of sheaves on schemes we can define the support of a sheaf in the following way \cite[1.1.1]{MR1450870}
\begin{defn}
  Let $\FF$ be a coherent sheaf on $\XX$, we define the support of $\FF$ to be the closed substack associated to the ideal:
  \begin{displaymath}
    0\to\mathcal{I}\to\OO_\XX\to\END_{\OO_\XX}(\FF)
  \end{displaymath}
\end{defn}
Since the stack $\XX$ has finite stabilizers we can deal with the dimension of the support of a sheaf as we do with schemes and define \cite[1.1.2]{MR1450870}:
\begin{defn}
  A pure sheaf of dimension $d$ is a coherent sheaf $\FF$ such that for every non zero subsheaf $\GG$ the support of $\GG$ is of pure dimension $d$. 
\end{defn}
\begin{rem}\label{rem:pure-sheaves-supp}
\begin{enumerate}
\item  Let $\xymatrix{X_1\ar@<1ex>[r]^-s \ar@<-1ex>[r]^-t & X_0 \ar[r]^-{\phi} & \XX}$ be an \'etale presentation of $\XX$ and $\FF$ a pure sheaf on $\XX$ of dimension $d$. If the ideal sheaf $\mathcal{I}$ defines the support of $\FF$, the ideal $\phi^\ast\mathcal{I}$ defines the support of $\phi^\ast\FF$ in $X_0$. This follows from the flatness of $\phi$. For the same reason we can produce an \'etale presentation of $\supp{(\FF)}$ as $\xymatrix{\supp{(t^\ast\phi^\ast\FF)}\ar@<1ex>[r]^s\ar@<-1ex>[r]^t & \supp{(\phi^\ast\FF)}}$. It is then clear that $\dim{(\FF)}=2\dim{(\phi^\ast\FF)}-\dim{(t^\ast\phi^\ast\FF)}=\dim{\phi^\ast\FF}$ in every point of $\supp{(\FF)}$.  
\item \label{item:17} The notion of associated point in the case of a Deligne-Mumford stack is the usual one \cite[2.2.6.5]{MR2309155}. A geometric point $x$ of $\XX$ is associated for the coherent sheaf $\FF$ on $\XX$ if $x$ is associated for the stalk $\FF_x$ as on $\OO_{\XX,x}$-module. A sheaf is pure if and only if every associated prime has the same dimension (\ie the support has pure dimension and there are no embedded primes in the sheaf). Moreover $\phi(\Ass{(\phi^\ast\FF)})=\Ass{(\FF)}$ \cite[2.2.6.6]{MR2309155}. It is now clear that $\FF$ is pure of dimension $d$ if and only if $\phi^\ast\FF$ is pure of the same dimension.  
\end{enumerate}
\end{rem}
As in \cite[1.1.4]{MR1450870} we have the torsion filtration:
\begin{displaymath}
  0\subset T_0(\FF)\subset\ldots\subset T_d(\FF)=\FF
\end{displaymath}
where every factor $T_i(\FF)/T_{i-1}(\FF)$ is pure of dimension $i$ or zero.

In the case $\XX$ is an Artin stack  we don't know what is the meaning of the dimension of the support, but we can use the notion of associated point as defined in \cite[2.2.6.4]{MR2309155} and of torsion subsheaf \cite[2.2.6.10]{MR2309155}. Lieblich proves that the sum of torsion subsheaves is torsion so that there is a maximal torsion subsheaf \cite[2.2.6.11]{MR2309155}. We will denote it with $T(\FF)$. Maybe there is a more general notion of a torsion filtration for sheaves on Artin stacks but for the moment we prefer to limit the study to the case of torsion free sheaves.

In the following we  prove that the functor $F_{\EE}$ preserves the pureness the support and the dimension of a sheaf. This will be of great help to prove Corollary \ref{cor:boundedness-eventually}.
The proof goes in two parts. First we observe that the morphism $\pi$ is an omeomorphism so that it preserves the dimension of points and we prove that $\pi \Ass{\FF}=\Ass(\pi_\ast\FF)$ for every coherent sheaf $\FF$, unless the push-forward $\pi_\ast\FF$ vanishes. Second, we prove that $F_{\EE}(\FF)$ is non zero unless $\FF$ itself is zero. 
To clarify the situation we show the following example. Let $\pi\colon\XX\to X$ be an abelian $G$-banded gerbe over a scheme. Every sheaf $\FF$ on $\XX$ decomposes into a direct sum on the characters of the banding group $\FF=\bigoplus_{\chi\in C(G)}\FF_\chi$. The push-forward $\pi_\ast\FF$ is just $\pi_\ast\FF_0$ where $\FF_0$ corresponds to the trivial character. 
This example explains that a sheaf supported on a gerbe can be sent to zero, however we shall prove that this is the only pathology that occurs, and tensoring with the generating sheaf removes this pathology.

\begin{lem}\label{lem:pure-sheaves-supp}
 Let $\XX\to\spec{k}$ be a  projective Deligne-Mumford stack and $\pi\colon\XX\to X$ the moduli scheme. Let $\FF$ be a coherent sheaf on $\XX$, then we have:
 \begin{equation}
   \label{eq:supp-fe}
   \pi\supp{\FF}=\pi \supp{\FF\otimes\EE^\vee}\supseteq\supp{F_{\EE}(\FF)}
 \end{equation}
moreover $F_{\EE}(\FF)$ is the zero sheaf if and only if $\FF=0$.
\end{lem}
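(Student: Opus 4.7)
The plan is to split the statement into three separate claims, each of which follows from a natural property of the setup.

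First, the equality $\pi\supp{\FF}=\pi\supp{\FF\otimes\EE^\vee}$ is immediate: since $\EE$ (and hence $\EE^\vee$) is locally free of positive rank, working in an \'etale chart where $\EE^\vee\cong\OO^{\oplus r}$ shows that $\FF\otimes\EE^\vee\cong\FF^{\oplus r}$ locally, so the annihilator ideals agree and $\supp{\FF}=\supp{\FF\otimes\EE^\vee}$ already as closed substacks of $\XX$, before applying $\pi$.

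Next, for the inclusion $\pi\supp{\FF\otimes\EE^\vee}\supseteq\supp{F_{\EE}(\FF)}$, I would argue by contrapositive on the complement. Let $x\in X\setminus\pi\supp{\FF\otimes\EE^\vee}$. The morphism $\pi$ is proper, so it is closed, hence $\pi\supp{\FF\otimes\EE^\vee}$ is closed in $X$ and there is an open neighborhood $U$ of $x$ with $\pi^{-1}(U)\cap\supp{\FF\otimes\EE^\vee}=\emptyset$. Then $(\FF\otimes\EE^\vee)|_{\pi^{-1}(U)}=0$, and since $\pi_\ast$ commutes with restriction to opens of $X$ (apply Proposition \ref{prop:cohom-base-change-spcoarse} to the open immersion $U\hookrightarrow X$), we get $F_{\EE}(\FF)|_U=\pi_\ast(\FF\otimes\EE^\vee)|_U=0$, so $x\notin\supp{F_\EE(\FF)}$.

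Finally, for the non-vanishing statement, the direction $\FF=0\Rightarrow F_\EE(\FF)=0$ is trivial. For the converse, suppose $F_\EE(\FF)=0$. Then $\pi^\ast F_\EE(\FF)=0$ and therefore $\pi^\ast F_\EE(\FF)\otimes\EE=0$ as well. But $\EE$ is a generating sheaf, so the adjunction morphism
\[
\theta_\EE(\FF)\colon \pi^\ast\pi_\ast\HOM_{\OO_\XX}(\EE,\FF)\otimes\EE\;=\;\pi^\ast F_\EE(\FF)\otimes\EE\;\longrightarrow\;\FF
\]
is surjective, forcing $\FF=0$. The key input here is precisely the defining property of a generating sheaf; the main subtlety to watch out for is only that $\pi$ is not flat in general, but we never need to pull back an exact sequence---we only use that $\pi^\ast$ sends the zero sheaf to the zero sheaf, which is automatic. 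This completes all three parts.
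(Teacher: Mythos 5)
Your proof is correct, and the interesting divergence is in the last claim. For the equality of supports and the inclusion $\supp F_{\EE}(\FF)\subseteq\pi\supp(\FF\otimes\EE^\vee)$ you argue set-theoretically (locally $\FF\otimes\EE^\vee\cong\FF^{\oplus r}$, and a pushforward vanishes over any open missing the closed image of the support, which is closed because $\pi$ is proper), whereas the paper runs the same containments through the defining ideal sheaves $\mathcal{I}\cong\mathcal{I}_\EE\hookrightarrow$ and $\pi_\ast\mathcal{I}_\EE\hookrightarrow\mathcal{I}_{F_\EE(\FF)}$; the two are interchangeable for every later use of the lemma, which only needs dimensions and associated points. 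For the non-vanishing, however, you take a genuinely shorter route: you observe that $F_\EE(\FF)=0$ forces the source of the adjunction map $\theta_\EE(\FF)\colon\pi^\ast F_\EE(\FF)\otimes\EE\to\FF$ to vanish, and surjectivity of $\theta_\EE(\FF)$ --- which is literally the definition of $\EE$ being a generating sheaf --- kills $\FF$. The paper instead restricts to the residual gerbe at the generic point of an irreducible component, decomposes $\FF\otimes\EE^\vee$ into eigensheaves for the (linearly reductive) stabilizer, and uses that $\EE$ contains every irreducible representation to exhibit a trivial summand surviving $\pi_\ast$; in effect it reproves the generator property at the generic point, and as a byproduct it shows the slightly finer statement that the \emph{generic fiber} of $F_\EE(\FF)$ on each component of the support is nonzero. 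Your argument buys brevity and avoids the residual-gerbe machinery; since the restriction of a generating sheaf to any base change of the moduli space is again generating (Theorem \ref{thm:existence-generating-sheaf}), the lemma as you prove it still suffices verbatim for its later applications in Proposition \ref{prop:pure-sheaves} and its corollary, where it is only invoked after restricting to such a base change.
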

\begin{proof}
 Since the sheaf $\EE$ is locally free and supported everywhere we have the first claimed equality (the tensor product intersects the supports). 
This is made evident by the following diagram:
\begin{displaymath}
  \xymatrix{
    0 \ar[r] & \mathcal{I} \ar[r]\ar[d]^-{\wr} & \OO_\XX \ar[r]\ar@{=}[d] & \END_{\OO_\XX}(\FF)\ar@{>->}[d]^-{\otimes\Id_{\EE}} \\
    0 \ar[r] & \mathcal{I}_\EE \ar[r] & \OO_\XX \ar[r] & \END_{\OO_\XX}(\FF\otimes\EE^\vee) \\
  }
\end{displaymath}
where $\mathcal{I}$ defines $\supp{(\FF)}$ and $\mathcal{I}_\EE$ the support of $\FF\otimes\EE^\vee$.
With the following commutative and exact diagram we verify the second inclusion:
\begin{displaymath}
   \xymatrix{
     0 \ar[r] & \pi_\ast\mathcal{I}_\EE \ar[r]\ar@{>->}[d] & \OO_X \ar[r]\ar@{=}[d] &  \pi_\ast\END_{\XX}(\FF\otimes\EE^\vee)\ar[d] \\
  0 \ar[r]  & \mathcal{I}_{F_\EE(\FF)}\ar[r] & \OO_X \ar[r] & \END_{X}(F_\EE(\FF)) \\ 
}
 \end{displaymath}
The vertical arrow on the left is easily verified to be injective. 

The non vanishing of $F_{\EE}(\FF)$ can be checked on points, in particular we can assume that $\XX$ is irreducible and prove that the generic fiber of $F_{\EE}(\FF)$ doesn't vanish. Let $\eta$ be an opportune field extension of the generic point of $\XX$. We can represent the residual gerbe at the generic point with this diagram:
\begin{displaymath}
  \xymatrix{
B\stab{(\eta)} \ar[r]^-{h} \ar[d]_-{\rho} &  \mathcal{G}_\eta \ar[d] \ar[r]^-{f} & \XX \ar[d]^-{\pi} \\ 
 \spec{\eta} \ar[r]^-{i} & \spec{\overline{\eta}} \ar[r]^-{g} & X \\
}
\end{displaymath}
where $\mathcal{G}_\eta,\spec{\overline{\eta}}$ are the residual gerbe and its residue field, $f,g$ are open morphisms, $h,i$ are \'etale (the residual gerbe is an \'etale gerbe) and the two squares are cartesian (the one on the right wouldn't be cartesian for a non open point). 
Using base change on the moduli space we have $i^\ast g^\ast\pi_\ast(\FF\otimes\EE^\vee)=\rho_\ast h^\ast f^\ast(\FF\otimes\EE^\vee)$. Since  a linearly reductive group scheme on a field is completely reducible we can decompose $h^\ast f^\ast(\FF\otimes\EE^\vee)$ in eigensheaves. Now we use that $h^\ast f^\ast\EE$ is again a generating sheaf and it contains every irreducible representation so that the sheaf $h^\ast f^\ast(\FF\otimes\EE^\vee)$ must contain as a direct summand the trivial representation. As a consequence $\rho_\ast h^\ast f^\ast(\FF\otimes\EE^\vee)$ is non zero.  
\end{proof}
We still don't know if the morphism $\pi$ ``respects'' associated points of $\FF$.
\begin{lem}\label{lem:algebra-pure-sheaves}
  Let $f\colon\spec{B}\to\spec{A}$ be a surjective flat morphism of noetherian schemes, $E$ an $A$-module. We have the following:
  \begin{equation}
    \label{eq:etale-pullback}
    f\Ass{(E\otimes_A B)}=\Ass{(E)}
  \end{equation}
\end{lem}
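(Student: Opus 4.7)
The plan is to reduce the claim to the standard flat base change formula for associated primes, which for a flat ring map $A\to B$ and an $A$-module $E$ reads
\begin{equation*}
\Ass_B(E\otimes_A B)=\bigcup_{\mathfrak{p}\in\Ass_A(E)}\Ass_B(B/\mathfrak{p}B),
\end{equation*}
together with the refined version that a prime $\mathfrak{q}\in\Ass_B(E\otimes_A B)$ lies over $\mathfrak{p}\in\Ass_A(E)$ in $A$ precisely when its image in $\spec(B\otimes_A\kappa(\mathfrak{p}))$ is an associated prime of the fiber ring (this is in Bourbaki, Commutative Algebra IV, §1, or Matsumura's \emph{Commutative Ring Theory}, Theorem 23.2). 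With this in hand the lemma is essentially formal, and the only place where the surjectivity of $f$ is used is in the reverse inclusion.

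For the inclusion $f(\Ass(E\otimes_A B))\subseteq\Ass(E)$ I would just unwind the formula above: any $\mathfrak{q}\in\Ass_B(E\otimes_A B)$ belongs to $\Ass_B(B/\mathfrak{p}B)$ for some $\mathfrak{p}\in\Ass_A(E)$, and the refined statement shows $\mathfrak{q}\cap A=\mathfrak{p}$, which is exactly $f(\mathfrak{q})\in\Ass_A(E)$.

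For the reverse inclusion $\Ass(E)\subseteq f(\Ass(E\otimes_A B))$, pick $\mathfrak{p}\in\Ass_A(E)$. By the formula it suffices to exhibit a point of $\Ass_B(B/\mathfrak{p}B)$ lying over $\mathfrak{p}$. Such points are in bijection with $\Ass(B\otimes_A\kappa(\mathfrak{p}))$, so I only need that the fiber ring $B\otimes_A\kappa(\mathfrak{p})$ is non-zero; this is where surjectivity of $f\colon\spec B\to\spec A$ enters, guaranteeing a point of $\spec B$ over $\mathfrak{p}$ and hence a non-empty fiber with at least one associated prime (noetherianity of $B$ ensures it has some).

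The only potential obstacle is making sure one is appealing to the correct ``refined'' form of the flat base change formula that tracks the lying-over relation, rather than just the set-theoretic union; once this is cited, both inclusions are one-line verifications, and the lemma follows.
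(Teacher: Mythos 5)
Your proposal is correct and rests on exactly the same key fact the paper invokes: the flat base change formula for associated primes, $\Ass_B(E\otimes_A B)=\bigcup_{\mathfrak{p}\in\Ass_A(E)}\Ass_B(B/\mathfrak{p}B)$, which is the content of the reference the paper cites (Matsumura, Thm 12); the paper simply leaves the deduction implicit. You spell out the two inclusions correctly, in particular the role of surjectivity in guaranteeing a non-empty fiber (hence a non-empty $\Ass_B(B/\mathfrak{p}B)$) for the reverse inclusion, so this is the paper's argument made explicit rather than a different route.
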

\begin{proof}
  It is a special case of  \cite[Thm 12]{MR575344}.
\end{proof}
\begin{prop}\label{prop:pure-sheaves}
  Let $\XX\to\spec{k}$ be a  projective Deligne-Mumford stack and $\pi\colon\XX\to X$ the moduli scheme. Let $\FF$ be a coherent sheaf on $\XX$. If the sheaf $\FF$ is pure of dimension $d$ the sheaf $F_{\EE}(\FF)$ is pure of the same dimension.
\end{prop}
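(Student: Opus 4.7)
The approach is to show that $F_{\EE}(\FF)$ has no associated point of dimension strictly less than $d$; combined with the bound on its support, this gives purity. By Lemma~\ref{lem:pure-sheaves-supp}, $F_{\EE}(\FF)$ is non-zero and $\supp F_{\EE}(\FF) \subseteq \pi\supp \FF$. Since $\pi$ is proper with finite fibres, it preserves dimensions of closed substacks, so $\dim\supp F_{\EE}(\FF) \leq \dim\pi\supp\FF = d$. By Remark~\ref{rem:pure-sheaves-supp}\,(\ref{item:17}), it then suffices to prove that every point $x \in \Ass(F_{\EE}(\FF))$ satisfies $\dim\overline{\{x\}} = d$.

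Suppose for contradiction that $x \in \Ass(F_{\EE}(\FF))$ with $\dim\overline{\{x\}} < d$. Then there is a non-zero section of $F_{\EE}(\FF)$ annihilated by $\mathfrak{m}_x$, and it generates a non-zero subsheaf $i\colon G \hookrightarrow F_{\EE}(\FF)$ with $\supp G \subseteq \overline{\{x\}}$, so in particular $\dim\supp G < d$. By the $(G_{\EE}, F_{\EE})$-adjunction this inclusion corresponds to the morphism
\[
\theta \coloneqq \theta_{\EE}(\FF)\circ G_{\EE}(i)\colon G_{\EE}(G) \lra \FF.
\]
Naturality of the unit $\varphi_{\EE}$ together with the triangle identity in Lemma~\ref{lem:idenity-theta-phi} yields $i = F_{\EE}(\theta)\circ \varphi_{\EE}(G)$; hence $\theta \ne 0$, for otherwise $i$ would vanish, contradicting the choice of $G$.

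Set $\FF' \coloneqq \im\theta$, a non-zero subsheaf of $\FF$. Because $\EE$ is locally free and supported on all of $\XX$, one has $\supp G_{\EE}(G) = \supp(\pi^{\ast} G \otimes \EE) = \pi^{-1}(\supp G)$, and $\pi$ having finite fibres gives $\dim\pi^{-1}(\supp G) = \dim\supp G < d$. Consequently $\dim\supp \FF' < d$. But $\FF$ is pure of dimension $d$, so every non-zero subsheaf must have support of pure dimension $d$. This contradicts $\dim\supp\FF' < d$ and completes the argument.

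The main obstacle I expect is the adjunction step: verifying that the adjoint $\theta$ of the non-zero inclusion $i$ is itself non-zero. This relies essentially on the injectivity of $\varphi_{\EE}$ noted in~\eqref{eq:2} and on the triangle identity packaged in Lemma~\ref{lem:idenity-theta-phi}; the rest of the proof is bookkeeping about supports, once we exploit that $\pi$ is proper and quasi-finite so that it preserves dimensions both ways between $\XX$ and $X$.
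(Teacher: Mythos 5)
Your proof is correct, but it follows a genuinely different route from the paper's. The paper works \'etale-locally on the moduli space: using Theorem \ref{thm:tame-stacks-1} it reduces to equivariant modules $M_i$ over linearly reductive groups, proves the containment $\Ass{((\leftsub{A_i}{M_i})^{G_i})}\subseteq h\Ass{(M_i)}$, and then transports associated points back through the finite and \'etale maps via Lemma \ref{lem:algebra-pure-sheaves} and Proposition \ref{prop:cohom-base-change-spcoarse}, obtaining the finer statement $\Ass{(\pi_\ast(\FF\otimes\EE^\vee))}\subseteq\pi\Ass{(\FF\otimes\EE^\vee)}$ from which purity follows because all the maps involved preserve dimensions of points. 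You instead argue globally and categorically: a low-dimensional nonzero subsheaf $G\subseteq F_{\EE}(\FF)$ would have a nonzero adjoint $\theta\colon G_{\EE}(G)\to\FF$ (nonzero precisely by the triangle identity of Lemma \ref{lem:idenity-theta-phi} and naturality of $\varphi_{\EE}$), whose image is a nonzero subsheaf of $\FF$ supported in $\pi^{-1}(\supp G)$, contradicting purity of $\FF$ since $\pi$ is a homeomorphism on underlying spaces. Your argument is shorter and avoids the local analysis of invariants entirely, using only the already-established adjunction formalism together with the support estimate and non-vanishing from Lemma \ref{lem:pure-sheaves-supp}; it also adapts readily to give Corollary \ref{cor:torsion-filtration-preserved}. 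What the paper's approach buys is the explicit control of associated points under $\pi_\ast$, a more precise piece of information than purity alone. Two small points of hygiene: an associated point furnishes a section whose annihilator \emph{equals} the prime $\mathfrak{p}_x$ (not merely a section killed by it), and the nonzero subsheaf $G$ supported on $\overline{\{x\}}$ is most cleanly taken to be the subsheaf of sections with support in $\overline{\{x\}}$; neither affects the validity of the argument.
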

\begin{proof}
  We can use Theorem \ref{thm:tame-stacks-1}~\ref{item:7} to produce the usual local picture of a tame stack with a presentation:
  \begin{displaymath}
    \xymatrix@1{
      \coprod_i \spec{B_i} \ar@{->>}[r]^-{\chi} &  \XX \ar[r]^-{\pi} & X \\
       & \coprod_i[\spec{B_i}/G_i] \ar[r]^-{\rho}\ar@{->>}[u]^-{\phi} & \coprod_i\spec{A_i}\ar@{->>}[u]^-{\psi} \\
}
  \end{displaymath}
where vertical arrows are \'etale, the obvious map $\coprod_i\spec{B_i}\to[\spec{B_i}/G_i]$ composed with $\rho$ gives a finite morphism $h\colon\coprod\spec{B_i}\to\coprod_i\spec{A_i}$ and the square in the picture is cartesian. The sheaf $\chi^\ast(\FF\otimes\EE^\vee)$ is given by finitely generated $B_i$-modules $M_i$. It is clear that:
\begin{displaymath}
  \Ass{(\leftsub{A_i}{M_i})^{G_i}}\subseteq\Ass{(\leftsub{A_i}{M_i})}=h\Ass{M_i}
\end{displaymath}
and  $\Ass{(\leftsub{A_i}{M_i})^{G_i}}$ cannot be empty because of Lemma \ref{lem:pure-sheaves-supp}.
We can rewrite the formula as:
\begin{equation}\label{eq:5}
  \Ass{\rho_\ast\phi^\ast(\FF\otimes\EE^\vee)}\subseteq\Ass{h_\ast\chi^\ast(\FF\otimes\EE^\vee)}=h\Ass{\chi^\ast(\FF\otimes\EE^\vee)}=\rho\Ass{\phi^\ast(\FF\otimes\EE^\vee)}
\end{equation}
where the second equality follows from \ref{rem:pure-sheaves-supp}~\ref{item:17}. For the same reason if $\FF$ is pure of dimension $d$ the module $M_i$ is pure of the same dimension, moreover $h$ is finite and preserves the dimension of points so that $(\leftsub{A_i}{M_i})^G$ is pure of the same dimension. Since there are no embedded primes in $M_i$ and using Lemma \ref{lem:pure-sheaves-supp} we deduce that $M_i^G$ is pure of the same dimension. Using \ref{prop:cohom-base-change-spcoarse} we have $\rho_\ast\phi^\ast(\FF\otimes\EE^\vee)=\psi^\ast\pi_\ast(\FF\otimes\EE^\vee)$ that implies:
\begin{displaymath}
  \Ass{\psi^\ast\pi_\ast(\FF\otimes\EE^\vee)}\subseteq\rho\Ass{\phi^\ast(\FF\otimes\EE^\vee)}
\end{displaymath}
Using Lemma \ref{lem:algebra-pure-sheaves} we obtain:
\begin{displaymath}
  \Ass{\pi_\ast(\FF\otimes\EE^\vee)}=\psi\Ass{\psi^\ast\pi_\ast(\FF\otimes\EE^\vee)}\subseteq\psi\circ\rho\Ass{\phi^\ast(\FF\otimes\EE^\vee)}=\pi\Ass{\FF\otimes\EE^\vee}
\end{displaymath}
Since $\phi$ and $\psi$ are \'etale and preserve the dimension of points \cite[I Prop 3.14]{Met}, and  $h$ is finite and preserves the dimension of points,  we obtain that $F_\EE(\FF)$ is pure of dimension $d$. 
\end{proof}
With this result the following corollary is immediate:
\begin{cor}\label{cor:torsion-filtration-preserved}
  Let $\XX\to \Spec{k}$ be a projective DM stack and $\FF$ a pure coherent sheaf on $\XX$ of dimension $d$. Consider the torsion filtration $0\subset T_0(\FF)\subset\ldots \subset T_d(\FF)=\FF$. The functor $F_{\EE}$ sends the torsion filtration to the torsion filtration of $F_{\EE}(\FF)$ that is $F_{\EE}(T_i(\FF))=T_i(F_{\EE}(\FF))$.  
\end{cor}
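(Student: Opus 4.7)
The plan is to reduce the Corollary to Proposition \ref{prop:pure-sheaves} via the exactness of $F_{\EE}$ and the standard characterization of the torsion filtration as the maximal subsheaf of each given dimension. I will first note that, by Remark \ref{rem:FE-exact}, the functor $F_{\EE}$ is exact, so applying it to the torsion filtration $0\subset T_0(\FF)\subset\ldots\subset T_d(\FF)=\FF$ yields an ascending filtration $0\subseteq F_{\EE}(T_0(\FF))\subseteq\ldots\subseteq F_{\EE}(T_d(\FF))=F_{\EE}(\FF)$, with graded pieces $F_{\EE}(T_i(\FF)/T_{i-1}(\FF))$. Each $T_i(\FF)/T_{i-1}(\FF)$ is either zero or pure of dimension $i$, so by Proposition \ref{prop:pure-sheaves} so is its image under $F_{\EE}$.

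For the inclusion $F_{\EE}(T_i(\FF))\subseteq T_i(F_{\EE}(\FF))$, I will argue by induction on $i$: the short exact sequence $0\to F_{\EE}(T_{i-1}(\FF))\to F_{\EE}(T_i(\FF))\to F_{\EE}(T_i(\FF)/T_{i-1}(\FF))\to 0$ together with the inductive hypothesis shows that $F_{\EE}(T_i(\FF))$ has support of dimension at most $i$ (recall from Remark \ref{rem:pure-sheaves-supp} that dimension can be checked after an \'etale presentation). By the defining property of $T_i(F_{\EE}(\FF))$ as the maximal such subsheaf, the desired inclusion follows.

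For the reverse inclusion, I will exploit the quotient. By exactness of $F_{\EE}$, the quotient $F_{\EE}(\FF)/F_{\EE}(T_i(\FF))$ is canonically $F_{\EE}(\FF/T_i(\FF))$. The torsion filtration of $\FF/T_i(\FF)$ is $T_j(\FF)/T_i(\FF)$ for $j>i$, whose graded pieces are pure of dimension $>i$; applying Proposition \ref{prop:pure-sheaves} again, $F_{\EE}(\FF/T_i(\FF))$ admits a filtration by subsheaves whose successive quotients are pure of dimensions $>i$. Since associated points of an extension are contained in the union of associated points of the pieces (item \ref{item:17} of Remark \ref{rem:pure-sheaves-supp}), every associated point of $F_{\EE}(\FF/T_i(\FF))$ has dimension $>i$. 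Hence this quotient contains no nonzero subsheaf of dimension $\leq i$, so the image of $T_i(F_{\EE}(\FF))$ in it vanishes, giving $T_i(F_{\EE}(\FF))\subseteq F_{\EE}(T_i(\FF))$.

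The two inclusions together yield the equality. The main conceptual obstacle is already absorbed by Proposition \ref{prop:pure-sheaves}; what remains is routine homological bookkeeping, the only subtle point being the behavior of associated points under extensions, which I would invoke as a general fact and verify (if needed) by reduction to an \'etale presentation in the spirit of Remark \ref{rem:pure-sheaves-supp}.
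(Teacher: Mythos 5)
Your argument is correct and follows the paper's route: the paper simply declares the corollary immediate from Proposition \ref{prop:pure-sheaves}, and your two-inclusion argument (exactness of $F_{\EE}$ plus preservation of purity and dimension, combined with the maximality characterization of $T_i$ and the fact that the associated points of an extension lie in the union of those of its pieces) is exactly the standard way to make that immediacy precise. Note only that as literally stated the corollary assumes $\FF$ pure, in which case the torsion filtration is trivial and there is nothing to prove; your proof correctly treats a general coherent $\FF$ of dimension $d$, which is the version actually used later (e.g.\ in Lemma \ref{lem:lepotier-deform-sheaves}).
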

We can sharpen a little bit the result in Proposition \ref{prop:pure-sheaves} and prove that the support of $F_{\EE}(\FF)$ is actually $\pi\supp{\FF}$.
\begin{cor}
  Let $\XX$ and $\FF$ be as in the previous theorem then $\supp{F_{\EE}(\FF)}=\pi\supp{\FF}$
\end{cor}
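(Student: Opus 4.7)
My plan is to deduce this from the non-vanishing half of Lemma \ref{lem:pure-sheaves-supp}, by applying it to suitable open substacks. The inclusion $\supp F_{\EE}(\FF) \subseteq \pi\supp{\FF}$ is already recorded in equation \eqref{eq:supp-fe}, so the only remaining task is to prove the opposite inclusion $\pi\supp{\FF} \subseteq \supp F_{\EE}(\FF)$.

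To this end, I will fix an arbitrary point $y \in \pi\supp{\FF}$ together with an arbitrary open neighborhood $U \subseteq X$ of $y$, and show that $F_{\EE}(\FF)|_U$ is non-zero. By the choice of $y$, there exists $x \in \supp{\FF}$ with $\pi(x) = y$, so $x$ lies in the open substack $\pi^{-1}(U) \subseteq \XX$, and therefore $\FF|_{\pi^{-1}(U)}$ is a non-zero coherent sheaf. The restriction $\EE|_{\pi^{-1}(U)}$ remains a locally free generating sheaf for the tame open substack $\pi^{-1}(U) \to U$, since the property of containing every irreducible representation of every stabilizer is preserved by restriction to an open. Formation of $F_{\EE}$ also commutes with restriction to opens of the base, so
\begin{displaymath}
F_{\EE}(\FF)|_U \;=\; F_{\EE|_{\pi^{-1}(U)}}\bigl(\FF|_{\pi^{-1}(U)}\bigr).
\end{displaymath}
Applying the non-vanishing clause of Lemma \ref{lem:pure-sheaves-supp} to this restricted situation then gives $F_{\EE}(\FF)|_U \neq 0$, and since $U$ was arbitrary this means $y \in \supp F_{\EE}(\FF)$, as desired.

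The main (mild) obstacle is checking that the non-vanishing clause of Lemma \ref{lem:pure-sheaves-supp} genuinely applies to the open substack $\pi^{-1}(U)$, which strictly speaking is only quasi-projective rather than projective. However, an inspection of that proof shows it is entirely local on the moduli space: it localizes at the generic point of an irreducible component of $\supp{\FF}$, passes to the residual gerbe, and uses only tameness together with the fact that the pullback of a generating sheaf to a residual gerbe still contains every irreducible representation of the stabilizer. All of these ingredients are preserved by restriction to an open substack, so no genuine new difficulty arises and the argument goes through verbatim.
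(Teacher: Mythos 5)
Your proof is correct, but it takes a genuinely different route from the paper's. The paper deduces the missing inclusion from Proposition \ref{prop:pure-sheaves}: it reduces to showing that the generic point $p$ of each irreducible component of $\supp{\FF}$ maps to an \emph{associated} point of the pure sheaf $F_{\EE}(\FF)$, restricts everything to the \emph{closed} subscheme $\Pi=\overline{\pi(p)}$ and its preimage, invokes the base-change result of Proposition \ref{prop:cohom-base-change-spcoarse} to identify $F_{\EE}(\FF)\vert_{\Pi}$ with the pushforward of the restricted sheaf, and only then applies the non-vanishing clause of Lemma \ref{lem:pure-sheaves-supp}. You instead restrict to \emph{open} neighborhoods, where compatibility of $F_{\EE}$ with restriction is immediate (pushforward always commutes with open base change), and conclude from closedness of the support of a coherent sheaf; your observation that the proof of the non-vanishing clause of Lemma \ref{lem:pure-sheaves-supp} is local on $X$ (residual gerbe, tameness, the very-ampleness of $\EE\vert_{\pi^{-1}(U)}$) and never uses projectivity is the right justification and is accurate. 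What your approach buys: it avoids the purity and associated-points machinery entirely, so it needs neither Proposition \ref{prop:pure-sheaves} nor the nontrivial base change along a closed immersion, and it actually proves $\supp{F_{\EE}(\FF)}=\pi\supp{\FF}$ for an arbitrary coherent $\FF$, not just a pure one. What the paper's route buys is coherence with the associated-points analysis it has already set up and reuses elsewhere. (Incidentally, your argument can be compressed further: if $y\notin\supp{F_{\EE}(\FF)}$, take $U=X\setminus\supp{F_{\EE}(\FF)}$ and apply the lemma once to see $\FF\vert_{\pi^{-1}(U)}=0$, so $y\notin\pi\supp{\FF}$.)
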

\begin{proof}
 According to Proposition \ref{prop:pure-sheaves} this could be false if and only if the generic point $p$ of an irreducible component of the support of $\FF$ is mapped to a point $\pi(p)$ that is not associated to the pure sheaf $F_{\EE}(\FF)$. Let $\Pi$ be the closure in $X$ of $\pi(p)$ and $\XX_{\Pi}$ the base change of $\XX$ to $\Pi$ with map $\pi_p\colon \XX_{\Pi}\to \Pi$. According to Proposition \ref{prop:cohom-base-change-spcoarse} we have ${\pi_P}_\ast (\FF\otimes\EE^\vee)\vert_{\XX_{\Pi}}\cong F_{\EE}(\FF)\vert_{\Pi}$. We observe that $\EE\vert_{\XX_{\Pi}}$ is again a generating sheaf and $\FF\vert_{\XX_{\Pi}}$ is pure of the dimension $d$, we can apply Lemma \ref{lem:pure-sheaves-supp} and conclude that  ${\pi_P}_\ast (\FF\otimes\EE^\vee)\vert_{\XX_{\Pi}}$ is a non zero sheaf pure of dimension $d$. However $\Pi$ is irreducible so that $\pi(p)$ must be associated.   
\end{proof}
\begin{rem}
In order to classify coherent sheaves on a scheme or a stack we consider three filtrations which let us split the problem in simpler pieces. The first one is the torsion filtration that reduces the problem to the study of pure sheaves, the second is the Harder-Narasimhan filtration that reduces the problem to the study of pure dimensional semistable sheaves, and the last one is the Jordan-H\"older filtration that reduces the problem to the study of stable sheaves. We have these three filtrations both on a projective stack $\XX$ and on its projective moduli scheme $X$; while the torsion filtration on $\XX$ is sent to the torsion filtration on $X$, the functor $F_{\EE}$ doesn't respect the other two filtrations as it will be clear in the following of this work.
\end{rem}

\subsection{Stability condition}
\begin{defn}
Let $\FF$ be a coherent sheaf on $\XX$, we define the following \textit{modified Hilbert polynomial}:
\begin{displaymath}
  P_{\EE}(\FF,m)=\chi(\XX,\FF\otimes\EE^\vee\otimes\pi^\ast\OO_X(m))=P(F_{\EE}(\FF)(m))=\chi(X,F_{\EE}(\FF)(m))
\end{displaymath}
\end{defn}
\begin{rem}\label{rem:additive-hilbert-polynomial}
\begin{enumerate}
\item If the sheaf $\FF$ is pure of dimension $d$, the function $m\mapsto  P_{\EE}(\FF,m)$ is a polynomial and we will denote it with:
\begin{equation}\label{item:14}
  P_{\EE}(\FF,m)=\sum_{i=0}^d \alpha_{\EE,i}(\FF)\frac{m^i}{i!}
\end{equation}
This is true since  the functor $F_{\EE}$ preserves the pureness and dimension of sheaves \ref{prop:pure-sheaves}, so that we can conclude as in the case of schemes. 
\item The modified Hilbert polynomial is additive on short exact sequences since the functor $F_{\EE}$ is exact \ref{rem:FE-exact} and the Euler characteristic is additive on short exact sequences.
\item The modified Hilbert polynomial is not a generalized Hilbert polynomial in the sense of Olsson and Starr \cite[Def 4.1]{MR2007396}.
\end{enumerate}
\end{rem}
\begin{defn}
As usual we define the \textit{reduced Hilbert polynomial} for pure sheaves, and we will denote it with $p_{\EE}(\FF)$; it is the monic polynomial with rational coefficients $ \frac{P_{\EE}(\FF)}{\alpha_{\EE,d}(\FF)}$.
\end{defn}
\begin{defn}
We define also the \textit{slope} of a sheaf of dimension $d$ (not necessarily pure):
\begin{displaymath}
  \hat{\mu}_{\EE}(\FF)= \frac{\alpha_{\EE,d-1}(\FF)}{\alpha_{\EE,d}(\FF)}
\end{displaymath}
\end{defn} 
We will also use the ordinary slope of a  sheaf $F$ on a scheme, and we will denote it with $\hat{\mu}(F)$ as usual (see \cite[Def 1.6.8]{MR1450870}). 

And here the definition of stability:
\begin{defn}
  Let $\FF$ be a pure coherent sheaf, it is \textit{semistable} if for every proper subsheaf $\FF'\subset\FF$ it is verified $p_{\EE}(\FF')\leq p_{\EE}(\FF)$ and it is stable if the same is true with a strict inequality. 
\end{defn}
\begin{rem}
\begin{enumerate}
\item The notion of $\mu$-stability and semistability  are defined in the evident way and they are related to Gieseker stability as they are in the case of schemes.
\item When $\XX$ is a scheme this notion of stability is not necessarily the same as ordinary Gieseker stability; it is clear that every locally free sheaf in this case is a generating sheaf. As a special example we can consider parabolic sheaves on a scheme $X$ provided with the special parabolic structure, that is a filtration $F(-D)\subset F$. Parabolic sheaves on $X$ with the special parabolic structure are obviously equivalent to sheaves on $X$, however parabolic stability requires $\EE$ to be $\OO_X(D)\oplus \OO_X$. In general this is not equivalent to Gieseker stability as observed in \cite{MR1162674}.
\item  The functor $F_{\EE}$ doesn't map semistable sheaves on $\XX$ to semistable sheaves on $X$. Indeed it induces a closed immersion of the $\Quot$-scheme of $\XX$ in the $\Quot$-scheme of $X$; this means that in general we have ``more quotients'' on $X$ then on $\XX$.
\end{enumerate}
\end{rem}
The \textit{multiplicity} or rank of the sheaf $F_{\EE}(\FF)$ is the usual thing: if the Hilbert polynomial of $\OO_X$ has coefficients $a_d(\OO_X),\ldots,a_0(\OO_X)$ it is given by:
\begin{displaymath}
  \rk{F_{\EE}(\FF)}=\frac{\alpha_{\EE,d}(\FF)}{a_d(\OO_X)}
\end{displaymath}
We can also try to relate the rank of the sheaf $\FF$ to the Hilbert polynomial. Let $P(\FF,m)=\chi(\FF\otimes\pi^\ast\OO_X(m))=\sum_{i=0}^d \alpha_i(\FF)\frac{m^i}{i!}$ be the Hilbert polynomial of $\FF$ with respect to the polarization $\pi^\ast\OO_X(1)$ alone. We could be tempted to define the rank of $\FF$ using this polynomial. Assume that $\XX$ is an orbifold, we can put $\rk{\FF}\coloneqq \frac{\alpha_d(\FF)}{\alpha_{d}(\OO_X)}$. This is a reasonable definition. Indeed if $\FF$ is locally free this is the rank of the free module. This is because the only contribution to the rank from the To\"en-Riemann-Roch formula is from the piece $\int_\XX \ch{(\FF\otimes\pi^\ast\OO_X(m))}\Td{(T_\XX)}$ (see the next subsection for some recall about the To\"en-Riemann-Roch formula). Assume that $\XX$ is a smooth Deligne-Mumford stack with non generically trivial stabilizer and $\FF$ is locally free. In this case this is not the rank of the locally free sheaf but the rank of a direct summand\footnotemark of $\FF$. \footnotetext{A quasicoherent sheaf on a $G$-banded gerbe, where $G$ is a diagonalizable group scheme decomposes according to the irreducible representations of the group (This is written in many papers). The direct summand is the one corresponding to the trivial representation.} 

To conclude the section we write a technical lemma. It states that given a flat family of sheaves the modified Hilbert polynomial is locally constant on the fibers. It replaces the analogous one for generalized Hilbert polynomials \cite[Lem 4.3]{MR2007396}.
\begin{lem}\label{lem:hilbert-poly-fibers}
  Let $\XX\to S$ be a family of projective stacks with chosen $\EE,\OO_X(1)$ and $\FF$ an $\OO_S$-flat sheaf on $\XX$. Assume $S$ is connected. There is an integral polynomial $P$ such that for every point $\Spec{K}\xrightarrow{s}S$ the modified Hilbert polynomial of the fiber $\chi(\XX_s,\FF\otimes\EE^\vee\otimes\pi^\ast\OO_X(m)\vert_{\XX_s})=P(m)$.     
\end{lem}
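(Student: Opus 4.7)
The plan is to reduce the statement to the well-known analogous result for flat families of coherent sheaves on projective schemes, using the functor $F_{\EE}$ as a bridge between $\XX$ and its moduli scheme $X$.

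First I would observe that $F_{\EE}(\FF)=\pi_{\ast}(\FF\otimes\EE^{\vee})$ is a coherent $\OO_{X}$-module which is $\OO_{S}$-flat. Coherence follows from Remark \ref{item:18}, and flatness follows from Corollary \ref{cor:tame-stack-2}(\ref{item:3}) applied to the $\OO_{S}$-flat quasicoherent sheaf $\FF\otimes\EE^{\vee}$ (which is $\OO_{S}$-flat because $\EE^{\vee}$ is locally free and $\FF$ is $\OO_{S}$-flat). Moreover, $X\to S$ is a projective morphism by the definition of a family of projective stacks (Definition \ref{def:projective-family}), and $\OO_{X}(1)$ provides a polarization.

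Next, for any geometric point $s\colon\Spec{K}\to S$, consider the $2$-cartesian diagram
\begin{displaymath}
\xymatrix{
\XX_{s}\ar[d]^{\pi_{s}}\ar[r]^{\sigma} & \XX\ar[d]^{\pi} \\
X_{s}\ar[r]^{\rho} & X
}
\end{displaymath}
By Corollary \ref{cor:tame-stack-2}(\ref{item:1}), $\pi_{s}\colon\XX_{s}\to X_{s}$ is the moduli space of $\XX_{s}$, so $\XX_{s}$ is again a tame stack and $\pi_{s\,\ast}$ is exact. Applying Proposition \ref{prop:cohom-base-change-spcoarse} to the sheaf $\FF\otimes\EE^{\vee}$ gives a natural isomorphism
\begin{displaymath}
\rho^{\ast}F_{\EE}(\FF)\;\cong\;\pi_{s\,\ast}\bigl((\FF\otimes\EE^{\vee})\vert_{\XX_{s}}\bigr).
\end{displaymath}
Twisting both sides by $\OO_{X_{s}}(m)$ and using the projection formula of Lemma \ref{lem:coherent-projection} together with exactness of $\pi_{s\,\ast}$, we conclude that
\begin{displaymath}
\chi\bigl(\XX_{s},\FF\otimes\EE^{\vee}\otimes\pi^{\ast}\OO_{X}(m)\vert_{\XX_{s}}\bigr)=\chi\bigl(X_{s},F_{\EE}(\FF)(m)\vert_{X_{s}}\bigr).
\end{displaymath}

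The right-hand side is the ordinary Hilbert polynomial (with respect to the polarization $\OO_{X}(1)$) of the fiber at $s$ of the $\OO_{S}$-flat coherent sheaf $F_{\EE}(\FF)$ on the projective $S$-scheme $X$. By the classical result for flat families of coherent sheaves on projective schemes over a noetherian base, this integer-valued polynomial is locally constant on $S$; since $S$ is assumed connected, it is in fact constant, giving the desired integral polynomial $P$. No step presents a genuine obstacle here: the entire argument is a packaging of the cohomology-and-base-change machinery (Proposition \ref{prop:cohom-base-change-spcoarse} and Corollary \ref{cor:tame-stack-2}) developed in the previous section, with tameness of $\XX$ playing the essential role of transferring flatness and exactness from the stack to its moduli scheme.
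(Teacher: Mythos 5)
Your proof is correct and follows essentially the same route as the paper: reduce to the moduli scheme via Proposition \ref{prop:cohom-base-change-spcoarse} (together with flatness of $F_{\EE}(\FF)$ from Corollary \ref{cor:tame-stack-2}), then invoke the classical constancy of the Hilbert polynomial for a flat family on a projective $S$-scheme (EGA III, 7.9.4). You have merely spelled out in more detail the base-change identification that the paper's one-line proof leaves implicit.
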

\begin{proof}
  Since $\pi$ preserves flatness and using \ref{prop:cohom-base-change-spcoarse} together with \ref{thm:existence-generating-sheaf}~\ref{item:16} we reduce the problem to the moduli scheme $X$. We have to prove that the integral polynomial $\chi(X_s,F_\EE(\FF)(m)\vert_{X_s})$ doesn't depend from $s$, but this is the statement of \cite[Thm 7.9.4]{MR0163911}.
\end{proof}
\begin{rem}
Using this lemma and generic-flatness (Prop \ref{prop:generic-flatness}) we can produce a stratification $\coprod S_i\to S$ such that on each $S_i$ the sheaf $\FF$ is flat and its modified Hilbert polynomial is constant. Again this is not the same as a flattening stratification since the universal property of a flattening stratification described by Mumford in \cite{MR0209285} is missing. Let $p\colon X\to S$ be a projective morphism of schemes, Mumford constructed the flattening stratification for such a morphism relying on the couple of functors $\Gamma_\ast,\widetilde{}\;$ where $\Gamma_\ast(F)=\bigoplus_{m\geq 0} p_\ast F(m)$ and $\;\widetilde{}\;$ is its inverse. In particular he was able to prove that $F$ is $S$-flat if and only if for all sufficiently large $m$ the sheaves $p_\ast F(m)$ are locally free. In the case of projective stacks $\XX\xrightarrow{\pi}X\xrightarrow{p}S$ we would need an analogous couple of functors. We could think of using $\Gamma_\ast\circ F_\EE$. As we will see in Lemma \ref{lem:left-inverse} it has a left inverse $\eta\circ\widetilde{}\;$; however it's evident that the statement $\FF$ is $S$-flat if and only if for all sufficiently large $m$ the sheaves $p_\ast F_{\EE}(\FF)(m)$ are locally free is quite false.
\end{rem}

\subsection{To\"en-Riemann-Roch and geometric motivations}

%

It is natural to ask if the degree of the sheaf $\FF$, computed with respect to $\pi^\ast\OO_X(1)$ is related to the slope $\widehat{\mu}_{\EE}$. It is, in a wide class of examples, but in general it is not. To explain this kind of relation we need some machinery from the paper of To\"en \cite{MR1710187}. We recall a couple of ideas from the Riemann-Roch theorem for smooth tame Deligne-Mumford stacks. For the sake of simplicity we restrict to the case where $\XX$ is a smooth tame Deligne-Mumford stack over an algebraically closed field $k$ and it is a global quotient of a scheme $Z$ by the action of a diagonalizable group scheme $G$. For the general case we redirect the reader to \cite[Thm 3.15]{MR1710187}, \cite[Def 4.5-4.7]{MR1710187} and \cite[Thm 4.10]{MR1710187}. Denote with $\sigma\colon I_{\XX}\to \XX$  the inertia stack. Let $\FF$ be a coherent sheaf on $\XX$ and consider the sheaf $\sigma^\ast\FF$. The inertia stack can be written as a disjoint union of Deligne-Mumford stacks $\XX_{g}=[Z^g/G]$ where $g\in G$ stabilizes the closed subscheme $Z^g\subset Z$. The coherent sheaf $\sigma^\ast\FF$ is a disjoint union of sheaves on the $\XX_g$'s. Each of these components carry an action of the cyclic group $\langle g \rangle$, whose order is coprime with the characteristic of $k$ by the tameness assumption. This implies that we can choose an isomorphism between $\langle g \rangle$ and $\mu_{a,k}$,  $a$ the order of $\langle g \rangle$, that sends $g$ to $\xi$, a generator of $\mu_{a,k}$. Moreover we can decompose the sheaf $\sigma^\ast\FF$ according to the irreducible representations of $\langle g \rangle$ in a direct sum of eigensheaves $\FF^{(z)}$ where $z\in\mu_{\infty}$. On each $\FF^{(z)}$ the element $g$ acts by multiplication by $z$. We will denote with  $\rho_{\XX}([\sigma^\ast\FF])$ the element $\sum_{z\in\mu_{\infty}} z[\FF^{(z)}]$ in $K_0(I_{\XX})\otimes_{\mathbb{Z}}\mathbb{Q}(\mu_{\infty})$. Denote also with $I^1_{\XX}$ the substack of the inertia stack made of connected components of codimension $1$ and by $\sigma_1\colon I^1_{\XX}\to \XX$ the composition of the inclusion of $I^1_\XX$ in $I_\XX$ and the morphism $\sigma$.  
\begin{prop}\label{prop:degree-mu}
  Assume that $\XX$ is a projective (connected) orbifold that is a  global quotient $[Z/G]$ where $G$ is diagonalizable. The generating sheaf $\EE$ is chosen so that $\rho_{\XX}([\sigma_1^\ast\EE])$ is a sum of locally free  sheaves of the same rank on each connected component of $I_\XX^1$ (the rank can change from a component to the other); let $\FF$ be a locally free sheaf of rank $r$  and $e$ be the rank of $\EE$ then we have:
  \begin{equation}\label{eq:9}
 \frac{\deg{(\FF)}}{r}=\frac{\alpha_{\EE,d-1}(\FF)}{re}-\frac{\alpha_{\EE,d-1}(\OO_\XX)}{e}   
  \end{equation}
\end{prop}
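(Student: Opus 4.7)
The plan is to apply the To\"en-Riemann-Roch theorem to the sheaf $\FF\otimes\EE^\vee\otimes\pi^\ast\OO_X(m)$, extract the coefficient of $m^{d-1}$ on both sides, and track which components of the inertia stack actually contribute. I would start by writing
\begin{equation*}
  P_{\EE}(\FF,m) \;=\; \int_{I_\XX} \ch\bigl(\rho_\XX([\sigma^\ast\FF])\bigr)\,\ch\bigl(\rho_\XX([\sigma^\ast\EE^\vee])\bigr)\,e^{mH}\,\Td(T_{I_\XX}),
\end{equation*}
where $H=\sigma^\ast\pi^\ast c_1(\OO_X(1))$, using that the pulled-back polarization lies in the trivial eigenspace on every component and that $\rho_\XX$ is multiplicative on tensor products. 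Decomposing $I_\XX=\XX\sqcup I^1_\XX\sqcup R$ with $R$ collecting components of codimension $\geq 2$, the integral over $R$ only contributes to coefficients $\alpha_{\EE,i}(\FF)$ with $i\leq d-2$ and can be discarded.

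The identity-component contribution is the classical HRR one: the coefficient of $m^{d-1}/(d-1)!$ equals
\begin{equation*}
  \int_\XX \bigl(e\,c_1(\FF) - r\,c_1(\EE) + \tfrac{re}{2}\,c_1(T_\XX)\bigr)\,H^{d-1}.
\end{equation*}
Specializing this to $\FF=\OO_\XX$ (so $r=1$, $c_1(\FF)=0$), multiplying by $r$, and subtracting, the $\deg(\EE)$ and $c_1(T_\XX)$ pieces cancel and one is left with exactly $e\deg(\FF)$, which is $re$ times the left-hand side of (\ref{eq:9}).

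The hard step is the $I^1_\XX$ contribution. On a component $\XX_g\subset I^1_\XX$ the degree-zero piece of the integrand factors as $\bigl(\sum_z z\,\rk\FF^{(z)}\bigr)\bigl(\sum_w w\,\rk(\EE^\vee)^{(w)}\bigr)$, paired with $H^{d-1}/(d-1)!$ of degree $d-1$ on $\XX_g$. The corresponding term for $r\alpha_{\EE,d-1}(\OO_\XX)$ replaces the first factor by $r\cdot 1=r$, since $\sigma^\ast\OO_\XX$ lies in the trivial eigenspace. The hypothesis on $\EE$ is exactly what forces cancellation here: if all eigensheaves of $\EE$ on $\XX_g$ share a common rank $k$, then $\sum_w w\,\rk(\EE^{\vee\,(w)}) = k\sum_w w$, where $w$ ranges over the characters of the generic stabilizer of $\XX_g$. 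Since $\EE$ is $\pi$-very ample every irreducible character of this non-trivial cyclic stabilizer appears, so the sum of all $n$-th roots of unity appears, which is zero. This kills both the $\FF$ and the $\OO_\XX$ terms on $\XX_g$ simultaneously, and summing over components of $I^1_\XX$ gives zero.

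Combining the identity-component computation with the vanishing on $I^1_\XX$ and the irrelevance of $R$, I obtain $\alpha_{\EE,d-1}(\FF) - r\,\alpha_{\EE,d-1}(\OO_\XX) = e\deg(\FF)$, which rearranges to (\ref{eq:9}). The main obstacle, and the reason the rank hypothesis on $\EE$ is phrased as it is, is verifying that the codimension-one twisted sectors do not leave a residual first Chern class contribution; the identity component alone does not reproduce the desired identity, so the assumption on $\EE$ is essential rather than cosmetic.
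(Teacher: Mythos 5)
Your proposal is correct and follows essentially the same route as the paper's proof: apply To\"en--Riemann--Roch, note that only the untwisted sector and the codimension-one twisted sectors can contribute to the $m^{d-1}$ coefficient, extract the classical term on the identity component, and kill the $I^1_\XX$ contributions by observing that the equal-rank hypothesis reduces them to a multiple of the sum of all $n_g$-th roots of unity, which vanishes. You merely spell out the "little algebra" the paper leaves implicit (including the explicit cancellation of the $c_1(\EE)$ and $\Td$ terms after subtracting the $\OO_\XX$ case), so no further comment is needed.
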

\begin{proof}
  This is a computation with the To\"en-Riemann-Roch formula. The degree receives  contributions only from pieces of codimension zero and codimension one in the inertia stack. Since we have assumed that $\XX$ is an orbifold the only piece in codimension zero contributing to $P_{\EE}(\FF,m)$ is:
  \begin{displaymath}
    \int_{\XX}\ch{(\FF\otimes\EE^\vee)}\ch{(\pi^\ast\OO_X(m))}\Td{(T_{\XX})}
  \end{displaymath}
Let $\sigma_g\colon\XX_g\to \XX$ be the connected components of $I^1_{\XX}$ and $\langle g \rangle\cong\mu_{n_g,k}$. Each component contributes with:
\begin{displaymath}
  \sum_{i,j=1}^{n_g} \xi^{i-j}\int_{\XX_g}\frac{\ch{(\FF_i\otimes\EE_j^\vee)}\ch{(\sigma_g^\ast\pi^\ast\OO_X(m))}\Td{(T_{\XX_g})}}{Q(\xi,c(N_{\XX_g\vert\XX}))}
\end{displaymath}
The coherent sheaves $\FF_i$ and $\EE_j$ are summands in the decomposition in eigensheaves of $\sigma_g^\ast\FF$ and $\sigma_g^\ast\EE$ with respect to the group $\mu_{n_g,k}$. The function $Q(\xi,c(N_{\XX_g\vert\XX}))$ is an opportune rational polynomial in $\xi$ and the Chern classes of the normal bundle $N_{\XX_g\vert\XX}$; the complex number   $\xi\in\mu_{n_g}$ is different from $1$. The sum $\sum_{j=1}^{n_g} \xi^{-j}$ vanishes, if we can also assume that $\rk{\EE_j}$ does not depend from  $j$ we retrieve the claimed identity with a little algebra.  
\end{proof}
\begin{rem}
\begin{enumerate}
\item If the stack has a non generically trivial stabilizer we have to take care of contributions to the To\"en-Riemann-Roch formula coming from twisted sectors of codimension zero. In order to retrieve formula  (\ref{eq:9}) we have to take care of the vanishing of expressions like:
  \begin{displaymath}
    \sum_{j=1}^{n_g} z^{-j}(\rk{(F_i)}c_1(\EE_j^\vee)+c_1(F)\rk{(\EE_j^\vee)})
  \end{displaymath}
where $z\neq 1$ and $n_g$ is again the order of the cyclic group generated by $g$.
We can achieve this requiring that $\rk{\EE_j^\vee}$ does not depend from $j$ and that the determinant of $\EE^\vee$ is some fixed invertible sheaf. This second request doesn't sound very reasonable in general.
\item There are cases where $\EE$ can be chosen so that $\rk{\EE_j^\vee}$ in Proposition \ref{prop:degree-mu} and in the previous point are equal to one. The component $\XX_g$ of the inertia stack can be considered as a $\langle g \rangle$-gerbe over a Deligne-Mumford stack and as a matter of fact $\langle g \rangle$-banded when $G$ is diagonalizable. The existence of an invertible sheaf over a $\langle g \rangle$-banded gerbe which is an eigensheaf with respect to the representation associated to a generator of the group (an invertible twisted sheaf) is not a trivial fact. If $\XX_g$ is a $\langle g \rangle$-gerbe over a scheme, such an invertible sheaf exists if and only if the gerbe is essentially trivial \cite[Lem 2.3.4.2]{MR2309155}. Despite this being a stringent assumption there are significant cases where this is satisfied. If $\XX$ is a toric stack the inertia stack is again toric and every toric gerbe is abelian and essentially trivial. In the toric case it is always possible to find a generating sheaf $\EE$ satisfying the condition in \ref{prop:degree-mu} or even in the case of a smooth  Deligne-Mumford curve with abelian stabilizers.   
\end{enumerate}
\end{rem}

\subsection{Harder-Narasimhan and Jordan-H\"older filtrations}

The last part of this section is devoted to the definition of the Harder-Narasimhan filtration and the Jordan-H\"older filtrations. The construction of these two filtrations doesn't differ from the case of sheaves on schemes which can be found in great detail in \cite[1.3]{MR1450870} and \cite[1.5]{MR1450870}; their existence in the case of stacks is granted by the fact that the functor $F_{\EE}$ is exact (Remark \ref{rem:FE-exact}) and that the modified Hilbert polynomial $P_{\EE}$ is additive for short exact sequences (Remark \ref{rem:additive-hilbert-polynomial}).  
\begin{defn}
  Let $\FF$ be a pure sheaf on $\XX$ . A strictly ascending filtration:
  \begin{displaymath}
    0=HN_0(\FF)\subset HN_1(\FF)\ldots \subset HN_l(\FF)=\FF
  \end{displaymath}
is a Harder-Narasimhan filtration if it satisfies the following:
\begin{enumerate}
\item the $i$-th graded piece $gr_i^{HN}(\FF)\coloneqq\frac{HN_i(\FF)}{HN_{i-1}(\FF)}$ is a semistable sheaf for every $i=1,\ldots,l$. 
\item denoted with $p_i=p_\EE(gr_i^{HN}(\FF))$, the reduced Hilbert polynomial are ordered in a strictly decreasing way:
\begin{displaymath}
  p_{\max{}}(\FF)\coloneqq p_1 > \ldots > p_l\eqqcolon p_{\min{}}(\FF)
\end{displaymath}
\end{enumerate}
\end{defn}
\begin{defn}
   Let $\FF$ be a semistable  sheaf on $\XX$  with reduced Hilbert polynomial $p_{\EE}(\FF)$. A strictly ascending filtration:
  \begin{displaymath}
    0=JH_0(\FF)\subset JH_1(\FF)\ldots \subset JH_l(\FF)=\FF
  \end{displaymath}
is a Jordan-H\"older filtration if $gr_i^{JH}(\FF)=\frac{JH_i(\FF)}{JH_{i-1}(\FF)}$ is stable with reduced Hilbert polynomial $p_{\EE}(\FF)$ for every $i=1,\ldots,l$. 
\end{defn}
\begin{thm}[{\cite[Thm 1.3.4]{MR1450870}}]\label{thm:harder-narasimhan}
  For every pure sheaf $\FF$ on $\XX$ there is a unique Harder-Narasimhan filtration.
\end{thm}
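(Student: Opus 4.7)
The plan is to adapt the classical argument of Huybrechts--Lehn \cite[Thm 1.3.4]{MR1450870} to the stacky setting, using the functor $F_\EE$ as the main bridge to the projective scheme $X$. The two algebraic ingredients that make the classical proof work are already in place on $\XX$: additivity of $P_\EE$ on short exact sequences (Remark \ref{rem:additive-hilbert-polynomial}) and exactness of $F_\EE$ (Remark \ref{rem:FE-exact}), so the formal combinatorics of reduced Hilbert polynomials goes through unchanged. What still needs to be checked is the existence of a ``maximal destabilizing subsheaf'' $HN_1(\FF)\subseteq\FF$; once this is available, the filtration is built by iteration on the pure quotient $\FF/HN_1(\FF)$, which has strictly smaller length by noetherianity.

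For existence of the maximal destabilizing subsheaf, I would first show that the set
\begin{displaymath}
\Sigma(\FF)\coloneqq\{\, p_\EE(\FF') \mid 0\neq\FF'\subseteq\FF\,\}
\end{displaymath}
has a maximum in the ordering of reduced Hilbert polynomials. Here I use the key fact from the paper's framework: the natural transformation $F_\EE\colon\Quot_{\XX/k}(\FF)\to\Quot_{X/k}(F_\EE(\FF))$ is a closed immersion (the stacky version of \cite[Prop 6.2]{MR2007396}), hence $F_\EE$ is injective on subsheaves, and for each $\FF'\subseteq\FF$ one has $P_\EE(\FF')=P(F_\EE(\FF'))$ with $F_\EE(\FF')\subseteq F_\EE(\FF)$ a subsheaf of the same dimension (Proposition \ref{prop:pure-sheaves}). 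Grothendieck's classical boundedness lemma on the projective scheme $X$ then bounds $p(F_\EE(\FF'))$, hence $p_\EE(\FF')$, from above, so $\Sigma(\FF)$ admits a maximum $p^\ast$.

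Next, I would show that the family of subsheaves $\FF'\subseteq\FF$ with $p_\EE(\FF')=p^\ast$ is closed under sums: if $\FF_1,\FF_2$ both attain $p^\ast$, then the exact sequence $0\to\FF_1\cap\FF_2\to\FF_1\oplus\FF_2\to\FF_1+\FF_2\to 0$ together with additivity of $P_\EE$ and maximality of $p^\ast$ forces $p_\EE(\FF_1+\FF_2)=p^\ast$ as well. By noetherianity of $\mathfrak{Coh}_{\XX/k}$ (Corollary \ref{cor:alg-stack-of-coh-sh} ensures $\XX$ is noetherian in the relevant sense, so coherent sheaves form a noetherian abelian category), any ascending chain of such subsheaves stabilizes, and we obtain a unique largest subsheaf $HN_1(\FF)$ among subsheaves with $p_\EE=p^\ast$. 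This $HN_1(\FF)$ is semistable: any proper subsheaf of strictly larger reduced Hilbert polynomial would contradict maximality of $p^\ast$ in $\Sigma(\FF)$. Iterating on the pure sheaf $\FF/HN_1(\FF)$ (purity being preserved thanks to Corollary \ref{cor:torsion-filtration-preserved} applied to its torsion subsheaf) terminates after finitely many steps.

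For uniqueness, given two Harder--Narasimhan filtrations of $\FF$, I would compare them level by level using the standard Hom-vanishing: if $\GG_1,\GG_2$ are semistable with $p_\EE(\GG_1)>p_\EE(\GG_2)$, then $\Hom_{\OO_\XX}(\GG_1,\GG_2)=0$, because the image of any nonzero morphism would be a quotient of $\GG_1$ and a subsheaf of $\GG_2$ contradicting semistability on one side or the other. Comparing the two candidates for $HN_1(\FF)$ with this observation pins down $HN_1(\FF)$ uniquely, and induction on the length of the filtration concludes. The main technical obstacle is the boundedness of $p_\EE(\FF')$ from above over all subsheaves $\FF'\subseteq\FF$; handling this is precisely where the rigidity of the setup on $\XX$ is not automatic, and where transferring the question to the projective moduli scheme $X$ via the closed immersion of $\Quot$-schemes induced by $F_\EE$ becomes essential.
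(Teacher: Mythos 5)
Your argument is correct and follows essentially the same route the paper takes: the paper simply defers to the classical proof of Huybrechts--Lehn, observing that it transfers verbatim because $F_\EE$ is exact and $P_\EE$ is additive on short exact sequences, which is exactly the skeleton you reproduce (maximal destabilizing subsheaf via noetherianity and closure under sums, then iteration, then Hom-vanishing for uniqueness). Your appeal to Grothendieck's lemma on $X$ to bound $p_\EE$ over subsheaves is a harmless extra ingredient (one should pass to saturations to see the maximum is actually attained), but it is not strictly needed, since the order-theoretic argument of \cite[Lem 1.3.5]{MR1450870} already yields the maximal destabilizing subsheaf from noetherianity and additivity alone.
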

\begin{thm}[{\cite[Prop 1.5.2]{MR1450870}}]\label{thm:jordan-holder}
  For every semistable sheaf $\FF$ on $\XX$ there is a Jordan-H\"older filtration and the  sheaf $gr^{JH}(\FF)\coloneqq\bigoplus_i gr_i^{JH}(\FF) $ doesn't depend on the particular chosen filtration. 
\end{thm}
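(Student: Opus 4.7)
The plan is to adapt verbatim the classical proof \cite[Prop 1.5.2]{MR1450870}, since every ingredient used there has been established above: $\Coh(\XX)$ is a noetherian abelian category, the modified Hilbert polynomial $P_\EE$ is additive on short exact sequences (Remark \ref{rem:additive-hilbert-polynomial}), and stability is phrased purely in terms of the reduced polynomial $p_\EE$. Write $d=\dim\FF$ and $p=p_\EE(\FF)$.

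For existence I would argue by noetherian induction on the multiplicity $\alpha_{\EE,d}(\FF)$. If $\FF$ is stable, $0\subset\FF$ is already a Jordan-H\"older filtration. Otherwise, by semistability and noetherianity of $\Coh(\XX)$, among the nonzero proper subsheaves $\GG\subset\FF$ with $p_\EE(\GG)=p$ one can pick $\FF_1$ of minimal multiplicity $\alpha_{\EE,d}(\FF_1)$. Minimality forces every proper subsheaf of $\FF_1$ to have strictly smaller reduced Hilbert polynomial, hence $\FF_1$ is stable with $p_\EE(\FF_1)=p$. A direct check using additivity of $P_\EE$ shows that $\FF/\FF_1$ is still pure of dimension $d$ and semistable with the same reduced polynomial: for any $\QQ\subset\FF/\FF_1$ with preimage $\GG\subset\FF$, semistability of $\FF$ gives $p_\EE(\GG)\leq p$, and additivity applied to $0\to\FF_1\to\GG\to\QQ\to 0$ combined with $p_\EE(\FF_1)=p$ yields $p_\EE(\QQ)\leq p$. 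Since $\alpha_{\EE,d}(\FF/\FF_1)<\alpha_{\EE,d}(\FF)$, the inductive hypothesis supplies a Jordan-H\"older filtration of $\FF/\FF_1$, which pulls back to one on $\FF$.

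For uniqueness of $gr^{JH}(\FF)$ the crucial input is a Schur-type lemma: any nonzero homomorphism $\phi\colon\GG_1\to\GG_2$ between stable sheaves with the same reduced Hilbert polynomial is an isomorphism. Indeed, if $\ker\phi\neq 0$ then $\im\phi$ is a proper quotient of $\GG_1$, and additivity of $P_\EE$ on $0\to\ker\phi\to\GG_1\to\im\phi\to 0$ together with stability of $\GG_1$ forces $p_\EE(\im\phi)>p_\EE(\GG_1)=p_\EE(\GG_2)$; but $\im\phi$ is also a subsheaf of the stable $\GG_2$, which would require $p_\EE(\im\phi)\leq p_\EE(\GG_2)$, a contradiction. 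Hence $\phi$ is injective, and a symmetric argument applied to $\coker\phi$ shows $\phi$ is surjective. Given two Jordan-H\"older filtrations $\{JH_i(\FF)\}$ and $\{JH'_j(\FF)\}$, I would then run the standard Schreier-type comparison: intersecting $\FF_1\coloneqq JH_1(\FF)$ with $\{JH'_j(\FF)\}$ produces a filtration of the stable sheaf $\FF_1$ whose consecutive quotients embed into the respective stable $gr_j^{JH'}(\FF)$, so by the Schur-type lemma exactly one of them is nonzero and identifies $\FF_1$ with some $gr_k^{JH'}(\FF)$. Removing $\FF_1$ from both filtrations and inducting on the total length then yields a bijection of factors, whence $gr^{JH}(\FF)\cong gr^{JH'}(\FF)$.

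There is really no substantive obstacle here: once one accepts the noetherianity of $\Coh(\XX)$ and the additivity of $P_\EE$ (both already in place), the whole argument is a purely abelian-categorical imitation of the scheme case. The only point deserving a second of care is that ``nonzero proper subsheaf of minimal multiplicity'' is well defined, which is immediate because $\alpha_{\EE,d}$ takes values in $\mathbb{Z}_{\geq 0}$.
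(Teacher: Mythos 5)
Your proposal is correct and follows exactly the route the paper intends: the paper simply defers to \cite[Prop 1.5.2]{MR1450870}, observing that the classical argument carries over once one knows that $F_\EE$ is exact and $P_\EE$ is additive on short exact sequences, and your write-up is precisely that classical argument (noetherian induction on $\alpha_{\EE,d}$ for existence, the Schur-type lemma plus the Schreier comparison for uniqueness of $\gr^{JH}$) instantiated with those two facts. No gaps.
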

\begin{rem}\label{rem:harder-ecc}
  \begin{enumerate}
  \item All the summands $gr_i^{JH}(\FF)$ of the Jordan-H\"older filtration are semistable with reduced Hilbert polynomial $p_\EE(\FF)$; also the graded object \\ $\bigoplus_i gr_i^{JH}(\FF) $ is semistable with polynomial $p_\EE(\FF)$ \cite[1.5.1]{MR1450870}.
  \item If $\FF$ is  pure with Harder-Narasimhan filtration $HN_i(\FF)$ the sheaf $F_{\EE}(\FF)$ is again pure, $F_{\EE}(HN_i(\FF))$ is again a filtration but it is not the Harder-Narasimhan filtration in general. This is clear in the trivial case where the sheaf $\FF$ is already semistable,  and the sheaf $F_{\EE}(\FF)$ is not semistable and has a non trivial filtration. To fix the ideas we can think of the structure sheaf on $\XX$ which is semistable (stable) and $\pi_\ast\EE^\vee $ which is not semistable in most situations. 
  \item If $\FF$ is semistable the sheaf $F_{\EE}(\FF)$ is not; in general there is no hope to send a Jordan-H\"older filtration to a Jordan-H\"older filtration using the functor $F_{\EE}$. Again consider the simple case of an invertible sheaf $L$ on $X$. The pullback $\pi^\ast L$ is always stable on $\XX$ (we have a trivial filtration), however $F_{\EE}(\pi^\ast L)=L\otimes\pi_\ast\EE^\vee $ is not stable in general since $\pi_\ast\EE^\vee $ is not, and usually it is not even semistable.
  \end{enumerate}
\end{rem}
\begin{defn}
  As usual \cite[1.5.3-1.5.4]{MR1450870} two semistable sheaves $\FF_1,\FF_2$ with the same reduced modified Hilbert polynomial are called $S$-\textit{equivalent} if they satisfy \\ $gr^{JH}(\FF_1)\cong gr^{JH}(\FF_2)$. A semistable sheaf $\FF$ is \textit{polystable} if it is the direct sum of stable sheaves or equivalently $\FF\cong gr^{JH}(\FF)$.  
\end{defn}

\section{Boundedness}

In order to construct the stack of semistable sheaves as a finite type stack and a global quotient we first need to know if the family of semistable sheaves is bounded. In the previous section we have defined the Mumford regularity of a sheaf $\FF$ on a projective stack to be the Mumford regularity of $F_\EE(\FF)$, however it is not of great help to know that the family  $F_\EE(\FF)$ is bounded by a family of sheaves on the moduli scheme, since this family cannot be ``lifted'' to a bounding family on the stack. To obtain a boundedness result we need to study a \textit{Kleiman criterion} on the stack; the fact that we are using  Mumford regularity of $F_\EE(\FF)$  means that we are just going to consider regular sequences of hyperplane sections of the polarization $\OO_X(1)$ pulled back to the stack. A priori we could decide to study a more general class of sections, for instance the global sections of the generating sheaf $\EE$, however the generating sheaf is not suitable to produce the standard induction arguments that are commonly used with Mumford regularity.   
\subsection{Kleiman criterion for stacks}
\begin{ass}
In this section the morphism $p\colon\XX\to S$ will denote a family of projective stacks (Def \ref{def:projective-family}) on $S$ with a fixed polarization $\OO_X(1),\EE$.
\end{ass}
We prove here that general enough sequences of global sections of $\OO_X(1)$ are enough to establish a result of boundedness for semistable sheaves.
We recall a couple of results from Kleiman's \textit{expos\'e}  (\cite[XIII]{MR0354655}) about Mumford regularity and the definition and properties of $(b)$-sheaves.
Let $k$ be an algebraically closed field and $X$ a projective $k$-scheme with a very ample line bundle $\OO_X(1)$.
\begin{defn}[{\cite[1.7.1]{MR1450870}}]
Let $F$ be a coherent sheaf on $X$. The sheaf $F$ is $m$-regular (Mumford-Castelnuovo regular) if for every $i>0 $ we have $H^i(X,F(m-i))=0$. The regularity of $F$ denoted with $\reg(F)$ is the least integer $m$ such that $F$ is $m$-regular. 
\end{defn}
\begin{defn}
  We define the Mumford regularity of a coherent sheaf on $\XX$ to be the Mumford regularity of $F_{\EE}(\FF)$ on $X$ and we will denote it by $\reg_{\EE}(\FF)$. 
\end{defn}

\begin{prop}{\cite[XIII-1.2]{MR0354655}}\label{pro:m-regular}
  Let $F$ be a coherent $m$-regular sheaf on $X$. For $n\geq m$ the following results hold:
  \begin{enumerate}
    \item $F$ is $n$-regular\label{item:8}
    \item $H^0(F(n))\otimes H^0(\OO_X(1))\to H^0(F(n+1))$ is surjective \label{item:9}
    \item $F(n)$ is generated by its global sections.\label{item:10}
  \end{enumerate}
\end{prop}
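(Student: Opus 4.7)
The strategy is the classical induction of Mumford, as presented in Kleiman's \textit{exposé}. After pushing forward along the closed embedding $X\hookrightarrow\mathbb{P}^N$ associated to $\OO_X(1)$—a closed immersion is affine, so all cohomology and twists are preserved—I may assume $X=\mathbb{P}^N$ and induct on $N$. Since $k$ is infinite and $F$ has finitely many associated points, I pick a section $h\in H^0(\OO_{\mathbb{P}^N}(1))$ avoiding $\Ass(F)$, so multiplication by $h$ on $F$ is injective and yields
\begin{displaymath}
0\to F(-1)\xrightarrow{h} F\to F_H\to 0
\end{displaymath}
with $H=V(h)\cong\mathbb{P}^{N-1}$. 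Twisting by $(m-i)$, the $m$-regularity of $F$ in the long exact sequence forces $H^i(F_H(m-i))=0$ for $i>0$, so $F_H$ is $m$-regular on $H$ and hence satisfies all three conclusions for every $n\geq m$ by the inductive hypothesis.

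\textbf{Claims \ref{item:8} and \ref{item:9}.} To prove \ref{item:8} it suffices to upgrade $m$-regularity to $(m+1)$-regularity and iterate; in the piece
\begin{displaymath}
H^i(F(m-i))\to H^i(F(m+1-i))\to H^i(F_H(m+1-i))
\end{displaymath}
both outer terms vanish for $i>0$, the left by $m$-regularity of $F$ and the right by $(m+1)$-regularity of $F_H$ (from the inductive hypothesis), so the middle vanishes. Now \ref{item:8} yields $H^1(F(n))=0$ for $n\geq m$, whence
\begin{displaymath}
0\to H^0(F(n))\xrightarrow{h} H^0(F(n+1))\to H^0(F_H(n+1))\to 0
\end{displaymath}
is short exact. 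The restrictions $H^0(\OO_{\mathbb{P}^N}(1))\twoheadrightarrow H^0(\OO_H(1))$ and $H^0(F(n))\twoheadrightarrow H^0(F_H(n))$ are surjective (using $H^1(\OO_{\mathbb{P}^N})=0$ and \ref{item:8}, respectively), so a diagram chase with the multiplication square reduces \ref{item:9} for $F$ to the same statement on $H$—known by induction—modulo an error in $\ker\bigl(H^0(F(n+1))\to H^0(F_H(n+1))\bigr)=h\cdot H^0(F(n))$, which is manifestly in the image of the multiplication map.

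\textbf{Claim \ref{item:10} and main obstacle.} Iterating \ref{item:9} gives surjectivity of $H^0(F(m))\otimes H^0(\OO(k))\to H^0(F(m+k))$ for every $k\geq 0$, while Serre vanishing makes $F(m+k)$ globally generated for $k\gg 0$; hence the evaluation morphism $H^0(F(m))\otimes\OO\to F(m)$ becomes surjective after twisting by $\OO(k)$ for $k\gg 0$, and so was already surjective, proving \ref{item:10}. The only genuinely delicate step is the diagram chase in \ref{item:9}: one must lift both tensor factors of an element of $H^0(F_H(n))\otimes H^0(\OO_H(1))$ to $\mathbb{P}^N$ simultaneously and absorb the lift's discrepancy using the precise identification of the kernel of restriction. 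Everything else is routine long-exact-sequence bookkeeping.
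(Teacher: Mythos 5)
Your argument is correct and is precisely the classical Mumford/Kleiman induction that the paper itself does not reproduce but only cites from \cite[XIII-1.2]{MR0354655}: reduction to $\mathbb{P}^N$, a hyperplane avoiding $\Ass(F)$, the short exact sequence $0\to F(-1)\to F\to F_H\to 0$, and the diagram chase with kernel $h\cdot H^0(F(n))$. All the delicate points (surjectivity of the two restriction maps, the identification of the kernel, and undoing the twist in claim (3)) are handled as in the standard reference, so there is nothing to add.
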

  \begin{defn}
    Let $F$ be a coherent sheaf on $X$ and $r$ an integer $\geq\dim(\supp{F})$. Let $(b)=(b_0,\ldots,b_r)$ a collection of $r+1$ non negative integers. The sheaf $F$ is a $(b)$-sheaf if there is an $F$-regular sequence $\sigma_1,\ldots,\sigma_r$ of $r$ global sections of $\OO_X(1)$ such that, denoted with $F_i$ the restriction of $F$ to the intersection $\cap_{j\leq i}Z(\sigma_j)$ of the zero schemes of the sections ($i=0,\ldots,r$), the dimension of the global sections of $F_i$ is estimated by $h^0(F_i)\leq b_i$.   
  \end{defn}
Now let $\XX$ be a family of projective stacks on $S$ with moduli scheme $X$ and generating sheaf $\EE$. 
\begin{defn}
  Let $\FF$ be a coherent sheaf on $\XX$; it is defined to be a $(b)$-sheaf if $F_{\EE}(\FF)$ is a $(b)$-sheaf on $X$.
\end{defn}
\begin{rem}
  Let $\sigma_1,\ldots,\sigma_r$ an $F_{\EE}(\FF)$-regular sequence of sections of $\OO_X(1)$ making $\FF$ into a $(b)$-sheaf; let $Z(\sigma_i)$ the associated zero-scheme. The closed substack  $\pi^{-1}Z(\sigma_i)=Z(\pi^\ast\sigma_i)$ is the zero-stack of $\pi^\ast\sigma_i\in H^0(\XX,\pi^\ast\OO_X(1))$. Denote by $\FF_i$ the restriction to $\cap_{j\leq i}Z(\pi^\ast\sigma_j)$. An obvious application of \ref{prop:cohom-base-change-spcoarse} and exactness of $\pi_\ast$ imply that the following holds: $h^0(\FF_i\otimes\EE^\vee)\leq b_i$ for $i=0,\ldots,r$.
\end{rem}
As in \cite[XIII-1.9]{MR0354655} we define inductively a class of polynomials with rational coefficients $P_i\in\mathbb{Q}[x_0,\ldots,x_i]$:
\begin{displaymath}
  \begin{cases}
  & P_{-1}=0 \\
  & P_i(x_0,\ldots,x_i)=P_{i-1}(x_1,\ldots,x_i)+\sum_{j=0}^i x_j\binom{P_{i-1}(x_1,\ldots,x_i)-1+j}{j} \\ 
  \end{cases}
\end{displaymath}
\begin{prop}\label{prop:sub-b-sheaf}
   Let $\FF$ be a coherent $(b)$-sheaf on $\XX$. Every $\FF'$ subsheaf of $\FF$ is a $(b)$-sheaf.
\end{prop}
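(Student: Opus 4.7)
The plan is to reduce the statement to its classical scheme-theoretic analogue by means of the exact functor $F_\EE$, and then appeal to Kleiman's original treatment. Since $F_\EE$ is exact (Remark~\ref{rem:FE-exact}), the inclusion $\FF'\subset\FF$ produces an inclusion $F_\EE(\FF')\hookrightarrow F_\EE(\FF)$ of coherent sheaves on the projective moduli scheme $X$. By definition $\FF$ being a $(b)$-sheaf means precisely that $F_\EE(\FF)$ is a $(b)$-sheaf on $X$. Hence what must be proved is the scheme-theoretic statement: if $F$ is a $(b)$-sheaf on the projective scheme $X$ and $F'\subset F$ is any coherent subsheaf, then $F'$ is again a $(b)$-sheaf.

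For the scheme-theoretic statement, set $F=F_\EE(\FF)$, $F'=F_\EE(\FF')$ and fix an $F$-regular sequence $\sigma_1,\ldots,\sigma_r\in H^0(X,\OO_X(1))$ realising the $(b)$-bound. The first step is to show that $\sigma_1,\ldots,\sigma_r$ is also $F'$-regular. Multiplication by $\sigma_1$ is a non-zero-divisor on $F$, so it is trivially a non-zero-divisor on $F'\subset F$; the inductive step relies on a diagram chase in which the $F$-regularity of $\sigma_i$ modulo $(\sigma_1,\ldots,\sigma_{i-1})$ forces any candidate zero-divisor relation on $F'_{i-1}$ to come from a relation already valid in $F'$ itself. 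The second step is to estimate $h^0(F'_i)\leq b_i$ inductively by comparing the two short exact sequences
\[
0\to F'_{i-1}(-1)\xrightarrow{\sigma_i} F'_{i-1}\to F'_i\to 0,\qquad 0\to F_{i-1}(-1)\xrightarrow{\sigma_i} F_{i-1}\to F_i\to 0,
\]
together with the associated long exact cohomology sequences. The case $i=0$ is immediate from left-exactness of $H^0$ applied to $F'\hookrightarrow F$, and the inductive step uses $H^0$-left-exactness together with the comparison maps to transport the bound $h^0(F_i)\leq b_i$ to $h^0(F'_i)\leq b_i$.

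The main technical obstacle is in the second step. The natural sheaf map $F'_i\to F_i$ induced by the inclusion $F'\hookrightarrow F$ need not be injective: for instance, if $F=\OO_X$ and $F'=\sigma_1\OO_X$, then $F'_1\to F_1$ is the zero map. Consequently one cannot deduce $h^0(F'_i)\leq h^0(F_i)$ by simple containment of sheaves. Instead, one must exploit the fact that although the sheaf map $F'_i\to F_i$ has a kernel, the induced map on global sections remains injective thanks to the $F$-regularity of $\sigma_i$; this transport of injectivity from sheaves to global sections is exactly the technical heart of Kleiman's classical argument in \cite[XIII]{MR0354655}, on which this proposition rests.
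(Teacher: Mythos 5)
Your reduction is exactly the paper's proof: by exactness of $F_\EE$ the inclusion $\FF'\subset\FF$ gives a subsheaf $F_\EE(\FF')\subset F_\EE(\FF)$, and the claim then follows from Kleiman's classical lemma \cite[XIII-1.6]{MR0354655} that every subsheaf of a $(b)$-sheaf on a projective scheme is again a $(b)$-sheaf, which is all the paper says. Your two additional paragraphs sketching that classical lemma are superfluous and are the only place where the argument wobbles (in particular, a fixed $F$-regular sequence realising the bound for $F$ need not do so for $F'$ without the extra care Kleiman takes), but since you explicitly defer to \cite[XIII]{MR0354655} for that step, your proof coincides with the paper's.
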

\begin{proof}
  It follows from \cite[XIII-1.6]{MR0354655} that every subsheaf $\FF''$ of $F_\EE(\FF)$ is a $(b)$-sheaf, and in particular $F_{\EE}(\FF')$ is such a subsheaf.  
\end{proof}
\begin{prop}\label{prop:b-sheaves-m-reg}
    Let $\FF$ be a coherent $(b)$-sheaf on $\XX$ with Hilbert polynomial $P_{\EE}(\FF,m)=\sum_{i=0}^r a_i\,\binom{m+i}{i}$ and $(b)=(b_0,\ldots,b_r)$; let $(c)=(c_0,\ldots,c_r)$ be integers such that $c_i\geq b_i-a_i$ for $i=0,\ldots,r$ then $n\coloneqq P_r(c_0,\ldots,c_r)$ is non negative and $\FF$ is $n$-regular.
\end{prop}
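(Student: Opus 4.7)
The proof strategy is to reduce the statement entirely to the analogous scheme-theoretic result on the moduli scheme $X$ via the functor $F_{\EE}$. The point is that, by our definitions, both being a $(b)$-sheaf and being $n$-regular are defined on the stack $\XX$ purely in terms of the coherent sheaf $F_{\EE}(\FF)$ on the projective scheme $X$; moreover the modified Hilbert polynomial $P_\EE(\FF,m)$ is literally the ordinary Hilbert polynomial of $F_\EE(\FF)$ with respect to the polarization $\OO_X(1)$. So once these identifications are made, the statement becomes exactly Kleiman's Proposition XIII-1.9 in \cite{MR0354655} applied to $F_\EE(\FF)$.

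More precisely, I would proceed as follows. First, recall that by definition $\reg_\EE(\FF)=\reg(F_\EE(\FF))$, and that by Remark \ref{rem:additive-hilbert-polynomial}\eqref{item:14} together with the identity $P_\EE(\FF,m)=\chi(X,F_\EE(\FF)(m))$, the coefficients $a_i$ in the expansion $\sum_{i=0}^r a_i\binom{m+i}{i}$ coincide with the coefficients of the ordinary Hilbert polynomial of $F_\EE(\FF)$. Second, unwind the definition of $(b)$-sheaf on $\XX$: there is an $F_\EE(\FF)$-regular sequence of sections $\sigma_1,\ldots,\sigma_r\in H^0(X,\OO_X(1))$ such that the successive restrictions $F_\EE(\FF)_i$ of $F_\EE(\FF)$ to the intersections of the zero-schemes $Z(\sigma_j)$ satisfy $h^0(F_\EE(\FF)_i)\le b_i$. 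Thus $F_\EE(\FF)$ is a $(b)$-sheaf on the projective scheme $X$ in the classical sense.

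With these translations in place, the statement is just: if $G$ is a coherent $(b)$-sheaf on the projective scheme $X$ with Hilbert polynomial $\sum a_i\binom{m+i}{i}$ and $c_i\ge b_i-a_i$, then $P_r(c_0,\ldots,c_r)$ is non-negative and $G$ is $P_r(c_0,\ldots,c_r)$-regular. This is exactly \cite[XIII-1.9]{MR0354655}, applied to $G=F_\EE(\FF)$. Applying that result concludes the proof, since $F_\EE(\FF)$ being $n$-regular means, by our definition, that $\FF$ is $n$-regular.

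The only thing that needs a moment's care is the consistency of the ``regular sequence'' condition, namely that the $\sigma_i$ which are $F_\EE(\FF)$-regular on $X$ correspond (via pullback by $\pi$) to a sensible notion on $\XX$; but this is precisely the content of the remark immediately preceding the proposition, which uses Proposition \ref{prop:cohom-base-change-spcoarse} and the exactness of $\pi_\ast$ to identify the restrictions of $\FF$ to the successive stacky vanishing loci $Z(\pi^\ast\sigma_j)$ with the restrictions of $F_\EE(\FF)$ to $Z(\sigma_j)$. So there is no genuine obstacle: the entire content of the proposition has already been packaged into the definition of a $(b)$-sheaf on $\XX$, and the proof is a one-line invocation of the Kleiman result on $X$.
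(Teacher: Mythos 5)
Your proposal is correct and is essentially the paper's own argument: since $(b)$-sheaf, $n$-regularity, and the modified Hilbert polynomial on $\XX$ are all defined via $F_\EE(\FF)$ on the moduli scheme $X$, the statement reduces immediately to Kleiman's result (which the paper cites as \cite[XIII-1.11]{MR0354655} rather than XIII-1.9, the latter being where the polynomials $P_i$ are defined). The paper's proof is exactly this one-line reduction, so no further comparison is needed.
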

\begin{proof}
  This has nothing to do with the stack $\XX$ so that the proof in \cite[XIII-1.11]{MR0354655} is enough.
\end{proof}
To state the Kleiman criterion we first need to recall the notion of family of sheaves and bounded family.
Given $s$ a point of $S$ with residue field $k(s)$ and given also $K$ a field extension of $k(s)$ a sheaf on a fiber is a coherent sheaf $\FF_K$ on $\XX\times_S\Spec{K}$. If we are given two field extensions $K,K'$ and two sheaves, respectively $\FF_K,\FF_{K'}$, they are equivalent if there are $k(s)$-homomorphisms of $K,K'$ to a third extension $K''$ of $k(s)$ such that $\FF_{K}\otimes_{k(s)}K''$ and $\FF_{K'}\otimes_{k(s)}K''$ are isomorphic. 
\begin{defn}
  A \textit{set-theoretic family} of sheaves on $p\colon\XX\to S$ is a set of sheaves defined on the fibers of $p$. 
\end{defn}
\begin{defn}
  A set theoretic family $\mathfrak{F}$ of sheaves on $\XX$ is \textit{bounded} if there is an $S$-scheme $T$ of finite type and a coherent sheaf $\mathcal{G}$ on $\XX_T\coloneqq\XX\times_S T$ such that every sheaf in $\mathfrak{F}$ is contained in the fibers of $\mathcal{G}$. 
\end{defn}
\begin{rem}
  It is quite standard in the literature to define a family of sheaves to be a flat sheaf. However according to the generic flatness result every coherent sheaf is flat on some finite stratification of the base scheme. For this reason it really doesn't matter if we bound a set theoretic family with a sheaf or a flat sheaf. We have decided to keep Kleiman's definition.  
\end{rem}
The following theorem is the stacky version of \cite[XIII-1.13]{MR0354655}.
\begin{thm}[Kleiman criterion for stacks]\label{thm:kleiman-criterion}
 Let $p\colon\XX\to S$ be a family of projective stacks with moduli scheme $\pi:\XX\to X$. Assume $\OO_X(1)$ is chosen so that for every point $s$ of $S$ the line bundle restricted to the fiber $\OO_{X_s}(1)$ is generated by the global sections (for instance $\OO_X(1)$ is very ample relatively to $X\to S$). Let $\mathfrak{F}$ be a set-theoretic family of coherent sheaves on the fibers of $\XX\to S$. The following statements are equivalent:
 \begin{enumerate}
 \item The family $\mathfrak{F}$ is bounded by a coherent sheaf $\mathcal{G}$ on $\XX_T\coloneqq\XX\times_S T$. Moreover if every $\FF_K\in\mathfrak{F}$ is locally free of rank $r$ a bounding family can be chosen locally free of rank $r$ ($\FF_K$ is a sheaf on a $K$-fiber of $\XX\to S$).
 \item The set of the Hilbert polynomials $P_{\mathcal{E}_K}(\FF_K)$ for $\FF_K\in\mathfrak{F}$ is finite and there is a sequence of integers $(b)$ such that every $\FF_K$ is a $(b)$-sheaf ($\EE_K$ is the $K$-fiber of the generating sheaf $\EE$).
\item The set of Hilbert polynomials $P_{\mathcal{E}_K}(\FF_K)$ for $\FF_K\in\mathfrak{F}$ is finite and there is a non negative integer $m$ such that every $\FF_K$ is $m$-regular.
\item The set of Hilbert polynomials $P_{\mathcal{E}_K}(\FF_K),\,\FF_K\in\mathfrak{F}$ is finite  and there is a coherent sheaf $\mathcal{H}$ on $\XX_T$ such that every $\FF_K$ is a quotient of $\mathcal{H}_K$ for some point $K$-point in $T$. We can assume that $T=S$ and $\mathcal{H}=\EE^{\oplus N}\otimes\pi^\ast\OO_X(-m)$ for some integers $N,m$.
\item There are two coherent sheaves $\mathcal{H},\mathcal{H}'$ on $\XX_T$ such that every $\FF_K$ is the cokernel of a morphism $\mathcal{H}'_K\to\mathcal{H}_K$ for some $K$-point of $T$. Moreover we can assume that $T=S$ and  $\mathcal{H}=\EE^{\oplus N}\otimes\pi^\ast\OO_X(-m),\mathcal{H}'=\EE^{\oplus N'}\otimes\pi^\ast\OO_X(-m')$. 
 \end{enumerate}
\end{thm}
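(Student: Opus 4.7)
The strategy is to reduce every assertion to the classical Kleiman criterion \cite[XIII-1.13]{MR0354655} applied on the moduli scheme $X$ to the family $\{F_{\EE_K}(\FF_K)\}$, and then to transport information back to $\XX$ using the adjunction $G_\EE\dashv F_\EE$ together with the counit $\theta_\EE$ of Lemma \ref{lem:idenity-theta-phi}. I will prove the cycle $(1)\Rightarrow(2)\Rightarrow(3)\Rightarrow(4)\Rightarrow(5)\Rightarrow(1)$. Recall that, by definition, the modified Hilbert polynomial, the $(b)$-property, and $m$-regularity of $\FF_K$ are precisely the corresponding scheme-theoretic invariants of $F_{\EE_K}(\FF_K)$, so these notions transport freely along $F_\EE$.

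For $(1)\Rightarrow(2)$, if $\mathcal{G}$ on $\XX_T$ bounds $\mathfrak{F}$, then $F_{\EE_T}(\mathcal{G})$ bounds $\{F_{\EE_K}(\FF_K)\}$, because $F_\EE$ commutes with base change via Proposition \ref{prop:cohom-base-change-spcoarse} and preserves flatness over $S$ by Corollary \ref{cor:tame-stack-2}(3); the classical criterion on $X$ then yields both the finiteness of Hilbert polynomials and a uniform $(b)$. The implication $(2)\Rightarrow(3)$ is exactly Proposition \ref{prop:b-sheaves-m-reg}.

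The central step is $(3)\Rightarrow(4)$. If $\FF_K$ is $m$-regular, Proposition \ref{pro:m-regular} gives a surjection $\OO_{X_K}^{\oplus N}\twoheadrightarrow F_{\EE_K}(\FF_K)(m)$ with $N=P_{\EE_K}(\FF_K,m)$. Using the projection formula of Lemma \ref{lem:coherent-projection} one has $F_{\EE_K}(\FF_K)(m)\cong F_{\EE_K}\bigl(\FF_K\otimes\pi_K^\ast\OO_{X_K}(m)\bigr)$; applying the right-exact functor $G_{\EE_K}$ and composing with the counit $\theta_{\EE_K}$, which is surjective since $\EE$ is a generating sheaf, yields
\begin{displaymath}
\EE_K^{\oplus N}\twoheadrightarrow \FF_K\otimes\pi_K^\ast\OO_{X_K}(m),
\end{displaymath}
and twisting by $\pi_K^\ast\OO_{X_K}(-m)$ realizes $\FF_K$ as a quotient of $\bigl(\EE\otimes\pi^\ast\OO_X(-m)\bigr)_K^{\oplus N}$. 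Finiteness of the set of Hilbert polynomials allows $(N,m)$ to be chosen uniformly, so $T=S$ works with $\mathcal{H}=\EE^{\oplus N}\otimes\pi^\ast\OO_X(-m)$. For $(4)\Rightarrow(5)$, the kernels $\mathcal{K}_K$ of the uniform surjections $\mathcal{H}_K\twoheadrightarrow\FF_K$ have modified Hilbert polynomials $P_\EE(\mathcal{H})-P_\EE(\FF_K)$, taking finitely many values; as subsheaves of $\mathcal{H}$ they share a common $(b')$ by Proposition \ref{prop:sub-b-sheaf}, so the already-proved implications $(2)\Rightarrow(3)\Rightarrow(4)$ produce a common $\mathcal{H}'=\EE^{\oplus N'}\otimes\pi^\ast\OO_X(-m')$ surjecting onto all kernels, which gives the presentation in $(5)$.

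Finally, $(5)\Rightarrow(1)$ is carried out via representability: the family is parametrized by the disjoint union
\begin{displaymath}
T=\coprod_P\Quot_{\XX/S}(\mathcal{H},P)
\end{displaymath}
over the finite set of modified Hilbert polynomials, which is a finite-type $S$-scheme by Theorem \ref{thm:quot-closed}, and the universal quotient on $\XX_T$ is a bounding family. The locally-free refinement follows by restricting $T$ to the open locus where the universal quotient is locally free of rank $r$, using Proposition \ref{prop:flatness-for-fibers} to guarantee openness. The main technical obstacle is the non-flatness of $\pi$, which makes $G_\EE$ only right exact and prevents a naive base-change passage between $X$ and $\XX$; this is precisely why one must route the reconstruction through the adjunction triangle of Lemma \ref{lem:idenity-theta-phi} and through the representability of $\Quot_{\XX/S}$ for the fixed generator $\mathcal{H}$.
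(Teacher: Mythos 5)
Your implications $(1)\Rightarrow(2)\Rightarrow(3)\Rightarrow(4)\Rightarrow(5)$ follow the paper's proof essentially verbatim: transport to $X$ via $F_\EE$ and the classical criterion for $(1)\Rightarrow(2)$, Proposition \ref{prop:b-sheaves-m-reg} for $(2)\Rightarrow(3)$, global generation of $F_\EE(\FF_K)(m)$ followed by $G_\EE$ and the counit $\theta_\EE$ for $(3)\Rightarrow(4)$, and the kernel argument via Proposition \ref{prop:sub-b-sheaf} for $(4)\Rightarrow(5)$. Where you genuinely diverge is $(5)\Rightarrow(1)$. The paper does not use a $\Quot$-scheme here: it forms the vector bundle $\mathbb{V}\bigl({p_T}_\ast\HOM_{\OO_{\XX_T}}(\mathcal{H}',\mathcal{H})\bigr)$ over a stratification of $T$ on which push-forward commutes with arbitrary base change (Proposition \ref{prop:arbitrary-base-change}), and takes the cokernel of the universal morphism $\mathcal{H}'\vert_{\mathbb{V}}\to\mathcal{H}\vert_{\mathbb{V}}$ as the bounding family. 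That construction is insensitive to the set of Hilbert polynomials and needs no representability of $\Quot_{\XX/S}$, only flat base change for $\HOM$.

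Your $\Quot$-scheme route can be made to work, but as written it has two gaps. First, condition $(5)$, unlike $(2)$--$(4)$, does \emph{not} include finiteness of the set of modified Hilbert polynomials, yet your parametrizing scheme $\coprod_P\Quot_{\XX/S}(\mathcal{H},P)$ is of finite type only if that set is finite; you must derive finiteness from $(5)$, e.g.\ by applying the exact functor $F_\EE$ (which commutes with passage to fibers by Proposition \ref{prop:cohom-base-change-spcoarse}) to realize each $F_{\EE_K}(\FF_K)$ as a cokernel of $F_\EE(\mathcal{H}')_K\to F_\EE(\mathcal{H})_K$ and invoking the classical criterion on $X$. Second, Theorem \ref{thm:quot-closed} gives projectivity of $\Quot_{\XX/S}(\mathcal{H},P)$ only for $P$ a \emph{generalized} Hilbert polynomial in the sense of Olsson--Starr, and the paper explicitly notes that the modified Hilbert polynomial is not one of these; finite-typeness of $\Quot_{\XX/S}(\mathcal{H},P)$ for a modified Hilbert polynomial $P$ is Proposition \ref{prop:closed-embedding-modified-vers} (proved independently of the Kleiman criterion, so no circularity, but it is the result you actually need to cite). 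Your treatment of the locally free refinement via Proposition \ref{prop:flatness-for-fibers} is exactly what the paper intends that proposition for.
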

\begin{proof}
  Part of the proof, that is $1\Rightarrow 2,\, 2\Rightarrow 3$ is just the proof in the expos\'e of Kleiman. 



$(3)\Rightarrow (4)\;$ Take $m$ such that every $\FF_K$ is $m$-regular. Let $N$ be the maximal $P_{\EE_K}(\FF_K,m)=h^0(F_{\EE_K}(\FF_K)(m))$. According to Proposition \ref{pro:m-regular}~\ref{item:10} there is a surjective map $\OO_{X_T}^{\oplus N}\otimes \OO_{X_T}(-m)\to F_{\EE_T}(\mathcal{G})$. Applying $G_{\EE_T}$ and composing with $\theta_{\EE_T}(\mathcal{G})$ we obtain the surjection we wanted.

$(4)\Rightarrow (5)\;$ Assume that there is a coherent sheaf $\mathcal{H}$ on $\XX_T$ satisfying $(4)$. For every $\FF_K$ there is a point $t$ such that:
  \begin{displaymath}
    0\to F_{\EE_T}(\FF'_t) \to F_{\EE_T}(\mathcal{H}_t) \to F_{\EE_K}(\FF_K)\to 0
  \end{displaymath}
Since $\mathcal{H}$ is bounded, the number of Hilbert polynomials $P_{\EE_T}(\mathcal{H}_t)$ is finite and there is $(b)$ such that every $\mathcal{H}_t$ is a $(b)$-sheaf. The number of Hilbert Polynomials of $\FF_K$ is finite by hypothesis so that the number of Hilbert polynomials of $\FF'_t$ is finite too. Moreover according to \ref{prop:sub-b-sheaf} $\FF'_t $ are $(b)$-sheaves. We can apply $(2)\Rightarrow (4)$ to $\FF'_t$ and deduce $(5)$.

$(5)\Rightarrow (1)\;$ First we prove that $\mathcal{H}$ and $\mathcal{H}'$ can be chosen of the kind $\EE_T^{\oplus N}\otimes\pi^\ast\OO_{X_T}(-m)$. 
Given a point $t\in T$ consider a cokernel $\mathcal{H}'_t\xrightarrow{\beta}\mathcal{H}_t\to\coker{\beta}\to 0$. We apply the functor $F_{\EE_T}$ and  observe that it commutes with $\otimes k(t)$ so that the family $F_{\EE_t}(\coker{\beta})$ belongs to the cokernels of the fibers of the two sheaves $F_{\EE_T}(\mathcal{H}')$ and   $F_{\EE_T}(\mathcal{H})$. According to the Kleiman criterion for coherent sheaves on a scheme the family $F_{\EE_t}(\coker{\beta})$ is bounded and in particular the number of Hilbert polynomials $P_{\EE_t}(\coker{\beta})$ is finite. Since $\mathcal{H}$ is bounded we can assume that there is a surjection $\mathcal{L}\coloneqq\EE_T^{\oplus N}\otimes\pi^\ast\OO_{X_T}(-m)\to\mathcal{H}$ so that we have an exact sequence:
\begin{displaymath}
  0\to \mathcal{C}_\beta\to\mathcal{L}_t\to \coker{\beta}\to 0
\end{displaymath}
The family $\mathcal{C}_\beta$ has just a finite number of different Hilbert polynomials since $\mathcal{L}$ is bounded and $\coker{\beta}$ has a finite number of polynomials. Moreover there is $(b)$ such that every sheaf $\mathcal{L}_t$ is a $(b)$-sheaf and since $\mathcal{C}_\beta$ are all subsheaves of $\mathcal{L}_t$ they are $(b)$-sheaves according to \ref{prop:sub-b-sheaf}. Using $(2)\Rightarrow(3)\Rightarrow(4)$ we deduce that there is a sheaf $\EE_T^{\oplus N'}\otimes\pi^\ast\OO_{X_T}(-m_1)$ such that the family $C_\beta$ is contained in the quotients of its fibers. This completes the first part. 
To complete the proof we take a finite stratification of $T$ so that the coherent sheaf $\HOM_{\OO_{\XX_T}}(\mathcal{H}',\mathcal{H})$ is flat on $T$ and $R^i{p_T}_\ast\HOM_{\OO_{\XX_T}}(\mathcal{H}',\mathcal{H})$ are locally free for every $i\geq 0$ (they are a finite number of sheaves according to the proof of \ref{prop:arbitrary-base-change}). By Proposition \ref{prop:arbitrary-base-change} we obtain that ${p_T}_\ast$ commutes with an arbitrary base-change. This implies that the representable functor $\mathbb{V}({p_T}_\ast\HOM_{\OO_{\XX_T}}(\mathcal{H}',\mathcal{H}))=\mathbb{V}$ is the same as a functor associating to a map $f\colon U\to T$ the group $\Gamma(U,{p_U}_\ast\widetilde{f}^\ast\HOM_{\OO_{\XX_T}}(\mathcal{H}',\mathcal{H}))$ where $\widetilde{f}\colon\XX_U\to\XX_T$. To conclude we observe that $\mathbb{V}$ is a vector bundle and the map $\mathbb{V}\to T$ is smooth and in particular flat so that the universal section is an object $\mathcal{U}\in \Gamma(\mathbb{V},{\pi_{\mathbb{V}}}_\ast\HOM_{\OO_{\XX_\mathbb{V}}}(\mathcal{H}'\vert_{\mathbb{V}},\mathcal{H}\vert_\mathbb{V}))$. Eventually we obtain a universal quotient:
\begin{displaymath}
  \mathcal{H}'\vert_{\mathbb{V}}\xrightarrow{\mathcal{U}}\mathcal{H}\vert_{\mathbb{V}}\to \mathcal{G}\to 0 
\end{displaymath}
where $\mathcal{G}$ bounds the family $\mathfrak{F}$. 
\end{proof}

We state here a useful lemma of Grothendieck about the boundedness of families of sheaves. The version of this lemma for schemes \cite[Lem 1.7.9]{MR1450870} does not require the Kleiman criterion, however in the case of stacks there is an easy way to pull-back the result from the moduli scheme using the Kleiman criterion for stacks.
\begin{lem}[Grothendieck]\label{lem:grothendieck-lemma}
Let $\XX$ be a projective stack over a field $k$ with moduli scheme $\pi\colon\XX\to X$. Let $P$ be an integral polynomial of degree $d=\dim{(X)}$ ($0\leq d\leq\dim{(X)}$) and $\rho$ an integer. There is a constant $C=C(P,\rho)$ such that if $\FF$ is  coherent sheaf of dimension $d$ on $\XX$ with $P_{\EE}(\FF)=P$ and $\reg_{\EE}(\FF)\leq\rho$, then for every $\FF'$ purely $d$-dimensional quotient $\hat{\mu}_{\EE}(\FF')\geq C$. Moreover, the family of purely $d$-dimensional quotients $\FF'_i,\, i\in I$ (for some set of indices $I$) with $\hat{\mu}_{\EE}(\FF'_i)$ bounded from above is bounded.
\end{lem}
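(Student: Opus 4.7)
The plan is to reduce the entire statement to the classical Grothendieck lemma on the moduli scheme $X$ (namely \cite[Lem 1.7.9]{MR1450870}) via the functor $F_{\EE}$, using the Kleiman criterion for stacks (Theorem~\ref{thm:kleiman-criterion}) to transfer the resulting boundedness back to $\XX$.

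The key observation is that $F_{\EE}$ is precisely designed to make all the relevant numerical invariants match. First, $F_{\EE}$ is exact (Remark~\ref{rem:FE-exact}), so a quotient $\FF\twoheadrightarrow \FF'$ on $\XX$ maps to a quotient $F_{\EE}(\FF)\twoheadrightarrow F_{\EE}(\FF')$ on $X$. Second, $F_{\EE}$ sends purely $d$-dimensional sheaves to purely $d$-dimensional sheaves (Proposition~\ref{prop:pure-sheaves}), so $F_{\EE}(\FF')$ is a purely $d$-dimensional quotient of $F_{\EE}(\FF)$. Third, by definition of the modified Hilbert polynomial, $P_{\EE}(\FF,m)=P(F_{\EE}(\FF)(m))$ so that $P_{\EE}(\FF)=P(F_{\EE}(\FF))$, $\reg_{\EE}(\FF)=\reg(F_{\EE}(\FF))$, and $\hat{\mu}_{\EE}(\FF)=\hat{\mu}(F_{\EE}(\FF))$.

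For the first assertion I would argue as follows. Given $\FF$ on $\XX$ with $P_{\EE}(\FF)=P$ and $\reg_{\EE}(\FF)\leq\rho$, the sheaf $F_{\EE}(\FF)$ on $X$ has Hilbert polynomial $P$ and regularity at most $\rho$, so the classical Grothendieck lemma applies and yields a constant $C=C(P,\rho)$ depending only on $P$ and $\rho$ such that every purely $d$-dimensional quotient $G$ of $F_{\EE}(\FF)$ satisfies $\hat{\mu}(G)\geq C$. Applied to $G=F_{\EE}(\FF')$ for a purely $d$-dimensional quotient $\FF'$ of $\FF$, this gives $\hat{\mu}_{\EE}(\FF')=\hat{\mu}(F_{\EE}(\FF'))\geq C$, which is the desired bound.

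For the \textit{moreover} part, suppose $\{\FF'_i\}_{i\in I}$ is a family of purely $d$-dimensional quotients with $\hat{\mu}_{\EE}(\FF'_i)$ uniformly bounded from above. Then $\{F_{\EE}(\FF'_i)\}$ is a family of purely $d$-dimensional quotients of sheaves of type $F_{\EE}(\FF)$ (whose Hilbert polynomial is $P$ and whose regularity is at most $\rho$) with slopes bounded from above, so the classical Grothendieck lemma on $X$ guarantees that the family $\{F_{\EE}(\FF'_i)\}$ is bounded on $X$. Concretely this gives a finite set of Hilbert polynomials and a uniform regularity bound; translating via the dictionary in the preceding paragraph, the family $\{\FF'_i\}$ on $\XX$ has a finite set of modified Hilbert polynomials and a uniform $\reg_{\EE}$-bound, i.e.\ satisfies condition $(3)$ of Theorem~\ref{thm:kleiman-criterion}. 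Hence $\{\FF'_i\}$ is bounded on $\XX$. The only subtle points are the purity-preservation by $F_{\EE}$ and the equivalence of boundedness on $\XX$ with boundedness of the image family on $X$; both are already established earlier in the paper (Proposition~\ref{prop:pure-sheaves} and Theorem~\ref{thm:kleiman-criterion} respectively), so the proof is essentially a translation argument.
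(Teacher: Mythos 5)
Your proof is correct and follows essentially the same route as the paper's: both parts are reduced to the classical Grothendieck lemma on the moduli scheme via $F_{\EE}$ (using exactness, preservation of purity, and the identities $P_{\EE}(\FF)=P(F_{\EE}(\FF))$, $\hat{\mu}_{\EE}(\FF)=\hat{\mu}(F_{\EE}(\FF))$), and boundedness is transferred back to $\XX$ through the Kleiman criterion for stacks. The only cosmetic difference is that you invoke condition (3) of Theorem~\ref{thm:kleiman-criterion} (finitely many Hilbert polynomials plus uniform regularity), whereas the paper pulls the bounding family back with $G_{\EE}$ and the surjection $\theta_{\EE}$ to land in condition (4); these are equivalent by the theorem itself, and your version is exactly the content the paper later records as Corollary~\ref{cor:corollary-for-boundedness}.
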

\begin{proof}
  The first part of the lemma is just the original Grothendieck lemma applied to the moduli scheme, we have just to observe that if $\FF$ has dimension $d$ the sheaf $F_{\EE}(\FF)$ has the same dimension.
To prove the second part we observe that the lemma in the case of schemes provides us a coherent sheaf $\mathcal{G}$ on $X\times R$ for some finite type scheme $R\to\spec{k}$ bounding the family of quotients  $\pi_\ast(\FF'_i\otimes\EE^\vee)$. A bounded family of sheaves on a projective scheme has a finite number of Hilbert polynomial, so in particular the number of polynomials $P_\EE(\FF_i')$ is finite. We can pull back the problem to the stack using the functor $G_\EE$, and obtain that the family $\FF_i'$ is contained in the quotients of $\pi_R^\ast\mathcal{G}\otimes \EE_R$ and we write it as:
  \begin{displaymath}
    \xymatrix{
    \pi^\ast\mathcal{G}_i\otimes\EE_i \ar@{->>}[r] & \FF_i'
  }
  \end{displaymath}
The family of sheaves $ \pi_R^\ast\mathcal{G}\otimes_{\OO_{\XX\times R}}\EE_R$ is bounded and in particular it is a quotient of a sheaf  $\EE_R^{\oplus N}\otimes\pi_R^\ast\OO_{X\times R}(-m)$ for some $m$ and $N$; applying \ref{thm:kleiman-criterion} we deduce that $\FF_i'$ is a bounded family.
\end{proof}
\begin{rem}
  The same statement is obviously true if $\FF'$ is a subsheaf (a family of subsheaves) and the inequalities are all inverted.
\end{rem}

With this machinery we can prove that semistability and stability are open conditions.
\begin{prop}\label{prop:semistable-open}
Let $F$ be a flat family of $d$-dimensional coherent sheaves on $p\colon\XX\to S$  (a family of projective stacks again) and fixed modified Hilbert Polynomial $P$ of degree $d$. The set  $ \{\text{$\,s\in S\,\vert\; F_s\;$ is pure and semistable} \}$ is open in $S$. The same is true for stable sheaves and geometrically stable sheaves.
\end{prop}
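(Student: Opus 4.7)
The plan is to adapt the classical Grothendieck argument (cf.\ \cite[Prop 2.3.1]{MR1450870}) using the stack-theoretic tools already developed: the Kleiman criterion \ref{thm:kleiman-criterion}, Grothendieck's boundedness lemma \ref{lem:grothendieck-lemma}, and the projectivity of the stacky $\Quot$ functor (Theorem \ref{thm:quot-closed}). I would show that the complementary locus --- the points $s\in S$ at which $\FF_s$ fails to be pure and semistable --- is closed in $S$. The key reformulation is that $\FF_s$ is pure and semistable if and only if every nonzero quotient $\FF_s\twoheadrightarrow \GG$ with $\GG$ purely $d$-dimensional satisfies $p_\EE(\GG)\geq p_\EE(\FF_s)$. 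When $\FF_s$ is pure this is the standard dualization of the subsheaf condition; when $\FF_s$ is not pure the torsion filtration produces the pure $d$-dimensional quotient $\FF_s/T_{d-1}(\FF_s)$, whose modified Hilbert polynomial differs from $P$ by a polynomial of degree $<d$ with positive leading coefficient, and so has strictly smaller reduced polynomial.

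Next I would bound the set of all candidate destabilizing quotients. Any such $\GG$ satisfies $\hat\mu_\EE(\GG)\leq \hat\mu_\EE(\FF_s)$, obtained by comparing the coefficient of $m^{d-1}$ in the two reduced polynomials. Because $\FF$ is coherent on $\XX$ over the finite-type base $S$ with fixed modified Hilbert polynomial $P$, Lemma \ref{lem:hilbert-poly-fibers} together with Theorem \ref{thm:kleiman-criterion} provides a uniform upper bound on $\reg_\EE(\FF_s)$, and $\hat\mu_\EE(\FF_s)$ is determined by $P$. Lemma \ref{lem:grothendieck-lemma}, applied with this uniform upper bound on the slope, now supplies a single bounding family for all candidate destabilizers; in particular only finitely many modified Hilbert polynomials $Q_1,\dots,Q_k$ can occur for such $\GG$.

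With the finite list $Q_1,\dots,Q_k$ in hand, each relative functor $\Quot_{\XX/S}(\FF,Q_i)$ is a finite disjoint union of projective $S$-schemes by Theorem \ref{thm:quot-closed} (by boundedness only finitely many Olsson--Starr generalized Hilbert polynomial strata inside it are nonempty). Hence $\coprod_{i=1}^k \Quot_{\XX/S}(\FF,Q_i)\to S$ is proper, and by the preceding reformulation its image in $S$ is precisely the non-(pure semistable) locus, which is therefore closed. The stable case is handled identically, replacing the strict inequality by $p_\EE(\GG)\leq p_\EE(\FF_s)$ together with $\GG\neq\FF_s$; for geometric stability one pulls the family back to $S\times_k\bar k$, applies the previous steps there, and descends the resulting open set along the faithfully flat base change.

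The main obstacle I anticipate is this boundedness step: Gieseker destabilization involves the full reduced polynomial, while Lemma \ref{lem:grothendieck-lemma} requires only an upper bound on the slope, and the simple observation $p_\EE(\GG)<p_\EE(\FF_s)\Rightarrow \hat\mu_\EE(\GG)\leq\hat\mu_\EE(\FF_s)$ bridges the gap. Once this is in place, the remainder of the proof is a faithful translation of the scheme case, with the stacky $\Quot$-scheme of \cite{MR2007396} replacing the classical one.
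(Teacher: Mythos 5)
Your proposal is correct and follows essentially the same route as the paper, whose proof of Proposition \ref{prop:semistable-open} is simply the argument of \cite[Prop 2.3.1]{MR1450870} transported via the stacky Grothendieck lemma \ref{lem:grothendieck-lemma} and the projectivity of $\Quot_{\XX/S}$ with fixed modified Hilbert polynomial. You have merely spelled out the details (reformulation via pure $d$-dimensional destabilizing quotients, the slope bound feeding Grothendieck's lemma, finiteness of the polynomials, properness of the resulting $\Quot$-schemes) that the paper leaves implicit.
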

\begin{proof}
  The same proof as in \cite[2.3.1]{MR1450870} but using the Grothendieck lemma for stacks and  projectivity of the $\Quot$-scheme for sheaves on stacks proved in \cite{MR2007396}. 
\end{proof}
\begin{cor}
  The stack of semistable sheaves on $\XX$ is an algebraic open substack of $\mathfrak{Coh}_{\XX /S}$.
\end{cor}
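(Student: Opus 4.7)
The plan is to deduce this corollary directly from Proposition \ref{prop:semistable-open} together with Corollary \ref{cor:alg-stack-of-coh-sh}. Recall that $\mathfrak{Coh}_{\XX/S}$ is already known to be an Artin stack locally of finite type, with a smooth atlas $\coprod_{N,m} Q_{N,m}^0$, so we only need to identify the subfunctor of semistable sheaves as an \emph{open} substack.

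To carry this out, let $\mathfrak{Coh}^{ss}_{\XX/S}\subset\mathfrak{Coh}_{\XX/S}$ denote the subfunctor assigning to each $S$-scheme $T$ the groupoid of $\OO_T$-flat coherent sheaves $\FF$ on $\XX_T$ whose geometric fibers $\FF_t$ are pure of dimension $d$ and Gieseker semistable with modified Hilbert polynomial $P$. I would first check that this is really a substack, which is immediate because purity, dimension and semistability of a fiber are intrinsic properties depending only on the isomorphism class of $\FF$ and are stable under pullback along morphisms $T'\to T$. One should really take the disjoint union over all admissible polynomials $P$, but since semistability only makes sense once the modified Hilbert polynomial is fixed, each connected stratum is treated separately and the disjoint union of open substacks is again open.

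Next I would verify openness. Given any map $T\to \mathfrak{Coh}_{\XX/S}$ from a scheme $T$, represented by an $\OO_T$-flat coherent sheaf $\FF$ on $\XX_T$, pull back $p\colon\XX\to S$ along $T\to S$ to obtain the family $\XX_T\to T$, which is again a family of projective stacks (the generating sheaf and the polarization pull back by Theorem \ref{thm:existence-generating-sheaf}.\ref{item:16}). Apply Proposition \ref{prop:semistable-open} to this family and the sheaf $\FF$: the subset $\{t\in T\mid \FF_t\text{ is pure of dimension }d\text{ and semistable}\}$ is open in $T$. By definition of the $2$-fibered product, this open subset is exactly the preimage of $\mathfrak{Coh}^{ss}_{\XX/S}$, so $\mathfrak{Coh}^{ss}_{\XX/S}\hookrightarrow\mathfrak{Coh}_{\XX/S}$ is representable by open immersions.

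Finally, an open substack of an algebraic stack is algebraic: one restricts the smooth atlas $\coprod_{N,m}Q_{N,m}^0\to\mathfrak{Coh}_{\XX/S}$ to the open preimage of $\mathfrak{Coh}^{ss}_{\XX/S}$, which is an open subscheme of $\coprod_{N,m}Q_{N,m}^0$ and therefore a smooth atlas for $\mathfrak{Coh}^{ss}_{\XX/S}$. No step in this argument presents a real obstacle; the substantive work has already been done in establishing Proposition \ref{prop:semistable-open} (which required the Kleiman criterion for stacks \ref{thm:kleiman-criterion} and Grothendieck's lemma \ref{lem:grothendieck-lemma}), and the only mildly delicate point is to unwind the definition of the fibered product and confirm that openness on test schemes is the correct notion for an open substack.
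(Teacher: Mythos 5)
Your proposal is correct and follows exactly the route the paper takes: its proof of this corollary is literally the one-line citation of Proposition \ref{prop:semistable-open} (openness of the pure semistable locus in a flat family) together with Corollary \ref{cor:alg-stack-of-coh-sh} (algebraicity of $\mathfrak{Coh}_{\XX/S}$ with its explicit atlas), and your write-up just makes the standard unwinding explicit — testing openness against maps $T\to\mathfrak{Coh}_{\XX/S}$ and restricting the atlas $\coprod_{N,m}Q^0_{N,m}$. Nothing is missing; your remark about fixing $P$ and taking a disjoint union over admissible polynomials is consistent with Lemma \ref{lem:hilbert-poly-fibers}.
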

\begin{proof}
  It follows from the previous one and Corollary \ref{cor:alg-stack-of-coh-sh}.
\end{proof}
\begin{cor}\label{cor:corollary-for-boundedness}
  Let $\XX\to X\to S$ be a polarized stack satisfying hypothesis of Theorem \ref{thm:kleiman-criterion}, and $\mathfrak{F}$ is a set-theoretic family of coherent sheaves on its fibers. The family $\mathfrak{F}$ is bounded if and only if $F_{\EE}(\mathfrak{F})$ is bounded.  
\end{cor}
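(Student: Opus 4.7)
The plan is to transfer boundedness through the functor $F_{\EE}$ in both directions, using Proposition \ref{prop:cohom-base-change-spcoarse} for the easy direction and the equivalence $(1)\Leftrightarrow(3)$ of Theorem \ref{thm:kleiman-criterion} for the harder one. The statement is essentially a repackaging of the Kleiman criterion, in which case most of the real work is already done.

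For the forward direction, suppose $\mathfrak{F}$ is bounded by a coherent sheaf $\mathcal{G}$ on $\XX_T$ for some finite type $S$-scheme $T$. I would form the sheaf $F_{\EE_T}(\mathcal{G}) = \pi_{T*}\HOM_{\OO_{\XX_T}}(\EE_T,\mathcal{G})$ on $X_T$, which is coherent by Remark \ref{item:18}. For a point $\Spec K \to T$ whose associated fiber is $\FF_K \in \mathfrak{F}$, I want the fiber of $F_{\EE_T}(\mathcal{G})$ over $\Spec K$ to be precisely $F_{\EE_K}(\FF_K)$. Because $\EE$ is locally free, $\HOM(\EE_T,\mathcal{G})$ commutes with arbitrary base change, and then Proposition \ref{prop:cohom-base-change-spcoarse} gives the compatibility of $\pi_{T*}$ with arbitrary base change on the moduli space. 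Composing these two identifications yields the required equality of fibers, so $F_{\EE_T}(\mathcal{G})$ bounds $F_{\EE}(\mathfrak{F})$.

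For the reverse direction, assume $F_{\EE}(\mathfrak{F})$ is bounded by a coherent sheaf on some $X_T$. By the classical Kleiman criterion applied to the $S$-projective scheme $X\to S$ with its chosen polarization, the set of Hilbert polynomials $\{P(F_{\EE_K}(\FF_K))\}$ is finite and there exists an integer $m$ such that every $F_{\EE_K}(\FF_K)$ is $m$-regular. Translating through the definitions of modified Hilbert polynomial and modified regularity, this says that the polynomials $P_{\EE_K}(\FF_K)$ take only finitely many values and every $\FF_K$ in $\mathfrak{F}$ is $m$-regular. This is exactly condition $(3)$ in Theorem \ref{thm:kleiman-criterion}, from which condition $(1)$ gives that $\mathfrak{F}$ is bounded.

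There is no substantial obstacle here since the heavy lifting has been done in Theorem \ref{thm:kleiman-criterion}; the only two things to verify carefully are the base change identity on the fibers (via Proposition \ref{prop:cohom-base-change-spcoarse} and the fact that $\HOM$ with the locally free sheaf $\EE$ commutes with pullback) and the tautological translation between ordinary regularity of $F_{\EE}(\FF)$ and $\reg_\EE(\FF)$.
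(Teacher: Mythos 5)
Your proof is correct, and the reverse direction coincides with the paper's: classical Kleiman on $X\to S$ gives finiteness of the Hilbert polynomials and uniform $m$-regularity of $F_{\EE_K}(\FF_K)$, which is condition $(3)$ of Theorem \ref{thm:kleiman-criterion}, whence $(3)\Rightarrow(1)$ bounds $\mathfrak{F}$. Where you diverge is the forward direction. The paper stays entirely inside the numerical conditions: it invokes the stacky Kleiman criterion $(1)\Rightarrow(2)$ to get a uniform $(b)$ (equivalently a uniform regularity $m$) for $F_{\EE}(\mathfrak{F})$, and then feeds that into the scheme-level Kleiman criterion to conclude boundedness on $X$ --- implicitly also using the finiteness of the set of Hilbert polynomials packaged in condition $(2)$. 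You instead construct the bounding family on $X_T$ explicitly as $F_{\EE_T}(\mathcal{G})$ and verify that its fibers are the $F_{\EE_K}(\FF_K)$, using that $\HOM(\EE_T,-)$ with $\EE_T$ locally free commutes with pullback and that $\pi_{T\ast}$ commutes with arbitrary base change on the moduli space by Proposition \ref{prop:cohom-base-change-spcoarse} (together with Corollary \ref{cor:tame-stack-2}, which identifies $\XX_T\times_{X_T}X_K$ with $\XX_K$, and Remark \ref{item:18} for coherence of the pushforward). Your route is more constructive and sidesteps the round trip through $(b)$-sheaves entirely for this direction; the paper's is shorter to state given that the Kleiman machinery has already been assembled. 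Both are valid.
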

\begin{proof}
  If $\mathfrak{F}$ is bounded then there is $(b)$ such that $F_{\EE}(\mathfrak{F})$ are $(b)$-sheaves or equivalently there is an integer $m$ such that $F_{\EE}(\mathfrak{F})$ are $m$-regular. From the Kleiman criterion for schemes it follows that $F_{\EE}(\mathfrak{F})$ is a bounded family.
If $F_{\EE}(\mathfrak{F})$ is a bounded family from the Kleiman criterion for schemes it follows that there is $(b)$ or equivalently there is $m$ such that $F_{\EE}(\mathfrak{F})$ are $(b)$-sheaves or $m$-regular; from the Kleiman criterion for stacks this implies that $\mathfrak{F}$ is bounded.
\end{proof}

\subsection{A numerical criterion for boundedness}
With the last corollary we have reduced the problem of boundedness to a study of boundedness on the moduli scheme $X$ of the family of projective stacks. 
Working on the moduli scheme we have at disposal very fine results to establish whether  a family of sheaves is bounded or not: in characteristic zero we can use the well known theorem of Le Potier and Simpson \cite[3.3.1]{MR1450870} relying on the  Grauert M\"ulich theorem, in positive and mixed characteristic we can use a finer result of Langer in \cite[Thm 4.2]{MR2051393}.

Let $\mathfrak{F}$ be the family of pure semistable sheaves of dimension $d$ on  the fibers of $\XX\to S$ with fixed modified Hilbert polynomial $P$; as we have noticed before it is not true that $F_{\EE}(\mathfrak{F})$ are semistable, however we can study how much this family is destabilized and try to bound this loss of stability. 
Given $\FF$ in the family we can consider the Harder-Narasimhan filtration $0\subset F_n\subset\ldots\subset F_1\subset F_{\EE}(\FF)$ and look for some estimate of the maximal slope ($\hat{\mu}(F_n)$) depending only on the fixed Hilbert polynomial and possibly the sheaf $\EE$ and the geometry of $\XX$. 
The rest of this section is devoted to this problem.    
First of all we show a simple result of boundedness for smooth projective curves  whose proof is analogous to the one for schemes.
\begin{prop}\label{prop:sheaves-on-curves-bounded}
Let $\XX\to\spec{k}$ be a  smooth projective stack of dimension $1$ and  $k$ is an algebraically closed field. The family of torsion-free semistable sheaves on $\XX$ is bounded 
\end{prop}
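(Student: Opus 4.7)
The plan is to reduce the boundedness question on the stack to the analogous question on the smooth projective curve $X$ by means of Corollary \ref{cor:corollary-for-boundedness}, and then adapt the classical curve argument; the remaining difficulty is that $F_\EE$ does not preserve semistability, so the bound on the maximal slope of $F_\EE(\FF)$ must be extracted indirectly from the semistability of $\FF$.

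By Proposition \ref{prop:pure-sheaves}, for every torsion-free semistable $\FF$ in the family (with fixed modified Hilbert polynomial $P_\EE$), the sheaf $F_\EE(\FF)$ is pure of dimension $1$ on the smooth curve $X$, hence locally free, and its rank and degree are determined by $P_\EE$. By Corollary \ref{cor:corollary-for-boundedness} it is enough to prove that $F_\EE(\mathfrak{F})$ is bounded on $X$. For a family of torsion-free sheaves with fixed rank and degree on a smooth projective curve, the Kleiman criterion reduces boundedness to a uniform upper bound on $\hat\mu_{\max}(F_\EE(\FF))$: indeed, by Serre duality $H^1(X,F_\EE(\FF)(m))=0$ as soon as $m$ exceeds an explicit expression in $\hat\mu_{\min}$ (equivalently, in $\hat\mu_{\max}$ of the dual), which yields a uniform regularity bound and thus $m$-regularity for a single $m$.

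To bound $\hat\mu_{\max}(F_\EE(\FF))$ I would use adjunction. Let $F'\subseteq F_\EE(\FF)$ be an arbitrary nonzero subsheaf. The inclusion gives, by the adjunction $G_\EE\dashv F_\EE$, a nonzero morphism $G_\EE(F')=\pi^*F'\otimes\EE\to\FF$; denote its image by $\FF'\subseteq\FF$. Semistability of $\FF$ yields
\[
\hat\mu_\EE(\FF')\;\le\;\hat\mu_\EE(\FF),
\]
the right-hand side being fixed by $P_\EE$. Applying $F_\EE$ to the surjection $G_\EE(F')\twoheadrightarrow\FF'$ and invoking Lemma \ref{lem:idenity-theta-phi} (the composition $F_\EE(\theta_\EE)\circ\varphi_\EE=\mathrm{id}$), one obtains an inclusion $F'\hookrightarrow F_\EE(\FF')$ of locally free sheaves on the curve $X$. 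A projection-formula computation, rewriting $P_\EE(G_\EE(F'))=\chi\bigl(X,F'\otimes\pi_*\END(\EE)(m)\bigr)$ in terms of the fixed sheaf $\pi_*\END(\EE)$, converts the inequality $\hat\mu_\EE(\FF')\le\hat\mu_\EE(\FF)$ into a linear upper bound on $\hat\mu(F')$ in terms of $\hat\mu_\EE(\FF)$ and the numerical invariants of $\pi_*\END(\EE)$ and $\OO_X(1)$.

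The main obstacle is the last step: one must extract a clean linear estimate $\hat\mu(F')\le\hat\mu_\EE(\FF)+C$ with $C$ depending only on the polarization $(\EE,\OO_X(1))$ and on $\XX$, uniformly in $\FF$ and $F'$. Once this numerical comparison is in place, the resulting uniform bound on $\hat\mu_{\max}(F_\EE(\FF))$ yields a uniform $m$-regularity, so $F_\EE(\mathfrak{F})$ is bounded on $X$ by the classical Kleiman criterion, and the proposition follows from Corollary \ref{cor:corollary-for-boundedness}.
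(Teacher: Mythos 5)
Your route is genuinely different from the paper's. The paper disposes of the curve case in one line: it applies Serre duality \emph{on the stacky curve $\XX$ itself} together with the Kleiman criterion for stacks, exactly as in \cite[Cor 1.7.7]{MR1450870} --- semistability of $\FF$ forces $H^1(\XX,\FF\otimes\EE^\vee\otimes\pi^\ast\OO_X(m))=\Hom(\FF\otimes\EE^\vee\otimes\pi^\ast\OO_X(m),\omega_\XX)^\vee$ to vanish for $m$ larger than a bound depending only on $\hat{\mu}_\EE(\FF)$, the polarization and $\omega_\XX$, giving uniform regularity directly, with no need to descend to $X$ or to control $\hat{\mu}_{\max}(F_\EE(\FF))$. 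This is why the proposition is stated for \emph{smooth} $\XX$. What you propose instead is to push everything down to the coarse curve via Corollary \ref{cor:corollary-for-boundedness} and bound the maximal destabilizing slope of $F_\EE(\FF)$; this is precisely the strategy the paper deploys later, in arbitrary dimension, via Proposition \ref{prop:stima-max-destab} and Langer's theorem. Your approach buys generality (it does not use smoothness of $\XX$, as the paper itself remarks after the general boundedness theorem), at the price of the harder numerical estimate; the paper's shortcut buys brevity at the price of smoothness.

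The step you flag as the ``main obstacle'' is a real gap as written, and the fix is worth making explicit. An inclusion $F'\hookrightarrow F_\EE(\FF')$ gives no upper bound on $\hat{\mu}(F')$, since a subsheaf can have strictly larger slope than the ambient sheaf. The correct mechanism, which is the content of Proposition \ref{prop:stima-max-destab}, is: take $F'=\overline{F}$ to be the \emph{maximal destabilizing} subsheaf of $F_\EE(\FF)$, so that $\overline{F}$ is semistable; choose $\widetilde{m}$ with $\pi_\ast\END_{\OO_\XX}(\EE)(\widetilde{m})$ globally generated, so that $\overline{F}(-\widetilde{m})^{\oplus N}$ surjects onto $\overline{F}\otimes\pi_\ast\END_{\OO_\XX}(\EE)=F_\EE(G_\EE(\overline{F}))$, which in turn surjects onto $F_\EE(\FF')$; then semistability of $\overline{F}(-\widetilde{m})^{\oplus N}$ applied to this \emph{quotient} gives
\begin{displaymath}
\hat{\mu}(\overline{F})-\widetilde{m}\deg(\OO_X(1))\;\leq\;\hat{\mu}(F_\EE(\FF'))=\hat{\mu}_\EE(\FF')\;\leq\;\hat{\mu}_\EE(\FF),
\end{displaymath}
which is the uniform linear bound you need. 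With that in place the rest of your argument (fixed rank and degree of the locally free sheaf $F_\EE(\FF)$ on the smooth curve $X$, classical Serre duality on $X$ to get uniform regularity from the slope bound, Kleiman criterion to pull boundedness back to $\XX$) goes through.
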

\begin{proof}
  This is an application of Serre duality for stacky curves and Kleiman criterion for stacks as in \cite[Cor 1.7.7]{MR1450870}.
\end{proof}
There is also a very standard result of Maruyama and Yokogawa about the boundedness of parabolic bundles:
\begin{prop}
 Let $X\to\spec{k}$ be a smooth projective scheme, and consider a root construction on it $\pi\colon\XX\to X$. The family of semistable locally free sheaves on $\XX$ is bounded.  
\end{prop}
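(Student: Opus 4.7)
The plan is to invoke the Maruyama--Yokogawa boundedness theorem on the moduli scheme $X$ and pull the resulting boundedness back up to the stack $\XX$ via the Kleiman criterion for stacks that has just been established.

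First, I would fix the generating sheaf $\EE = \bigoplus_{\mathbf{0}\leq \mathbf{i} < \mathbf{a}} \OO_\XX\bigl(\sum_{j} i_j \mathcal{D}_j^{1/a_j}\bigr)$, which is the natural choice promised (in the last section of the paper) to recover the parabolic stability condition. Under the Biswas--Borne equivalence (\cite{MR1455522}, \cite{borne-2006}, \cite{borne-2007}) a locally free sheaf $\FF$ on $\XX = \sqrt[\mathbf{a}]{\mathcal{D}/X}$ corresponds functorially to a parabolic bundle $F_\bullet$ on $X$ with parabolic structure along $\mathcal{D}$ and rational weights in $\frac{1}{a_j}\zz$; moreover, for this choice of $\EE$, the modified Gieseker stability on $\XX$ agrees with the Maruyama--Yokogawa parabolic Gieseker stability on $X$, and the modified Hilbert polynomial $P_\EE(\FF,m)$ determines the parabolic Hilbert polynomial of the associated parabolic bundle (up to a positive multiplicative constant coming from $\rk \EE$).

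Second, given a set-theoretic family $\mathfrak{F}$ of semistable locally free sheaves on $\XX$ with fixed modified Hilbert polynomial $P$, the associated family of parabolic bundles on $X$ is parabolic-semistable with a fixed parabolic Hilbert polynomial determined by $P$. By \cite[Thm 3.1]{MR1162674} this family of parabolic bundles is bounded on $X$, and equivalently $F_\EE(\mathfrak{F})$ is a bounded family of coherent sheaves on $X$. At this point Corollary~\ref{cor:corollary-for-boundedness}, which is a direct consequence of the Kleiman criterion for stacks (Theorem~\ref{thm:kleiman-criterion}), upgrades boundedness of $F_\EE(\mathfrak{F})$ on $X$ to boundedness of $\mathfrak{F}$ itself on $\XX$, which is the desired conclusion. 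If one wishes to keep the locally-free assumption in the bounding family, one can apply the refined form of Theorem~\ref{thm:kleiman-criterion}~(1), since all members of $\mathfrak{F}$ have constant rank (the rank of the locally free sheaf being read off from the leading coefficient of $P_\EE$).

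The main obstacle I foresee is the bookkeeping needed to make the dictionary of stabilities rigorous: namely, verifying that fixing $P_\EE$ on $\XX$ fixes the full parabolic Hilbert polynomial on $X$ (so that the Maruyama--Yokogawa theorem applies to only finitely many polynomials) and that $\EE$-semistability of $\FF$ really translates to parabolic semistability of $F_\bullet$. Once this equivalence is in place --- which is the content of Borne's work combined with the final section of the present paper --- the remainder of the argument is essentially a transport of boundedness across the equivalence plus the application of Corollary~\ref{cor:corollary-for-boundedness}.
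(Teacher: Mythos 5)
Your proposal is correct and follows essentially the same route as the paper: the paper's own proof is a one-line citation of the computation in \cite{MR1162674}, implicitly using the equivalence between locally free sheaves on the root stack and parabolic bundles that you spell out. The only additions you make are to state explicitly the transfer mechanism back to $\XX$ (Corollary~\ref{cor:corollary-for-boundedness}, which the paper leaves implicit here) and to flag the Hilbert-polynomial bookkeeping, both of which are sensible refinements rather than departures.
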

\begin{proof}
  This is a direct computation that can be found in the original paper \cite{MR1162674}. 
\end{proof}

What follows is devoted to the study of the problem in a greater generality.
Let $\FF$ a coherent sheaf on $\XX$ and $P$ a polynomial with integral coefficients. We will denote with $\Quot_{\XX/S}(\FF,P)$ the functor of quotients of $\FF$ with modified Hilbert polynomial $P$.  The natural transformation $F_\EE$ maps $\Quot_{\XX/S}(\FF,P)$ to the ordinary $\Quot$-scheme $\Quot_{X/S}(F_{\EE}(\FF),P)$ of quotient sheaves on $X$ with ordinary Hilbert polynomial $P$.
\begin{prop}\label{prop:closed-embedding-modified-vers}
  The natural transformation $F_{\EE}$ is relatively representable with schemes and actually a closed immersion. In particular $\Quot_{\XX/S}(\FF,P)$ is a projective scheme.
\end{prop}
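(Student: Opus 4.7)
The plan is to reduce this to Proposition 6.2 of Olsson--Starr (quoted just above in the excerpt) by decomposing according to generalized Hilbert polynomials. The key observation is that the modified Hilbert polynomial $P_\EE(\FF,m)$ of a sheaf $\FF$ on $\XX$ coincides, by definition, with the ordinary Hilbert polynomial of $F_\EE(\FF)$ on $X$. Consequently, for every quotient $\FF \twoheadrightarrow \mathcal{G}$ parameterized by a point of $\Quot_{\XX/S}(\FF,P)$, the sheaf $F_\EE(\mathcal{G})$ has fixed ordinary Hilbert polynomial $P$, so the natural transformation $F_\EE$ does indeed land in $\Quot_{X/S}(F_\EE(\FF),P)$, which is the classical projective Grothendieck Quot-scheme.

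First I would show that $\Quot_{\XX/S}(\FF,P)$ decomposes as a disjoint union
\begin{displaymath}
  \Quot_{\XX/S}(\FF,P) \;=\; \coprod_{\widetilde{P}\mapsto P} \Quot_{\XX/S}(\FF,\widetilde{P})
\end{displaymath}
indexed by the generalized Hilbert polynomials $\widetilde{P}$ of Olsson--Starr whose associated ordinary Hilbert polynomial $P_V(\widetilde{P})$ equals $P$. This is essentially tautological, since specifying a generalized Hilbert polynomial is a finer invariant than specifying $P$ alone, and each Olsson--Starr Quot functor is representable by a projective scheme by Theorem \ref{thm:quot-closed}.

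Next I would prove that this disjoint union is \emph{finite}. For this I invoke Corollary \ref{cor:corollary-for-boundedness}: the set-theoretic family of quotients $\FF\twoheadrightarrow \mathcal{G}$ with fixed modified Hilbert polynomial $P$ is bounded if and only if the family $F_\EE(\mathcal{G})$ of quotients of $F_\EE(\FF)$ on $X$ with fixed ordinary Hilbert polynomial $P$ is bounded. The latter is bounded because $\Quot_{X/S}(F_\EE(\FF),P)$ is a projective scheme. A bounded family of sheaves on $\XX$ has only finitely many generalized Hilbert polynomials (a bounding sheaf on $\XX_T$ has, after stratifying $T$ by generic flatness, only finitely many generalized Hilbert polynomials in its fibers), so only finitely many $\widetilde{P}$ contribute.

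Finally, by Proposition 6.2 of Olsson--Starr each term $\Quot_{\XX/S}(\FF,\widetilde{P}) \to \Quot_{X/S}(F_\EE(\FF),P)$ is relatively representable by schemes and a closed immersion. A finite disjoint union of closed immersions into a common target is itself relatively representable by schemes and a closed immersion, which proves the first assertion. Since $\Quot_{X/S}(F_\EE(\FF),P)$ is projective (classical Grothendieck), its closed subscheme $\Quot_{\XX/S}(\FF,P)$ is projective as well, giving the final claim. The only real content beyond Olsson--Starr is the finiteness step, which is precisely where the Kleiman criterion for stacks (Theorem \ref{thm:kleiman-criterion}) is essential; this is the main obstacle and it is already addressed by Corollary \ref{cor:corollary-for-boundedness}.
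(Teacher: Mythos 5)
Your route is genuinely different from the paper's. The paper does not decompose anything: it simply reruns the proofs of \cite[Lem 6.1]{MR2007396} and \cite[Prop 6.2]{MR2007396} with the modified Hilbert polynomial in place of the generalized one, the only new input being Lemma \ref{lem:hilbert-poly-fibers} (local constancy of $P_\EE$ in flat families), which substitutes for \cite[Lem 4.3]{MR2007396}. You instead use the Olsson--Starr results as black boxes and pay for this with a decomposition into generalized-Hilbert-polynomial pieces plus a finiteness argument. Most of this goes through: the decomposition is legitimate (both invariants are locally constant in flat families over a noetherian base, and $\widetilde{P}$ determines $P$ by evaluation, so each $\Quot_{\XX/S}(\FF,\widetilde{P})$ lands in the single target $\Quot_{X/S}(F_\EE(\FF),P)$), and the finiteness step is correct: the quotients of $\FF$ with modified Hilbert polynomial $P$ are sent by $F_\EE$ to quotients of $F_\EE(\FF)$ with ordinary Hilbert polynomial $P$, a bounded family on $X$, so Corollary \ref{cor:corollary-for-boundedness} bounds the family on $\XX$ and generic flatness together with the local constancy of the generalized Hilbert polynomial yields finitely many $\widetilde{P}$. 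There is also no circularity, since Corollary \ref{cor:corollary-for-boundedness} rests only on the Kleiman criterion and precedes this proposition.

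The one step that is wrong as stated is the last one: a finite disjoint union of closed immersions into a common target is \emph{not} in general a closed immersion (consider $Z\sqcup Z\to Y$ for a closed subscheme $Z\subseteq Y$); it is one precisely when the images are pairwise disjoint, and you never verify this. Here they are disjoint, but that requires an argument: if two quotients $\FF\to\GG_1$ and $\FF\to\GG_2$ with distinct generalized Hilbert polynomials had $F_\EE(\GG_1)=F_\EE(\GG_2)$ as quotients of $F_\EE(\FF)$, then applying the left inverse $\eta$ of Lemma \ref{lem:left-inverse} would give $\GG_1=\GG_2$ as quotients of $\FF$, a contradiction. Hence $F_\EE$ is injective on quotient objects, the images of the various $\Quot_{\XX/S}(\FF,\widetilde{P})$ are pairwise disjoint closed subschemes of $\Quot_{X/S}(F_\EE(\FF),P)$, and their finite union is the required closed subscheme. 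With this point supplied your proof is complete; without it the final assertion does not follow from what you wrote.
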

\begin{proof}
  It is the same proof as in \cite[Lem 6.1]{MR2007396} and \cite[Prop 6.2]{MR2007396} but using Lemma \ref{lem:hilbert-poly-fibers} instead of \cite[Lem 4.3]{MR2007396}.
\end{proof}
Consider now $T$ an $S$-scheme, $\FF$ a coherent sheaf on $\XX_T$ and $P$ as before. We recall the definition of the natural transformation $\eta_T\colon\Quot_{X/S}(F_{\EE}(\FF),P)(T)\to\Quot_{\XX/S}(\FF)(T)$ from \cite{MR2007396}. Let $F_\EE(\FF)\xrightarrow{\rho}Q$ be a quotient sheaf in $\Quot_{X/S}(F_{\EE}(\FF),P)(T)$. First consider the kernel $K\xrightarrow{\sigma}F_{\EE}(\FF)$, apply $G_{\EE}$ and compose with the natural morphism $\theta_{\EE}$:
\begin{displaymath}
\xymatrix{
  G_{\EE}(K)\ar[r]^-{G_{\EE}(\sigma)} & G_{\EE}F_{\EE}(\FF) \ar@{->>}[r]^-{\theta_{\EE}(\FF)} & \FF
}
\end{displaymath}
Let $\mathcal{Q}$ be the cokernel of this composition so that we have defined a quotient $\FF\to\mathcal{Q}$ which is $\eta_T(\rho)$. 
\begin{lem}{\cite[Lem 6.1]{MR2007396}}\label{lem:left-inverse}
  Let $\mathcal{Q}$ be a coherent sheaf in $\Quot_{\XX/S}(\FF)(T)$, the composition of natural transformations $\eta_T(F_{\EE}(\mathcal{Q}))$ is the same sheaf $\mathcal{Q}$ moreover the association $T\mapsto \eta_T$ is functorial.
\end{lem}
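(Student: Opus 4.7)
The plan is to establish the identity $\eta_T(F_\EE(\mathcal{Q}))=\mathcal{Q}$ by comparing the two surjections out of $\FF$ at the level of their kernels. Write the given quotient as $\rho\colon\FF\twoheadrightarrow\mathcal{Q}$ with kernel $N=\ker\rho$. Since $F_\EE$ is exact (Remark \ref{rem:FE-exact}), setting $K\coloneqq F_\EE(N)$ gives a short exact sequence $0\to K\xrightarrow{\iota} F_\EE(\FF)\to F_\EE(\mathcal{Q})\to 0$. Unwinding the definition of $\eta_T$ applied to $F_\EE(\rho)$, the sheaf $\mathcal{Q}'\coloneqq\eta_T(F_\EE(\mathcal{Q}))$ is the cokernel of the composite
\begin{displaymath}
G_\EE(K)\xrightarrow{G_\EE(\iota)} G_\EE F_\EE(\FF)\xrightarrow{\theta_\EE(\FF)} \FF.
\end{displaymath}
Writing the resulting quotient as $\rho'\colon\FF\twoheadrightarrow\mathcal{Q}'$, one must show $\ker\rho=\ker\rho'$ as subsheaves of $\FF$.

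For the inclusion $\ker\rho'\subseteq\ker\rho$ I would use naturality of $\theta_\EE$, which yields the commutative square $\rho\circ\theta_\EE(\FF)=\theta_\EE(\mathcal{Q})\circ G_\EE F_\EE(\rho)$. Since $G_\EE$ is right exact, the image of $G_\EE(K)$ in $G_\EE F_\EE(\mathcal{Q})$ vanishes, so the composite $G_\EE(K)\to\FF\to\mathcal{Q}$ is zero. This gives $\ker\rho'=\mathrm{Im}(\theta_\EE(\FF)\circ G_\EE(\iota))\subseteq N=\ker\rho$.

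The main obstacle is the opposite inclusion; my approach is to apply the exact functor $F_\EE$ and reduce to the fact that $F_\EE$ detects nonzero sheaves (Lemma \ref{lem:pure-sheaves-supp}). After applying $F_\EE$, the subsheaf $F_\EE(\ker\rho')\subseteq F_\EE(\FF)$ coincides with the image of $F_\EE(\theta_\EE(\FF))\circ F_\EE G_\EE(\iota)$. Naturality of the unit $\varphi_\EE$ on the morphism $\iota$ gives $F_\EE G_\EE(\iota)\circ\varphi_\EE(K)=\varphi_\EE(F_\EE(\FF))\circ\iota$, while the first identity of Lemma \ref{lem:idenity-theta-phi} gives $F_\EE(\theta_\EE(\FF))\circ\varphi_\EE(F_\EE(\FF))=\mathrm{id}_{F_\EE(\FF)}$. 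Composing, one obtains
\begin{displaymath}
\iota=F_\EE(\theta_\EE(\FF))\circ F_\EE G_\EE(\iota)\circ\varphi_\EE(K),
\end{displaymath}
so that $\iota$ factors through the map whose image is $F_\EE(\ker\rho')$. Hence $K\subseteq F_\EE(\ker\rho')$, and together with the previous step $F_\EE(\ker\rho')=K=F_\EE(\ker\rho)$. Exactness of $F_\EE$ then yields $F_\EE(\ker\rho/\ker\rho')=0$, and Lemma \ref{lem:pure-sheaves-supp} forces $\ker\rho=\ker\rho'$, so $\mathcal{Q}=\mathcal{Q}'$.

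For the functoriality of $T\mapsto\eta_T$, I would observe that every ingredient in the construction is compatible with base change along a morphism $T'\to T$: the functor $G_\EE$, defined by $G_\EE(H)=\pi_T^\ast H\otimes\EE_T$, commutes with pullback by base change of $\pi$ and of $\EE$ (Theorem \ref{thm:existence-generating-sheaf}~\ref{item:16}), $\theta_\EE$ is a natural transformation and hence compatible with pullback, and the formation of kernels of surjections onto flat sheaves as well as of cokernels is preserved by pullback. Thus the assignment $\rho\mapsto\coker\bigl(G_\EE(\ker\rho)\to\FF\bigr)$ defining $\eta_T$ is natural in $T$.
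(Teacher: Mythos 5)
Your argument is correct, but note that the paper does not actually prove this lemma: it is imported verbatim as \cite[Lem 6.1]{MR2007396}, so the comparison is with the standard argument of Olsson--Starr rather than with anything in this text. That standard argument is shorter than yours: writing $j\colon N=\ker\rho\hookrightarrow\FF$, one has $G_\EE(\iota)=G_\EE F_\EE(j)$, and naturality of the counit gives $\theta_\EE(\FF)\circ G_\EE F_\EE(j)=j\circ\theta_\EE(N)$; since $\EE$ is a generating sheaf, $\theta_\EE(N)$ is surjective, so the image of $G_\EE(K)\to\FF$ is exactly $N$ and both inclusions fall out at once. Your first inclusion is the same naturality computation (though the phrase ``since $G_\EE$ is right exact'' is a red herring --- you only need $G_\EE(0)=0$, i.e.\ functoriality). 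Your second inclusion takes a genuinely different route: you apply the exact functor $F_\EE$, use the triangle identity $F_\EE(\theta_\EE)\circ\varphi_\EE(F_\EE(-))=\Id$ from Lemma \ref{lem:idenity-theta-phi} to recover $\iota$ inside the image, and then conclude by faithfulness of $F_\EE$ on objects. This works, and it has the mild virtue of never invoking surjectivity of $\theta_\EE(N)$ directly, but it buys this at the cost of two extra inputs. Be aware that Lemma \ref{lem:pure-sheaves-supp} is stated only for a projective stack over an algebraically closed field, whereas here you are on $\XX_T$ for an arbitrary $S$-scheme $T$; the statement you need ($F_\EE(\GG)=0\Rightarrow\GG=0$) is still true, but the clean justification in the relative setting is again surjectivity of $\theta_\EE(\GG)$ for the generating sheaf $\EE_T$ (Theorem \ref{thm:existence-generating-sheaf}), at which point you might as well run the shorter proof.

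On functoriality in $T$, your sketch is essentially right but glosses over the one nontrivial input: compatibility of $\theta_\EE$ with base change is not a formal consequence of its naturality in the sheaf variable, since $\theta_\EE$ involves $\pi_\ast$; you need Proposition \ref{prop:cohom-base-change-spcoarse} (together with Theorem \ref{thm:existence-generating-sheaf}~\ref{item:16}) to identify $u^\ast F_{\EE_T}(\FF)$ with $F_{\EE_{T'}}(\FF_{T'})$ before the kernel--$G_\EE$--cokernel recipe can be transported along $T'\to T$. With that citation added the functoriality claim is complete.
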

\begin{rem}
  In this context there is no reason why $\eta_T(Q)$ should have modified Hilbert polynomial $P$, unless it is in the image of $F_{\EE}$.
\end{rem}

\begin{lem}
  Let $\FF$ be a quasicoherent sheaf on $\XX$. Let $Q$ be the following quotient:
  \begin{displaymath}
    \xymatrix{
0 \ar[r] &  K \ar[r]^-{\alpha} & F_{\EE}(\FF) \ar[r]^-{\beta} & Q \ar[r] & 0
}
  \end{displaymath}
Let $\FF\to\mathcal{Q}$ be the quotient associated to $Q$ by the transformation $\eta_S$ and let $\mathcal{K}$ be its kernel, then we have the following $9$-roman:
\begin{displaymath}
  \xymatrix{
     & 0 & & & & 0 \\
  0  \ar[r]  & A\ar[u]\ar[r] & Q\ar[rr]\ar[ul] & & F_{\EE}(\mathcal{Q})\ar[ur]\ar[r] & 0 \\
   & & & F_{\EE}(\FF)\ar[ur]^-{F_{\EE}(\beta)}\ar[ul] & & \\
  0\ & B\ar[l]\ar[uu]^{\wr} & F_{\EE}(\mathcal{K})\ar[ur]\ar[l] & & K\ar[ul]^-{\alpha}\ar[ll]^{\gamma} & 0\ar[l] \\
  & 0\ar[u]\ar[ur] & & & & 0\ar[ul] \\
  }
\end{displaymath}
Moreover the map $\gamma$ factorizes in the following way:
\begin{displaymath}
  \xymatrix{
    K\ar@{>->}[rr]^-{\varphi_{\EE}(K)} && F_{\EE}\circ G_{\EE}(K) \ar@{->>}[rr]^-{\widetilde{\gamma}} && F_{\EE}(\mathcal{K}) 
  }
\end{displaymath}
\end{lem}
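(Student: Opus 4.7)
The strategy is to produce $\gamma$ directly by the formula $\gamma=\widetilde{\gamma}\circ\varphi_{\EE}(K)$, check that $K$ sits inside $F_{\EE}(\mathcal{K})$ as subsheaves of $F_{\EE}(\FF)$, and then read off the $9$-diagram from the resulting nested inclusions $K\subseteq F_{\EE}(\mathcal{K})\subseteq F_{\EE}(\FF)$. Everything else is formal diagram chasing once these ingredients are in place.

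By the definition of $\eta_S$, the subsheaf $\mathcal{K}\hookrightarrow\FF$ is the image of $\theta_{\EE}(\FF)\circ G_{\EE}(\alpha)\colon G_{\EE}(K)\to\FF$, so $G_{\EE}(K)\twoheadrightarrow\mathcal{K}$ is surjective. Applying the exact functor $F_{\EE}$ yields the surjection $\widetilde{\gamma}\colon F_{\EE}G_{\EE}(K)\twoheadrightarrow F_{\EE}(\mathcal{K})$ announced in the statement; pre-composing with $\varphi_{\EE}(K)$, which is injective by the observation after equation (\ref{eq:2}), defines $\gamma$. This already gives the announced factorization, with $\widetilde{\gamma}$ surjective and $\varphi_{\EE}(K)$ injective.

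The core computation is to show $\iota_{*}\circ\gamma=\alpha$, where $\iota_{*}\colon F_{\EE}(\mathcal{K})\hookrightarrow F_{\EE}(\FF)$ is $F_{\EE}$ applied to the inclusion $\mathcal{K}\hookrightarrow\FF$. Applying $F_{\EE}$ to the factorization $G_{\EE}(K)\twoheadrightarrow\mathcal{K}\hookrightarrow\FF$ of $\theta_{\EE}(\FF)\circ G_{\EE}(\alpha)$ gives $\iota_{*}\circ\widetilde{\gamma}=F_{\EE}(\theta_{\EE}(\FF))\circ F_{\EE}G_{\EE}(\alpha)$. Pre-composing further with $\varphi_{\EE}(K)$ and using naturality of $\varphi_{\EE}$ at $\alpha$, this becomes $F_{\EE}(\theta_{\EE}(\FF))\circ\varphi_{\EE}(F_{\EE}(\FF))\circ\alpha$; the first two factors compose to the identity by Lemma \ref{lem:idenity-theta-phi}, so $\iota_{*}\circ\gamma=\alpha$. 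In particular $\gamma$ is injective since $\alpha$ is, and $K$ is identified with a subsheaf of $F_{\EE}(\mathcal{K})$ sitting compatibly inside $F_{\EE}(\FF)$.

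With the chain $K\subseteq F_{\EE}(\mathcal{K})\subseteq F_{\EE}(\FF)$ established, the $9$-diagram assembles tautologically. The two short exact sequences $0\to K\to F_{\EE}(\FF)\to Q\to 0$ and $0\to F_{\EE}(\mathcal{K})\to F_{\EE}(\FF)\to F_{\EE}(\mathcal{Q})\to 0$ (the latter obtained by applying the exact functor $F_{\EE}$ to $0\to\mathcal{K}\to\FF\to\mathcal{Q}\to 0$) share the middle term, and the nested-quotient isomorphism theorem identifies both $A:=\ker(Q\twoheadrightarrow F_{\EE}(\mathcal{Q}))$ and $B:=\operatorname{coker}(\gamma)$ canonically with $F_{\EE}(\mathcal{K})/K$. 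This gives the vertical isomorphism $A\xrightarrow{\sim}B$ and completes the diagram. The only non-trivial ingredient is the triangle identity from Lemma \ref{lem:idenity-theta-phi}; everything else is bookkeeping.
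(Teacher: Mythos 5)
Your proof is correct and follows essentially the same route as the paper's: both define $\gamma$ as $\widetilde{\gamma}\circ\varphi_{\EE}(K)$ with $\widetilde{\gamma}$ obtained by applying the exact functor $F_{\EE}$ to the surjection $G_{\EE}(K)\twoheadrightarrow\mathcal{K}$, and both reduce the compatibility with $\alpha$ to the triangle identity of Lemma \ref{lem:idenity-theta-phi} (the paper phrases this by stacking the three rows and noting the middle column is the identity, you phrase it via naturality of $\varphi_{\EE}$, which is the same computation). The concluding identification of $A$ with $B$ via the nested inclusions $K\subseteq F_{\EE}(\mathcal{K})\subseteq F_{\EE}(\FF)$ is exactly the diagram chase the paper leaves implicit.
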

\begin{proof}
  First of all we produce the sheaf $\QQ$ using the following diagram:
  \begin{displaymath}
    \xymatrix{
     & G_{\EE}(K) \ar@{->>}[d] \ar[r]^-{G_{\EE}(\alpha)}\ar[rd]^{\delta} & G_{\EE}\circ F_{\EE}(\FF) \ar[r]\ar@{->>}[d]^-{\theta_{\EE}(\FF)} & G_{\EE}(Q) \ar[r]\ar@{->>}[d] & 0 \\
    0 \ar[r] & \mathcal{K} \ar[r] & \FF \ar[r]^{\beta} & \QQ \ar[r] & 0 \\
    }
  \end{displaymath}
where $\QQ=\coker{\theta_\EE(\FF)\circ G_\EE(\alpha)}$ and $\mathcal{K}=\ker{\beta}$. Now we apply the exact functor $F_{\EE}$ and use the transformation $\varphi_{\EE}$ and formula (\ref{eq:2}) to obtain the following diagram:
 \begin{displaymath}
    \xymatrix{
   0 \ar[r] & K \ar[r]^-{\alpha}\ar@{>->}[d]^-{\varphi_{\EE}(K)} & F_{\EE}(\FF) \ar[r]\ar@{>->}[d]^-{\varphi_{\EE}(F_{\EE}(\FF))} & Q \ar@{>->}[d]^-{\varphi_{\EE}(Q)}\ar[r] & 0 \\
    & K\otimes\pi_\ast\END_{\OO_\XX}(\EE) \ar@{->>}[d]^-{\widetilde{\gamma}} \ar[r]^-{\alpha\otimes\Id}\ar[rd]_-{F_{\EE}(\delta)} &  F_{\EE}(\FF)\otimes\pi_\ast\END_{\OO_\XX}(\EE) \ar[r]\ar@{->>}[d]^-{F_{\EE}(\theta_{\EE}(\FF))} & Q\otimes\pi_\ast\END_{\OO_\XX}(\EE) \ar[r]\ar@{->>}[d] & 0 \\
    0 \ar[r] & F_{\EE}(\mathcal{K}) \ar[r]  & F_{\EE}(\FF) \ar[r]^-{F_{\EE}(\beta)} & F_{\EE}(\QQ) \ar[r] & 0 \\ }
\end{displaymath}
The middle column is the identity according to lemma \ref{lem:idenity-theta-phi} so that the left column is injective and the right column is surjective. It is immediate to produce the $9$-roman.
\end{proof}
\begin{prop}\label{prop:stima-max-destab}
Let $\XX$ be a projective polarized stack over an algebraically closed field $k$.  Let $\FF$ be a pure $\mu$-semistable sheaf  on $\XX$. Let $\overline{F}$ be the maximal destabilizing sheaf of $F_{\EE}(\FF)$. Take $\widetilde{m}$ an integer such that $\pi_\ast\END_{\OO_\XX}(\EE)(\widetilde{m})$ is generated by the global sections and denote with $N=h^0(X,\pi_\ast\END_{\OO_X}(\EE)(\widetilde{m}))$. The following inequality holds:
  \begin{equation}\label{eq:3}
  \hat{\mu}_{\max{}}(F_{\EE}(\FF))=\hat{\mu}(\overline{F})\leq \hat{\mu}_{\EE}(\FF) +\widetilde{m}\deg{(\OO_X(1))} 
  \end{equation}
\end{prop}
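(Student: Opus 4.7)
The strategy is to lift the maximal destabilizing subsheaf $K \coloneqq \overline{F}\subseteq F_\EE(\FF)$ back to a subsheaf of $\FF$ via the transformation $\eta$ of Lemma \ref{lem:left-inverse}, and then play two slope comparisons against each other: one on $X$ exploiting $\mu$-semistability of $K$, and one on $\XX$ exploiting $\mu$-semistability of $\FF$.

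First I would set $Q = F_\EE(\FF)/K$ and apply $\eta_{\spec k}$ to obtain a quotient $\FF\twoheadrightarrow\QQ$ with kernel $\mathcal{K}\hookrightarrow\FF$, fitting into the nine-term diagram of the preceding lemma. That lemma yields both an injection $K\hookrightarrow F_\EE(\mathcal{K})$ and a surjection
\begin{displaymath}
\widetilde{\gamma}\colon K\otimes\pi_\ast\END_{\OO_\XX}(\EE)\twoheadrightarrow F_\EE(\mathcal{K}).
\end{displaymath}
In particular $F_\EE(\mathcal{K})\neq 0$, hence $\mathcal{K}\neq 0$ by Lemma \ref{lem:pure-sheaves-supp}. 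Since $\FF$ is pure of dimension $d$, Proposition \ref{prop:pure-sheaves} ensures that $\mathcal{K}$ and $F_\EE(\mathcal{K})$ are both pure of dimension $d$.

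Next I would use the global generation assumption to realise $\pi_\ast\END_{\OO_\XX}(\EE)$ as a quotient of $\OO_X(-\widetilde{m})^{\oplus N}$, and tensor with $K$ to get, via $\widetilde{\gamma}$, a surjection
\begin{displaymath}
K(-\widetilde{m})^{\oplus N}\twoheadrightarrow F_\EE(\mathcal{K}).
\end{displaymath}
Being the maximal $\mu$-destabilizing subsheaf of the pure sheaf $F_\EE(\FF)$, $K$ is itself $\mu$-semistable on $X$; since tensoring by a line bundle and taking direct sums preserve $\mu$-semistability, the source is $\mu$-semistable of slope $\hat{\mu}(K)-\widetilde{m}\deg\OO_X(1)$. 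As $F_\EE(\mathcal{K})$ is a nonzero pure quotient of the same dimension $d$, the standard slope inequality for quotients of a $\mu$-semistable sheaf produces
\begin{displaymath}
\hat{\mu}(F_\EE(\mathcal{K}))\geq \hat{\mu}(K)-\widetilde{m}\deg\OO_X(1).
\end{displaymath}

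Finally, the $\mu$-semistability of $\FF$ on $\XX$ applied to $\mathcal{K}\subseteq\FF$ gives $\hat{\mu}_\EE(\mathcal{K})\leq\hat{\mu}_\EE(\FF)$, and the identity $\hat{\mu}_\EE(-)=\hat{\mu}(F_\EE(-))$ built into the definition of the modified Hilbert polynomial rewrites this as $\hat{\mu}(F_\EE(\mathcal{K}))\leq\hat{\mu}_\EE(\FF)$. Chaining the two inequalities yields $\hat{\mu}(K)\leq\hat{\mu}_\EE(\FF)+\widetilde{m}\deg\OO_X(1)$, which is the claim. The only real delicacy is verifying that $\mathcal{K}$ and $F_\EE(\mathcal{K})$ are nonzero and have the right dimension so that the two slope comparisons are meaningful, and this is exactly what Proposition \ref{prop:pure-sheaves} and Lemma \ref{lem:pure-sheaves-supp} were set up to provide.
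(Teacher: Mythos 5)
Your argument is essentially identical to the paper's: the paper also lifts $\overline{F}$ to the subsheaf $\overline{\FF}\subseteq\FF$ via $\eta_k$, uses the surjection $\overline{F}\otimes\OO_X(-\widetilde{m})^{\oplus N}\twoheadrightarrow F_\EE(\overline{\FF})$ coming from the nine-term lemma and global generation of $\pi_\ast\END_{\OO_\XX}(\EE)(\widetilde{m})$, and then chains the quotient-slope inequality for the $\mu$-semistable source with the subsheaf inequality $\hat{\mu}_\EE(\overline{\FF})\leq\hat{\mu}_\EE(\FF)$. The proposal is correct and matches the paper's route, including the supporting appeals to Lemma \ref{lem:pure-sheaves-supp} and Proposition \ref{prop:pure-sheaves} for nonvanishing and purity.
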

\begin{proof}
The coherent sheaf $\pi_\ast\END_{\OO_\XX}{\EE}$ is unstable in almost every example, however we have a surjection 
\begin{displaymath}
\xymatrix{
  \OO_X(-\widetilde{m})^{\oplus N} \ar@{->>}[r] & \pi_\ast\END_{\OO_\XX}{\EE}
}
\end{displaymath}
 with $\widetilde{m},N$ as in the hypothesis, which is given by the evaluation map. Since the sheaf $\overline{F}$ is semistable the sheaf $\overline{F}(-\widetilde{m})$ is again semistable. This is an immediate consequence of Riemann-Roch for projective schemes. Moreover we observe that $\hat{\mu}(\overline{F}(-\widetilde{m})^{\oplus N})=\hat{\mu}(\overline{F}(-\widetilde{m}))$ and in particular it doesn't depend on $N$. Therefor the sheaf $\overline{F}(-\widetilde{m})^{\oplus N}$ is semistable (for a proof see \cite[Lem 1.2.4.ii]{MR561910}). We have a surjection $\overline{F}\otimes\OO_X(-\widetilde{m})^{\oplus N}\to F_{\EE}(\overline{\FF}) $ where $\overline{\FF}$ is the sheaf associated to $\overline{F}$ by the transformation $\eta_k$. Since it is a subsheaf of $\FF$ it is pure. Using that $\overline{F}(-\widetilde{m})^{\oplus N}$ is $\mu$-semistable we obtain:  

\begin{displaymath}
\hat{\mu}(\overline{F}\otimes_X\OO_X(-\widetilde{m})^{\oplus N})\leq \hat{\mu}_{\EE}(\overline{\FF})\leq \hat{\mu}_{\EE}(\FF)  
\end{displaymath}
where the second inequality comes from the fact that $\FF$ is $\mu$-semistable and $\overline{\FF}$ is a sub-sheaf. 
The desired inequality follows from this one with a simple computation.
\end{proof}

\subsection{Two results  of Langer}
To complete the proof of  boundedness for semistable sheaves we have just to use the result of Langer about the boundedness of sheaves on projective schemes together with \ref{prop:stima-max-destab}.
We first recall the precise statement of \cite[Th 4.2]{MR2051393}
\begin{thm}[Langer]
Let $q\colon X\to S$ be a projective morphism of schemes of finite type over an algebraically closed field, let $\OO_X(1)$ be a $q$-very ample locally free sheaf on $X$. Let $P$ be an integral polynomial of degree $d$ and $\mu_0$ is a real number. The set-theoretic family of pure sheaves of dimension $d$ on the geometric fibers of $q$ with fixed Hilbert polynomial $P$ and maximal slope bounded by $\mu_0$ is bounded.
\end{thm}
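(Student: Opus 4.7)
The plan is to reduce to bounding Mumford regularity, in the spirit of Kleiman's criterion, and then bound the regularity by a delicate induction on the dimension of the support, restricting to hyperplane sections. After stratifying $S$ one may assume $S$ is the spectrum of an algebraically closed field, so we are looking at pure $d$-dimensional sheaves on a single fiber $X$, with Hilbert polynomial equal to $P$ and $\hat{\mu}_{\max}(\FF) \leq \mu_0$. By Kleiman's criterion it suffices to produce an integer $m$, depending only on $P$ and $\mu_0$ (and on $X, \OO_X(1)$), such that every such $\FF$ is $m$-regular. Since the Hilbert polynomial is fixed and additivity of $\chi$ under restriction to hyperplane sections controls the lower coefficients, the task reduces to bounding $h^0(\FF(k))$ uniformly for each shift $k$.

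The inductive step is to choose an $\FF$-regular sequence of hyperplane sections $H_1, \ldots, H_d$ cut out by general sections of $\OO_X(1)$, and to control $h^0(\FF|_{H_1 \cap \cdots \cap H_i})$ for each $i$. Ordinarily one would estimate these via the short exact sequence
\begin{equation*}
0 \to \FF_i(-1) \to \FF_i \to \FF_i|_{H_{i+1}} \to 0
\end{equation*}
and then bound $h^0$ of the pure sheaf $\FF_i|_{H_{i+1}}$ in terms of its slope and multiplicity, using an inequality of Simpson's type. The crucial input here is a restriction theorem: $\hat{\mu}_{\max}(\FF|_{H})$ must itself be bounded in terms of $\hat{\mu}_{\max}(\FF)$ and the numerical data, for a general hyperplane $H$. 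This is the Grauert--M\"ulich inequality in characteristic zero (which underlies the Le Potier--Simpson bound $\hat\mu_{\max}(\FF|_H) \leq \hat\mu_{\max}(\FF) + C$), and is the starting point for the Le Potier--Simpson estimate on global sections of a pure sheaf.

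In positive or mixed characteristic, Grauert--M\"ulich fails and the obvious restriction bound is false: Frobenius pullbacks can destabilize a semistable sheaf, producing an unbounded jump in $\hat{\mu}_{\max}$ upon restriction. Langer's innovation, which is the main obstacle to face here, is to replace $\hat{\mu}_{\max}(\FF)$ with the Frobenius-asymptotic invariant
\begin{equation*}
L_{\max}(\FF) \;=\; \lim_{n \to \infty} \frac{\hat{\mu}_{\max}((F^n)^\ast \FF)}{p^n},
\end{equation*}
and to prove a restriction theorem for $L_{\max}$ instead. One would first verify that a bound on $\hat{\mu}_{\max}$ together with the fixed Hilbert polynomial forces a bound on $L_{\max}$ (this uses the standard comparison $L_{\max}(\FF) \leq \hat\mu_{\max}(\FF) + \text{const}(\rk \FF, X)$), and then iterate Langer's restriction inequality to control $\hat{\mu}_{\max}(\FF|_{H_1\cap\cdots\cap H_i})$ at every step of the induction.

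Once the restricted slopes are under control, the Le Potier--Simpson type bound gives $h^0(\FF|_{H_1 \cap \cdots \cap H_i}(k)) \leq C(P,\mu_0,k)$ for all $i$, and climbing back up the chain of short exact sequences produces the desired uniform bound on $h^0(\FF(k))$, hence on $\reg(\FF)$. Applying Kleiman's criterion then yields boundedness of the family. The main difficulty, and the reason the theorem is nontrivial, lies entirely in establishing Langer's restriction theorem for $L_{\max}$ in arbitrary characteristic; the remaining steps are the classical Simpson--Grothendieck machinery transported through this sharper estimate.
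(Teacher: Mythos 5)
First, be aware that the paper does not prove this statement at all: it is quoted verbatim as Theorem 4.2 of Langer's paper \cite{MR2051393} and used as a black box (together with the $h^0$-estimate imported from \cite{MR2085175}). So there is no in-paper proof to compare against; what you have written is a sketch of Langer's own argument.

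As a roadmap your sketch is accurate: reduce via Kleiman's criterion to a uniform bound on Mumford regularity, induct on dimension by restricting to general members of $|\OO_X(1)|$, and observe that the only missing ingredient relative to the characteristic-zero Le Potier--Simpson argument is a restriction theorem controlling $\hat{\mu}_{\max}(\FF|_H)$, which in positive characteristic must be routed through the Frobenius-stabilized invariant $L_{\max}$. But as a proof it has a genuine gap: everything that makes the theorem hard is asserted rather than established. The existence of the limit defining $L_{\max}$, the comparison $L_{\max}(\FF)\leq\hat{\mu}_{\max}(\FF)+C$ (your ``standard comparison'' is in fact Langer's Theorem 2.7, one of the two main results of \cite{MR2051393}, not a standard fact), the restriction inequality for $L_{\max}$, and the resulting bound on $h^0$ of a pure sheaf with bounded maximal slope together constitute essentially the entire content of Langer's paper. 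Moreover the logical order you propose (first establish the comparison, then iterate the restriction inequality) does not reflect the actual structure of the argument: Langer proves the slope comparison and the restriction statement by a single simultaneous induction on dimension, since each step of one requires the other in lower dimension. Finally, the statement concerns pure sheaves of dimension $d$, possibly smaller than $\dim X$, on the varying geometric fibers of a projective morphism, so the slope and Frobenius arguments must be carried out on the supports with multiplicities in place of ranks; your sketch implicitly assumes torsion-free sheaves on a single fixed smooth variety. If the intent is to justify citing Langer, the sketch is adequate; if the intent is to prove the theorem, the heart of the matter is missing.
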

In order to use \ref{prop:stima-max-destab} for a family of projective stacks $p\colon \XX\to S$ we need a homogeneous bound for $\widetilde{m}$ in the theorem for every fiber of $\EE$ and a bound for $\deg(\OO_{X_s}(1))$ for every geometric point $s$ of $S$.
\begin{lem}\label{lem:stima-omo-di-muhat}
  Let $p\colon \XX\to S$ be family of projective stacks polarized by $\EE,\OO_X(1)$. There is an integer $\widetilde{m}$ and a geometric point $\overline{s}$ of $S$ such that for every sheaf $\FF$ in the family of purely $d$-dimensional semistable sheaves on the fibers of $p$ with fixed modified Hilbert polynomial $P$ we have:
  \begin{equation}
    \label{eq:stima-omo-di-maxslope}
    \hat{\mu}_{\max{}}(F_{\EE_s}(\FF))\leq \hat{\mu}_{\EE}(\FF)+\widetilde{m}\deg(\OO_{X_{\overline{s}}}(1))
  \end{equation}
where $s$ is the point of $S$ on which $\FF$ is defined.
\end{lem}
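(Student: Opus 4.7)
The plan is to combine the fiberwise estimate of Proposition \ref{prop:stima-max-destab} with uniform control, over the base $S$, of the two quantities $\widetilde m$ and $\deg(\OO_{X_s}(1))$. Both depend a priori on the fiber, so I will stratify $S$ using generic flatness (Proposition \ref{prop:generic-flatness}) and use cohomology and base change to transport bounds from the total space of the family to each geometric fiber.

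First I would control $\widetilde m$. Consider the coherent sheaf $\GG\coloneqq\pi_\ast\END_{\OO_\XX}(\EE)$ on $X$. Proposition \ref{prop:generic-flatness} furnishes a finite, locally closed stratification $\coprod S_i\to S$ on which both $\GG$ and $\OO_X$ become flat over $S_i$ after restriction to $X_{S_i}$. Since $\rho\colon X\to S$ is projective, Serre's fundamental theorem applied on each stratum produces $\widetilde m_i$ such that for every $m\geq \widetilde m_i$ the higher direct images $R^j{\rho_i}_\ast(\GG\vert_{X_{S_i}}(m))$ vanish for $j>0$ and the adjunction $\rho_i^\ast{\rho_i}_\ast(\GG\vert_{X_{S_i}}(m))\to\GG\vert_{X_{S_i}}(m)$ is surjective. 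Flatness together with Theorem \ref{thm:cohom-base-change-stack} upgrades the vanishing into the base-change isomorphism ${\rho_i}_\ast(\GG\vert_{X_{S_i}}(m))\otimes k(\overline s)\cong H^0(X_{\overline s},\GG(m)\vert_{X_{\overline s}})$ for every geometric $\overline s\in S_i$; combining with the surjective adjunction, the fiber $\GG(m)\vert_{X_{\overline s}}$ is globally generated by its $H^0$-sections on $X_{\overline s}$. By Proposition \ref{prop:cohom-base-change-spcoarse} one has the identification $\GG\vert_{X_{\overline s}}\cong \pi_{\overline s,\ast}\END_{\OO_{\XX_{\overline s}}}(\EE_{\overline s})$, so this is precisely the hypothesis that Proposition \ref{prop:stima-max-destab} requires on the fiber $\XX_{\overline s}$. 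Setting $\widetilde m\coloneqq\max_i\widetilde m_i$ (finite, since the stratification is finite) supplies a single threshold working on every geometric fiber.

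Next I would choose $\overline s$ and conclude. On the same (possibly refined) stratification, Lemma \ref{lem:hilbert-poly-fibers}, or equivalently its classical avatar \cite[Thm 7.9.4]{MR0163911} applied to the $S_i$-flat sheaf $\OO_{X_{S_i}}$, implies that the ordinary Hilbert polynomial $\chi(X_s,\OO_{X_s}(m))$ is constant on each stratum; its leading coefficient is $\deg(\OO_{X_s}(1))/d!$, so this degree takes only finitely many values as $s$ varies over $S$. Pick $\overline s$ to be any geometric point lying over a stratum on which this degree attains its maximum. Then for every purely $d$-dimensional semistable $\FF$ with modified Hilbert polynomial $P$ on a fiber $\XX_s$, Proposition \ref{prop:stima-max-destab} applied on $\XX_s$ with this uniform $\widetilde m$ yields
$$\hat\mu_{\max{}}(F_{\EE_s}(\FF))\leq \hat\mu_\EE(\FF)+\widetilde m\,\deg(\OO_{X_s}(1))\leq \hat\mu_\EE(\FF)+\widetilde m\,\deg(\OO_{X_{\overline s}}(1)),$$
as required. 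The main obstacle is the uniform global-generation statement for $\GG$ across fibers: $\GG$ is not $S$-flat in general, which is why the generic-flatness stratification is essential; once flatness is arranged stratum by stratum, cohomology and base change does the rest. A small but important observation is that the integer $N$ appearing in Proposition \ref{prop:stima-max-destab} would also fluctuate with $s$, yet it cancels out in the slope computation and so does not enter the final bound, meaning only $\widetilde m$ and the degree need to be homogenized.
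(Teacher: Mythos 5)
Your proof is correct, and its second half (flat stratification of $S$ for $\OO_X(1)$, local constancy of $\chi(X_s,\OO_{X_s}(m))$, hence finitely many values of the degree, then pick $\overline{s}$ maximizing it) is exactly the paper's argument. Where you diverge is in homogenizing $\widetilde{m}$: you stratify $S$ by generic flatness of $\GG=\pi_\ast\END_{\OO_\XX}(\EE)$, invoke Serre's theorem stratum by stratum, and then use cohomology and base change (Theorem \ref{thm:cohom-base-change-stack} plus Proposition \ref{prop:cohom-base-change-spcoarse}) to push global generation down to each geometric fiber. The paper gets the same uniformity in one stroke: choose $\widetilde{m}$ once so that $\pi_\ast\END_{\OO_\XX}(\EE)(\widetilde{m})$ is (relatively) globally generated over the whole family, and observe that $\otimes\, k(s)$ is right exact and commutes with $F_{\EE}$, so the surjection from a trivial bundle restricts to a surjection on every fiber $X_s$ — no flatness of $\GG$, no base-change theorem, and no stratification are needed for this step, since global generation is a purely right-exact (surjectivity) statement. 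Your route is valid but carries machinery that the problem does not require; its only payoff is that it also gives the base-change isomorphism on $H^0$, which is not used in the final bound. Your closing remark that the integer $N$ of Proposition \ref{prop:stima-max-destab} may vary with $s$ but cancels in the slope computation is a correct and worthwhile observation that the paper leaves implicit.
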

\begin{proof}
  Let $\widetilde{m}$ be the integer such that $\pi_\ast\END_{\OO_\XX}(\EE)(\widetilde{m})$ is generated by the global sections. Since $k(s)$ is right exact for every point $s$ and it commutes with $F_{\EE}$ this $\widetilde{m}$ has the desired property on each fiber.
Choose a finite flat stratification of $S$ for $\OO_X(1)$. Since the Euler characteristic $\chi(X_s,\OO_{X_s}(1))$ is locally constant the function $s\mapsto \deg(\OO_{X_s}(1))$ assume only a finite number of values, in particular we can choose $\overline{s}$ such that $\deg(\OO_{X_{\overline{s}}}(1))$ is maximal.
\end{proof}
\begin{thm}
Let $p\colon\XX \to S$ be a family of projective stacks over an algebraically closed field, polarized by $\EE,\OO_X(1)$. Let $P$ be an integral polynomial of degree $d$ and $\mu_0$ a real number.
\begin{enumerate}
\item  Every set-theoretic family $\FF_i,\, i\in I$ ($I$ a set) of purely $d$-dimensional sheaves on the fibers of $p$ with fixed modified Hilbert polynomial and such that $\hat{\mu}_{\max{}}(F_{\EE}(\FF_i))\leq\mu_0$ is bounded. 
\item The family of semistable purely $d$-dimensional sheaves on the fibers of $q$ with fixed modified Hilbert polynomial $P$ is bounded.
\end{enumerate} 
\end{thm}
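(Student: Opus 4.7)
The plan is to reduce both parts of the theorem to Langer's theorem on the moduli scheme via the functor $F_{\EE}$, using the preservation of pureness and dimension established in Proposition \ref{prop:pure-sheaves} together with the Kleiman criterion for stacks in the form of Corollary \ref{cor:corollary-for-boundedness}.

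For part (1), I would first note that since the $\FF_i$ are pure of dimension $d$ with modified Hilbert polynomial $P$, the sheaves $F_{\EE_s}(\FF_i)$ form a set-theoretic family of pure $d$-dimensional sheaves on the fibers of $q\colon X\to S$ with Hilbert polynomial equal to $P$ (by definition of the modified Hilbert polynomial) and with $\hat{\mu}_{\max{}}(F_{\EE_s}(\FF_i))\leq\mu_0$ by hypothesis. Langer's theorem then bounds the family $\{F_{\EE_s}(\FF_i)\}$ on $X\to S$. Applying Corollary \ref{cor:corollary-for-boundedness} in the other direction lifts boundedness back to the family $\{\FF_i\}$ on $\XX\to S$, concluding the first part.

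For part (2), the key observation is that for a semistable sheaf $\FF$ with fixed modified Hilbert polynomial $P$, the slope $\hat{\mu}_{\EE}(\FF)=\alpha_{\EE,d-1}(\FF)/\alpha_{\EE,d}(\FF)$ is determined by $P$, hence constant over the whole family. Combined with Lemma \ref{lem:stima-omo-di-muhat}, which gives the uniform inequality
\begin{equation*}
\hat{\mu}_{\max{}}(F_{\EE_s}(\FF))\leq \hat{\mu}_{\EE}(\FF)+\widetilde{m}\deg(\OO_{X_{\overline{s}}}(1))
\end{equation*}
with $\widetilde{m}$ and $\overline{s}$ independent of $\FF$, this yields an absolute constant $\mu_0$ (depending only on $P$, $\EE$ and $\OO_X(1)$) bounding $\hat{\mu}_{\max{}}(F_{\EE_s}(\FF))$ for every semistable $\FF$ in the family. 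Part (1) then applies and finishes the proof.

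I expect the main conceptual obstacle to have already been overcome by the authors in the setup: it is Proposition \ref{prop:stima-max-destab} (and its relative version \ref{lem:stima-omo-di-muhat}) which controls the failure of $F_{\EE}$ to preserve semistability. Without such an estimate one could not hope to apply Langer, since $F_{\EE}$ is known not to send semistable sheaves to semistable sheaves. The proof of the theorem itself is then essentially a formal concatenation of the preceding results, and no further subtlety beyond tracking that all ingredients (pureness, polynomial, slope bound) descend uniformly through $F_{\EE}$ is needed.
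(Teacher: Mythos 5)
Your proposal is correct and follows essentially the same route as the paper: part (1) is Langer's theorem applied to $F_{\EE}(\FF_i)$ on the moduli scheme followed by Corollary \ref{cor:corollary-for-boundedness} to lift boundedness back to $\XX$, and part (2) combines the uniform estimate of Lemma \ref{lem:stima-omo-di-muhat} (built on Proposition \ref{prop:stima-max-destab}) with the observation that $\hat{\mu}_{\EE}(\FF)$ is fixed by $P$ to produce the constant $\mu_0$ and reduce to part (1). Your explicit remark that $\hat{\mu}_{\EE}(\FF)$ is determined by the fixed modified Hilbert polynomial is a detail the paper leaves implicit, but the argument is the same.
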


\begin{proof}\label{cor:boundedness-eventually}
$(1)$ It is  an immediate consequence of \cite[Thm 4.2]{MR2051393} that $F_{\EE}(\FF_i)$ form a bounded family, so according to Corollary \ref{cor:corollary-for-boundedness} the family $\FF_i$ is bounded too.

$(2)$ We choose $\widetilde{m}$ and $\overline{s}$ as in Lemma \ref{lem:stima-omo-di-muhat}; it follows from Corollary \ref{prop:stima-max-destab} that we have:
  \begin{displaymath}
    \hat{\mu}_{\max{}}(F_{\EE}(\FF_i)\leq \hat{\mu}_{\EE}(\FF_i) +\widetilde{m}\deg{(\OO_{X_{\overline{s}}}(1))}=\colon\mu_0
  \end{displaymath}
 From the previous point we have that $\FF_i$ is a bounded family.
\end{proof}
\begin{rem}
This result improves  boundedness for sheaves on curves  in \ref{prop:sheaves-on-curves-bounded} since we have no normality assumption on $\XX$. In particular we can study sheaves on nodal curves  as in \cite{MR2450211}. 
This result improves the boundedness result for parabolic bundles. Indeed the equivalence between parabolic sheaves and sheaves on stacks of roots is proven only for locally free-sheaves \cite{borne-2006} and \cite{borne-2007}, and we cannot use the result in \cite{MR1162674} to prove  boundedness of semistable sheaves. This generalizes also the result in \cite{MR2309155} about gerbes since we have no assumptions on the banding  of the gerbe, we just need the gerbe to be a projective stack.
\end{rem}

The second result of Langer that we need is estimate in \cite[Cor 3.4]{MR2085175}. In characteristic zero it is possible to bound the number of global sections of a family of semistable sheaves with fixed Hilbert polynomial  restricted to a general enough hyperplane or intersection of hyperplanes. This is known as Le Potier Simpson theorem \cite[3.3.1]{MR1450870}. In positive characteristic it is known that it is not possible to reproduce such a result (for a counterexample \cite[Ex. 3.1]{MR2085175}). However Langer was able to prove that it is possible to produce a bound for the number of global sections.
\begin{thm}[{\cite[Cor 3.4]{MR2085175}}]
 Let $X$ be a projective scheme with a very ample invertible sheaf $\OO_X(1)$. For any pure sheaf $F$ of dimension $d$ we have:
 \begin{displaymath}
   h^0(X,F)\leq \begin{cases} & r\binom{\hat{\mu}_{\max{}}(F)+r^2+f(r)+\frac{d-1}{2}}{d}\quad  \text{if $\hat{\mu}_{\max{}}(F)\geq \frac{d+1}{2}-r^2$} \\
& 0   \quad\quad\quad\quad\quad\quad\quad\quad\quad\;\;  \text{if $\hat{\mu}_{\max{}}(F)< \frac{d+1}{2}-r^2$} \\
\end{cases}
 \end{displaymath}
where $r$ is the multiplicity of $F$ and $f(r)=-1+\sum_{i=1}^r\frac{1}{i}$ is an approximation of $\ln{r}$. 
\end{thm}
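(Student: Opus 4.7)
The plan is to proceed by induction on the dimension $d$ of the support of $F$. In the base case $d=0$, the sheaf $F$ is zero-dimensional, so $h^0(X,F)$ equals the length of $F$, which equals the multiplicity $r$; the stated inequality then holds because $\binom{\cdot}{0}=1$, and the vanishing branch is vacuous.

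For the inductive step, I would pick a general section $s\in H^0(X,\OO_X(1))$ that is regular for $F$; such a section exists because $F$ is pure of dimension $d$, so its associated points are $d$-dimensional and a generic hyperplane misses them. Setting $H=Z(s)$ and $F_H:=F\otimes\OO_H$, the short exact sequence
\[0\to F(-1)\to F\to F_H\to 0\]
yields the cohomological estimate $h^0(F)\leq h^0(F(-1))+h^0(F_H)$, and $F_H$ is pure of dimension $d-1$ with the same multiplicity $r$. Iterating this inequality and using that $h^0(F(-n))=0$ for $n\gg 0$ gives
\[h^0(X,F)\leq \sum_{k\geq 0}h^0(H,F_H(-k)),\]
a finite sum. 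To close the induction, I would apply the inductive hypothesis to each $F_H(-k)$ and sum the resulting binomials via the hockey-stick identity $\sum_{k}\binom{a-k}{d-1}=\binom{a+1}{d}$; the symmetric shift $\tfrac{d-1}{2}$ in the final statement arises from the centering of this telescoping.

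The main obstacle is controlling $\hat{\mu}_{\max{}}(F_H)$ in terms of $\hat{\mu}_{\max{}}(F)$, since this is precisely what propagates through the induction. In characteristic zero this is delivered by the Grauert--M\"ulich theorem, but it fails in positive characteristic and constitutes the genuine technical heart of \cite{MR2085175}. Langer's restriction theorem produces an estimate of the form $\hat{\mu}_{\max{}}(F_H)\leq \hat{\mu}_{\max{}}(F)+c(r)$ for a general $H$, in which the Frobenius instability of $F$ enters through a correction of size $r^2+f(r)$; this is exactly the term one sees in the binomial argument of the statement. Once this restriction theorem is invoked, the induction closes mechanically. The vanishing branch follows from the observation that a nonzero global section of $F$ produces a nonzero morphism $\OO_X\to F$ whose image, by purity of $F$, is a $d$-dimensional subsheaf whose slope provides a lower bound on $\hat{\mu}_{\max{}}(F)$; the threshold $\tfrac{d+1}{2}-r^2$ is precisely the value below which no such subsheaf is allowed by the restriction estimate, so the only consistent possibility is $h^0(F)=0$.
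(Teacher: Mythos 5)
This statement is not proved in the paper at all: it is imported verbatim from Langer with the citation \cite[Cor 3.4]{MR2085175} standing in for a proof, and it is used downstream as a black box. So there is no internal argument to compare yours against; what can be said is whether your sketch would constitute a proof. Its skeleton is indeed the one Langer uses --- induction on the dimension of the support via restriction to a general member of $|\OO_X(1)|$, telescoping the twists $F(-k)$, and closing the induction with a restriction theorem controlling $\hat{\mu}_{\max{}}$ of the hyperplane section --- and you correctly locate the technical heart in that restriction theorem and the origin of the $r^2+f(r)$ term in the positive-characteristic (Frobenius) analysis.

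As a proof, however, it has genuine gaps beyond the fact that it defers its essential ingredient to the very reference being proved. First, the claim that $F_H$ is pure of dimension $d-1$ is false in general: for an $F$-regular section $s$, the quotient $F_H=F/sF(-1)$ has equidimensional support of dimension $d-1$ and the same multiplicity, but it can acquire embedded associated points, so the inductive hypothesis does not apply to $F_H(-k)$ as stated. One must pass to the maximal pure quotient (equivalently, define $\hat{\mu}_{\max{}}$ through the torsion-free quotient) and check that the discarded torsion does not inflate the section count; this bookkeeping is precisely where the shift $\tfrac{d-1}{2}$ and the exact shape of the binomial must be verified rather than attributed to ``the centering of the telescoping.'' Second, the vanishing branch is only gestured at: to conclude $h^0(F)=0$ below the threshold you need an explicit lower bound on the slope of the image of a nonzero map $\OO_X\to F$, i.e.\ a concrete estimate for the minimal slope of structure sheaves of $d$-dimensional subschemes, not the circular remark that the threshold is ``precisely the value below which no such subsheaf is allowed.'' In short: right architecture, but the constants and the purity issue --- which are the whole content of the corollary --- are asserted, not established.
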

From this we deduce a stacky version for semistable sheaves:
\begin{cor}
  Let $\XX$ be a projective stack with polarization $\EE,\OO_X(1)$. For any pure semistable sheaf $\FF$ on $\XX$ of dimension $d$ we have:
 \begin{equation}\label{eq:stima-molto-utile}
   h^0(\XX,\FF\otimes\EE^\vee)\leq \begin{cases} & r\binom{\hat{\mu}_{\EE}(\FF)+\widetilde{m}\deg(\OO_X(1))+r^2+f(r)+\frac{d-1}{2}}{d}\quad  \text{if $\hat{\mu}_{\max{}}(F_{\EE}(\FF))\geq \frac{d+1}{2}-r^2$} \\
& 0   \quad\quad\quad\quad\quad\quad\quad\quad\quad\quad\quad\quad\;\;  \text{if $\hat{\mu}_{\max{}}(F_\EE(\FF))< \frac{d+1}{2}-r^2$} \\
\end{cases}
 \end{equation}
where $r$ is the multiplicity of $F_{\EE}(\FF)$, the integer $\widetilde{m}$ is like in Proposition \ref{prop:stima-max-destab}.
\end{cor}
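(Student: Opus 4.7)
The plan is to reduce the statement to Langer's theorem (just quoted) applied to the sheaf $F_{\EE}(\FF)$ on the moduli scheme $X$, and then translate the resulting bound back to the stack using the earlier estimate on $\hat{\mu}_{\max}(F_{\EE}(\FF))$.

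First I would observe that global sections transfer cleanly between the stack and its moduli scheme: by Remark \ref{item:18} (and the fact that $\pi_\ast$ is exact and carries injectives to flasque sheaves by Lemma \ref{lem:cohomo-pushf}), one has
\begin{displaymath}
 h^0(\XX,\FF\otimes\EE^\vee) \;=\; h^0(X,\pi_\ast(\FF\otimes\EE^\vee)) \;=\; h^0(X,F_{\EE}(\FF)).
\end{displaymath}
Next, by Proposition \ref{prop:pure-sheaves} the sheaf $F_{\EE}(\FF)$ is pure of dimension $d$ on $X$, and by construction its multiplicity is the integer $r$ appearing in the statement. Thus the hypotheses of Langer's bound (\cite[Cor 3.4]{MR2085175}) are satisfied for $F_{\EE}(\FF)$, and I can apply it directly with this same $r$ and $d$.

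Now I would invoke Proposition \ref{prop:stima-max-destab}, which gives
\begin{displaymath}
 \hat{\mu}_{\max}(F_{\EE}(\FF)) \;\leq\; \hat{\mu}_{\EE}(\FF) + \widetilde{m}\,\deg(\OO_X(1)),
\end{displaymath}
with $\widetilde{m}$ chosen as in that proposition. Substituting this upper bound for $\hat{\mu}_{\max}(F_{\EE}(\FF))$ into the first branch of Langer's estimate is legitimate because, once we are in the regime $\hat{\mu}_{\max}(F_{\EE}(\FF))\geq \frac{d+1}{2}-r^2$, the function $x \mapsto r\binom{x+r^2+f(r)+\frac{d-1}{2}}{d}$ is monotonically increasing in $x$. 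The second branch (the vanishing case) is kept verbatim, since it is phrased in terms of $\hat{\mu}_{\max}(F_{\EE}(\FF))$ itself and holds automatically whenever $h^0(X,F_{\EE}(\FF))=0$.

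There is essentially no obstacle here: the corollary is a bookkeeping combination of three earlier results. The only point requiring mild care is checking that the substitution of the upper estimate into the binomial coefficient preserves the inequality, which is clear from monotonicity once $r$ and $d$ are fixed positive integers and the argument of the binomial is sufficiently large. No further hypotheses on the characteristic or on the singularities of $\XX$ are needed, since Langer's bound is characteristic-free and purity of $F_{\EE}(\FF)$ was established in full generality in Proposition \ref{prop:pure-sheaves}.
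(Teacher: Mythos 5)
Your proof is correct and is precisely the deduction the paper intends: the corollary is stated there without proof as an immediate combination of Langer's bound applied to $F_{\EE}(\FF)$ on $X$, the identification $h^0(\XX,\FF\otimes\EE^\vee)=h^0(X,F_{\EE}(\FF))$, purity and preservation of dimension from Proposition \ref{prop:pure-sheaves}, and the slope estimate of Proposition \ref{prop:stima-max-destab}, with the monotonicity check you supply. The only point you leave implicit is that Gieseker semistability of $\FF$ implies the $\mu$-semistability required as a hypothesis in Proposition \ref{prop:stima-max-destab}, which is standard and is how the paper uses it.
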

This corollary is a replacement for \cite[Cor 3.3.8]{MR1450870} and it will play a fundamental role in the study of the GIT quotient producing the moduli scheme of semistable sheaves.

\section{The stack of semistable sheaves}
In the previous section we have collected enough machinery to write the algebraic stack of semistable-sheaves on a projective Deligne-Mumford stack $\XX$ as a global quotient. Let $\pi\colon\XX\to X$ be a projective stack with moduli scheme $X$ over an algebraically closed field $k$. Fix a polarization  $\EE,\OO_X(1)$ and a polynomial $P$ with integer coefficients and degree  $d\leq\dim{X}$. Fix an integer $m$ such that every semistable sheaf on $\XX$ is $m$-regular, which exists since semistable sheaves are bounded and according to Kleiman criterion there is $m$ such that every sheaf in a bounded family is $m$-regular. Let $N$  be the positive integer $h^0(X,F_{\EE}(\FF)(m))=P_{\EE}(\FF,m)=P(m)$ and denote with $V$ the linear space $k^{\oplus N}$.
\begin{thm}\label{prop:stack-dei-moduli}
  There is an open subscheme $\QQ$ in $\Quot_{\XX/k}(V\otimes\EE\otimes\pi^\ast\OO_X(-m),P)$ such that $\mathcal{S}(\EE,\OO_X(1),P)$ the algebraic stack of semistable sheaves on $\XX$ with Hilbert polynomial $P$ is the global quotient:
  \begin{displaymath}
    \mathcal{S}(\EE,\OO_X(1),P)=[\QQ/GL_{N,k}]\subseteq [\Quot_{\XX/k}(V\otimes\EE\otimes\pi^\ast\OO_X(-m),P)/GL_{N,k}]
  \end{displaymath}
\end{thm}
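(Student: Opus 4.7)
The strategy is to mimic the proof of Proposition \ref{prop:alg-stack-coherent}, restricting the atlas to the locus of semistable quotients and using the uniform choice of $m$ to fix the $H^0$-rank. As a first step I would invoke boundedness of the family of semistable sheaves with modified Hilbert polynomial $P$ (Corollary \ref{cor:boundedness-eventually}) together with the Kleiman criterion for stacks (Theorem \ref{thm:kleiman-criterion}) to justify the existence of $m$; for such $m$, every semistable $\FF$ with $P_\EE(\FF)=P$ is $m$-regular, so $h^0(X,F_\EE(\FF)(m))=N$, the higher $R^i\rho_\ast(F_\EE(\FF)(m))$ vanish, and Proposition \ref{pro:m-regular} provides a surjection $\OO_X(-m)^{\oplus N}\twoheadrightarrow F_\EE(\FF)$. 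Applying $G_\EE$ and composing with $\theta_\EE(\FF)$ produces a surjection $V\otimes\EE\otimes\pi^\ast\OO_X(-m)\twoheadrightarrow\FF$, well defined up to the $GL_{N,k}$-action on $V$.

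Next I would define $\QQ$ as the open subscheme of $Q:=\Quot_{\XX/k}(V\otimes\EE\otimes\pi^\ast\OO_X(-m),P)$ cut out, on the universal quotient $V\otimes\EE_Q\otimes\pi_Q^\ast\OO_{X_Q}(-m)\xrightarrow{\mu}\mathcal{U}$, by exactly three conditions: (i) every geometric fiber of $\mathcal{U}$ is pure $d$-dimensional and semistable -- open by Proposition \ref{prop:semistable-open}; (ii) $R^i\rho_{Q,\ast}(F_{\EE_Q}(\mathcal{U})(m))=0$ for $i>0$ and $\rho_{Q,\ast}(F_{\EE_Q}(\mathcal{U})(m))$ is locally free of rank $N$ -- open by Theorems \ref{thm:cohom-base-change-stack} and \ref{thm:semicontinuity-stack}; (iii) the induced map $V\otimes\OO_Q\to\rho_{Q,\ast}(F_{\EE_Q}(\mathcal{U})(m))$, built from $\mu$ exactly as $\rho_{Q,\ast}E_{N,m}(\mu)$ in the proof of Proposition \ref{prop:alg-stack-coherent}, is an isomorphism -- open since it is a map of locally free sheaves of the same rank. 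Thus $\QQ\subseteq Q_{N,m}^{0}$ is just the further restriction imposing semistability of the fibers.

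The natural $GL_{N,k}$-action on $V$ induces an action on $Q$ preserving $\QQ$, and the universal quotient on $\QQ$ yields a $GL_{N,k}$-equivariant morphism $\QQ\to\mathcal{S}(\EE,\OO_X(1),P)$, hence a morphism of stacks $\Phi\colon[\QQ/GL_{N,k}]\to\mathcal{S}(\EE,\OO_X(1),P)$. To see that $\Phi$ is an isomorphism I would proceed via the $2$-fibered product: for any $T$-point $\mathcal{N}\in\mathcal{S}(T)$, the fact that $\mathcal{N}$ is semistable with Hilbert polynomial $P$ ensures, by the choice of $m$, that the three conditions (i)-(iii) hold for $\mathcal{N}$ itself, so the construction $I_{N,m}$ from the proof of Proposition \ref{prop:alg-stack-coherent} identifies $\QQ\times_{\mathcal{S}}T$ with the $GL_N$-torsor $\ISO_T(V\otimes\OO_T,\rho_{T,\ast}(F_{\EE_T}(\mathcal{N})(m)))$ over $T$. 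Essential surjectivity is automatic (every semistable $\FF$ lifts Zariski-locally to $\QQ$ after a choice of basis of ${\rho_T}_\ast(F_{\EE_T}(\FF)(m))$), and equivariance of the identification says precisely that $\Phi$ is an equivalence of stacks.

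The main obstacle is the last verification, namely that the morphism $\QQ\to\mathcal{S}$ genuinely exhibits $\mathcal{S}$ as a stack-theoretic quotient rather than just a quotient of coarse spaces. This is achieved by exactly the argument of Proposition \ref{prop:alg-stack-coherent}: one needs to build the inverse natural transformation $L_{N,m}$ and check the commutative diagrams showing $I_{N,m}\circ L_{N,m}=\Id$ and $L_{N,m}\circ I_{N,m}=\Id$, which in turn rely crucially on the adjunction identities of Lemma \ref{lem:idenity-theta-phi}. The new content compared to that earlier proposition is only that the \emph{whole} fibered product $\QQ\times_\mathcal{S}T$ is a $GL_N$-torsor rather than just a scheme of finite type, which is immediate once one observes that conditions (i)-(iii) are automatically inherited from semistability of $\mathcal{N}$ via the choice of $m$.
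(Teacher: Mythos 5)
Your proposal is correct and follows essentially the same route as the paper: define $\QQ$ as the open locus (semistability via Proposition \ref{prop:semistable-open}, plus the cohomological conditions making the induced map $V\to H^0(X,F_\EE(\FF)(m))$ an isomorphism) and identify $[\QQ/GL_{N,k}]$ with $\mathcal{S}(\EE,\OO_X(1),P)$ by showing points of $\QQ$ correspond bijectively to pairs $(\FF,\rho)$, the inverse constructions resting on the adjunction identities of Lemma \ref{lem:idenity-theta-phi} exactly as in Proposition \ref{prop:alg-stack-coherent}. If anything, your functorial $T$-point/torsor formulation is slightly more explicit about the stacky content than the paper's pointwise bijection, but the argument is the same.
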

\begin{proof}
First of all we consider the set of pairs $\FF,\rho$ where $\FF$ is a semistable sheaf and $\rho$ is an isomorphism $\rho\colon V\to H^0(X,F_{\EE}(\FF)(m))$. Every sheaf $F_{\EE}(\FF)$ with isomorphism $\rho$ can be written in a unique way as a quotient $\OO_{X}(-m)^{\oplus N}\xrightarrow{\widetilde{\rho}} F_{\EE}(\FF)$ such that the map induced by $\widetilde{\rho}$ between $V$ and $H^0(X,F_{\EE}(\FF)(m))$ is $\rho$. In particular the quotient $\widetilde{\rho}$ is unique and it is the following composition:
\begin{displaymath}
  V(-m)\xrightarrow{\rho\otimes\Id} H^0(X,F_{\EE}(\FF)(m))\otimes \OO_X(-m)\xrightarrow{\ev} F_{\EE}(\FF)
\end{displaymath}
where the second map is the evaluation and we have denoted with $V(-m)$ the tensor product $V\otimes\OO_X(-m)$ .
Now consider a quotient $G_{\EE}(V(-m))\xrightarrow{\widetilde{\sigma}}\FF$; apply the functor $F_{\EE}$ and compose on the left with the transformation $\varphi_{\EE}(V(-m))$:
\begin{displaymath}
\xymatrix{
  V(-m)\ar[rrr]^-{\varphi_{\EE}(V(-m))} &&& F_{\EE}\circ G_{\EE}(V(-m))\ar[rr]^-{F_{\EE}(\widetilde{\sigma})} && F_{\EE}(\FF) \\
}
\end{displaymath}
This induces a morphism in cohomology $V\xrightarrow{\sigma}H^0(X,F_{\EE}(\FF)(m))$. The subset of points of $\Quot_{\XX/k}(V\otimes\EE\otimes\pi^\ast\OO_X(-m),P)$ such that this procedure induces an isomorphism is an open (see the proof of \ref{prop:alg-stack-coherent}) and we denote it with $\QQ$. We claim that there is a bijection between points of $\QQ$ and couples $\FF,\rho$.
Given a couple $\FF,\rho$ we first associate to it a quotient $V(-m)\xrightarrow{\widetilde{\rho}} F_{\EE}(\FF)$ where $\widetilde{\rho}=\ev\circ (\rho\otimes\Id)$, then we produce the quotient $\theta_{\EE}(\FF)\circ G_{\EE}(\widetilde{\rho})\colon G_{\EE}(V(-m))\to \FF$. This quotient is clearly the same as $\widetilde{\ev}(N,m)\circ(\rho\otimes\Id)$ (defined in \ref{cor:alg-stack-of-coh-sh}). Given a quotient $\widetilde{\sigma}$ we associate to it the isomorphism induced in cohomology by $F_{\EE}(\widetilde{\sigma})\circ\varphi_{\EE}(V(-m))$. We show that these two maps of sets are inverse to each other. First we consider the following composition:
\begin{displaymath}
  \xymatrix{
    G_{\EE}(V(-m))\ar@{=}[rrrd]\ar[rrr]^-{G_{\EE}(\varphi_{\EE}(V(-m)))} &&& G_{\EE}\circ F_{\EE}\circ G_{\EE}(V(-m)) \ar[d]^-{\theta_{\EE}(G_{\EE}(V(-m)))}\ar[rr]^-{G_{\EE}\circ F_{\EE}(\widetilde{\sigma})} && G_{\EE}\circ F_{\EE}(\FF) \ar[r]^-{\theta_{\EE}(\FF)} & \FF \\
  &&& G_{\EE}(V(-m))\ar[rrru]_-{\widetilde{\sigma}} &&& \\
  }
\end{displaymath}
where the first triangle commutes because of lemma \ref{lem:idenity-theta-phi} and the second triangle commutes because $\theta_\EE$ is a natural transformation.
Now we consider the composition the other way round:
\begin{displaymath}
  \xymatrix{
    V(-m)\ar[rrrrrd]_-{\rho}\ar[rrr]^-{\varphi_{\EE}(V(-m))} &&& F_{\EE}\circ G_{\EE}(V(-m)) \ar[rr]^-{F_{\EE}\circ G_{\EE}(\rho)} && F_{\EE}\circ G_{\EE}\circ F_{\EE}(\FF) \ar[rr]^-{F_{\EE}(\theta_{\EE}(\FF))} && F_{\EE}(\FF) \\
  &&&&& F_{\EE}(\FF)\ar@{=}[rru]\ar[u]^-{\varphi_{\EE}(F_{\EE}(\FF))} && \\
  }
\end{displaymath}
The first triangle commutes because $\phi_\EE$ is a natural transformation and the second because of \ref{lem:idenity-theta-phi}.
\end{proof}
 It is clear that given a quotient $G_{\EE}(V(-m))=V\otimes\EE\otimes\pi^{\ast}\OO_X(-m)\xrightarrow{\widetilde{\sigma}}\FF$ there is an action of the group $\Aut{\EE}$ that is composing on the left with an automorphism $\alpha$ of $\EE$. A priori it is not clear if this is an action of $\Aut{\EE}$ on $\QQ$. Eventually it is.
\begin{lem}
  The open subscheme $\QQ$ is invariant by the action of $\Aut{\EE}$ on $\Quot_{\XX/k}(V\otimes\EE\otimes\pi^\ast\OO_X(-m),P)$.
\end{lem}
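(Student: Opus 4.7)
The plan is to unravel the action on the level of the defining data and to verify that each condition cutting out $\QQ$ inside $\Quot_{\XX/k}(V\otimes\EE\otimes\pi^\ast\OO_X(-m),P)$ is preserved. Given $\alpha \in \Aut(\EE)$ and a quotient $\mu\colon V\otimes\EE\otimes\pi^\ast\OO_X(-m) \to \mathcal{M}$, the transformed quotient is $\mu^\alpha := \mu \circ (\Id_V \otimes \alpha \otimes \Id_{\pi^\ast\OO_X(-m)})$. Crucially, $\mu^\alpha$ has the same target sheaf $\mathcal{M}$ as $\mu$. Hence every condition that refers only to $\mathcal{M}$ itself --- the modified Hilbert polynomial being $P$, semistability, flatness, the vanishing of $R^i\rho_\ast F_\EE(\mathcal{M})(m)$ for $i>0$, and local freeness of rank $N$ in degree $0$ --- is trivially preserved.

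The only remaining condition is that $\rho_\ast E_{N,m}(\mu^\alpha)$ be an isomorphism. I will show that $E_{N,m}(\mu^\alpha)$ and $E_{N,m}(\mu)$ differ by post-composition with an automorphism of $F_\EE(\mathcal{M})(m)$, so that $\rho_\ast$ preserves this property. Using the projection formula (Lemma \ref{lem:coherent-projection}) to identify $F_\EE G_\EE(V\otimes\OO_X(-m))$ with $\pi_\ast\HOM\bigl(\EE,V\otimes\OO_X(-m)\otimes\EE\bigr)$, the adjunction $\varphi_\EE(V\otimes\OO_X(-m))$ sends a local section $w$ to the morphism $e \mapsto w \otimes e$. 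Composing with $F_\EE(\mu)$ gives $w \mapsto \bigl(e \mapsto \mu(w\otimes e)\bigr)$, whereas $E_{N,m}(\mu^\alpha)$ gives $w \mapsto \bigl(e \mapsto \mu(w\otimes\alpha(e))\bigr)$.

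Introduce the endomorphism $R_\alpha \colon F_\EE(\mathcal{M}) \to F_\EE(\mathcal{M})$ given by $\beta \mapsto \beta\circ\alpha$; since $\alpha$ is invertible, so is $R_\alpha$ (with inverse $R_{\alpha^{-1}}$). The previous calculation says exactly that $E_{N,m}(\mu^\alpha) = R_\alpha(m) \circ E_{N,m}(\mu)$, where $R_\alpha(m)$ denotes the Serre twist. Applying the exact functor $\rho_\ast$ yields $\rho_\ast E_{N,m}(\mu^\alpha) = \rho_\ast R_\alpha(m) \circ \rho_\ast E_{N,m}(\mu)$, with $\rho_\ast R_\alpha(m)$ an automorphism of $\rho_\ast F_\EE(\mathcal{M})(m)$. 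Consequently $\rho_\ast E_{N,m}(\mu^\alpha)$ is an isomorphism if and only if $\rho_\ast E_{N,m}(\mu)$ is, which completes the verification that $\QQ$ is $\Aut(\EE)$-invariant. The only subtlety is ensuring that the natural identifications via the projection formula are compatible with $\varphi_\EE$ and with $F_\EE$ applied to the twisted map $\mu^\alpha$; once this bookkeeping is set up correctly, the argument is purely formal.
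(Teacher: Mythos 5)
Your argument is correct and is essentially the paper's own proof: the paper likewise reduces to the single condition that the induced map on global sections be an isomorphism, and shows (via "a simple computation in local coordinates") that twisting the quotient by $\alpha$ composes this map with the automorphism of $H^0(X,F_{\EE}(\mathcal{M})(m))$ induced by the action of $\alpha$ on $\EE^\vee$. Your explicit identification $E_{N,m}(\mu^\alpha)=R_\alpha(m)\circ E_{N,m}(\mu)$ just makes that local computation precise.
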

\begin{proof}
  We have just to prove that given a quotient $\widetilde{\sigma}$ inducing an isomorphism in cohomology and given $\alpha\in\Aut{\EE}$ the composition $\widetilde{\sigma}\circ\alpha$ induces an isomorphism in cohomology $\sigma_{\alpha}$. The morphism $\sigma_{\alpha}$ is induced by the following composition:
  \begin{displaymath}
    \xymatrix{
      V(-m)\ar[rrr]^-{\varphi_{\EE}(V(-m))} &&& F_{\EE}\circ G_{\EE}(V(-m))\ar[rr]^-{F_{\EE}(\alpha\otimes\Id)} && F_{\EE}\circ G_{\EE}(V(-m)) \ar[rr]^-{F_{\EE}(\widetilde{\sigma})} && F_{\EE}(F) \\
    }
  \end{displaymath}
The composition of the first two arrows on the left is the same as a map $ V(-m)\rightarrow F_{\EE}\circ G_{\EE}(V(-m))$ selecting the automorphism $\pi_\ast\alpha$ in $\pi_\ast\END_{\OO_\XX}(\EE)$. A simple computation in local coordinates shows that $\sigma_{\alpha}$ is the same as the composition of $\sigma$ and the endomorphism of $H^0(X,F_{\EE}(F)(m))$ given by the action of $\alpha$ on $\EE^\vee$. Since this is actually an automorphism we obtain that $\sigma_{\alpha}$ is an isomorphism.
\end{proof}
\begin{rem}[Psychological remark]
Despite the action of $\Aut{\EE}$ on $\QQ$ being well defined, there is no reason to quotient it in order to obtain the moduli stack of semistable sheaves, even if it could seem natural at a first sight. Actually we don't know what kind of moduli problem  could represent a quotient by this group action, for sure a much harder one from the GIT viewpoint.
\end{rem}
\begin{rem}
  As in the case of sheaves on schemes we can consider the subscheme $\QQ^s\subset\QQ$ of stable sheaves which is an open subscheme \ref{prop:semistable-open}.
\end{rem}
\begin{rem}
  The multiplicative group $\mathbb{G}_{m,k}$ is contained in the stabilizer of every point of $[\QQ/\GL_{N,k}]$ so that it is natural to consider the rigidification $[\QQ/\GL_{N,k}]\fs\Gm$ which is the stack $[\QQ/\PGL_{N,k}]$ where the action is induced by the exact sequence:
  \begin{displaymath}
    1\to \mathbb{G}_{m,k} \to \GL_{N,k} \to \PGL_{N,k} \to 1
  \end{displaymath}
In particular $[\QQ/\GL_{N,k}]$ is a $\mathbb{G}_{m,k}$-gerbe on $[\QQ/\PGL_{N,k}]$. In the same way we could consider the global quotient $[\QQ/\SL_{N,k}]$ where the action is induced by the inclusion $\SL_{N,k}\to \GL_{N,k}$; again we have that $[\QQ/SL_{N,k}]\fs\mu_{N,k}$ is isomorphic to $[\QQ/\PGL_{N,k}]$ and $[\QQ/\SL_{N,k}]$ is a $\mu_{N,k}$-gerbe on it. For this reason in the  GIT study of this global quotient it is equivalent to consider one of these three quotients.
\end{rem}
\begin{thm}
  The algebraic stack $[\QQ/GL_{N,k}]$ is an Artin stack of finite type on $k$.
\end{thm}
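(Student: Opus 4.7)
The plan is to combine three ingredients already available in the paper. First, by Theorem \ref{thm:quot-closed}, the functor $\Quot_{\XX/k}(V\otimes\EE\otimes\pi^\ast\OO_X(-m),P)$ is represented by a projective $k$-scheme; hence the open subscheme $\QQ$ constructed in Theorem \ref{prop:stack-dei-moduli} is quasi-projective over $k$, and in particular of finite type over $k$. Second, $\GL_{N,k}$ is a smooth affine algebraic group scheme of finite type over $k$, acting on $\QQ$ in the evident way (composition on the factor $V$ in $V\otimes\EE\otimes\pi^\ast\OO_X(-m)$). The action is a morphism of finite type $k$-schemes.

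Given these two facts, I would invoke the standard result that for any finite type algebraic group $G$ acting on a finite type algebraic space $Y$ over a field, the quotient stack $[Y/G]$ is an Artin stack of finite type (see, e.g., \cite[Ex 4.6.1]{LMBca}): a smooth atlas is provided by the structure morphism $Y\to [Y/G]$, which is representable, smooth, and surjective because $G$ itself is smooth over $k$. Applying this with $Y=\QQ$ and $G=\GL_{N,k}$ gives immediately that $[\QQ/\GL_{N,k}]$ is an Artin stack, with smooth atlas $\QQ\to[\QQ/\GL_{N,k}]$. Since both $\QQ$ and $\GL_{N,k}$ are of finite type over $k$, the atlas is a finite type $k$-scheme, so the quotient stack is of finite type over $k$.

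There is no real obstacle here: the only nontrivial input is the projectivity of the Quot-scheme with fixed modified Hilbert polynomial (Theorem \ref{thm:quot-closed}), which in turn depends on the closed immersion $F_\EE\colon\Quot_{\XX/k}(V\otimes\EE\otimes\pi^\ast\OO_X(-m),P)\to\Quot_{X/k}(F_\EE(V\otimes\EE\otimes\pi^\ast\OO_X(-m)),P_V)$ of Proposition \ref{prop:closed-embedding-modified-vers}, together with the classical representability of Quot on projective schemes. Once this is in hand, the statement reduces to the general principle that the quotient of a finite type scheme by a finite type group action is an Artin stack of finite type.
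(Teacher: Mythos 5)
Your proposal is correct and follows essentially the same route as the paper: both arguments rest on the smoothness and finite presentation of $\GL_{N,k}$ over $k$ together with \cite[4.6.1]{LMBca} to get the Artin property with atlas $\QQ$, and on $\QQ$ being of finite type over $k$ to conclude finiteness of type. The only difference is that you spell out why $\QQ$ is of finite type (it is open in the projective $\Quot$-scheme of Theorem \ref{thm:quot-closed}), a point the paper leaves implicit.
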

\begin{proof}
  Since the group scheme $GL_{N,k}$ is smooth separated and of finite presentation on $k$, the global quotient is an Artin stack and $\QQ$ is a smooth atlas \cite[4.6.1]{LMBca}. Since $\QQ\to\Spec{k}$ is a finite type morphism the stack itself is of finite type on $k$.
\end{proof}

\section{The moduli scheme of semistable sheaves}
The aim of this section is to prove that the global quotient $\mathcal{S}(\EE,\OO_X(1),P)$ of semistable sheaves  exists as a GIT quotient and it is a projective  scheme. As in the case of sheaves on schemes the GIT quotient is the moduli scheme for the stack $\mathcal{S}(\EE,\OO_X(1),P)$ only if there are no strictly semistable sheaves, otherwise it just parametrizes classes of $S$-equivalent sheaves. To prove this result we use the standard machinery of Simpson \cite{MR1307297} to compare semistability of sheaves to semistability for the GIT quotient $\QQ/SL_{N,k}$. The first section is devoted to the definition of the GIT problem, while the second contains the results.

\subsection{Closed embedding of  $\Quot_{\XX/S}$ in the projective space}
Let $\rho\colon \XX\to S$ a family of projective Deligne-Mumford stacks with moduli scheme $\XX\xrightarrow{\pi} X\xrightarrow{p} S$ and $\HH$ a locally free sheaf on $\XX$ and $P$ a polynomial with integral coefficients. In this subsection we write explicitly a class of very ample line bundles on $\Quot_{\XX/S}(\HH,P)$.
\begin{lem}\label{lem:dummy-repfunctor}
  Let $G$ and $H$ be two representable functors, represented by $\overline{G}$ and $\overline{H}$ respectively. Let $\UU_G$ and $\UU_H$ the two universal objects. Given $\iota\colon G\to H$ a natural transformation relatively represented by $\overline{\iota}$ the following square is cartesian:
  \begin{displaymath}
    \xymatrix{
      G \ar[r]^{\iota} & H \\
      \overline{G}\ar[u]^{\UU_G}\ar[r]^{\overline{\iota}} & \overline{H}\ar[u]_{\UU_H} \\
    }
  \end{displaymath}
and in particular $\iota(\UU_G)=\overline{\iota}^\ast\UU_{H}$.
\end{lem}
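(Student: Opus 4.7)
The plan is to deduce everything from the Yoneda lemma applied twice (to $G$ and to $H$), since the hypothesis that both functors are representable makes the whole statement almost tautological once unraveled. First I would identify $G$ with $\Mor(-,\overline{G})$ and $H$ with $\Mor(-,\overline{H})$ via the universal objects: the isomorphism $G(T)\cong\Mor(T,\overline{G})$ sends $\xi\in G(T)$ to the unique morphism $f_\xi\colon T\to\overline{G}$ with $f_\xi^\ast\UU_G=\xi$, and the inverse sends $f\colon T\to\overline{G}$ to $f^\ast\UU_G$. In particular the class of $\UU_G\in G(\overline{G})$ corresponds to $\Id_{\overline{G}}$, and similarly $\UU_H$ corresponds to $\Id_{\overline{H}}$. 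The hypothesis that $\iota$ is relatively representable by $\overline{\iota}$ then means, after this identification, that $\iota$ agrees with the natural transformation $\Mor(-,\overline{G})\to\Mor(-,\overline{H})$ induced by $\overline{\iota}$ (this is the only natural transformation between representable functors, again by Yoneda).

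Once this identification is in place, computing $\iota(\UU_G)\in H(\overline{G})$ and $\overline{\iota}^\ast\UU_H\in H(\overline{G})$ via the two sides of Yoneda becomes straightforward: the element $\iota(\UU_G)$ corresponds to $\overline{\iota}\circ\Id_{\overline{G}}=\overline{\iota}\in\Mor(\overline{G},\overline{H})$, while $\overline{\iota}^\ast\UU_H$ corresponds, by definition of $\UU_H$ as the universal element, to the morphism $\overline{\iota}\colon\overline{G}\to\overline{H}$ itself. Hence $\iota(\UU_G)=\overline{\iota}^\ast\UU_H$, which in particular says that the square commutes.

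For the cartesian property I would argue directly on the functor of points. Given a test object $T$ equipped with a morphism $f\colon T\to\overline{H}$ and an element $\eta\in G(T)$ such that $\iota(\eta)=f^\ast\UU_H$, the identification above rewrites the condition as $\overline{\iota}\circ f_\eta = f$, where $f_\eta\colon T\to\overline{G}$ is the morphism associated to $\eta$. Thus $f_\eta$ is the unique morphism to $\overline{G}$ that produces $\eta$ and whose composition with $\overline{\iota}$ is $f$, which is exactly the universal property of the fibered product $\overline{G}\times_{\overline{H}}\overline{H}=\overline{G}$. The only real content is therefore the bookkeeping; there is no genuine obstacle, but care is needed to track which direction of the Yoneda correspondence is being used at each step so that the commutativity of the square and its cartesian nature are proved from the same identification rather than assumed.
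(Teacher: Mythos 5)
Your argument is correct and is exactly the content the paper compresses into its one-line proof (``Almost the definition of representable functor''): identify both functors with their functors of points via Yoneda, observe that $\UU_G$ and $\UU_H$ correspond to the identities, and check that a pair $(\eta,f)$ with $\iota(\eta)=f^\ast\UU_H$ is the same datum as a single morphism to $\overline{G}$. The only cosmetic slip is writing the fibered product as $\overline{G}\times_{\overline{H}}\overline{H}$ rather than $G\times_{H}\overline{H}$, but the universal property you actually verify is the right one.
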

\begin{proof}
Almost the definition of representable functor.
\end{proof}
 Let $\widetilde{\UU}$ be the universal quotient sheaf of $\widetilde{\QQ}\coloneqq\Quot_{\XX/S}(\HH,P)$. Let $\UU$ be the universal quotient sheaf of $\QQ\coloneqq\Quot_{X/S}(F_{\EE}(\HH),P)$. First we recall that there is a class of closed embeddings  $j_l\colon\Quot_{X/S}(F_{\EE}(\HH),P)\to\Grass_S(p_\ast F_{\EE}(\HH)(l),P(l))$ where $l$ is a big enough integer, and it is given by the class  of very ample line bundles $\det{({p_{\QQ}}_\ast\UU(l))}$ where $p_{\QQ}\colon X_{\QQ}\to \QQ$ is the pull back of $p$ through the Pl\"ucker embedding $k_l$ in $\pp\coloneqq\Proj(\Lambda^{P(l)}W)$ of the grassmanian and the closed embedding $j_l$. The locally free sheaf $W$ is the universal quotient of $\Grass_S(p_\ast F_{\EE}(\HH)(l),P(l))$. 
According to \cite[Prop 6.2]{MR2007396} in its modified version \ref{prop:closed-embedding-modified-vers} there is a closed embedding $\iota\colon\widetilde{\QQ}\to\QQ$ representing the natural transformation $F_{\EE}$. We will denote with $\widetilde{\iota}$ the pull-back morphism $X_{\widetilde{\QQ}}\to X_{\QQ}$.
\begin{prop}\label{prop:very-ample-lb}
 Let $\pi_{\widetilde{\QQ}},p_{\widetilde{\QQ}}$ be the pull-back of $\pi,p$ through the Pl\"ucker embedding the closed embedding $j_l$ and $\iota$. The class of invertible sheaves $L_l\coloneqq\det{({p_{\widetilde{\QQ}}}_{\ast}(F_{\EE}(\widetilde{\UU})(l)))}$ is very ample for $l$ big enough as before.  
\end{prop}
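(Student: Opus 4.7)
The plan is to show that $L_l$ is the pullback, under a closed embedding into projective space, of the tautological invertible sheaf $\OO_\pp(1)$; hence very ample for $l$ sufficiently large.

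First, I recall the classical setup on the scheme side. For $l$ sufficiently large, $R^i p_{\QQ,\ast}\UU(l)=0$ for $i>0$, the pushforward $p_{\QQ,\ast}\UU(l)$ is a locally free quotient of $p_\ast F_{\EE}(\HH)(l)$ of rank $P(l)$, and these conditions determine the closed immersion $j_l\colon\QQ\hookrightarrow\Grass_S(p_\ast F_\EE(\HH)(l),P(l))$ with the property $j_l^\ast\det W=\det p_{\QQ,\ast}\UU(l)$. Composing with the Plücker embedding $k_l$ one has $(k_l\circ j_l)^\ast\OO_\pp(1)=\det p_{\QQ,\ast}\UU(l)$, a very ample line bundle on $\QQ$.

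Second, I would identify $\widetilde{\iota}^\ast\UU$ with $F_{\EE}(\widetilde{\UU})$ as quotient sheaves on $X_{\widetilde\QQ}$. By Proposition \ref{prop:cohom-base-change-spcoarse}, the formation of $F_\EE$ commutes with arbitrary base change on the moduli scheme, so the natural transformation $F_\EE\colon\Quot_{\XX/S}(\HH,P)\to\Quot_{X/S}(F_\EE(\HH),P)$ sends the universal object $\widetilde{\UU}$ over $\widetilde{\QQ}$ to the quotient $F_\EE(\widetilde\UU)$ on $X_{\widetilde\QQ}$. Since this natural transformation is represented by $\iota$, Lemma \ref{lem:dummy-repfunctor} gives the claimed canonical isomorphism $\widetilde{\iota}^\ast\UU\cong F_\EE(\widetilde{\UU})$.

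Third, I would transport the very ample line bundle along $\iota$. Because $p_{\QQ,\ast}\UU(l)$ is locally free and all higher direct images vanish for $l$ large, Proposition \ref{prop:arbitrary-base-change} applies to the (not necessarily flat) base change $\iota\colon\widetilde\QQ\to\QQ$, yielding
\begin{displaymath}
\iota^\ast p_{\QQ,\ast}\UU(l)\;\cong\;p_{\widetilde{\QQ},\ast}\widetilde{\iota}^\ast\UU(l)\;\cong\;p_{\widetilde{\QQ},\ast}F_\EE(\widetilde{\UU})(l).
\end{displaymath}
Taking determinants gives $\iota^\ast\bigl(\det p_{\QQ,\ast}\UU(l)\bigr)\cong L_l$, so $L_l=(k_l\circ j_l\circ\iota)^\ast\OO_\pp(1)$. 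Since $k_l$, $j_l$, and $\iota$ are all closed immersions, the composite is a closed immersion of $\widetilde{\QQ}$ into a projective space, whence $L_l$ is very ample.

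The main obstacle I expect is the second step: one must be careful that the identification $\widetilde{\iota}^\ast\UU\cong F_\EE(\widetilde{\UU})$ uses both the definition of $F_\EE$ as a natural transformation of Quot functors (which requires the base-change compatibility of Proposition \ref{prop:cohom-base-change-spcoarse}) and the fact that $\iota$ represents this transformation. Once this identification is established, the rest is a straightforward combination of cohomology-and-base-change (Proposition \ref{prop:arbitrary-base-change}) with the classical construction of very ample line bundles on Quot schemes of schemes.
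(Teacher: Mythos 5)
Your proposal is correct and follows essentially the same route as the paper: identify $F_{\EE}(\widetilde{\UU})$ with $\widetilde{\iota}^\ast\UU$ via Lemma \ref{lem:dummy-repfunctor}, then use cohomology and base change (justified by the $l$-regularity of the quotient sheaves, so that $H^1(\UU_q(l))=0$ and the pushforward is locally free) to get $\iota^\ast {p_{\QQ}}_\ast\UU(l)\cong {p_{\widetilde{\QQ}}}_\ast F_{\EE}(\widetilde{\UU})(l)$, whence $L_l$ is the restriction of the very ample $\det({p_{\QQ}}_\ast\UU(l))$ along the closed immersion $\iota$. The only cosmetic difference is that you invoke Proposition \ref{prop:arbitrary-base-change} where the paper cites the classical base-change criterion directly, but the underlying justification is identical.
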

\begin{proof}
  According to the previous recall about the $\Quot$ scheme of sheaves on a projective scheme we have that:
  \begin{displaymath}
    j_l^\ast k_l^\ast\OO(1)=\det{({p_{\QQ}}_\ast\UU(l))}
  \end{displaymath}
Moreover lemma \ref{lem:dummy-repfunctor} implies that $F_{\EE}(\widetilde{\UU})=\widetilde{\iota}^\ast\UU$. We observe that  $\iota^\ast {p_{\QQ}}_\ast \UU(l) \cong {p_{\widetilde{\QQ}}}_\ast\widetilde{\iota}^\ast\UU(l)$. This is an application of cohomology and base-change; in general we have this isomorphism for an arbitrary base change and a flat family $\GG$ on $X_{\QQ}$ whenever $H^1(\GG_q)=0$ for every closed point $q$ in $\QQ$ (this can be easily derived from cohomology and base change and for an explicit reference see \cite[0.5]{MR1304906}).  In this particular case the fiber $\UU_q(l)$ is just a sheaf of the $\Quot$-functor and $l$ in the hypothesis is big enough so that every quotient sheaf is $l$-regular. This implies $H^1(\UU_q(\overline{l}-1))=0$ for every $q$ and $\overline{l}\geq l$ and eventually the desired relation:
\begin{displaymath}
   \iota^\ast j_l^\ast k_l^\ast\OO(1)=\det{({p_{\widetilde{\QQ}}}_{\ast}(F_{\EE}(\widetilde{\UU})(l)))}
\end{displaymath}
\end{proof}

\begin{lem}\label{lem:equi-invertible}
  The class of very ample invertible sheaves $L_l$ of proposition \ref{prop:very-ample-lb} carries a natural $\GL_{N,k}$-linearization. 
\end{lem}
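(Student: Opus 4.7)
The plan is to exhibit, for the action $\sigma\colon\GL_{N,k}\times\widetilde{\QQ}\to\widetilde{\QQ}$ induced by the $\GL_{N,k}$-action on $V$, a canonical isomorphism $\sigma^\ast L_l\xrightarrow{\sim}\pr_2^\ast L_l$ satisfying the usual cocycle condition, obtained by transporting a tautological equivariance of the universal quotient $\widetilde{\UU}$ through the functorial chain $F_\EE$, $(-)(l)$, $p_{\widetilde{\QQ},\ast}$, $\det$ that defines $L_l$.

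First I will make the equivariance of $\widetilde{\UU}$ precise. A geometric point of $\widetilde{\QQ}$ is a surjection $q\colon\HH\twoheadrightarrow\FF$ with $\HH=V\otimes\EE\otimes\pi^\ast\OO_X(-m)$, and $g\in\GL_{N,k}$ sends it to $q\circ(g^{-1}\otimes\Id)$; the image quotient has the same target $\FF$ as $q$, with kernel obtained from $\ker q$ by the change-of-basis automorphism of $\HH$ induced by $g$. That automorphism identifies the two quotients, and so provides, on the base change of $\XX$ to $\GL_{N,k}\times\widetilde{\QQ}$, a canonical isomorphism $\Phi\colon\sigma_\XX^\ast\widetilde{\UU}\xrightarrow{\sim}\pr_{2,\XX}^\ast\widetilde{\UU}$ (subscript $\XX$ denoting the pullback of the corresponding map to $\XX$). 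Its cocycle condition over $\GL_{N,k}\times\GL_{N,k}\times\widetilde{\QQ}$ is automatic, since composing two such change-of-basis automorphisms of $\HH$ reproduces the group law of $\GL_{N,k}$.

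Second, I will push $\Phi$ through the four functors defining $L_l$. Since $F_\EE$ commutes with base change (Proposition \ref{prop:cohom-base-change-spcoarse}, used already in the proof of Proposition \ref{prop:very-ample-lb} via Lemma \ref{lem:dummy-repfunctor}), applying it to $\Phi$ and twisting by $\OO_X(l)$ yields the analogous identification for $F_\EE(\widetilde{\UU})(l)$ on $X_{\GL_{N,k}\times\widetilde{\QQ}}$. Both $\sigma$ and $\pr_2$ are flat since $\GL_{N,k}$ is smooth over $k$, and the choice of $l$ in Proposition \ref{prop:very-ample-lb} makes $p_{\widetilde{\QQ},\ast}F_\EE(\widetilde{\UU})(l)$ locally free; Theorem \ref{thm:flat-base-change} (or Proposition \ref{prop:arbitrary-base-change}) then guarantees that its formation commutes with both base changes. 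Applying $\det$, a natural construction on locally free sheaves of constant rank, converts $\Phi$ into the sought-for isomorphism $\sigma^\ast L_l\xrightarrow{\sim}\pr_2^\ast L_l$, and the cocycle condition transports from $\Phi$ by functoriality.

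The hard part will simply be keeping track of which canonical isomorphism is being used at each stage, so that the final cocycle identity is strict rather than only true up to a scalar. Because every step is either a canonical natural isomorphism or the pullback of such a thing, this reduces to a diagram chase rather than a genuine obstacle; the only place where the geometry of the situation enters is the choice of $l$ large enough to bring the pushforward inside the locally free range where base change is unambiguous, which is exactly how Proposition \ref{prop:very-ample-lb} has already constrained $l$.
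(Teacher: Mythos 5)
Your proof is correct and follows essentially the same route as the paper: the natural $\GL_{N,k}$-equivariant structure on the universal quotient $\widetilde{\UU}$ (coming from the change-of-basis automorphisms of $V\otimes\EE\otimes\pi^\ast\OO_X(-m)$) is transported to $L_l$ because the formation of ${p_{\widetilde{\QQ}}}_\ast F_\EE(\widetilde{\UU})(l)$ commutes with base change, which is exactly the combination of Proposition \ref{prop:cohom-base-change-spcoarse}, the base-change compatibility of $\EE$, and the vanishing criterion built into the choice of $l$ in Proposition \ref{prop:very-ample-lb} that the paper cites. You have merely spelled out the equivariance of $\widetilde{\UU}$ and the cocycle condition more explicitly than the paper, which delegates these to \cite[pg.~90]{MR1450870}.
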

\begin{proof}
  The universal sheaf $\widetilde{\mathcal{U}}$ carries a natural $\GL_{N,k}$-linearization coming from the universal automorphism of $\GL_{N,k}$ (see \cite[pg. 90]{MR1450870} for the details). This linearization induces a linearization of $L_l$ because the formation of $L_l$ commutes with arbitrary base changes. This follows from \ref{prop:cohom-base-change-spcoarse}, the compatibility of $\EE$ with base change and the criterion in \ref{prop:very-ample-lb} .
\end{proof}
\begin{rem}
  The invertible sheaf $L_l$ together with the $GL_{N,k}$-linearization of the previous lemma defines an invertible sheaf on the global quotient  $[\QQ/\GL_{N,k}]$. We will denote this sheaf with $\mathcal{L}_l$.  
\end{rem}
With lemma \ref{lem:equi-invertible} we can define a notion of GIT semistable (stable) points in the projective scheme $\overline{\QQ}$, the closure of $\QQ$, with respect to the invertible sheaf $L_l$ and the action of $\GL_{N,k}$. Since a $\GL_{N,k}$-linearization induces an $\SL_{N,k}$-linearization we can consider the GIT problem with respect to the action of $\SL_{N,k}$. We will denote the subscheme of GIT semistable points with $\overline{\QQ}^{ss}(L_l)$ and the subscheme of GIT stable points with $\overline{\QQ}^{s}(L_l)$.  

\subsection{Two technical lemmas of Le Potier}
We collect here two technical results of Le Potier \cite{potier-ldmds1992} which are useful to compare the semistability of sheaves on $\XX$ and the GIT stability that we will study in the next section.
The first statement is a tool to relate the notion of semistability to the number of global sections of subsheaves or quotient sheaves.
\begin{thm}\label{thm:stima-utile-dilepotier}
  Let $\FF$ be a pure dimensional coherent sheaf on a projective stack $\XX$ with modified Hilbert polynomial $P_{\EE}(\FF)=P$, multiplicity $r$ and reduced Hilbert polynomial $p$ and let $m$ be a sufficiently large integer. The following conditions are equivalent:
  \begin{enumerate}
  \item $\FF$ is semistable (stable) \label{item:11} 
  \item $r\cdot p(m)\leq h^0(F_{\EE}(\FF)(m))$ and for every subsheaf $\FF'\subset \FF$ with multiplicity $r',\, 0<r'<r$ we have $h^0(F_{\EE}(\FF')(m))\leq r'\cdot p(m);\quad (<)$ \label{item:13}
  \item for every quotient sheaf $\FF\to \FF''$ of multiplicity $r'',0<r''<r$ we have $r''\cdot p(m)\leq h^0(F_{\EE}(\FF'')(m));\quad (<)$ \label{item:12}
  \end{enumerate}
moreover equality holds in \ref{item:13} and \ref{item:12} if and only if $\FF'$ and $\FF''$ respectively are semistable  and it holds for every $m$.  
\end{thm}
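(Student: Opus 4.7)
The plan is to mirror Simpson's classical argument from the scheme case, performing all estimates on the moduli scheme $X$ by means of the functor $F_\EE$ and then transporting the conclusions back to $\XX$. The essential inputs from earlier in the paper are: boundedness of the family of semistable sheaves with modified Hilbert polynomial $P$ (Corollary \ref{cor:boundedness-eventually}); Grothendieck's lemma for stacks (Lemma \ref{lem:grothendieck-lemma}), which says that purely $d$-dimensional sub- or quotient sheaves with bounded slope form a bounded family; Proposition \ref{prop:stima-max-destab}, which compares $\hat{\mu}_{\max}(F_\EE(\FF))$ to $\hat\mu_\EE(\FF)$; and Langer's estimate (\ref{eq:stima-molto-utile}), bounding $h^0$ in terms of $\hat\mu_{\max}$.

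First I would address (1)$\Rightarrow$(2). Kleiman's criterion (Theorem \ref{thm:kleiman-criterion}) produces a uniform $m_0$ such that every semistable $\FF$ with modified Hilbert polynomial $P$ is $m$-regular for $m\geq m_0$; hence $h^0(F_\EE(\FF)(m))=P_\EE(\FF,m)=r\,p(m)$, yielding the first inequality (as an equality). For a pure $d$-dimensional subsheaf $\FF'\subset\FF$ of multiplicity $0<r'<r$, semistability gives $p_\EE(\FF')\leq p_\EE(\FF)$ and in particular $\hat\mu_\EE(\FF')\leq\hat\mu_\EE(\FF)$; Proposition \ref{prop:stima-max-destab} bounds $\hat\mu_{\max}(F_\EE(\FF'))$ in terms of $\hat\mu_\EE(\FF)$, and substituting this into (\ref{eq:stima-molto-utile}) applied to $F_\EE(\FF')(m)$ yields $h^0(F_\EE(\FF')(m))\leq r'\,p(m)$ for $m$ large. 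Grothendieck's lemma for stacks keeps the threshold uniform across all admissible $\FF'$.

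The equivalence (2)$\Leftrightarrow$(3) is then essentially formal: applying the exact functor $F_\EE$ (Remark \ref{rem:FE-exact}) to $0\to\FF'\to\FF\to\FF''\to 0$ and taking cohomology, the $m$-regularity available on the bounded families under consideration gives $h^0(F_\EE(\FF)(m))=h^0(F_\EE(\FF')(m))+h^0(F_\EE(\FF'')(m))$, and additivity of the modified Hilbert polynomial (Remark \ref{rem:additive-hilbert-polynomial}) converts each inequality in (2) into the complementary one in (3). For (3)$\Rightarrow$(1) I argue by contraposition: if $\FF$ is not semistable, a maximal destabilising subsheaf produces a pure $d$-dimensional quotient $\FF''$ with $p_\EE(\FF'')<p$; such $\FF''$ range over a bounded family (their slope is bounded above by $\hat\mu_\EE(\FF)$, so Grothendieck's lemma for stacks applies), hence are simultaneously $m$-regular, and then $h^0(F_\EE(\FF'')(m))=r''\,p_\EE(\FF'',m)<r''\,p(m)$, contradicting (3). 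The parenthetical strict inequalities for the stable case follow the same pattern. Finally, equality in (2) or (3) at a single large $m$ forces $p_\EE(\FF')=p=p_\EE(\FF'')$ by comparing polynomial coefficients, so these subquotients are themselves semistable, and their common boundedness propagates the equality to every $m\geq m_0$.

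The hard part will be producing a single ``sufficiently large'' $m$ that works uniformly across the a priori infinite collection of subsheaves and quotient sheaves one must test. This is exactly the role played by Grothendieck's lemma for stacks (Lemma \ref{lem:grothendieck-lemma}) in tandem with Langer's inequality (\ref{eq:stima-molto-utile}): the former bounds the relevant families once a slope bound is in hand, and the latter supplies that slope bound in a characteristic-free way, replacing the Le Potier--Simpson estimate that was available to Simpson only in characteristic zero.
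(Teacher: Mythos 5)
Your proposal follows essentially the same route as the paper, whose proof simply defers to \cite[Thm 4.4.1]{MR1450870} after observing that the only inputs needed are the stacky Grothendieck lemma (Lemma \ref{lem:grothendieck-lemma}), the Kleiman criterion for stacks (Theorem \ref{thm:kleiman-criterion}), and Langer's inequality (\ref{eq:stima-molto-utile}) in place of the Le Potier--Simpson estimate. You have correctly identified and deployed exactly these three ingredients, so your argument is a faithful (and more detailed) rendering of the paper's intended proof.
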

\begin{proof}
  The proof of this is just the same as in \cite[4.4.1]{MR1450870}. This is true since the proof relies only on the Grothendieck lemma \ref{lem:grothendieck-lemma}, the Kleiman criterion \ref{thm:kleiman-criterion} and above all  Langer's inequality (\ref{eq:stima-molto-utile}) (replacing \cite[Cor 3.3.8]{MR1450870}).  
\end{proof}
\begin{rem}
If $m$ is chosen such that every semistable sheaf on $\XX$ is $m$-regular then it is an $m$ big enough in the sense of the previous theorem.
\end{rem}

Before stating the second lemma we recall the definition of dual sheaf of a pure dimensional sheaf.
\begin{defn}
  Let $\XX$ be a smooth projective Deligne-Mumford stack and $\FF$ a coherent  sheaf of codimension $c$ on $\XX$. We define the dual of $\FF$ to be the coherent sheaf $\FF^D=\EXT_\XX^c(\FF,\omega_\XX)$. 
\end{defn}
If the stack $\XX$ is non smooth we could think of studying the dual choosing an embedding $i\colon\XX\to\mathcal{P}$ in a smooth projective ambient $\mathcal{P}$ and using $(i_\ast\FF)^D=\EXT^e_{\mathcal{P}}(i_\ast\FF,\omega_{\mathcal{P}})$ where $e$ is now the codimension of $\FF$ in $\mathcal{P}$. This is reasonable because of the following remark in \cite{MR1450870}:
\begin{lem}
  Let $\XX$ be a smooth projective stack and $i\colon\XX\to\mathcal{P}$ a closed embedding in a smooth projective stack $\mathcal{P}$. Let $c$ be the codimension of $\FF$ in $\XX$ and $e$ the codimension of $\FF$ in $\mathcal{P}$. We have the following isomorphism:
  \begin{displaymath}
    i_\ast\EXT_\XX^c(\FF,\omega_\XX)\cong\EXT_{\mathcal{P}}^e(i_\ast\FF,\omega_{\mathcal{P}})
  \end{displaymath}
\end{lem}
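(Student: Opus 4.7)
The plan is to reduce the statement to Grothendieck duality for the regular closed embedding $i\colon\XX\to\mathcal{P}$. Since both $\XX$ and $\mathcal{P}$ are smooth, $i$ is a regular closed immersion whose codimension is precisely $d := \dim\mathcal{P} - \dim\XX = e - c$. The two ingredients I would combine are: (i) the change-of-rings (Cartan--Eilenberg) spectral sequence
\begin{equation*}
E_2^{p,q} = \EXT^p_\XX\bigl(\FF,\, \EXT^q_{\mathcal{P}}(i_\ast\OO_\XX,\omega_\mathcal{P})\bigr) \;\Longrightarrow\; \EXT^{p+q}_\mathcal{P}(i_\ast\FF,\omega_\mathcal{P}),
\end{equation*}
and (ii) the fundamental local isomorphism for a regular closed immersion of codimension $d$,
\begin{equation*}
\EXT^q_\mathcal{P}(i_\ast\OO_\XX,\omega_\mathcal{P}) \;=\; \begin{cases} \omega_\XX & \text{if } q = d, \\ 0 & \text{otherwise,} \end{cases}
\end{equation*}
where the identification uses the adjunction formula $\omega_\XX = i^\ast\omega_\mathcal{P}\otimes\det N_{\XX/\mathcal{P}}$.

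First I would verify that both ingredients make sense in the Deligne--Mumford setting. Since $i$ is representable and étale-locally on $\mathcal{P}$ is a regular closed immersion of schemes (one can cut out $\XX$ by a regular sequence after passing to an étale atlas), the vanishings and the isomorphism in (ii) are étale-local on $\mathcal{P}$ and hence follow at once from the scheme case. Similarly, the Cartan--Eilenberg spectral sequence (i) is constructed from a locally free resolution of $\FF$ on $\XX$ together with an injective resolution of $\omega_\mathcal{P}$ on $\mathcal{P}$, and its formation commutes with étale pullback, so it transplants from schemes to the étale site of $\mathcal{P}$ without change.

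Combining the two, the spectral sequence collapses: its only nonzero column is $q = d$, so for every $p$ one obtains an isomorphism
\begin{equation*}
\EXT^{p}_\XX(\FF,\omega_\XX) \;\xrightarrow{\;\sim\;}\; \EXT^{p+d}_\mathcal{P}(i_\ast\FF,\omega_\mathcal{P}).
\end{equation*}
Specializing at $p = c$, so that $p+d = e$, and applying the exact functor $i_\ast$ (which commutes with $\EXT$-sheaves on $\XX$ viewed as sheaves on $\mathcal{P}$ supported on $\XX$) yields the claimed identification. The only substantive point is the fundamental local isomorphism (ii) in the DM setting; once that is granted, everything else is a formal spectral sequence argument, so this verification is the step I would expect to write out most carefully.
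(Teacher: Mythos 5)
Your argument is correct, and it proves the same underlying fact as the paper --- coherent duality for the regular closed immersion $i$ --- but by a genuinely more explicit route. The paper's proof is a one-line citation: it invokes Grothendieck duality for the closed immersion $i$ (in the form $i_\ast R\HOM_\XX(\FF,i^!\omega_{\mathcal P})\cong R\HOM_{\mathcal P}(i_\ast\FF,\omega_{\mathcal P})$, from the cited reference on duality for stacks) together with exactness of $i_\ast$, which lets one read off the isomorphism degree by degree. You instead reconstruct exactly this duality by hand: the fundamental local isomorphism computed from the Koszul resolution identifies $i^!\omega_{\mathcal P}$ with $\omega_\XX[-d]$, and the change-of-rings spectral sequence, collapsing because only the row $q=d$ survives, assembles the local statement into $i_\ast\EXT^p_\XX(\FF,\omega_\XX)\cong\EXT^{p+d}_{\mathcal P}(i_\ast\FF,\omega_{\mathcal P})$ for all $p$. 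What your route buys is self-containedness --- everything reduces to \'etale-local scheme-level facts (regularity of the immersion of smooth objects, the Koszul computation), so you do not need the general duality formalism for Deligne--Mumford stacks as a black box; what it costs is that you must check, as you correctly flag, that the fundamental local isomorphism is canonical enough to glue over an \'etale atlas of $\mathcal P$. One small point worth making explicit when you write this up: the spectral sequence naturally lives on $\mathcal P$ with $E_2$-terms $i_\ast\EXT^p_\XX(\FF,\EXT^q_{\mathcal P}(i_\ast\OO_\XX,\omega_{\mathcal P}))$, so the pushforward $i_\ast$ is built in from the start rather than applied at the end; this is harmless precisely because $i_\ast$ is exact and fully faithful onto sheaves supported on $\XX$, which is the same exactness the paper's proof leans on.
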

\begin{proof}
  It follows from the fact that $i_\ast$ is exact and an application of Grothendieck duality to the closed immersion $i$ \cite[Cor 1.41]{groth-duality-stack}.
\end{proof}
This lemma implies that our new definition of duality doesn't depend on the smooth ambient space.
\begin{lem}\label{lem:biduale-purity}
  Let $\FF$ be a  coherent sheaf on a projective stack $\XX$. There is a natural morphism:
  \begin{displaymath}
    \rho_{\FF}\colon \FF\to \FF^{DD}
  \end{displaymath}
and it is injective if and only if $\FF$ is pure dimensional.
\end{lem}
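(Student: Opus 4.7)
The plan is to reduce the statement to its classical counterpart for coherent sheaves on smooth schemes, namely \cite[Prop.~1.1.10]{MR1450870}, by combining the closed embedding into a smooth ambient stack with étale descent to an atlas. First, I would construct the natural map $\rho_\FF$. When $\XX$ is smooth, one obtains $\rho_\FF$ as the degree-$c$ cohomology of the canonical biduality morphism at the level of derived categories, $\FF \to R\HOM(R\HOM(\FF,\omega_\XX[-c]),\omega_\XX[-c])$, using a locally free resolution of $\FF$. When $\XX$ is not smooth, I would use a closed embedding $i\colon\XX\to\mathcal{P}$ into a smooth projective ambient $\mathcal{P}$, construct the biduality map for $i_\ast\FF$ on $\mathcal{P}$, and then invoke the previous lemma which identifies $i_\ast(\FF^{DD})$ with $(i_\ast\FF)^{DD}$; since $i_\ast$ is fully faithful on coherent sheaves supported on $\XX$, this yields the desired $\rho_\FF$.

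For the implication ``$\FF$ pure $\Rightarrow$ $\rho_\FF$ injective'', the question is local on $\XX$, so I would pull back along an étale atlas. Pushing forward to $\mathcal{P}$ if necessary, I may assume $\XX$ is smooth; choose an étale cover $u\colon U\to\XX$ with $U$ a smooth scheme. Because $u$ is flat and étale, $u^\ast$ commutes with $\EXT^c(-,\omega)$ (the dualizing complex is compatible with étale base change by smoothness of $u$), so $u^\ast\rho_\FF$ is identified with $\rho_{u^\ast\FF}$. By Remark~\ref{rem:pure-sheaves-supp}(\ref{item:17}), $\FF$ is pure of dimension $d$ on $\XX$ if and only if $u^\ast\FF$ is pure of the same dimension on $U$. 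Injectivity of $\rho_{u^\ast\FF}$ is then the classical Huybrechts--Lehn result, and by faithful flatness of $u$ this lifts to injectivity of $\rho_\FF$.

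For the converse direction, I would first establish that $\FF^{DD}$ is always pure of dimension $d=\dim\FF$. This is again local and reduces, by the same atlas/embedding reduction, to the statement on smooth schemes, which is standard. Once that is in hand, injectivity of $\rho_\FF$ exhibits $\FF$ as a subsheaf of the pure sheaf $\FF^{DD}$, and by the very definition of pureness (every nonzero subsheaf has support of pure dimension $d$), $\FF$ itself is pure of dimension $d$.

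The main technical obstacle is ensuring that the dualizing complex, the $\EXT^c$ functor, and the biduality map all behave well simultaneously under the two reductions: the closed immersion $i\colon\XX\to\mathcal{P}$ (to get from the possibly singular stack to a smooth one), and the étale atlas $U\to\mathcal{P}$ (to get from a smooth stack to a smooth scheme). The first reduction is handled by the preceding lemma via Grothendieck duality for $i$ \cite[Cor.~1.41]{groth-duality-stack}, and the second by the fact that $\omega$ and $\EXT$ are compatible with étale base change. With these two compatibilities, the entire argument is a transcription of the Huybrechts--Lehn proof.
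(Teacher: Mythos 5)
Your proof is correct, but it takes a genuinely different route from the paper. The paper's own proof is a direct transcription of the Huybrechts--Lehn arguments onto the stack itself: it invokes Serre duality for stacks \cite[Cor 1.41]{groth-duality-stack} and replaces the twists by $\OO_X(m)$ used in \cite[Prop 1.1.6, Lem 1.1.8, Prop 1.1.10]{MR1450870} with twists by $\EE^\vee\otimes\pi^\ast\OO_X(m)$ in order to recover the required cohomology vanishing, thereby re-establishing the codimension estimates for $\EXT^q(\FF,\omega_\XX)$ and the spectral sequence globally on $\XX$. You instead localize: after the same reduction to a smooth ambient $\mathcal{P}$ via the preceding lemma, you pull everything back to an \'etale atlas, using that $\omega$ and $\EXT^q(-,\omega)$ commute with \'etale pullback, that purity is \'etale-local (Remark~\ref{rem:pure-sheaves-supp}), and that injectivity descends along a faithfully flat cover; the classical result is then quoted as a black box on the (smooth, merely quasi-projective) atlas, which is legitimate since the relevant statements in \cite{MR1450870} are local in nature. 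What your approach buys is economy --- no stacky Serre duality and no generating sheaf are needed at all; what the paper's approach buys is that the intermediate codimension bounds and the biduality spectral sequence are established intrinsically on $\XX$. Two small remarks: your converse could be streamlined by descending both injectivity of $\rho_\FF$ and purity of $\FF$ to the atlas simultaneously, rather than passing through purity of $\FF^{DD}$ (though your route is also fine, since a nonzero subsheaf of a pure $d$-dimensional sheaf is pure of dimension $d$ by definition); and, like the paper, you implicitly rely on the existence of the smooth closed embedding $i\colon\XX\to\mathcal{P}$, which is only guaranteed in characteristic zero.
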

\begin{proof}
  Using Serre duality \cite[Cor 1.41]{groth-duality-stack} we can rewrite  \cite[Prop 1.1.6]{MR1450870}, \cite[Lem 1.1.8]{MR1450870} and \cite[Prop 1.1.10]{MR1450870} tensoring occasionally with a generating sheaf to achieve some vanishing. 
\end{proof}

With this machinery we can write the stacky version of a lemma of Le Potier \cite[4.4.2]{MR1450870}. Using this lemma we can  deal with possibly non pure dimensional sheaves that can be found in the closure $\overline{\QQ}$.
\begin{lem}\label{lem:lepotier-deform-sheaves}
  Let $\FF$ be a coherent sheaf on $\XX$ that can be deformed to a pure sheaf of the same dimension $d$. There is a pure $d$-dimensional sheaf $\GG$ on $\XX$ with a map $\FF\to\GG$ such that the kernel is $T_{d-1}(\FF)$  and $P_{\EE}(\FF)=P_{\EE}(\GG)$.
\end{lem}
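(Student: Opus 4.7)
The plan is to construct $\GG$ via iterated elementary modifications of the given deformation, adapting Le Potier's argument in \cite[Lem 4.4.2]{MR1450870} to the stack setting. All the necessary ingredients---torsion filtration on $\XX$, Tor computations, flatness under submodules, Hilbert polynomial invariance (Lemma \ref{lem:hilbert-poly-fibers})---are in place from earlier sections.

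By the deformation hypothesis and a standard reduction (restrict the deformation base to a smooth curve through the special point and localize at a DVR), I would take an $R$-flat coherent sheaf $\mathcal{F}$ on $\XX_R := \XX \times_k \Spec R$ with $R$ a DVR having uniformizer $\pi$, special fiber $\mathcal{F}_0 = \FF$, and generic fiber $\mathcal{F}_\eta$ pure of dimension $d$; Lemma \ref{lem:hilbert-poly-fibers} ensures $P_\EE(\mathcal{F}_\eta) = P_\EE(\FF)$. Set $T := T_{d-1}(\FF)$ and perform the elementary modification
\[ \mathcal{F}' := \ker\bigl(\mathcal{F} \twoheadrightarrow \FF \twoheadrightarrow \FF/T\bigr). \]
As a submodule of the $\pi$-torsion-free sheaf $\mathcal{F}$, the sheaf $\mathcal{F}'$ is $R$-flat, and $\mathcal{F}'_\eta = \mathcal{F}_\eta$. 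The identity $\Tor_1^R(\FF/T, R/\pi) = \FF/T$ plugged into the Tor long exact sequence of $0 \to \mathcal{F}' \to \mathcal{F} \to i_*(\FF/T) \to 0$ yields
\[ 0 \to \FF/T \to \mathcal{F}'_0 \to T \to 0. \]
Since $\pi\mathcal{F} \subseteq \mathcal{F}'$, multiplication by $\pi$ descends modulo $\pi$ to a canonical morphism $\bar\pi\colon \FF \to \mathcal{F}'_0$, $[f] \mapsto [\pi f]$; the $R$-flatness of $\mathcal{F}$ forces $\ker(\bar\pi) = T$ and $\im(\bar\pi) = \FF/T \hookrightarrow \mathcal{F}'_0$.

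Next, I would iterate this construction, modifying $\mathcal{F}^{(n)}$ along $T_{d-1}(\mathcal{F}^{(n)}_0)$ whenever it is nonzero. At every stage $\FF/T \subseteq \mathcal{F}^{(n)}_0$ is pure of dimension $d$ (Proposition \ref{prop:pure-sheaves}) and hence meets $T_{d-1}(\mathcal{F}^{(n)}_0)$ trivially, so it injects into $\mathcal{F}^{(n+1)}_0$; consequently the composite $\FF \to \mathcal{F}^{(n)}_0$ retains kernel exactly $T$ at every stage. The chain terminates in finitely many steps by a Langton-type argument: pushing the modifications forward under the exact functor $F_\EE$ (which respects the torsion filtration by Corollary \ref{cor:torsion-filtration-preserved} and purity by Proposition \ref{prop:pure-sheaves}) reduces termination to the classical scheme-theoretic statement on the moduli scheme $X$. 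Setting $\GG := \mathcal{F}^{(N)}_0$ at termination then produces a pure $d$-dimensional sheaf; the composite $\FF \to \GG$ has kernel $T_{d-1}(\FF)$; and Lemma \ref{lem:hilbert-poly-fibers} applied to the flat family $\mathcal{F}^{(N)}$ yields $P_\EE(\GG) = P_\EE(\mathcal{F}_\eta) = P_\EE(\FF)$.

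The main obstacle is termination: the single elementary-modification step and its fiberwise computation are routine, but showing that iteration stops in finitely many steps requires producing a strictly decreasing invariant on the successive torsion subsheaves (a Langton-type lex-ordered vector of Hilbert-polynomial coefficients), which I would handle by reducing via $F_\EE$ to the classical scheme case on $X$, where the argument is standard.
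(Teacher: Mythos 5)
Your route is genuinely different from the paper's, and it has a real gap at the point you yourself flag. For comparison: the paper follows \cite[Lem 4.4.2]{MR1450870} literally, treating the whole flat family $\mathfrak{F}$ on $\XX_C$ as a pure $(d+1)$-dimensional sheaf and producing $\mathfrak{G}$ in a \emph{single step} from the reflexive hull $\mathfrak{F}^{DD}$ (this is where Lemma \ref{lem:biduale-purity} enters); no iteration and hence no termination question ever arises, and the only stack-specific work is pushing the conclusion through $F_\EE$, exactly as you do at the end. Your elementary-modification step itself is correct: the Tor computation, the identification of $\mathcal{F}^{(1)}_0$ as an extension of $T$ by $\FF/T$, and the persistence of the kernel $T$ under iteration are all fine (though purity of $\FF/T$ is by definition of the torsion filtration, not Proposition \ref{prop:pure-sheaves}), as is the observation that the exact, faithful functor $F_\EE$ intertwines your iteration with the corresponding one for $F_\EE(\mathcal{F})$ on $X_R$.

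The problem is termination, and the proposed fix does not close it, for two reasons. First, the invariant you name is not strictly decreasing: $T^{(n+1)}$ injects into $T^{(n)}$, so the Hilbert polynomials $P(T^{(n)})$ are non-increasing and must stabilize after finitely many steps, but stabilization only gives $T^{(n+1)}\cong T^{(n)}$; ruling out an infinite chain of nonzero isomorphic torsions is precisely the hard ``stabilizing case'' of Langton's argument and requires a separate contradiction with the purity of $\mathcal{F}_\eta$. Second, the reduction via $F_\EE$ lands on a scheme-theoretic statement that is not actually in the standard references: \cite[Lem 4.4.2]{MR1450870} is not a termination theorem for this iteration but the one-step reflexive-hull construction, and Langton's appendix treats $\mu$-semistability, not torsion, so you would still owe a proof of termination on $X$. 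The natural way to supply it is to observe that the increasing chain $\pi^{-n}\mathcal{F}^{(n)}\subseteq\mathcal{F}\otimes_R K$ consists of pure $(d+1)$-dimensional sheaves agreeing with $\mathcal{F}$ in codimension $\geq 2$, hence all embed in the fixed coherent sheaf $\mathcal{F}^{DD}$ and the chain stabilizes --- but once you have invoked $\mathcal{F}^{DD}$ you may as well run the paper's one-step construction and discard the iteration altogether.
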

\begin{proof}
  Since $\FF$ can be deformed to a pure sheaf there is a smooth connected curve $C\to\Spec{k}$ and a family $\mathfrak{F}$ of sheaves on $\XX_C$ flat on $C$ such that there is a point $0\in C$ satisfying $\mathfrak{F}_0\cong\FF$ and for every point $t\neq 0$ the fibers $\mathfrak{F}_t$ are pure dimensional. Using the technique in \cite[4.4.2]{MR1450870} together with Lemma \ref{lem:biduale-purity} we can find a flat family $\mathfrak{G}$ on $\XX_C$  and a map $0\to\mathfrak{F}\to\mathfrak{G}$ which induces isomorphisms between the fibers for every $t\neq 0$, and for the special fiber $t=0$ the map $\mathfrak{F}_0\to\mathfrak{G}_0$ has kernel the torsion of $T_{d-1}(\FF)$. The two sheaves have the same Hilbert polynomial $P=P(\FF)=P(\mathfrak{G}_0)$ since the family $\GG$ is flat on a connected scheme. We observe that the two families $F_{\EE_C}(\mathfrak{F})$ and $F_{\EE_C}(\mathfrak{G})$ are again flat over $C$ since $\EE$ is locally free and using corollary \ref{cor:tame-stack-2}~\ref{item:3}. Moreover picking a fiber of a family and the functor $F_{\EE}$ commute because of proposition \ref{prop:cohom-base-change-spcoarse} and the compatibility of $\EE$ with base change. We have a morphism $F_{\EE}(\FF)\to F_{\EE}(\GG_0)$ and the kernel is $F_{\EE}(T_{d-1}(\FF))$ which is the same as $T_{d-1}(F_{\EE}(\FF))$ because of Corollary \ref{cor:torsion-filtration-preserved}. Eventually $P(F_{\EE}(\FF))=P(F_{\EE}(\mathfrak{G}_0))$ since $F_{\EE}$ preserves flatness.  
\end{proof}

\subsection{GIT computations}
The GIT problem is well posed and we can compute the Hilbert Mumford criterion for points in $\overline{\QQ}$ and establish a numerical criterion for stability, which is actually the standard condition we have for stability of points in a grassmanian. 

Suppose we are given $\rho\colon V\otimes\EE\otimes\pi^\ast\OO_X(-m)\to \FF$ a closed point in $\overline{\QQ}$.  Let $\lambda\colon \mathbb{G}_{m,k}\to \SL_{N,k}$ be a group morphism. This representation splits $V$ into weight subspaces $V_n$ such that $\lambda(t)\cdot v=t^n\cdot v$ for every $n$, every $t\in\mathbb{G}_{m,k}$ and every $v\in V_n$. We construct an ascending filtration $V_{\leq n}=\oplus_{i\leq n} V_n$ of $V$. In general, given a subspace $W\subseteq V$, it induces a subsheaf of $\FF$ which is the image sheaf $\rho(W\otimes\EE\otimes\pi^\ast\OO_X(-m))$. In this case we can produce a filtration of $\FF$ with the subsheaves $\FF_{\leq n}=\rho(V_{\leq n}\otimes\EE\otimes\pi^\ast\OO_X(-m))$. We have an induced surjection $\rho_n\colon V_n\otimes\EE\otimes\pi^\ast\OO_X(-m)\to \FF_{\leq n}/F_{\leq n-1}\eqqcolon \FF_n$. Taking the sum over all the weights we obtain a new quotient sheaf:
\begin{displaymath}
  \overline{\rho}\coloneqq V\otimes\EE\otimes\pi^\ast\OO_X(-m)\to \bigoplus_{n} \FF_n\eqqcolon\overline{\FF}
\end{displaymath}
It is a very standard result that:
\begin{lem}
  The quotient $[\overline{\rho}]$ is the limit $\lim_{t\to 0} \lambda(t)\cdot [\rho]$ in the sense of the Hilbert Mumford criterion.
\end{lem}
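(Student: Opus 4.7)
The plan is to reduce the statement to the classical Hilbert-Mumford computation on a Grassmannian. By Proposition \ref{prop:very-ample-lb} and Lemma \ref{lem:equi-invertible}, the composite
\begin{displaymath}
\overline{\QQ} \xrightarrow{\iota} \Quot_{X/k}(F_{\EE}(V \otimes \EE \otimes \pi^\ast\OO_X(-m)),P) \xrightarrow{j_l} \Grass_k(H,P(l)) \xrightarrow{k_l} \pp
\end{displaymath}
is a closed immersion, where $H \coloneqq H^0(X,F_{\EE}(V\otimes\EE\otimes\pi^\ast\OO_X(-m))(l)) \cong V \otimes H^0(X,F_{\EE}(\EE)(l-m))$ and $l \gg 0$. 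All arrows are $\SL_{N,k}$-equivariant: the $V$-action on $V \otimes \EE \otimes \pi^\ast\OO_X(-m)$ commutes with $F_{\EE}$, with taking $H^0$, and with the Plücker construction, and the linearization of $L_l$ is pulled back from $\OO_{\pp}(1)$. Hence $\lim_{t\to 0}\lambda(t) \cdot [\rho]$ can be computed in $\pp$ and then identified as a point of $\overline{\QQ}$.

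\textbf{Computation in the ambient space.} On the Grassmannian, a point is the data of a surjection $H \twoheadrightarrow Q$ onto a $P(l)$-dimensional quotient. The one-parameter subgroup $\lambda$ splits $V = \bigoplus_n V_n$, and hence $H = \bigoplus_n H_n$ with $H_n = V_n \otimes H^0(X,F_{\EE}(\EE)(l-m))$. The standard argument (\ie\ compute $\lambda(t) \cdot [H \to Q]$ in Plücker coordinates and take $t \to 0$, as in \cite[4.4.3]{MR1450870} or \cite{MR1307297}) shows that the limit is the graded quotient
\begin{displaymath}
\bigoplus_n H_n \twoheadrightarrow \bigoplus_n H_{\leq n}(Q)/H_{\leq n-1}(Q),
\end{displaymath}
where $H_{\leq n}(Q)$ denotes the image of $H_{\leq n} \coloneqq \bigoplus_{i\leq n} H_i$ in $Q$.

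\textbf{Translation back to $\XX$.} Having chosen $m$ large enough that every semistable sheaf of Hilbert polynomial $P$ is $m$-regular (and $l \geq 0$ so that $m+l$-regularity holds throughout $\overline{\QQ}$), the image of $[\rho]$ in $\Grass_k(H,P(l))$ is the surjection $H \twoheadrightarrow H^0(X,F_{\EE}(\FF)(l))$. Since $F_{\EE}$ is exact and $\EE\otimes\pi^\ast\OO_X(-m)$ is locally free, the filtration $\FF_{\leq n} = \rho(V_{\leq n}\otimes\EE\otimes\pi^\ast\OO_X(-m))$ maps under $F_{\EE}$ to a filtration whose induced filtration on $H^0(\cdot(l))$ is precisely $H_{\leq n}(H^0(X,F_{\EE}(\FF)(l)))$. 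Therefore the Grassmannian limit computed above equals the image in $\Grass_k(H,P(l))$ of the quotient $\overline{\rho}$; applying the inverse $\eta_k \circ G_{\EE}$ from Lemma \ref{lem:left-inverse} (which is well-defined on the open subscheme $\QQ$ and extends to identify closed points of $\overline{\QQ}$ with their Grassmannian images under the hypotheses of Theorem \ref{prop:stack-dei-moduli}) recovers $[\overline{\rho}]$.

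\textbf{Main obstacle.} The non-formal step is the compatibility assertion in the third paragraph: one must check that passing to the associated graded commutes with the four operations $F_{\EE}$, $\pi_\ast$, $H^0(\cdot(l))$, and then applying $\eta_k \circ G_{\EE}$ back. The regularity assumption on $m$ and $l$ makes $F_{\EE}$ and $H^0(\cdot(l))$ exact on every sheaf occurring in the filtration (by Proposition \ref{pro:m-regular} applied on $X$ to $F_{\EE}$ of each piece, combined with exactness of $F_{\EE}$ itself and Proposition \ref{prop:cohom-base-change-spcoarse}), so all the relevant sequences stay short exact and the direct sums of graded pieces are preserved; the identification with $[\overline{\rho}]$ then reduces to the bijection quotient $\leftrightarrow$ isomorphism $\rho$ already established in the proof of Theorem \ref{prop:stack-dei-moduli}.
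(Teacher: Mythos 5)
Your proof is correct, but it takes a different (equally classical) route from the one the paper points to. The paper's proof is a one-line deferral to \cite[4.4.3]{MR1450870}, whose argument builds the Rees-type degeneration directly inside the Quot functor: from the filtration $\FF_{\leq n}$ one forms the $k[t]$-flat quotient $\bigoplus_n \FF_{\leq n}\,t^{-n}$ of $V\otimes\EE\otimes\pi^\ast\OO_X(-m)\otimes k[t]$ (with the $\lambda$-twisted action on $V$), whose fibre at $t\neq 0$ is $\lambda(t)\cdot[\rho]$ and at $t=0$ is $[\overline{\rho}]$; the resulting morphism $\aff^1\to\Quot_{\XX/k}$ computes the limit, and the only stack-specific inputs are exactness of $F_\EE$ and projectivity of $\Quot_{\XX/k}$. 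You instead push the whole computation into the Grassmannian through the equivariant closed immersion $\iota$, $j_l$, $k_l$, do the linear-algebra limit there, and pull back. Your route is sound — the limit in $\overline{\QQ}$ exists by properness and invariance, maps to the Grassmannian limit, and the monomorphism property identifies it with $[\overline{\rho}]$ once you check that the image of $[\overline{\rho}]$ is the graded Grassmannian quotient — but it is the more expensive one: it forces the compatibility checks of your third paragraph (a uniform $l$ making every $F_\EE(\FF_{\leq n})$ $l$-regular, which does hold because the family of subsheaves generated by subspaces of $V$ is bounded, as the paper notes in the proof of the next lemma, and surjectivity of $V_{\leq n}\otimes H^0(F_\EE(\EE)(l-m))\to H^0(F_\EE(\FF_{\leq n})(l))$). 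What your approach buys is that the identification of the fibre of $L_l$ at $[\overline{\rho}]$ needed for the weight computation in the following lemma becomes transparent; what the Rees-family approach buys is that none of these global-section compatibilities need to be verified at all. One small inaccuracy: the citation \cite[4.4.3]{MR1450870} in your second paragraph is for the Rees-family argument, not the Plücker-coordinate computation you are actually performing there.
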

\begin{proof}
  It is just the same proof as in the case of sheaves on a projective scheme. See for instance \cite[4.4.3]{MR1450870}.
\end{proof}
\begin{lem}
  The action of $\mathbb{G}_{m,k}$ via the representation $\lambda$ on the fiber of the invertible sheaf $L_l$ at the point $[\overline{\rho}]$ has weight 
  \begin{displaymath}
    \sum_{n}n\cdot P_{\EE}(\FF_n,l)
  \end{displaymath}
\end{lem}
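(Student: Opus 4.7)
The plan is to compute the fiber of $L_l$ at $[\overline{\rho}]$ explicitly and to read off the $\mathbb{G}_{m,k}$-weight by decomposing that fiber along the weight subspaces of $V$.

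First I would identify the fiber. By construction $L_l = \det({p_{\widetilde{\QQ}}}_\ast(F_\EE(\widetilde{\UU})(l)))$, and by the integer $l$ being chosen large enough that every quotient in $\overline{\QQ}$ is $l$-regular, cohomology and base change (Theorem \ref{thm:cohom-base-change-stack}, or rather its derived corollary already used in Proposition \ref{prop:very-ample-lb}) gives
\[
  L_l \otimes k([\overline{\rho}]) \;\cong\; \det H^0\bigl(X,\,F_\EE(\overline{\FF})(l)\bigr).
\]
The $\GL_{N,k}$-linearization from Lemma \ref{lem:equi-invertible} arises functorially: it is the linearization induced on $\det{p_\ast F_\EE(-)}$ by the canonical action of $\GL_{N,k}$ on $V\otimes\EE\otimes\pi^\ast\OO_X(-m)$ through its action on $V$. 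Restricting along $\lambda\colon\mathbb{G}_{m,k}\to\SL_{N,k}$, the $\mathbb{G}_{m,k}$-action on the fiber above is obtained by applying $F_\EE$, then $H^0(X,-(l))$, then $\det$ to the $\lambda$-action on $\overline{\FF}$.

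Second, I would identify the $\lambda$-action on $\overline{\FF}$. By construction $\overline{\FF} = \bigoplus_n \FF_n$, and on the summand $\FF_n$ the quotient $\rho_n$ is induced from $V_n \otimes \EE \otimes \pi^\ast \OO_X(-m)$, on which $\lambda(t)$ acts by the scalar $t^n$. Hence $\lambda(t)$ acts on $\FF_n$ by multiplication by $t^n$. Since $F_\EE$ is $k$-linear and additive (Remark \ref{rem:FE-exact}) and commutes with direct sums, the same holds on $F_\EE(\FF_n)$, and passing to global sections,
\[
  H^0\bigl(X,F_\EE(\overline{\FF})(l)\bigr) \;=\; \bigoplus_n H^0\bigl(X,F_\EE(\FF_n)(l)\bigr),
\]
with $\lambda(t)$ acting on the $n$-th summand by $t^n$. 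By the choice of $l$ each such summand has dimension $P_\EE(\FF_n,l)$.

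Third, I would take determinants. The determinant of a direct sum is the tensor product of determinants, and on a one-dimensional space the action of $\mathbb{G}_{m,k}$ by a scalar $t^n$ in dimension $P_\EE(\FF_n,l)$ becomes the character $t\mapsto t^{n\,P_\EE(\FF_n,l)}$. Multiplying the characters of the tensor factors,
\[
  L_l\otimes k([\overline{\rho}]) \;\cong\; \bigotimes_n \det H^0\bigl(X,F_\EE(\FF_n)(l)\bigr),
\]
on which $\lambda(t)$ acts as $t^{\sum_n n\,P_\EE(\FF_n,l)}$, which is the required weight.

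I expect the only real subtlety to be the bookkeeping of the linearization: one must check that the canonical $\GL_{N,k}$-linearization of Lemma \ref{lem:equi-invertible} is the one induced by the $V$-action (as opposed to, say, its inverse or a twist by a determinantal character), and that cohomology and base change is genuinely compatible with the linearization, \emph{i.e.}\ $\mathbb{G}_{m,k}$-equivariantly. Both are formal once one recalls that the linearization is constructed from the universal automorphism acting on the universal sheaf via $F_\EE$ and $\det{p_\ast(-)}$, and that both $F_\EE$ and $\det\circ\,p_\ast$ commute with arbitrary base change under the regularity hypothesis on $l$.
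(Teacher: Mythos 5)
Your proof is correct and follows essentially the same route as the paper: identify the fiber of $L_l$ at $[\overline{\rho}]$ as $\det H^0(X,F_\EE(\overline{\FF})(l))$ via $l$-regularity, split it as $\bigotimes_n\det H^0(X,F_\EE(\FF_n)(l))$ with $\lambda(t)$ acting by $t^n$ on the $n$-th factor, and use that vanishing of higher cohomology of $\overline{\FF}$ forces $h^0(X,F_\EE(\FF_n)(l))=P_\EE(\FF_n,l)$ for each summand. The paper makes the last point slightly more explicit (writing $\det H^0=\det H^\bullet$ summand by summand), but the argument is the same.
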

\begin{proof}
  The action of $\mathbb{G}_{m,k}$ on $V_n$ induces an action on $\FF_n$ which is again multiplication by $t^n$ on the sections. The $k$-linear space $H^0(\FF_n\otimes\EE^\vee\otimes\pi^\ast\OO_{X}(-l))$ inherits the same action. We recall that $l$ is chosen big enough so that the $\Quot$-scheme is embedded in the grassmanian; in particular this means that every quotient of $V\otimes\EE\otimes\pi^\ast\OO_X(-m)$ with Hilbert polynomial $P=P_{\EE}(\FF)$ is $l$-regular. This means that $\overline{\FF}$ is $l$-regular and $P(l)=P_{\EE}(\overline{\FF},l)=h^0(X,F_{\EE}(\overline{\FF})(l))$. Now we take the fiber $[\overline{\rho}]$ in $L_l$:
  \begin{displaymath}
    L_l\vert_{[\overline{\rho}]}=\det{(H^0(X,F_{\EE}(\overline{\FF})(l)))}=\bigotimes_n\det{(H^0(X,F_{\EE}(\FF_n)(l)))}=\bigotimes_n\det{(H^\bullet(X,F_{\EE}(\FF_n)(l)))}
  \end{displaymath}
The last equality follows from the fact that $H^i(X,F_{\EE}(\overline{\FF})(l))=\bigoplus_{n}H^i(X,F_{\EE}(\FF_n)(l))$ for every $i\geq 0$ and it vanishes for $i>0$. This proves also that $h^0(X,F_{\EE}(\FF_n)(l))=P_{\EE}(\FF_n,l)$ so that the weight of the action of $\mathbb{G}_{m,k}$ on $L_{l}\vert_{[\overline{\rho}]}$ is the one we have stated. 
\end{proof}
An application of the Hilbert Mumford criterion translates in the following very standard lemma:
\begin{lem}
 A closed point $\rho\colon V\otimes\EE\otimes\pi^\ast\OO_X(-m)\to \FF$ is semistable (stable) if and only if for every non trivial subspace $V'\subset V$ the induced subsheaf $\FF'\subset\FF$ satisfies:
 \begin{equation}\label{eq:stima-con-l}
   \dim{(V)}\cdot P_{\EE}(\FF',l)\geq\dim{(V')}\cdot P_{\EE}(\FF,l);\quad (>)
 \end{equation}
\end{lem}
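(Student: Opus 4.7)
The plan is to apply the Hilbert--Mumford numerical criterion directly, with the two preceding lemmas providing the input data. By Hilbert--Mumford, the point $[\rho]$ in $\overline{\QQ}$ is semistable (resp.\ stable) with respect to the $\SL_{N,k}$-linearized line bundle $L_l$ if and only if for every one-parameter subgroup $\lambda\colon\Gm\to\SL_{N,k}$, the weight of the induced $\Gm$-action on the fiber of $L_l$ at the limit $[\overline{\rho}]=\lim_{t\to 0}\lambda(t)\cdot[\rho]$ is $\leq 0$ (resp.\ $<0$). By the first of the two preceding lemmas this limit is the graded quotient associated to the filtration $\FF_{\leq n}=\rho(V_{\leq n}\otimes\EE\otimes\pi^\ast\OO_X(-m))$, and by the second, the relevant weight equals $\mu(\lambda)\coloneqq\sum_n n\cdot P_\EE(\FF_n,l)$, where $\FF_n=\FF_{\leq n}/\FF_{\leq n-1}$.

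Next I would perform a standard summation by parts. Writing $N_n=\dim V_{\leq n}$ and letting $[a,b]$ be the range of weights, the $\SL$-condition $\sum_n n\cdot\dim V_n=0$ translates into $\sum_{n=a}^{b-1}N_n=b\cdot N$, and an analogous rearrangement using $P_\EE(\overline{\FF},l)=P_\EE(\FF,l)$ gives $\mu(\lambda)=b\cdot P_\EE(\FF,l)-\sum_{n=a}^{b-1}P_\EE(\FF_{\leq n},l)$. Thus $\mu(\lambda)\leq 0$ (resp.\ $<0$) is equivalent to
\[
N\cdot\sum_{n=a}^{b-1}P_\EE(\FF_{\leq n},l)\ \geq\ \sum_{n=a}^{b-1}N_n\cdot P_\EE(\FF,l)\quad(>),
\]
so it suffices to relate these weighted sums to the conjectural inequality at the level of single subspaces.

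For the implication $(\Rightarrow)$, given a proper non-zero subspace $V'\subset V$ of dimension $N'$ with induced subsheaf $\FF'$, I would use the one-parameter subgroup $\lambda$ acting with weight $-(N-N')$ on $V'$ and weight $N'$ on a fixed complement; this lies in $\SL_{N,k}$ and has $\FF_{-(N-N')}=\FF'$, $\FF_{N'}=\FF/\FF'$, so $\mu(\lambda)=-N\cdot P_\EE(\FF',l)+N'\cdot P_\EE(\FF,l)$, and the semistability (resp.\ stability) of $[\rho]$ rearranges into exactly the claimed inequality. Conversely, if the subspace inequality $N\cdot P_\EE(V_{\leq n}\text{-induced subsheaf},l)\geq N_n\cdot P_\EE(\FF,l)$ holds for every $n$ with $N_n\neq 0$, summing over $n=a,\dots,b-1$ and invoking the identity $\sum N_n=bN$ recovers $\mu(\lambda)\leq 0$ (resp.\ $<0$), so $[\rho]$ is semistable (resp.\ stable).

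The argument is essentially formal once the two preceding lemmas and the Hilbert--Mumford criterion are in hand, and parallels the scheme-theoretic proof of \cite[Lem.~4.4.4]{MR1450870} verbatim. The only substantive point to verify is that the modified Hilbert polynomial $P_\EE$ may be used throughout: this is guaranteed by its additivity on short exact sequences (Remark \ref{rem:additive-hilbert-polynomial}) together with Proposition \ref{prop:very-ample-lb}, which ensured that the fiber of $L_l$ at $[\overline{\rho}]$ is precisely $\det\bigl({p_{\widetilde{\QQ}}}_\ast F_\EE(\widetilde{\mathcal{U}})(l)\bigr)$, so that the weight computation of the preceding lemma is expressed in $P_\EE$ and not the na\"{\i}ve Hilbert polynomial. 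I expect no serious obstacle; only the Abel summation bookkeeping requires care.
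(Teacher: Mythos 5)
Your proposal is correct and is exactly the argument the paper intends: the paper states this lemma as a ``very standard'' consequence of the Hilbert--Mumford criterion combined with the two preceding lemmas (the limit point and the weight of the $\Gm$-action on $L_l$), deferring to the scheme case in Huybrechts--Lehn, and your Abel-summation bookkeeping and the choice of the one-parameter subgroup with weights $-(N-N')$ on $V'$ and $N'$ on a complement are the standard way to fill in that translation. The computations check out (in particular the $\SL$-condition $\sum_n n\dim V_n=0$ correctly becomes $\sum_{n=a}^{b-1}N_n=bN$, and all subspaces $V_{\leq n}$ appearing in the range $a\leq n\leq b-1$ are automatically proper and non-zero, so the strict-inequality case goes through), so there is nothing to add.
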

\begin{rem}
As we have already seen, given a quotient $\rho\colon V\otimes\EE\otimes\pi^\ast\OO_X(-m)\to \FF$ and a linear subspace $V'\subset V$ we can associate to it a subsheaf $\FF'\subset\FF$ which is $\rho(V'\otimes\EE\otimes\pi^\ast\OO_X(-m))$. Given a subsheaf $\FF'\subset\FF$ we can associate to it a subspace $V'$ of $V$. Consider the following cartesian square:
\begin{displaymath}
  \xymatrix{
    V \ar[r] & V\otimes H^0(X,F_{\EE}(\EE)(m))\ar[r] & H^0(X,F_{\EE}(\FF)(m)) \\
    V\cap H^0(X,F_{\EE}(\FF')(m)) \ar@{>->}[u] \ar[rr] && H^0(X,F_{\EE}(\FF')(m)) \ar@{>->}[u] \\
}
\end{displaymath}
where the first map on the top is induced by $\varphi_{\EE}(V(-m))$ and the second by $F_{\EE}(\rho)$. The linear space we associate to $\FF'$ is $ V\cap H^0(X,F_{\EE}(\FF')(m))$. If we take a linear subspace $V'$ and associate to it a subsheaf $\FF'$ and then we associate to $\FF'$ a linear space $V''$ with this procedure we obtain an inclusion $\xymatrix{V'' \ar@{>->}[r] & V'\\}$. On the contrary if we start from a subsheaf $\FF'$, associate to it a linear space $V'=V\cap H^0(X,F_{\EE}(\FF')(m))$ and we use $V'$ to generate a subsheaf $\FF''$ we obtain again a natural injection of sheaves $\xymatrix{\FF'' \ar@{>->}[r] & \FF' \\}$.
\end{rem}
From this observation follows the lemma:
\begin{lem}
  Let $\rho\colon V\otimes\EE\otimes\pi^\ast\OO_X(-m)\to\FF$ be a closed point in $\overline{\QQ}$. It is GIT semistable (stable) if and only if for any coherent subsheaf $\FF'$ of $\FF$ and denoted $V'=V\cap H^0(X,F_{\EE}(\FF')(l))$ we have the following inequality:
  \begin{equation}\label{eq:stima-senza-l}
    \dim{(V)}\cdot P_{\EE}(\FF') \geq \dim{(V')}\cdot P_{\EE}(\FF)
  \end{equation}
\end{lem}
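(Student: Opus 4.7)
The goal is to translate the Hilbert--Mumford numerical criterion of the previous lemma, stated on subspaces $V_0 \subseteq V$ and evaluated at the fixed large integer $l$, into the equivalent polynomial criterion on arbitrary coherent subsheaves $\FF' \subseteq \FF$. Two independent moves are needed: (a) the subspace/subsheaf dictionary of the remark preceding the statement, and (b) the passage from evaluation at $l$ to an inequality of full Hilbert polynomials.

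For the first move I would proceed as follows. Given $\FF' \subseteq \FF$, set $V' = V \cap H^0(X, F_\EE(\FF')(m))$ via the canonical injection $V \hookrightarrow H^0(X, F_\EE(\FF)(m))$ induced by $\varphi_\EE(V(-m))$ (the ``$l$'' in the statement should be read as the regularity integer $m$ fixed in the setup preceding Theorem \ref{prop:stack-dei-moduli}, since $V$ is canonically a subspace of $H^0(X, F_\EE(\FF)(m))$, not of $H^0(X, F_\EE(\FF)(l))$). Let $\FF_0 = \rho(V' \otimes \EE \otimes \pi^\ast \OO_X(-m))$; by the remark, $\FF_0 \subseteq \FF'$, and every subspace $V_0 \subseteq V$ arises from some subsheaf in this way after a ``saturation'' step that does not weaken the numerical condition. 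The previous lemma applied to the subspace $V'$ yields
\begin{equation*}
  \dim V \cdot P_\EE(\FF_0, l) \geq \dim V' \cdot P_\EE(\FF, l).
\end{equation*}
Additivity of $P_\EE$ on the exact sequence $0 \to \FF_0 \to \FF' \to \FF'/\FF_0 \to 0$ (Remark \ref{rem:additive-hilbert-polynomial}) gives $P_\EE(\FF_0) \leq P_\EE(\FF')$ at leading order, so the direction of the inequality is preserved when $\FF_0$ is replaced by $\FF'$ on the left-hand side.

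For the second move, I would upgrade the evaluated inequality to a polynomial one via Theorem \ref{thm:stima-utile-dilepotier}. By Grothendieck's Lemma \ref{lem:grothendieck-lemma} together with Corollary \ref{cor:corollary-for-boundedness}, the subsheaves entering the criterion have uniformly bounded slope and form a bounded family; in particular $l$ can be chosen uniformly large so that every $F_\EE(\FF')$ is $l$-regular and $h^0(X, F_\EE(\FF')(l)) = P_\EE(\FF', l)$. Theorem \ref{thm:stima-utile-dilepotier} then gives the equivalence, at such an $l$, between the numerical condition on $h^0(X, F_\EE(\FF')(l))$ and the polynomial inequality on reduced Hilbert polynomials $p_\EE(\FF') \leq p_\EE(\FF)$. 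Combined with the identity $\dim V = P_\EE(\FF, m) = P(m)$, this rescales into the polynomial form $\dim V \cdot P_\EE(\FF') \geq \dim V' \cdot P_\EE(\FF)$.

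The main obstacle is this second step. Theorem \ref{thm:stima-utile-dilepotier} is phrased in terms of subsheaves and the reduced polynomial $p_\EE(\FF)$, while the GIT criterion involves the ratio $\dim V' / \dim V$, which is only related to the polynomial ratio through the Le Potier estimate \eqref{eq:stima-molto-utile}. Bridging this requires the uniform $l$-regularity furnished by boundedness: once it is in hand, the bound $\dim V' \leq h^0(X, F_\EE(\FF')(m)) = P_\EE(\FF', m)$ closes the gap between the GIT numerical condition and the polynomial comparison, completing the equivalence in both directions (the reverse implication being obtained by applying the polynomial inequality to $\FF'$ = subsheaf generated by any $V_0 \subseteq V$ and evaluating at $l$).
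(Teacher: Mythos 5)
Your first move --- the dictionary between subspaces $V_0\subseteq V$ and coherent subsheaves $\FF'\subseteq\FF$, via the remark preceding the lemma, together with the monotonicity $P_\EE(\FF_0)\leq P_\EE(\FF')$ for the generated subsheaf $\FF_0=\rho(V'\otimes\EE\otimes\pi^\ast\OO_X(-m))\subseteq\FF'$ --- is correct and is essentially what the paper does; your observation that $V'$ should be cut out inside $H^0(X,F_\EE(\FF')(m))$ rather than at level $l$ is also a fair reading of the statement.

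The second move is where the argument breaks down. Theorem \ref{thm:stima-utile-dilepotier} characterizes semistability of the sheaf $\FF$ in terms of $h^0$ of subsheaves and the reduced polynomial $p_\EE$; it says nothing about the GIT inequality $\dim{(V)}\cdot P_\EE(\FF')\geq\dim{(V')}\cdot P_\EE(\FF)$, whose right-hand side involves the point-dependent quantity $\dim{(V')}$ rather than a multiplicity. That theorem enters only in the subsequent proposition identifying $\QQ$ with $\overline{\QQ}^{ss}(L_l)$; invoking it here conflates the present lemma (a pure reformulation of the Hilbert--Mumford computation) with that later comparison. Moreover the identity $\dim{(V)}=P(m)$ you use is not available for an arbitrary closed point of $\overline{\QQ}$: one only gets injectivity of $V\to H^0(X,F_\EE(\FF)(m))$, and only after GIT semistability is assumed (that is Lemma \ref{lem:git-semist-injectivity}, which comes after this one). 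The correct, and much shorter, bridge from the evaluated inequality (\ref{eq:stima-con-l}) to the polynomial inequality (\ref{eq:stima-senza-l}) is the one the paper uses: for the fixed point $[\rho]$, the subsheaves generated by linear subspaces of $V$ form a bounded family --- a subspace of $V$ is a direct summand, so each such subsheaf is a quotient of $V'\otimes\EE\otimes\pi^\ast\OO_X(-m)$ with regularity controlled by that of $\FF$ --- hence only finitely many Hilbert polynomials occur among them, and for a finite set of polynomials an inequality at a single sufficiently large $l$ is equivalent to the corresponding inequality of polynomials. Your alternative route through Grothendieck's Lemma \ref{lem:grothendieck-lemma} and a ``uniformly bounded slope'' is not justified: nothing bounds the slopes of these subsheaves a priori, nor are they assumed pure, and that detour is in any case unnecessary.
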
 
\begin{proof}
  We first observe that, fixed the point $[\rho]$, the family of subsheaves generated by a linear subspace of $V$ is bounded  because exact sequences of linear spaces split so that every subsheaf generated by a subspace has the same regularity as $\FF$. This implies also that this family has a finite number of Hilbert polynomials. If the number of polynomials is finite the inequality (\ref{eq:stima-senza-l}) is equivalent to (\ref{eq:stima-con-l}). The rest follows from the previous remark.
\end{proof}
\begin{lem}\label{lem:git-semist-injectivity}
Let $\rho\colon V\otimes\EE\otimes\pi^\ast\OO_X(-m)\to\FF$ be a closed point in $\overline{\QQ}$ which is GIT semistable then the induced morphism $V\to H^0(F_{\EE}(\FF)(m))$ is injective. 
\end{lem}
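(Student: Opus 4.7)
The plan is to exploit the adjunction that underlies the construction of the chart $\QQ$ together with the numerical GIT criterion (\ref{eq:stima-con-l}). Set $V'\coloneqq\ker\bigl(V\to H^0(X,F_{\EE}(\FF)(m))\bigr)$ and write $\FF'\subseteq\FF$ for the subsheaf generated by $V'$ via $\rho$, that is $\FF'=\rho(V'\otimes\EE\otimes\pi^\ast\OO_X(-m))$. The goal is to show that $V'=0$.

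First I would verify that $\FF'=0$. By the adjunction manipulations carried out in the proof of Theorem \ref{prop:stack-dei-moduli}, the quotient $\rho$ is uniquely recovered from the induced linear map $\sigma\colon V\to H^0(X,F_{\EE}(\FF)(m))$ as the composition
\begin{displaymath}
G_{\EE}(V(-m))\xrightarrow{G_{\EE}(\widetilde{\sigma})} G_{\EE}\circ F_{\EE}(\FF)\xrightarrow{\theta_{\EE}(\FF)}\FF,
\end{displaymath}
where $\widetilde{\sigma}=\ev\circ(\sigma\otimes\Id)$. Restricting to $V'\subset V$, the arrow $\widetilde{\sigma}\vert_{V'\otimes\OO_X(-m)}$ factors through $H^0(X,F_{\EE}(\FF)(m))\otimes\OO_X(-m)$ via the zero map, hence vanishes; therefore $\rho\vert_{V'\otimes\EE\otimes\pi^\ast\OO_X(-m)}=0$ and $\FF'=0$.

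Next I would apply the numerical semistability criterion to the subspace $V'\subseteq V$. By the lemma preceding the statement, GIT semistability of $[\rho]$ gives
\begin{displaymath}
\dim(V)\cdot P_{\EE}(\FF',l)\geq \dim(V')\cdot P_{\EE}(\FF,l).
\end{displaymath}
Since $\FF'=0$, the left-hand side is zero. On the other hand $\FF$ is a point of $\overline{\QQ}=\Quot_{\XX/k}(V\otimes\EE\otimes\pi^\ast\OO_X(-m),P)$, so its modified Hilbert polynomial is exactly $P$; as $P$ is the modified Hilbert polynomial of semistable sheaves of dimension $d$, its leading coefficient is strictly positive and $P(l)>0$ for $l$ large enough (which we may assume by further enlarging $l$, keeping the embedding into the Grassmannian intact). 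The inequality becomes $0\geq\dim(V')\cdot P(l)$, forcing $\dim(V')=0$.

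The only genuinely non-formal step is the first one, i.e.\ the identification $\FF'=0$; but this is a direct consequence of the bijective adjunction used in Theorem \ref{prop:stack-dei-moduli} between quotients $\rho$ and linear maps $\sigma$, so the argument is essentially algebraic bookkeeping.
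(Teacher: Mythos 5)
Your proof is correct and follows essentially the same route as the paper: identify the kernel $K=V'$ of $V\to H^0(F_{\EE}(\FF)(m))$, observe that it generates the zero subsheaf of $\FF$, and feed this into the numerical Hilbert--Mumford criterion to get $0\geq\dim(V')\cdot P(l)$, whence $V'=0$. The only difference is cosmetic: you spell out via the adjunction identities of Theorem \ref{prop:stack-dei-moduli} why $K$ generates the zero subsheaf (a step the paper leaves implicit) and you invoke the criterion in its subspace form (\ref{eq:stima-con-l}) rather than the subsheaf form (\ref{eq:stima-senza-l}); both are fine.
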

\begin{proof}
  Take the kernel $K$ of the morphism $V\to H^0(F_{\EE}(\FF)(m))$. It generates the zero subsheaf in $\FF$. Even if the zero sheaf has no global sections we have $0\cap V=K$ which is not zero. Inequality (\ref{eq:stima-senza-l}) reads $0\geq \dim{(K)}\cdot P_{\EE}(F) $ which is impossible unless $\dim{(K)}=0$.    
\end{proof}
\begin{prop}
 Let $m$ be a large enough integer (possibly larger than  the one we have used so far) and $l$ large enough in the usual sense.  The scheme $\QQ$ of semistable sheaves is equal to $\overline{\QQ}^{ss}(L_l)$ the scheme of GIT semistable points in $\overline{\QQ}$ with respect to $L_l$; moreover the stable points coincides $\QQ^s=\overline{\QQ}^{s}(L_l)$.
\end{prop}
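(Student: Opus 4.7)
The plan is to prove the two set-theoretic equalities by establishing both inclusions, following the pattern of Simpson's proof for schemes, with the stacky Le Potier criterion (Theorem \ref{thm:stima-utile-dilepotier}) and the numerical GIT criterion of equation (\ref{eq:stima-senza-l}) as the principal inputs. Throughout, I will use that $\dim(V) = P_\EE(\FF,m) = r\,p_\EE(\FF,m)$ whenever $V\cong H^0(X,F_\EE(\FF)(m))$.

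For the inclusion $\QQ\subseteq\overline{\QQ}^{ss}(L_l)$, I would start from $[\rho]\in\QQ$ corresponding to a pure semistable $\FF$ together with the isomorphism $V\xrightarrow{\sim} H^0(X,F_\EE(\FF)(m))$. Given an arbitrary subsheaf $\FF'\subset\FF$, set $V' = V\cap H^0(X,F_\EE(\FF')(m))$ of multiplicity $r'$. The key numerical bound comes from Le Potier's criterion: semistability yields $\dim(V')\leq h^0(F_\EE(\FF')(m))\leq r'\, p_\EE(\FF,m)$. Combining this with semistability $p_\EE(\FF')\leq p_\EE(\FF)$ and comparing polynomial coefficients, I obtain $\dim(V)\,P_\EE(\FF')\geq \dim(V')\,P_\EE(\FF)$, which is the GIT semistability condition (\ref{eq:stima-senza-l}); strictness in the stable case propagates since a proper subsheaf of a stable sheaf satisfies $p_\EE(\FF')<p_\EE(\FF)$.

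For the reverse inclusion $\overline{\QQ}^{ss}(L_l)\subseteq\QQ$, I would take a GIT semistable closed point $[\rho]\colon V\otimes\EE\otimes\pi^\ast\OO_X(-m)\to \FF$. Lemma \ref{lem:git-semist-injectivity} provides injectivity of $V\to H^0(X,F_\EE(\FF)(m))$. The first step is to show $\FF$ is pure of dimension $d$: if the torsion subsheaf $T_{d-1}(\FF)$ were nonzero, Lemma \ref{lem:lepotier-deform-sheaves} produces a deformation to a pure sheaf $\GG$ with $P_\EE(\FF)=P_\EE(\GG)$, and applying (\ref{eq:stima-senza-l}) to the quotient $\FF\to\GG$ together with Corollary \ref{cor:torsion-filtration-preserved} (which guarantees that $F_\EE$ respects the torsion filtration) forces a contradiction in the leading coefficient comparison for large $l$. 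Once purity is established, applying (\ref{eq:stima-senza-l}) to an arbitrary proper subsheaf $\FF'\subset\FF$ with $V'=V\cap H^0(X,F_\EE(\FF')(m))$, and invoking Le Potier's criterion in the reverse direction, forces $p_\EE(\FF')\leq p_\EE(\FF)$, i.e.\ semistability of $\FF$; the same argument with strict inequalities yields $\overline{\QQ}^s(L_l)\subseteq\QQ^s$.

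The main obstacle is the first step of the reverse direction: controlling closed points of $\overline{\QQ}$ whose underlying sheaves fail to be pure. Extracting enough numerical information from the GIT inequality to rule this out is exactly what Lemma \ref{lem:lepotier-deform-sheaves} was designed for, but one must verify carefully that the deformation argument produces the contradiction at the level of the \emph{modified} Hilbert polynomial rather than the naïve one, which is where the compatibility of $F_\EE$ with flatness (Corollary \ref{cor:tame-stack-2}.\ref{item:3}), base change (Proposition \ref{prop:cohom-base-change-spcoarse}) and the torsion filtration (Corollary \ref{cor:torsion-filtration-preserved}) all enter together. The upgrade from semistable to stable is then automatic from the strict form of Le Potier's criterion and openness of stability (Proposition \ref{prop:semistable-open}).
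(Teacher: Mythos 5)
Your overall route is the one the paper intends: its own proof is a one-line deferral to \cite[4.3.3]{MR1450870} ``with obvious modifications'', and the modifications are exactly the stacky inputs you list (Theorem \ref{thm:stima-utile-dilepotier}, the numerical criterion (\ref{eq:stima-senza-l}), Lemmas \ref{lem:git-semist-injectivity} and \ref{lem:lepotier-deform-sheaves}). However, the central numerical step of your forward direction does not go through as written. The target is $\dim(V)\,P_\EE(\FF')\geq \dim(V')\,P_\EE(\FF)$, which after writing $\dim(V)=r\,p_\EE(\FF,m)$, $P_\EE(\FF')=r'\,p_\EE(\FF')$, $P_\EE(\FF)=r\,p_\EE(\FF)$ reduces to $r'\,p_\EE(\FF,m)\,p_\EE(\FF')\geq \dim(V')\,p_\EE(\FF)$. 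The semistability inequality $p_\EE(\FF')\leq p_\EE(\FF)$ bounds the \emph{left}-hand side from above, so combining it with $\dim(V')\leq r'\,p_\EE(\FF,m)$ pushes the estimate in the wrong direction: the two inequalities you chain bound the same side and cannot produce the target. The correct derivation uses only the Le Potier bound together with monicity: since $\FF$ is pure, any nonzero $\FF'$ has dimension $d$, both reduced polynomials are monic of degree $d$, and the leading coefficient of $\dim(V)P_\EE(\FF')-\dim(V')P_\EE(\FF)$ is (up to a common positive factor) $r'\,p_\EE(\FF,m)-\dim(V')\geq 0$. If this is positive the polynomial inequality holds; if it vanishes, the addendum to Theorem \ref{thm:stima-utile-dilepotier} forces $\FF'$ to be semistable with $p_\EE(\FF')=p_\EE(\FF)$, giving equality. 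The same repair is needed for stability, where strictness comes from the strict form of the $h^0$ bound, not from $p_\EE(\FF')<p_\EE(\FF)$ directly.

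A second, smaller imprecision is in the purity step of the converse: you ``apply (\ref{eq:stima-senza-l}) to the quotient $\FF\to\GG$'', but (\ref{eq:stima-senza-l}) is a condition on subsheaves and the subspaces they determine. The standard argument applies it to $V'=V\cap H^0(X,F_\EE(T_{d-1}(\FF))(m))$: the subsheaf generated by $V'$ lies in $T_{d-1}(\FF)$, hence has modified Hilbert polynomial of degree $<d$, and the GIT inequality then forces $\dim V'=0$. Only after that does Lemma \ref{lem:lepotier-deform-sheaves} enter, supplying a pure $\GG$ with $P_\EE(\GG)=P_\EE(\FF)$ into which $V$ still injects; the count $\dim V=P_\EE(\FF,m)$ combined with Theorem \ref{thm:stima-utile-dilepotier} applied to $\GG$ then yields the contradiction with $T_{d-1}(\FF)\neq 0$. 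Your appeal to Corollary \ref{cor:torsion-filtration-preserved} and the flatness/base-change compatibilities is the right supporting material, but the contradiction is extracted from the subspace $V'$, not from a leading-coefficient comparison along the quotient map.
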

\begin{proof}
The proof is just like the one in {\cite[4.3.3]{MR1450870}} with obvious modifications .
\end{proof}
To prove next theorem, which completes the GIT study, we need a result of semicontinuity for the $\Hom$ functor on a family of projective stacks. Since we were not able to retrieve this result from an analogous one on the moduli scheme of $\XX$, we prove it here and not in   the first section.
\begin{lem}\label{lem:semicont-for-hom}
  Let $p\colon\XX\to S$ be a family of projective stacks, let $\GG$ be a coherent sheaf on $\XX$ flat on $S$ and $\FF$ a coherent sheaf on $\XX$. Let $s$ be a point of $S$, the function $s\mapsto \hom_{\XX_s}(\FF_s,\GG_s)$ is upper semicontinuous. 
\end{lem}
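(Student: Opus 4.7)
The plan is to reduce the statement to the upper semicontinuity of cohomology (Theorem~\ref{thm:semicontinuity-stack}) by resolving $\FF$ with two terms of the standard form $\mathcal{H}_i = \EE^{\oplus N_i}\otimes\pi^\ast\OO_X(-m_i)$ and then studying the induced morphism between their $\Hom$-sheaves, whose ranks are controlled by the cohomology-and-base-change theorem on $\XX/S$.

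First I would construct a two-term resolution
\[
\mathcal{H}_1 \xrightarrow{\beta} \mathcal{H}_0 \to \FF \to 0
\]
with $\mathcal{H}_i = \EE^{\oplus N_i}\otimes\pi^\ast\OO_X(-m_i)$. For $\mathcal{H}_0$ this is immediate: $\EE$ is a generator, so the adjunction $\pi^\ast F_\EE(\FF)\otimes\EE \to \FF$ is surjective, and by Serre's theorem on the projective $S$-scheme $X$ one has a surjection $\OO_X(-m_0)^{\oplus N_0} \to F_\EE(\FF)$ for $m_0 \gg 0$; applying $G_\EE$ and composing with $\theta_\EE(\FF)$ produces the desired $\mathcal{H}_0 \to \FF$. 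Applying the same recipe to the (coherent) kernel yields $\mathcal{H}_1$.

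Next, by enlarging the $m_i$ further, I arrange that $R^j p_\ast(\mathcal{H}_i^\vee \otimes \GG)=0$ for every $j>0$. Using the projection formula (Lemma~\ref{lem:coherent-projection}) together with the exactness of $\pi_\ast$, these higher direct images agree with $R^j\rho_\ast\bigl(F_\EE(\GG)^{\oplus N_i}(m_i)\bigr)$, which vanish for $m_i$ large by Serre vanishing applied to the projective morphism $\rho\colon X\to S$ and the coherent sheaf $F_\EE(\GG)$. Since $\GG$ is $S$-flat and each $\mathcal{H}_i^\vee$ is locally free, the sheaves $\mathcal{H}_i^\vee\otimes\GG$ are also $S$-flat, so Theorem~\ref{thm:cohom-base-change-stack} guarantees that
\[
\mathcal{A}:=p_\ast(\mathcal{H}_0^\vee\otimes\GG),\qquad \mathcal{B}:=p_\ast(\mathcal{H}_1^\vee\otimes\GG)
\]
are locally free on $S$ of constant rank and that their formation commutes with arbitrary base change, so that $\mathcal{A}\otimes k(s)\cong \Hom_{\XX_s}((\mathcal{H}_0)_s,\GG_s)$ and similarly for $\mathcal{B}$.

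Finally, $\beta$ induces a morphism $\alpha\colon\mathcal{A}\to\mathcal{B}$ whose formation likewise commutes with base change. Tensoring the resolution with $k(s)$ preserves right-exactness, so applying $\Hom_{\XX_s}(-,\GG_s)$ gives a left-exact sequence identifying $\Hom_{\XX_s}(\FF_s,\GG_s)$ with $\ker\bigl(\alpha\otimes k(s)\bigr)$. Hence
\[
\hom_{\XX_s}(\FF_s,\GG_s)=\rk(\mathcal{A})-\rk\bigl(\alpha\otimes k(s)\bigr),
\]
where the first term is locally constant and the second is lower semicontinuous, since the rank of a morphism between locally free sheaves can only drop on specializations. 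This yields the desired upper semicontinuity. The only genuine point of effort is the uniform vanishing $R^jp_\ast(\mathcal{H}_i^\vee\otimes\GG)=0$ for $j>0$; everything else is formal once the stacky Cohomology and Base Change theorem and the generating-sheaf mechanism of \cite{MR2007396} are in hand.
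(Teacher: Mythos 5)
Your proof is correct and follows essentially the same route as the paper: resolve $\FF$ by two sheaves of the form $\EE^{\oplus N_i}\otimes\pi^\ast\OO_X(-m_i)$, push the $\HOM$'s down to $S$ where (after enlarging the $m_i$ to kill higher cohomology, using the projection formula and the stacky cohomology-and-base-change theorem) they become locally free and compatible with base change, and conclude by linear algebra. The only cosmetic difference is that you phrase the final step as lower semicontinuity of the rank of $\alpha\otimes k(s)$, whereas the paper invokes Grothendieck's $T^0$-functor formalism and the semicontinuity of $\dim_{k(y)}W^1\otimes k(y)$ from Hartshorne III.12 --- these are the same argument.
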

\begin{proof}
  We prove this using Grothendieck original approach to the problem and we keep most of the notations in section $III.12$ of \cite{Hag}. Since the problem is local in the target we can assume that $S=\Spec{A}$ is affine. Let $T^0$ be the functor mapping an $A$-module $M$ to $\Hom_{\XX}(\FF,\GG\otimes_{A}M)$. Since $\XX$ is projective we can take a locally free resolution $\EE^{\oplus N_1}\otimes\pi^\ast\OO_X(-m_1)\to \EE^{\oplus N_2}\otimes\pi^\ast\OO_X(-m_2)\to \FF\to 0$ where $m_1,m_2$ are positive and big enough integers. We  produce the exact sequence:
  \begin{displaymath}
    \xymatrix{
      0\ar[r] & \Hom_{\XX}(\FF,\GG\otimes_A M) \ar[r] & H^0(X,F_{\EE}(\GG)^{\oplus N_2}(m_2))\otimes_A M \ar[r] & \ldots  \\
 \ldots \ar[r] & H^0(X,F_{\EE}(\GG)^{\oplus N_1}(m_1))\otimes_A M \\
}
  \end{displaymath}
The coherent sheaf $F_{\EE}(\GG)^{\oplus N_i}(m_i)$ is $A$-flat for $i=1,2$ and choosing $m_1,m_2$ even larger we can assume (Serre vanishing plus semicontinuity for cohomology) that \\ $H^1(X_y,F_{\EE}(\GG)^{\oplus N_i}(m_i)\otimes_A k(y))=0$ for every point $y$ in $S$. Denote with $q\colon X\to S$ the morphism from the moduli scheme to $S$. Using \cite[pag 19]{MR1304906} we can conclude that $p_\ast F_{\EE}(\GG)^{\oplus N_i}(m_i)$ is locally free, so that the module $L_i:=H^0(X,F_{\EE}(\GG)^{N_i}(m_i))$ is $A$-flat; it is also finitely generated since the morphism  $X\to S$ is projective. The $A$-module $L_i$ is flat and finitely generated so that it is a free $A$-module. Denote now with $W^1$ the cokernel of $L_0\to L_1$. The module $W_1$ is finitely generated and  according to \cite[Ex 12.7.2]{Hag} the function $y\mapsto \dim_{k(y)}W_1\otimes_A k(y)$ is upper semicontinuous; moreover since $L_i$ is a free module we can conclude that the function $y\mapsto T^0(k(y))$ is upper semicontinuous as in the proof of \cite[Prop 12.8]{Hag}. We are left to prove that $T^0(k(y))=\Hom_{\XX_y}(\FF_y,\GG_y)$. Since $\EE^{\oplus N_i}\otimes\pi^\ast\OO_X(-m_i)\otimes_A k(y)$ is still locally free we have the following exact diagram:
\begin{displaymath}
  \xymatrix{
 0\ar[d] & 0\ar[d] \\
  \Hom_{\XX}(\FF,\GG_y) \ar@{-->}[r] \ar[d] & \Hom_{\XX_y}(\FF_y,\GG_y)\ar[d] \\
  H^0(X,F_{\EE}(\GG)^{\oplus N_2}(m_2))\otimes_A k(y) \ar[r]^{\widetilde{}} \ar[d] &  H^0(X_y,F_{\EE}(\GG)^{\oplus N_2}(m_2)\otimes_A k(y)) \ar[d] \\
   H^0(X,F_{\EE}(\GG)^{\oplus N_1}(m_1))\otimes_A k(y)\ar[r]^{\widetilde{}} &  H^0(X_y,F_{\EE}(\GG)^{\oplus N_1}(m_1)\otimes_A k(y))\\
}
\end{displaymath}
The first two horizontal  arrows from the bottom are isomorphisms because of \cite[Cor 9.4]{Hag} and Proposition \ref{prop:cohom-base-change-spcoarse} so that the first horizontal arrow is also an isomorphism. 
\end{proof}
\begin{rem}
\begin{enumerate}
\item  The argument of the previous proof is very ad-hoc, even if we believe that a good result of semicontinuity for $\Ext$ functors should hold, we don't know about a general proof.
\item It is suggested by the proof of this lemma that the original Grothendieck's approach to cohomology and base change still holds for stacks, however it relies on Proposition \ref{prop:cohom-base-change-spcoarse}.
\end{enumerate}
\end{rem}

\begin{thm}\label{thm:git-polistabili}
  In the setup of the previous theorem, let $\rho\colon V\otimes\EE\otimes\pi^\ast\OO_X(-m)\to\FF$ be a semistable sheaf in $\QQ$. 
  \begin{enumerate}
  \item The polystable sheaf $\gr^{JH}(\FF)$ $S$-equivalent to $[\rho]$ belongs to the closure of the orbit of $[\rho]$.
  \item The orbit of $[\rho]$ is closed if and only if it is polystable.
  \item Given two semistable sheaves $[\rho]$ and $[\rho']$, their orbits intersect if and only if they are $S$-equivalent.
  \end{enumerate}
\end{thm}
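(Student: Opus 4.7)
The plan is to mirror the standard proof in the scheme case (Huybrechts-Lehn 4.3.3 / Simpson), adapted with the tools already assembled in the excerpt. The three statements are essentially equivalent once (1) is established and one knows that polystable orbits are closed, so the main effort is to produce the degeneration of $[\rho]$ to its polystable representative via a one-parameter subgroup.

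For (1), I would fix a Jordan-Hölder filtration $0=JH_0(\FF)\subset JH_1(\FF)\subset\ldots\subset JH_l(\FF)=\FF$ whose graded pieces $\gr_i^{JH}(\FF)$ are stable with reduced modified Hilbert polynomial $p_\EE(\FF)$. Since $m$ is chosen so that every semistable sheaf (and hence every $JH_i(\FF)$, by Theorem \ref{thm:stima-utile-dilepotier}) is $m$-regular, each $H^0(X,F_\EE(JH_i(\FF))(m))$ sits as a subspace inside $H^0(X,F_\EE(\FF)(m))$. Pulling back along the injection $V\hookrightarrow H^0(X,F_\EE(\FF)(m))$ provided by Lemma \ref{lem:git-semist-injectivity}, I get a strictly ascending filtration $V_i\coloneqq V\cap H^0(X,F_\EE(JH_i(\FF))(m))$. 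Choose a splitting $V=\bigoplus_{i=1}^l W_i$ with $V_i=\bigoplus_{j\leq i}W_j$, and let $\lambda\colon\Gm\to\SL_{N,k}$ act by weight $w_i$ on $W_i$ with $\sum\dim(W_i)w_i=0$ and $w_1<\ldots<w_l$. By the computation of $\lim_{t\to 0}\lambda(t)\cdot[\rho]$ recalled in the previous subsection, the limit quotient is $V\otimes\EE\otimes\pi^\ast\OO_X(-m)\twoheadrightarrow\bigoplus_i\FF_i$, where $\FF_i$ is the image of $W_i\otimes\EE\otimes\pi^\ast\OO_X(-m)$ in $JH_i(\FF)/JH_{i-1}(\FF)$. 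A dimension count using the $m$-regularity of each $JH_i(\FF)$ forces the surjection $W_i\otimes\EE\otimes\pi^\ast\OO_X(-m)\twoheadrightarrow\gr_i^{JH}(\FF)$ to have the full $\gr_i^{JH}(\FF)$ as image, so the limit point is exactly $[\gr^{JH}(\FF)]$.

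For (2) and (3), the argument is by now formal. The closure of any orbit in $\overline{\QQ}^{ss}(L_l)$ contains a unique closed orbit (the standard consequence of the Hilbert-Mumford criterion and properness of the GIT quotient). By (1) the orbit closure of $[\rho]$ contains $[\gr^{JH}(\FF)]$. To obtain (2) and (3) it suffices to show that the orbit of a polystable representative is closed; once this is known, the unique closed orbit in the closure of $[\rho]$ equals $\SL_{N,k}\cdot[\gr^{JH}(\FF)]$, and two orbits meet iff they share this closed orbit, i.e.\ iff their $\gr^{JH}$ coincide, which is exactly $S$-equivalence. The closedness of a polystable orbit $\SL_{N,k}\cdot[\rho_0]$ where $\rho_0\colon V\otimes\EE\otimes\pi^\ast\OO_X(-m)\twoheadrightarrow\bigoplus_i\GG_i^{\oplus n_i}$ (with $\GG_i$ stable, pairwise non-isomorphic) follows from Matsushima's criterion applied to its stabilizer $\prod_i\GL_{n_i,k}$, which is reductive; equivalently, one checks directly that any degeneration of $\rho_0$ in $\overline{\QQ}$ produces a sheaf with the same polystable $\gr^{JH}$ and hence is isomorphic to $\rho_0$ by stability.

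The principal technical obstacle is verifying that the limit in (1) really is the polystable representative, rather than a more degenerate object (for instance a non-pure sheaf in $\overline{\QQ}\setminus\QQ$). This is where the specifically stacky input enters: one must check that forming $\FF_i$ as image of $W_i\otimes\EE\otimes\pi^\ast\OO_X(-m)$ produces the stable quotient $\gr_i^{JH}(\FF)$ and not a proper subsheaf thereof. Using Lemma \ref{lem:lepotier-deform-sheaves} to replace a possibly non-pure limit by a pure sheaf of the same modified Hilbert polynomial, together with the numerical criterion for semistability in Theorem \ref{thm:stima-utile-dilepotier}, pins down the limit; the semicontinuity of $\Hom$ from Lemma \ref{lem:semicont-for-hom} then identifies the resulting quotient with $\gr^{JH}(\FF)$. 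Everything else in the argument is formally identical to \cite[4.3.3]{MR1450870}, with $F_\EE$ and the modified Hilbert polynomial replacing the usual pushforward and Hilbert polynomial throughout.
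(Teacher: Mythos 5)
Your proposal is correct and follows exactly the route the paper takes: the paper's own proof is a one-line reference to \cite[4.3.3]{MR1450870} ``with obvious modifications'', and your argument is precisely that proof transported through $F_\EE$ and the modified Hilbert polynomial, using Lemma \ref{lem:lepotier-deform-sheaves} to purify limits in $\overline{\QQ}$ and Lemma \ref{lem:semicont-for-hom} for the semicontinuity of $\Hom$, which is exactly what those two lemmas were proved for. One caution: Matsushima's criterion by itself only gives that a polystable orbit is affine, not closed (the open $\Gm$-orbit in $\aff^1$ has reductive stabilizer but is not closed), so the word ``equivalently'' should be dropped and the closedness of polystable orbits rested entirely on your second, direct argument --- purify the limiting sheaf via Lemma \ref{lem:lepotier-deform-sheaves} and use the two-sided nonvanishing of $\Hom$ from Lemma \ref{lem:semicont-for-hom} between polystable sheaves with the same reduced modified Hilbert polynomial.
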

\begin{proof}
Same  proof as in \cite[4.3.3]{MR1450870} with obvious modifications.
\end{proof}
\begin{thm}
  The stack of stable sheaves $\mathcal{S}(\EE,\OO_X(1),P)^s=[\QQ^s/\GL_{N,k}]$ is a $\mathbb{G}_{m,k}$-gerbe over its moduli space $M^s(\OO_X(1),\EE)\coloneqq \QQ^s/GL_{N,k}$ which is a quasi projective scheme.
\end{thm}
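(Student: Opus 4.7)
The plan is to combine the GIT material already developed with the rigidification remark made after Theorem \ref{prop:stack-dei-moduli}. First, by the previous proposition identifying $\QQ^s$ with the open locus $\overline{\QQ}^s(L_l)$ of GIT stable points, $\QQ^s$ is a quasi-projective scheme (an open subscheme of the projective scheme $\overline{\QQ}$). Standard GIT for a reductive group acting on a quasi-projective scheme linearized by $L_l$ then produces a geometric quotient $\QQ^s/\GL_{N,k}$ (equivalently $\QQ^s/\SL_{N,k}$ or $\QQ^s/\PGL_{N,k}$) which is quasi-projective; this gives the candidate $M^s(\OO_X(1),\EE)$.

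The next step, and the technical heart of the proof, is to compute the stabilizers of the $\GL_{N,k}$-action on $\QQ^s$. I would first prove that for any stable sheaf $\FF$ on $\XX$ one has $\End_{\OO_\XX}(\FF)=k$. Exactness of $F_\EE$ and the fact that $F_\EE$ is faithful (Lemma \ref{lem:pure-sheaves-supp}) let me reduce the usual argument to the stack: any nonzero $\phi\colon\FF\to\FF$ has kernel and image which are sub/quotient sheaves of $\FF$; comparing reduced modified Hilbert polynomials via stability and additivity of $P_\EE$ (Remark \ref{rem:additive-hilbert-polynomial}) forces $\phi$ to be injective with cokernel of strictly smaller dimension, hence an isomorphism. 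Thus $\End_{\OO_\XX}(\FF)$ is a finite-dimensional division $k$-algebra, and since $k=\overline{k}$ it equals $k$. Consequently $\Aut_{\OO_\XX}(\FF)=\Gm$.

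Now I would translate this into stabilizer data on $\QQ^s$. Using the bijection between closed points of $\QQ$ and pairs $(\FF,\rho\colon V\xrightarrow{\sim} H^0(X,F_\EE(\FF)(m)))$ established inside the proof of Theorem \ref{prop:stack-dei-moduli}, the stabilizer in $\GL_{N,k}$ of $[\rho]$ is canonically identified with $\Aut_{\OO_\XX}(\FF)=\Gm$, sitting inside $\GL_{N,k}$ as the scalars. Hence the induced $\PGL_{N,k}$-action on $\QQ^s$ has trivial stabilizers. It follows that $[\QQ^s/\PGL_{N,k}]$ is an algebraic space; by the GIT description above it is representable by the quasi-projective scheme $M^s(\OO_X(1),\EE)$.

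Finally, I would invoke the rigidification remark after Theorem \ref{prop:stack-dei-moduli}: the exact sequence $1\to\Gm\to\GL_{N,k}\to\PGL_{N,k}\to 1$ identifies $[\QQ^s/\GL_{N,k}]\fs\Gm$ with $[\QQ^s/\PGL_{N,k}]$, so $\mathcal{S}(\EE,\OO_X(1),P)^s=[\QQ^s/\GL_{N,k}]$ is a $\Gm$-gerbe over $M^s(\OO_X(1),\EE)$. The main obstacle I expect is the endomorphism computation $\End_{\OO_\XX}(\FF)=k$ for stable $\FF$: unlike the scheme case, I cannot directly invoke Schur's lemma on $X$ because $F_\EE$ does not preserve stability (Remark \ref{rem:harder-ecc}), so the argument must be carried out intrinsically on $\XX$, using only that modified Hilbert polynomials are additive on short exact sequences and that $F_\EE$ detects dimension and non-vanishing (Proposition \ref{prop:pure-sheaves} and Lemma \ref{lem:pure-sheaves-supp}).
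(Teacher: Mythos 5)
Your proof is correct and follows the same overall route as the paper: identify $\QQ^s$ with the GIT-stable locus, take the geometric quotient (quasi-projective since $\overline{\QQ}$ is projective and $L_l$ is ample), and pass to the rigidification $[\QQ^s/\GL_{N,k}]\fs\Gm\cong[\QQ^s/\PGL_{N,k}]$. The paper's own proof is a one-line citation: closedness of stable orbits (Theorem \ref{thm:git-polistabili}) plus \cite[Thm 1.4.1.10]{MR1304906}. What you add, and what the paper leaves entirely implicit, is the verification that a stable sheaf $\FF$ on $\XX$ is simple, $\End_{\OO_\XX}(\FF)=k$, so that the stabilizer of each point of $\QQ^s$ is exactly the scalar copy of $\Gm$ in $\GL_{N,k}$; this is genuinely needed for the assertion that the map to $M^s$ is a $\Gm$-gerbe (and not merely a good quotient), and your observation that the Schur-type argument must be run intrinsically on $\XX$ — using only additivity of $P_\EE$ and the fact that $F_\EE$ detects nonvanishing, since $F_\EE$ does not preserve stability — is exactly the right precaution. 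One small point to keep in mind when writing this up in full: triviality of the $\PGL_{N,k}$-stabilizers must be checked scheme-theoretically, not just on $k$-points, but this again reduces to $\End_{\OO_\XX}(\FF)=k$ via the identification of the stabilizer group scheme of $[\rho]$ with $\AUT(\FF)$.
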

\begin{proof}
  Since the orbits of stable sheaves are closed it follows from the previous theorem and \cite[Thm 1.4.1.10]{MR1304906}.
\end{proof}

\begin{thm}
  Denote with $M^{ss}\coloneqq M^{ss}(\OO_X(1),\EE)$ the GIT quotient $\QQ/GL_{N,k}$. Let $\psi$ be the natural morphism $\psi\colon \mathcal{S}(\EE,\OO_X(1),P)=[\QQ/GL_{N,k}]\to M^{ss}(\OO_X(1),\EE)$. 
\begin{enumerate}
\item It has the following universal property: let $Z$ be an algebraic space and \\ $\phi\colon\mathcal{S}(\EE,\OO_X(1),P)\to Z$ a morphism, then there is only one morphism $\theta\colon M^{ss}\to Z$ making the following diagram commute:
  \begin{displaymath}
    \xymatrix{
      \mathcal{S}(\EE,\OO_X(1),P) \ar[dr]_{\phi}\ar[rr]^{\psi} && M^{ss}\ar@{-->}[dl]^{\exists !\theta} \\
       & Z & \\
    }
  \end{displaymath}
\item The natural morphism $\OO_{M^{ss}}\to\psi_\ast\OO_{\mathcal{S}(\EE,\OO_X(1),P)}$ is an isomorphism and the functor $\psi_\ast$ is exact; in different words $M^{ss}$ is a good moduli space in the sense of Alper \cite[Def 4.1]{alper-2008}. Moreover there is an invertible sheaf $\mathcal{M}$ on $M^{ss}$ and an integer $m$ such that denoted $\sigma\colon [\QQ/\SL_{N,k}]\to M^{ss}$ we have:
  \begin{displaymath}
    \sigma^\ast\mathcal{M}\cong\mathcal{L}_l^{\otimes m}
  \end{displaymath}
\item The algebraic stack $\mathcal{S}(\EE,\OO_X(1),P)$ has no moduli space or no tame moduli space in the sense of Alper \cite[Def 7.1]{alper-2008}.
\end{enumerate}
\end{thm}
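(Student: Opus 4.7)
The plan is to deduce all three statements from the presentation $\mathcal{S}(\EE,\OO_X(1),P)=[\QQ/\GL_{N,k}]$ together with the GIT comparison already established, then translate the output into Alper's language of good moduli spaces. The main technical tool is the fact (proved above) that $\QQ$ coincides with the semistable locus $\overline{\QQ}^{ss}(L_l)$, so that the classical theory of Mumford applies and $M^{ss}=\QQ/\GL_{N,k}$ is a good quotient of a quasiprojective scheme by a reductive group.

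For the universal property in (1) I would argue as follows. A morphism $\phi\colon [\QQ/\GL_{N,k}]\to Z$ with $Z$ an algebraic space is the same datum as a $\GL_{N,k}$-invariant morphism $\widetilde{\phi}\colon \QQ\to Z$ (because the $\GL_{N,k}$-action on $Z$ is trivial). Since $M^{ss}$ is a good quotient in the sense of \cite{MR1304906}, it is in particular a categorical quotient in the category of algebraic spaces; thus $\widetilde{\phi}$ factors uniquely as $\widetilde{\phi}=\theta\circ q$ where $q\colon \QQ\to M^{ss}$ is the quotient map, and $\theta$ is the unique morphism making the diagram commute. For part (2), the identity $\OO_{M^{ss}}\xrightarrow{\sim}\psi_\ast\OO_{\mathcal{S}(\EE,\OO_X(1),P)}$ and the exactness of $\psi_\ast$ both reduce to the corresponding properties of the GIT map $q$: by construction $\OO_{M^{ss}}=(q_\ast\OO_\QQ)^{\GL_{N,k}}$, and reductivity of $\GL_{N,k}$ implies that the invariants functor is exact, so that the composition of $q_\ast$ with $(-)^{\GL_{N,k}}$, which is exactly $\psi_\ast$, is exact on quasicoherent sheaves. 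This is precisely Alper's defining condition for a good moduli space \cite[Def 4.1]{alper-2008}. The existence of $\mathcal{M}$ with $\sigma^\ast\mathcal{M}\cong\mathcal{L}_l^{\otimes m}$ is a standard feature of the GIT construction: the $\SL_{N,k}$-linearized line bundle $L_l^{\otimes m}$ descends, for $m$ divisible enough, to an ample invertible sheaf $\mathcal{M}$ on $M^{ss}=\mathrm{Proj}\bigoplus_n H^0(\overline{\QQ},L_l^{\otimes n})^{\SL_{N,k}}$, and the pullback identity on the open part $[\QQ/\SL_{N,k}]$ is then tautological.

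For part (3) the obstruction is twofold and I would present both. First, the scalar subgroup $\mathbb{G}_{m,k}\subset\GL_{N,k}$ acts trivially on $\QQ$ (scalars act by automorphisms on every coherent sheaf), so every geometric point of $\mathcal{S}(\EE,\OO_X(1),P)$ has stabilizer containing $\mathbb{G}_{m,k}$; since a tame moduli space in the sense of \cite[Def 7.1]{alper-2008} requires finite (even linearly reductive) stabilizers at geometric points, and $\mathbb{G}_{m,k}$ is not finite, no tame moduli space exists. Second, whenever there are strictly semistable sheaves, Theorem \ref{thm:git-polistabili} shows that non-isomorphic but $S$-equivalent sheaves determine orbits with the same image in $M^{ss}$, so $\psi$ fails to induce a bijection on geometric points; this rules out a coarse moduli space in the classical sense. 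The main subtle point will be verifying exactness of $\psi_\ast$ for quasicoherent (not merely coherent) sheaves in characteristic $p$, which is where Alper's framework for good GIT quotients by reductive groups must be invoked carefully; the rest of the argument is essentially a dictionary translation.
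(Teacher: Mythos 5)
Parts (1) and (2) of your argument are sound and essentially coincide with the paper's: where the paper routes the universal property through $[\QQ/\SL_{N,k}]\to[\QQ/\PGL_{N,k}]$ and the universal property of gerbes, you descend directly along the atlas $\QQ\to[\QQ/\GL_{N,k}]$, using that a morphism to an algebraic space is the same as a $\GL_{N,k}$-invariant morphism out of $\QQ$; both reduce to the categorical-quotient property of the GIT quotient in \cite[Thm 1.4.1.10]{MR1304906}, and your treatment of (2) (exactness of $(-)^{\GL_{N,k}}\circ q_\ast$ by reductivity, descent of $L_l^{\otimes m}$) is exactly the ``stacky interpretation'' the paper invokes.

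The genuine defect is your first obstruction in part (3). A tame moduli space in the sense of \cite[Def 7.1]{alper-2008} is a good moduli space that induces a bijection on geometric points; it does \emph{not} require finite (let alone trivial) stabilizers --- for instance $B\Gm\to\spec{k}$ is a tame moduli space. You have conflated ``tame moduli space'' with ``tame stack'' (or with coarse spaces of stacks with finite inertia), so the ubiquitous $\Gm$ in the stabilizers proves nothing and that half of the argument must be discarded. Your second obstruction is the correct one and is the paper's: by Theorem \ref{thm:git-polistabili}, $S$-equivalent but non-isomorphic strictly semistable sheaves have the same image in $M^{ss}$, so $\psi$ is not bijective on geometric points. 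But you still need the bridge the paper supplies: non-bijectivity of $\psi$ alone does not exclude some \emph{other} space from being a (coarse or tame) moduli space; you must combine it with the universal property of part (1), which forces any candidate moduli space to be isomorphic to $M^{ss}$, and only then does the failure of bijectivity kill it. (Both you and the paper are implicitly assuming strictly semistable sheaves exist; otherwise $\QQ=\QQ^s$ and $M^{s}$ is in fact a tame moduli space.)
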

\begin{proof}
$(1)$  According to \cite[Thm 1.4.1.10]{MR1304906} the map $\overline{\psi}\colon\QQ\to M^{ss}$ factorizing through the stack $[\QQ/\SL_{N,k}]$ and the morphism $[\QQ/\SL_{N,k}]\to[\QQ/\PGL_{N,k}]$ is a categorical quotient and this implies the universal property in the first point for the stack $[\QQ/\SL_{N,k}]$ and actually also for $[\QQ/\PGL_{N,k}]$. The map $\mathcal{S}(\EE,\OO_X(1),P)=[\QQ/\GL_{N,k}]\to [\QQ/\PGL_{N,k}]$ is a gerbe so that it has the universal property in \cite[IV Prop 2.3.18.ii]{Gcna-1971}; this implies that if $ [\QQ/\PGL_{N,k}]$ has the universal property in the statement then also $\mathcal{S}(\EE,\OO_X(1),P)$ has the same universal property.

$(2)$ It is just the stacky interpretation of \cite[1.4.1.10.ii]{MR1304906}.

$(3)$ Theorem \ref{thm:git-polistabili} implies that $M^{ss}$ is not a moduli scheme since its points are in bijection with $S$-equivalence classes, and semistable sheaves can be $S$-equivalent even if not isomorphic. So to speak the scheme $M^{ss}$ has not enough points to be a moduli scheme for $\mathcal{S}(\EE,\OO_X(1),P)$. However it satisfies the universal property in the first point (condition (C) in \cite{MR1432041}) and this implies that if a moduli space exists it is isomorphic to $M^{ss}$. 
\end{proof}




\section{Decorated sheaves.}

\subsection{Moduli of twisted sheaves.}

In this appendix we want to make a more precise comparison between our result on semistable sheaves on gerbes and analogous results in \cite{MR2309155} and \cite{MR2306170}.
In this section $\pi\colon\XX\to X$ is a $G$-banded gerbe over $X$ where $X$ is a projective scheme over an algebraically closed field $k$ and $G$ is a diagonalizable group scheme over $X$ (its Cartier dual is constant). With the word $G$-banded we mean banded by the trivial $G$-torsor on $X$. This assumption is very restrictive but fundamentel for the results that follow. The stack $\XX$ can have non finite inertia; the most interesting case, that is $G=\mathbb{G}_m$, has non finite inertia so that $\XX$ is not tame according to the definition in \cite{MR2427954}. However $\pi_\ast$ is exact and $X$ is the moduli space (a tame moduli space according to Alper) of $\XX$ so that all the construction of the moduli space of semistable sheaves still holds as far as  a generating sheaf exists. For sure we can say that there is an ample locally free sheaf (of finite rank) if the $\mathbb{G}_m$-banded gerbe is a torsion element in $H^2(X,\mathbb{G}_m)$  (see \cite[Thm 1.3.5]{cualduararu-dcotsocm2000}). The generating sheaf can be chosen to be a sum over the integers of the powers of the chosen ample locally free sheaf. 

Let $(\chi_1,\ldots,\chi_n)$ be the characters of $G$; in the following we will prove that the moduli space of semistable sheaves on $\XX$ is made of connected of components labelled by  characters and each of these is the moduli space of $\chi$-twisted sheaves on $X$.
\begin{lem}\label{lem:flatness-and-twisted}
  Let $\pi\colon\XX\to X$ be a $G$-banded gerbe and $q\colon X\to S$ a projective morphism of finite type schemes over a field. Fix $\EE,\OO_X(1)$ a polarization. Let $\FF$ be a coherent torsion-free sheaf on $\XX$ flat on $S$ and $\FF=\bigoplus_{\chi\in C(G)} \FF_\chi $ its  decomposition in eigensheaves. Let $P=P_\EE(\FF_s)$ be the modified Hilbert polynomial of the geometric fiber over $s$ a point of $S$. The polynomial $P$ splits as $P=\sum_{i=1}^n P_{\chi}$ where $P_{\chi}(m)=\chi(X_s,\pi_\ast(\FF_{\chi}\otimes\EE_{\chi}^\vee)(m)\vert_{X_s})$ is locally constant. 
\end{lem}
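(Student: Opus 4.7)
My plan is to exploit the character decomposition coming from the diagonalizable banding group $G$, and then reduce the statement to the already established local constancy of the modified Hilbert polynomial for a single flat sheaf (Lemma \ref{lem:hilbert-poly-fibers}).

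First, I would record the structure theorem for quasi-coherent sheaves on a $G$-banded gerbe with diagonalizable banding: every such sheaf splits canonically as a direct sum of eigensheaves indexed by the characters $C(G) = \{\chi_1, \ldots, \chi_n\}$, and this decomposition is functorial and preserves flatness (each $\FF_\chi$ is a direct summand of $\FF$). In particular $\EE = \bigoplus_\chi \EE_\chi$ and the dual $\EE^\vee = \bigoplus_\chi \EE_\chi^\vee$, where $\EE_\chi^\vee$ lives in the $\chi^{-1}$-eigencomponent. The key observation is that $\pi_\ast$ extracts exactly the trivial character component: if $\GG$ is an eigensheaf for a nontrivial character, then $\pi_\ast \GG = 0$. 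Applying this to the tensor product $\FF_\chi \otimes \EE_{\chi'}^\vee$, which is an eigensheaf for $\chi\chi'^{-1}$, gives
\begin{displaymath}
F_{\EE}(\FF) = \pi_\ast(\FF \otimes \EE^\vee) = \bigoplus_{\chi, \chi'} \pi_\ast(\FF_\chi \otimes \EE_{\chi'}^\vee) = \bigoplus_{\chi} \pi_\ast(\FF_\chi \otimes \EE_\chi^\vee).
\end{displaymath}

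Second, by additivity of the Euler characteristic along this direct sum decomposition, restricting to the geometric fiber over $s$ yields
\begin{displaymath}
P_\EE(\FF_s, m) = \sum_\chi \chi\bigl(X_s, \pi_\ast(\FF_\chi \otimes \EE_\chi^\vee)(m)\vert_{X_s}\bigr) = \sum_\chi P_\chi(m),
\end{displaymath}
which gives the claimed splitting.

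Finally, for local constancy of each $P_\chi$: since $\FF_\chi$ is a direct summand of the $S$-flat sheaf $\FF$, it is itself $S$-flat. As $\EE_\chi^\vee$ is locally free, $\FF_\chi \otimes \EE_\chi^\vee$ is $S$-flat on $\XX$. I can then apply the same argument used in Lemma \ref{lem:hilbert-poly-fibers}: combining Proposition \ref{prop:cohom-base-change-spcoarse}, Theorem \ref{thm:existence-generating-sheaf}.\ref{item:16}, and Corollary \ref{cor:tame-stack-2}.\ref{item:3} (which, even though stated for tame stacks, applies here because $\pi_\ast$ is exact for $G$-banded gerbes with diagonalizable $G$), the problem is pushed down to the flat family $\pi_\ast(\FF_\chi \otimes \EE_\chi^\vee)$ on the projective $S$-scheme $X$, and the local constancy of $P_\chi(m)$ follows from \cite[Thm 7.9.4]{MR0163911}. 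There is no real obstacle here once the character decomposition is in place; the only point to verify carefully is that each eigensheaf inherits $S$-flatness, which is immediate from being a direct summand.
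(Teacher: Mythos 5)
Your proof is correct, and its overall skeleton matches the paper's: decompose into eigensheaves, observe that each summand inherits $S$-flatness, and reduce local constancy to \cite[7.9.4]{MR0163911} via the pushforward to the projective $S$-scheme $X$. The one step where you genuinely diverge is the justification of the splitting $P=\sum_\chi P_\chi$. The paper gets it by applying the To\"en--Riemann--Roch formula on the fiber $\XX_s$, whereas you derive it by noting that $\pi_\ast$ kills every eigensheaf with nontrivial character, so that $F_\EE(\FF)=\bigoplus_\chi \pi_\ast(\FF_\chi\otimes\EE_\chi^\vee)$ on the nose, and then using additivity of the Euler characteristic over a direct sum. Your route is the more elementary and arguably the more robust one: the paper only recalls To\"en--Riemann--Roch for \emph{smooth} tame Deligne--Mumford stacks, while the lemma imposes no smoothness on $X$ or on the gerbe, so invoking TRR here quietly imports a hypothesis that your direct computation does not need. (The fact you rely on --- that $\pi_\ast\GG=0$ for a nontrivial eigensheaf $\GG$ --- is exactly what the paper itself records in the discussion preceding Lemma \ref{lem:pure-sheaves-supp}, so nothing new needs to be proved.) Your closing remarks on flatness of the summands and the applicability of the tame-stack results to a $\Gm$-banded gerbe are also consistent with the caveats the paper makes at the start of the twisted-sheaves section.
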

\begin{proof}
  Since $\FF$ is $S$-flat each summand $\FF_\chi$ is $S$-flat, moreover we observe that $\XX_s$ is again a $G$-banded gerbe and that the decomposition in eigensheaves is compatible with the restriction to the fiber. Applying To\"en-Riemann-Roch we have:
  \begin{displaymath}
    P_{\EE}(\FF\vert_{\XX_s},m)=\sum_{i=1}^n\chi(X_s,\pi_\ast(\FF_{\chi_i}\otimes\EE_{\chi_i}^\vee)(m)\vert_{X_s})=\sum_{i=1}^n P_{\chi_i}(m)
  \end{displaymath}
Each $P_{\chi_i}$ is locally constant according to  \cite[7.9.4]{MR0163911}.
\end{proof}
The family of semistable sheaves is bounded; this implies that there is a scheme $U\to S$ and a bounding family of sheaves parameterized by $U$. In general there is no reason why $U$ should be connected and the polynomial $P_\chi$ is just locally constant on $U$. Denote with $\mathfrak{P}$ the set of $n$-tuples  $\mathcal{P}_{\chi}=(P_{\chi_1},\ldots,P_{\chi_n})$ of polynomials labelled by $(\chi_1,\ldots,\chi_n)$ the irreducible representations of $G$ such that $\sum_{i=1}^n P_{\chi_i}=P$.  A priori we have components of $U$ for every such an $n$-tuple of polynomials.
Let $\GG$ be a locally free sheaf. Thanks to the lemma it makes sense to define the functor $\Quot_{\XX/k}(\mathcal{G}_{\chi},P_{\chi})$ of  quotients $\FF_{\chi}$ with modified Hilbert polynomial $P_{\chi}(m)=\chi(X,\pi_\ast(\FF_{\chi}\otimes\EE_{\chi}^\vee)(m))$. Every quotient must be a $\chi$-twisted sheaf or zero because the only morphism between sheaves twisted by different characters is the zero morphism. For every $n$-tuple $\mathcal{P}_{\chi}=(P_{\chi_1},\ldots,P_{\chi_n})$ we have a natural  transformation:
\begin{displaymath}
  \coprod_{i=1}^n\Quot_{\XX/k}(\mathcal{G}_{\chi_i},P_{\chi_i})\xrightarrow{\iota} \Quot_{\XX/k}(\mathcal{G},P)
\end{displaymath}
and they are monomorphisms (of sets) since there are evident sections.
\begin{lem}\label{lem:twist-dec-quot}
 Let $\GG$ be a locally free sheaf on $\XX$ and $\GG=\bigoplus_{i=1}^n \GG_{\chi_i} $ its  decomposition in eigensheaves . Fix $P$ a rational polynomial of degree $d=\dim{X}$. The natural transformation:
 \begin{displaymath}
   \coprod_{\mathcal{P}_{\chi}\in\mathfrak{P}} \coprod_{i=1}^n  \Quot_{\XX/k}(\mathcal{G}_{\chi_i},P_{\chi_i})\xrightarrow{\coprod\iota_{\chi}} \Quot_{\XX/k}(\mathcal{G},P)
 \end{displaymath}
is relatively representable, surjective and a closed immersion.
\end{lem}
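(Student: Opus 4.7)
The plan is to argue that the map $\coprod\iota_\chi$ is actually an isomorphism of functors (from which ``closed immersion, surjective, relatively representable'' is immediate). I read the inner indexing as a product $\prod_{i=1}^n$ over the characters, since a $T$-point of a disjoint union of the $\Quot_{\XX/k}(\GG_{\chi_i},P_{\chi_i})$ could not land in $\Quot_{\XX/k}(\GG,P)$ without the Hilbert polynomials collapsing to a single $P_{\chi_i}$.

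The key input is the representation-theoretic decomposition on a $G$-banded gerbe with $G$ diagonalizable: every quasicoherent sheaf $\FF$ on $\XX$ splits canonically as $\FF=\bigoplus_{\chi\in C(G)}\FF_\chi$ into eigensheaves, and every $\OO_\XX$-linear morphism respects this splitting. Because the splitting is canonical, it commutes with arbitrary base change $\XX_T=\XX\times_k T$, and because $G$ is diagonalizable the functors $\FF\mapsto\FF_\chi$ are exact and preserve flatness (each $\FF_\chi$ is a direct summand of $\FF$).

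With this in hand, first I would start from a $T$-point of $\Quot_{\XX/k}(\GG,P)$, namely a $T$-flat quotient $\GG_T\twoheadrightarrow\FF$ with modified Hilbert polynomial $P$ on geometric fibers. Decomposing gives a direct sum of surjections $\GG_{\chi_i,T}\twoheadrightarrow\FF_{\chi_i}$, each $\FF_{\chi_i}$ being $T$-flat. Lemma~\ref{lem:flatness-and-twisted} then tells me that the fiberwise modified Hilbert polynomials $P_{\chi_i}$ are locally constant on $T$ and satisfy $\sum_i P_{\chi_i}=P$, so $T$ breaks up canonically into a disjoint union of opens indexed by the $n$-tuples $\mathcal{P}_\chi\in\mathfrak{P}$; this gives a $T$-point of the left hand side. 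Conversely, given a tuple of quotients $\GG_{\chi_i,T}\twoheadrightarrow\FF_{\chi_i}$ with polynomials $P_{\chi_i}$, taking the direct sum yields a quotient of $\GG_T$ with polynomial $P$. The two constructions are mutual inverses by the canonicity of the eigen-decomposition and the fact that a morphism between sums of eigensheaves is the direct sum of its components; hence $\coprod\iota_\chi$ is an isomorphism of functors.

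The only non-bookkeeping step, and the one I would treat most carefully, is checking compatibility with base change and with flatness. Representability and the fact that $\iota_\chi$ is a closed immersion are not hard by themselves (one could alternatively mimic the proof of Proposition~\ref{prop:closed-embedding-modified-vers}), but arguing via the isomorphism of functors makes everything formal once the canonical splitting into eigensheaves is available and Lemma~\ref{lem:flatness-and-twisted} pins down the indexing by $\mathfrak{P}$.
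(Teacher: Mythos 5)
Your proof is correct, and your reading of the inner $\coprod_{i=1}^n$ as a product is the right one: it is the only interpretation under which the map can be surjective, and it is what the paper's own proof implicitly uses when it speaks of the images of the $\iota_{\mathcal{P}_\chi}$ being disjoint and covering the target. Where you diverge from the paper is in the mechanism. The paper splits the claim in two: disjointness and covering follow from the vanishing of morphisms between sheaves of different weights, while the closed-immersion part is obtained for each fixed tuple $\mathcal{P}_\chi$ by observing that $\EE_{\chi}$ is a generating sheaf for the subcategory of $\chi$-twisted sheaves and then running the Olsson--Starr argument of \cite[Prop 6.2]{MR2007396} (the relatively representable closed immersion $F_{\EE_\chi}$ into a $\Quot$ functor on $X$), with Lemma \ref{lem:flatness-and-twisted} substituting for \cite[Lem 4.3]{MR2007396}. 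You instead prove outright that $\coprod\iota_\chi$ is an isomorphism of functors, using the canonical, base-change-compatible eigensheaf decomposition and Lemma \ref{lem:flatness-and-twisted} to produce the clopen decomposition of the test scheme indexed by $\mathfrak{P}$; the asserted properties then follow formally. Your route is cleaner and yields a stronger conclusion, but note two things it does not give for free: it deduces representability of the components only from representability of the ambient $\Quot_{\XX/k}(\GG,P)$ (which in this possibly non-tame gerby setting is itself established by the $F_{\EE}$-embedding, so the Olsson--Starr mechanism is not really avoided, only relocated), whereas the paper's route exhibits each $\Quot_{\XX/k}(\GG_{\chi_i},P_{\chi_i})$ directly as a closed subscheme of a classical $\Quot$ scheme on $X$. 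Also, Lemma \ref{lem:flatness-and-twisted} is stated for torsion-free sheaves while your quotients need not be torsion-free; this is harmless since its proof (flatness of direct summands plus \cite[7.9.4]{MR0163911}) never uses that hypothesis, but it is worth saying explicitly.
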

\begin{proof}
Since there are no morphisms between sheaves twisted by different characters it is clear that the image of each natural transformation is disjoint from the others and the coproduct of all of them covers the target. We have just to prove that each $\iota_\chi$ is relatively representable and a closed immersion. To prove this we first observe that, while $\EE_\chi$ is not a generating sheaf, it is a generating sheaf for every $\chi$-twisted sheaf. Having this in mind, the result follows with the same proof as in \cite[Prop 6.2]{MR2007396} but using Lemma \ref{lem:flatness-and-twisted} instead of \cite[Lem 4.3]{MR2007396}.
\end{proof}
 Let $N,V,m$ be as in Proposition \ref{prop:stack-dei-moduli}. Let $\mathcal{Q}$ be the open subscheme of $\Quot_{\XX/k}(V\otimes\mathcal{E}\otimes\pi^\ast\OO_X(-m),P)$ defined in Proposition \ref{prop:stack-dei-moduli} and denote with $\mathcal{Q}_\chi$ its intersection with $\Quot_{\XX/k}(\mathcal{G}_{\chi},P_{\chi})$. We observe that when a sheaf $\FF$ is $m$-regular with $N=P_{\mathcal{E}}(\FF)(m)$ every summand $\FF_{\chi_i}$ is $m$-regular with $N_i=P_{\chi_i}(\FF_{\chi_i})(m)$ and the sum $\sum_{i=1}^n N_i$ is $N$. Since each $N_i$ is not greater than $N$ we can generate each summand $\FF_{\chi_i}$ using $N$ global sections. After this remark we can state the following result:
\begin{prop}
The moduli stack of torsion-free semistable sheaves on $\XX$ with fixed modified Hilbert polynomial $P$ is made of the following connected components:
\begin{displaymath}
  [\mathcal{Q}/GL_{N,k}]\cong \coprod_{\mathcal{P}_{\chi}\in\mathfrak{P}}\coprod_{i=1}^n[\mathcal{Q}_{\chi_i}/GL_{N,k}]
\end{displaymath}
In the same way the good moduli scheme of $[\mathcal{Q}/GL_{N,k}]$ decomposes in connected components:
\begin{displaymath}
  \mathcal{Q}/GL_{N,k}\cong \coprod_{\mathcal{P}_{\chi}\in\mathfrak{P}}\coprod_{i=1}^n\mathcal{Q}_{\chi_i}/GL_{N,k}
\end{displaymath}
and each of them is the good moduli scheme of $[\mathcal{Q}_{\chi_i}/GL_{N,k}]$
\end{prop}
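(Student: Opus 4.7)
The proof is essentially a formal consequence of Lemma~\ref{lem:twist-dec-quot} combined with the definition of $\mathcal{Q}$ given in Theorem~\ref{prop:stack-dei-moduli}. The plan is to apply that lemma to the locally free sheaf $\GG = V \otimes \EE \otimes \pi^\ast \OO_X(-m)$, whose eigensheaf decomposition is $\GG_{\chi_i} = V \otimes \EE_{\chi_i} \otimes \pi^\ast \OO_X(-m)$ (with $V$ carrying the trivial $G$-character, so the character of $\GG_{\chi_i}$ is that of $\EE_{\chi_i}$). Since the lemma produces a surjective closed immersion, it yields the scheme-theoretic decomposition
\[
\Quot_{\XX/k}(\GG, P) \;=\; \coprod_{\mathcal{P}_\chi \in \mathfrak{P}} \coprod_{i=1}^n \Quot_{\XX/k}(\GG_{\chi_i}, P_{\chi_i}),
\]
expressing that any quotient of $\GG$ splits canonically as a direct sum of quotients of the $\GG_{\chi_i}$ (cross-character morphisms vanish) and identifying each stratum with a Quot space of pure $\chi_i$-twisted quotients.

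Since $\mathcal{Q}$ is an open subscheme of $\Quot_{\XX/k}(\GG, P)$, it automatically inherits this stratification as $\mathcal{Q} = \coprod_{\mathcal{P}_\chi} \coprod_i \mathcal{Q}_{\chi_i}$ with $\mathcal{Q}_{\chi_i} := \mathcal{Q} \cap \Quot_{\XX/k}(\GG_{\chi_i}, P_{\chi_i})$ open in the respective component. The next step is to observe that the $GL_{N,k}$-action on $\mathcal{Q}$, which reparametrizes the basis of the trivial $G$-module $V$, cannot alter which character summand of $\GG$ the image is supported on; hence each $\mathcal{Q}_{\chi_i}$ is a $GL_{N,k}$-invariant open subscheme. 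Stack quotients and good moduli spaces (in the sense of Alper) both commute with disjoint unions of invariant opens, so
\[
[\mathcal{Q}/GL_{N,k}] \;\cong\; \coprod_{\mathcal{P}_\chi} \coprod_i [\mathcal{Q}_{\chi_i}/GL_{N,k}]
\]
and analogously at the level of good moduli schemes, with each $\mathcal{Q}_{\chi_i}/GL_{N,k}$ the good moduli space of $[\mathcal{Q}_{\chi_i}/GL_{N,k}]$ by the GIT theorem of the previous section.

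The only non-formal point, and so the main potential obstacle, is verifying invariance of each stratum under $GL_{N,k}$: one must check that composing the universal quotient with a change of basis on $V$ produces a quotient with the same character support. This is immediate from the naturality of the $G$-action, given that $V$ is trivial as a $G$-module and the decomposition $\GG = \bigoplus \GG_{\chi_i}$ is canonical, so the argument reduces cleanly to the already established Lemma~\ref{lem:twist-dec-quot} and the GIT results of Section~6.
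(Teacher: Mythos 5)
Your proof is correct and follows essentially the same route as the paper: the decomposition of $[\QQ/\GL_{N,k}]$ is read off from Lemma \ref{lem:twist-dec-quot} applied to $V\otimes\EE\otimes\pi^\ast\OO_X(-m)$, together with the observation that the strata are $\GL_{N,k}$-invariant (the action only reparametrizes the trivial $G$-module $V$ and cannot move a quotient between character components). The only, immaterial, divergence is in the second half: you deduce the splitting of the good moduli scheme formally from the compatibility of good moduli spaces with disjoint unions of clopen invariant substacks, whereas the paper re-runs the GIT results of the previous section on each component using that $\EE_{\chi_i}$ is a generating sheaf for the subcategory of $\chi_i$-twisted sheaves; both yield the same conclusion.
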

\begin{proof}
  The first statement is an immediate consequence of Lemma \ref{lem:twist-dec-quot}. Since each $\mathcal{E}_\chi$ is a generating sheaf for the subcategory of quasicoherent $\chi$-twisted sheaves all the results in section $7.3$ can be reproduced for each quotient $\mathcal{Q}/GL_{N,k}$ and this implies the second statement.
\end{proof}
If the group scheme $G$ is $\mathbb{G}_m$ or $\mu_a$ for some integer $a$ each $\mathcal{Q}_\chi/GL_{N,k}$ is the moduli scheme of $\chi$-twisted sheaves constructed by Yoshioka for an evident choice of the generating sheaf $\mathcal{E}$. To obtain exactly the same moduli scheme produced by Lieblich it's enough to choose the generating sheaf $\EE$ such that each summand $\EE_\chi$ has trivial Chern classes $c_i(\EE_\chi)$ for $i=1,\ldots,\dim{\XX}$.

\subsection{Moduli of parabolic sheaves}

In this section we explicitly compare our construction with the moduli space of semistable sheaves produced by Maruyama and Yokogawa in \cite{MR1162674}. The fundamental ingredient in this comparison is the equivalence between coherent sheaves on a root stack $\XX$ and parabolic sheaves on its moduli scheme $X$ proved by Borne in \cite{borne-2006}. 

Let $S$ be a finite type scheme over an algebraically closed field and $f\colon X\to S$ an $S$-projective scheme with $\OO_X(1)$ an $f$-very ample invertible sheaf. Let $D$ be a relative ($\OO_S$-flat) effective Cartier divisor. We will not use the most general notion of parabolic sheaf; for the precise definition we use, see \cite[Def 2.1.1]{borne-2006}. Roughly speaking, let $d\geq 1$ an integer (coprime with the characteristic of every point of $S$) and $F_\bullet$ a coherent sheaf with a quasi-parabolic structure in the sense of Maruyama of length $d$, that is a coherent sheaf $F$ with a length $d$ filtration $F_d\subseteq F_{d-1}\subseteq \ldots \subseteq F_0$ where  $F_0=F$ and $F_d$ is the image of $F(-D)$ in $F$; the parabolic structure assigns weight $\frac{l}{d}$ to the sheaf $F_{l}$ in the filtration where $0\leq l\leq d$. We will denote with $\text{Par}_{\frac{1}{d}}(X,D)$ the category of coherent parabolic sheaves on $X$ with chosen divisor $D$ and multiplicity $d$.

Till now we have not assumed $D$ to be smooth since the construction doesn't rely on smoothness, but we want to recall that for non smooth divisors this notion of parabolic sheaves is not the natural one (see \cite{borne-2006} and \cite{borne-2007} for more details), beside not assuming smoothness introduces a few minor complications that we prefer to avoid, so from now on we will assume $D$ to be smooth.

Let $\XX$ be the root stack $\sqrt[d]{D/X}$ and $\mathcal{D}=(\pi^{-1}D)_{\text{red}}$ is the orbifold divisor. We fix as generating sheaf the locally free sheaf $\EE=\bigoplus_{l=0}^d\OO_\XX(l\mathcal{D})$. We recall now the definition of the functor $F_{\mathcal{D}}:\text{Coh}(\XX)\to \text{Par}_{\frac{1}{d}}(X,D)$ exhibiting the equivalence between the two categories and of its quasi-inverse as constructed by Borne.
\begin{displaymath}
\xymatrix@R=0pt{
  \text{Coh}_{\XX} \ar[r]^{F_{\mathcal{D}}} &  \text{Par}_{\frac{1}{d}}(X,D) \\ 
   \FF \ar@{|->}[r] & F_{\mathcal{D}}(\FF);\; F_{\mathcal{D}}(\FF)_l=\pi_\ast(\FF\otimes\OO_{\XX}(-l\mathcal{D})) 
}
\end{displaymath}
This is related to $F_\EE$ defined previously in the obvious way: $\bigoplus_{l=0}^d F_{\mathcal{D}}(\FF)_l=F_{\EE}(\FF)$.
We denote with $\mathbb{Z}$ the category of integers where maps are given by the natural ordering $(\geq)$. Given a parabolic sheaf $F_\bullet$ we define a functor:
\begin{displaymath}
\xymatrix@R=0pt{
 \mathbb{Z}^\circ\times\mathbb{Z}  \ar[r]^{g_{\mathcal{D}}(F_\bullet)} & \text{Coh}_{\XX} \\
  l,m \ar@{|->}[r] & \OO_{\XX}(l\mathcal{D})\otimes \pi^\ast F_m \\
  }
\end{displaymath}
We define now the functor:
\begin{displaymath}
  \xymatrix@R=0pt{
 \text{Par}_{\frac{1}{d}}(X,D) \ar[r]^{G_{\mathcal{D}}} &  \text{Coh}_{\XX} \\
 F_\bullet \ar@{|->}[r] & \int^\mathbb{Z} g_{\mathcal{D}}(F_\bullet)(l,l) \\
}
\end{displaymath}
We denote with $\int^\mathbb{Z} g_{\mathcal{D}}(F_\bullet)(l,l)$ the colimit of wedges:
\begin{displaymath}
  \xymatrix{
 g_{\mathcal{D}}(F_\bullet)(l,m) \ar[r]^-{f_{l,m}}\ar[d]^-{h_{l,m}} & g_{\mathcal{D}}(F_\bullet)(l,l)\ar[d]^-{\omega(l)} \\
 g_{\mathcal{D}}(F_\bullet)(m,m) \ar[r]_-{\omega(m)} & \mathcal{G} \\
}
\end{displaymath}
where $m\geq l$ is an arrow in $\mathbb{Z}$, the arrow $h_{l,m}$ is induced by the canonical section of the divisor, the arrow $f_{l,m}$ is induced by the filtration $\pi^\ast F_\bullet$, the arrow $\omega(l)$ is a dinatural transformation and $\mathcal{G}$ is a sheaf in  $\text{Coh}(\XX)$. Such a colimit is also called a \textit{coend}, and more details about it can be found in \cite[IX.5]{MR1712872}. In the computation of the colimit attention should be paid to the fact that $ g_{\mathcal{D}}(F_\bullet)(l,l)$ and $ g_{\mathcal{D}}(F_\bullet)(l+d,l+d)$ are identified (by the parabolic structure), so that the two arrows leaving from them $\omega(l)$ and $\omega(l+d)$ must be equal\footnotemark \footnotetext{The reader is probably familiar with the case of the self-gluing of a scheme along two non intersecting closed sub-schemes, even in that case we compute a colimit that differs from the push-out for the fact that the two arrows to the colimit must be equal.}. 

\begin{defn}\label{def:t-f-par}
  We define a parabolic sheaf $F_\bullet\in\text{Par}_{\frac{1}{d}}(X,D)$ to be  torsion free if $F_0$ is torsion free.
\end{defn}

The functor $F_{\mathcal{D}}$ clearly maps torsion free sheaves on the stack to torsion free parabolic sheaves (see Proposition \ref{prop:pure-sheaves}), the vice-versa is not completely obvious.
\begin{lem}
  The functor $G_{\mathcal{D}}$ maps torsion free sheaves on $X$ to torsion free sheaves on the corresponding root stack $\XX$. 
\end{lem}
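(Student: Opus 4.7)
The plan is to leverage the fact that $F_{\mathcal{D}}$ and $G_{\mathcal{D}}$ are quasi-inverse equivalences and pull the hypothetical torsion of $G_{\mathcal{D}}(F_\bullet)$ back through $F_{\mathcal{D}}$ to the parabolic side, where it must vanish because the parabolic components $F_l$ are all torsion free.

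First I would set $\mathcal{G}\coloneqq G_{\mathcal{D}}(F_\bullet)$ and let $T\subseteq\mathcal{G}$ be its maximal torsion subsheaf (in the sense of \cite[2.2.6.11]{MR2309155}). The functor $F_{\mathcal{D}}$ is exact: its $l$-th component is $\pi_\ast(-\otimes\OO_\XX(-l\mathcal{D}))$, which is the composition of tensoring with a locally free sheaf and the exact pushforward along the tame morphism $\pi$. Applying it to the inclusion $T\hookrightarrow \mathcal{G}$ and using $F_{\mathcal{D}}(\mathcal{G})\cong F_\bullet$ therefore yields an inclusion of parabolic sheaves $F_{\mathcal{D}}(T)\hookrightarrow F_\bullet$.

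Next I would verify that each component $F_l$ of $F_\bullet$ is torsion free. By the quasi-parabolic structure, $F_l\subseteq F_0$ for $0\le l\le d$, and since $F_0$ is torsion free by Definition \ref{def:t-f-par} this torsion-freeness is inherited by every $F_l$. On the other hand $F_{\mathcal{D}}(T)_l=\pi_\ast(T\otimes\OO_\XX(-l\mathcal{D}))$ is a coherent sheaf on $X$ whose support is contained in $\pi(\supp T)$; since $T$ is torsion on $\XX$, Remark \ref{rem:pure-sheaves-supp} gives $\dim\supp T<\dim\XX=\dim X$, so each $F_{\mathcal{D}}(T)_l$ is a torsion subsheaf of the torsion-free sheaf $F_l$ and hence is zero.

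Finally I would conclude by assembling the vanishing of the components: with the chosen generating sheaf $\EE=\bigoplus_{l=0}^{d}\OO_\XX(l\mathcal{D})$, one has $F_\EE(T)=\bigoplus_{l=0}^{d}F_{\mathcal{D}}(T)_l=0$, so Lemma \ref{lem:pure-sheaves-supp} forces $T=0$, which is exactly torsion-freeness of $\mathcal{G}=G_{\mathcal{D}}(F_\bullet)$. The only mildly subtle step is the first one, namely checking that the inclusion $T\hookrightarrow\mathcal{G}$ really does translate under $F_{\mathcal{D}}$ into a parabolic subsheaf of $F_\bullet$; this is where exactness of $\pi_\ast$ (guaranteed by tameness) and local freeness of the twists $\OO_\XX(-l\mathcal{D})$ enter essentially.
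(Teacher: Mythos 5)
Your proof is correct and follows essentially the same route as the paper's: assume a nonzero torsion subsheaf $T\subseteq G_{\mathcal{D}}(F_\bullet)$, push it through $F_{\mathcal{D}}$ to get inclusions $F_{\mathcal{D}}(T)_l\hookrightarrow F_l$, observe each $F_{\mathcal{D}}(T)_l$ must vanish by torsion-freeness of the $F_l$, and derive a contradiction from the fact that $F_\EE$ kills no nonzero sheaf (Lemma \ref{lem:pure-sheaves-supp}). You are in fact slightly more explicit than the paper on two points it leaves implicit — that each $F_l$ inherits torsion-freeness from $F_0$ via $F_l\subseteq F_0$, and that $F_{\mathcal{D}}(T)_l$ is genuinely torsion because its support lies in $\pi(\supp T)$ — so no changes are needed.
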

\begin{proof}
  Let $F_\bullet $ be a torsion free parabolic sheaf and suppose that $\FF:=G_{\mathcal{D}}(F_\bullet)$ has some torsion subsheaf $\FF'$. We can apply to it the functor $F_{\mathcal{D}}$ and obtain injections $F_{\mathcal{D}}(\FF')_i \to F_i$. Each of the sheaves $F_{\mathcal{D}}(\FF')_i$ can be zero, however the direct sum $\bigoplus_{i=0}^d F_{\mathcal{D}}(\FF')_i$ cannot vanish according to Proposition \ref{prop:pure-sheaves}; this implies that one of the $F_i$ must have torsion, against the hypothesis. 
\end{proof}

The two functors $F_{\mathcal{D}}$ and $G_{\mathcal{D}}$ are inverse to each other when applied to torsion free sheaves. A proof of this will appear in a joint work of Borne and Vistoli (ref). If we consider non necessarily torsion free sheaves the functor $F_{\mathcal{D}}$ is again an equivalence of categories but its quasi-inverse is a more general functor than $G_{\mathcal{D}}$.
\begin{rem}
  It is clear from the construction of $F_{\mathcal{D}}$ that given a torsion free parabolic sheaf $F_\bullet$ all the cokernels $F_i\to F_j\to Q_{ij}$ with $i>j$ must be pure sheaves supported on the divisor or zero. This property has not been included in the definition of torsion free parabolic sheaf because it can be retrieved from our notion of torsion free parabolic sheaf. First of all we observe that in the following exact sequence:
  \begin{displaymath}
    \xymatrix{
      0 \ar[r] & F_i(-D) \ar[r] & F_{i} \ar[r] & F_i\vert_{D} \ar[r] & 0 \\
}
  \end{displaymath}
the cokernel $F_i\vert_{D}$ is pure and supported on the divisor. Assume now $d\geq i>j \geq 0$, we can draw the following exact and commutative diagram:
\begin{displaymath}
  \xymatrix{
    0 \ar[dr] & & & & 0\ar[dl] & \\
    0 \ar[r] & F_i \ar[dr] \ar[rr] & & F_j \ar[r]\ar[dl] & Q_{ij}\ar[r]\ar[dd]^-{\wr} & 0 \\
      & & F_i(D)\ar[dr]\ar[dl] & & & \\
    0 & R_{ij}\ar[l]\ar[dl] & & F_i(D)\vert_{D}\ar[ll]\ar[dr] & Q_{ij}\ar[l] & 0\ar[l] \\ 
    0 & & & & 0 & \\
}
\end{displaymath}
Since the sheaf $ F_i(D)\vert_{D}$ is pure and supported on the divisor also the sheaf $Q_{ij}$ is pure with the same support or zero.
\end{rem}
\begin{rem}
  Let $U\to S$ be a finite type scheme and denote with $X_U$ the product $X\times_S U$ with projection $p_1\colon X_U\to X$. We recall that the root construction is compatible with base change so that parabolic sheaves on $X_U$ are equivalent to coherent sheaves on the root construction obtained by base change $\XX_U$.  
\end{rem}

Since we are interested in parameterizing parabolic sheaves on the fibers of $f$ we need some notion of flatness, otherwise  fibers of a torsion free parabolic sheaf on $X$ are not parabolic sheaves in general.  
\begin{defn}
  We define a family of parabolic sheaves $F_\bullet\in\text{Par}_{\frac{1}{d}}(X_U,p_1^\ast D)$ to be $\OO_U$-flat if for every $l,m$ (where $m>l$)  every cokernel $F_l\to F_m\to Q_{l,m}$ is $\OO_U$-flat.
\end{defn}
This definition is enough to guarantee that fibers of a flat family of parabolic sheaves are parabolic sheaves.
\begin{lem}
  Let $f\colon X\to S$ be as before , let $D$ be a relative smooth effective Cartier divisor, $d\geq 1$ an integer. The functor $F_\mathcal{D}$ maps flat families of torsion free sheaves on $\XX :=\sqrt[d]{D/ X}$ to flat families of torsion free parabolic sheaves on $X$. The functor $G_\mathcal{D}$ maps flat families of torsion free parabolic sheaves on $X$ to flat families of torsion free sheaves on $\XX$.
\end{lem}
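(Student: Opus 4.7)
The plan is to treat the two functors separately, and in each case to verify $\OO_U$-flatness and fibrewise torsion-freeness.

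For $F_{\mathcal D}$, let $\FF$ be an $\OO_U$-flat coherent sheaf on $\XX_U$ with torsion free fibres. Tensoring with the locally free $\OO_{\XX_U}(-l\mathcal D)$ preserves $\OO_U$-flatness, and Corollary~\ref{cor:tame-stack-2}.\ref{item:3} then makes each $F_{\mathcal D}(\FF)_l=\pi_\ast\bigl(\FF\otimes\OO_{\XX_U}(-l\mathcal D)\bigr)$ $\OO_U$-flat. Fibrewise torsion-freeness of $F_{\mathcal D}(\FF)_0=\pi_\ast\FF$ follows by applying Proposition~\ref{prop:pure-sheaves} to the generating sheaf $\EE=\bigoplus_{l=0}^d\OO_\XX(l\mathcal D)$, since $F_\EE(\FF_u)=\bigoplus_l F_{\mathcal D}(\FF_u)_l$ and torsion-freeness passes to direct summands. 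The delicate point is $\OO_U$-flatness of the cokernels $Q_{l,m}=F_{\mathcal D}(\FF)_l/F_{\mathcal D}(\FF)_m$ for $m>l$; to establish this I will tensor the tautological sequence
\[
0\to\OO_{\XX_U}(-m\mathcal D)\to\OO_{\XX_U}(-l\mathcal D)\to\OO_{(m-l)\mathcal D}(-l\mathcal D)\to 0
\]
with $\FF$ and push forward by the exact functor $\pi_\ast$.

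Both left-exactness of this tensored sequence and $\OO_U$-flatness of its right-hand term reduce, by \'etale descent, to the following local statement: in a chart $\XX_U=[\spec A/\mu_d]$ with $A=B_U[x]/(x^d-s)$, multiplication by $x^{m-l}$ is injective on the $\mu_d$-equivariant $A$-module $M$ representing $\FF$, and $M/x^{m-l}M$ is $R$-flat (where $R=\OO_{U,u}$). Since $D$ is an $S$-relative Cartier divisor, $s$ is a non-zero-divisor in $B_u$ for every $u$; writing $A_u=B_u^{\oplus d}$ in the basis $1,x,\ldots,x^{d-1}$ shows that $x^{m-l}$ is then a non-zero-divisor on $A_u$, and torsion-freeness of $M_u$ yields that $x^{m-l}$ is a non-zero-divisor on $M_u$. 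The local criterion of flatness identifies $\Tor^1_R(M/x^{m-l}M,k(u))$ with $\ker(x^{m-l}\colon M_u\to M_u)$ using $R$-flatness of $M$, and its vanishing simultaneously gives $R$-flatness of $M/x^{m-l}M$ and (via flatness of the image and a Nakayama argument on the kernel) injectivity of $x^{m-l}$ on $M$. Applying $\pi_\ast$ then produces the required flat short exact sequence $0\to F_{\mathcal D}(\FF)_m\to F_{\mathcal D}(\FF)_l\to Q_{l,m}\to 0$.

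For $G_{\mathcal D}$ I use the same local chart. The quasi-inverse property $F_{\mathcal D}\circ G_{\mathcal D}=\mathrm{id}$ on torsion free objects gives a natural identification of the weight-$l$ summand $M_l$ of the $\mu_d$-equivariant $A$-module $M$ representing $G_{\mathcal D}(F_\bullet)$ with the $B_U$-module representing $F_l$ (up to the standard character twist). Since each $F_l$ in a flat family of parabolic sheaves is $R$-flat, each $M_l$ is $R$-flat, and so $M=\bigoplus_{l=0}^{d-1}M_l$ is $R$-flat; hence $G_{\mathcal D}(F_\bullet)$ is $\OO_U$-flat. Fibrewise torsion-freeness of $G_{\mathcal D}(F_{\bullet,u})$ is the absolute statement already proved in the previous lemma.

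The main obstacle will be the Tor-vanishing step in the $F_{\mathcal D}$ direction, i.e.\ upgrading from fibrewise injectivity of $x^{m-l}$ (which is immediate from torsion-freeness of $M_u$ combined with $s$ being a non-zero-divisor on each fibre) to global injectivity on $\FF$; this rests crucially on the $S$-flatness of $D$, without which the thickenings $(m-l)\mathcal D$ could fail to be $\OO_U$-flat and the local flatness criterion could not be applied uniformly over $U$.
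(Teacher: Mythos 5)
Your proposal is correct, but it proves the lemma by a more explicit, local route than the paper does. The paper disposes of the $F_{\mathcal D}$ direction in one line, as an ``immediate consequence'' of the exactness and flatness-preservation of $\pi_\ast$ (Corollary 1.5.(3)) together with flatness of $D$ and of $\pi$; the substantive point you isolate --- that $\OO_U$-flatness of the cokernels $Q_{l,m}$ requires upgrading fibrewise injectivity of the canonical section to global injectivity plus flatness of the quotient, via the local criterion of flatness --- is exactly the content hidden behind that ``immediate,'' and your verification of it (torsion-freeness of $M_u$ plus $s$ being a non-zero-divisor on each fibre $B_u$ because $D$ is a \emph{relative} Cartier divisor) is sound. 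For the $G_{\mathcal D}$ direction the two arguments genuinely diverge: the paper argues globally, assuming $\FF=G_{\mathcal D}(F_\bullet)$ is non-flat, exhibiting a nonzero $\Tor^{\OO_S}_1(\FF,\OO_Z)$, and pushing the resulting four-term sequence through the exact, faithful functor $F_{\mathcal D}$ (faithfulness being Lemma 3.6/Proposition 3.8) with the projection formula and base change to contradict flatness of some $F_i$; you instead decompose the equivariant module $M$ into weight summands $M_l\cong F_l$ in the chart $[\spec(B[x]/(x^d-s))/\mu_d]$ and observe that a direct sum of $\OO_U$-flat pieces is $\OO_U$-flat. Both are valid and rest on the same implicit reading of the definitions (that the $F_l$ themselves are flat in a flat parabolic family, and on the quasi-inverse property $F_{\mathcal D}\circ G_{\mathcal D}=\mathrm{id}$ for torsion-free objects, which the paper also only cites from forthcoming work of Borne--Vistoli). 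The paper's Tor argument is coordinate-free and recycles the machinery already built; yours is self-contained, makes the role of the relative Cartier hypothesis visible, and actually supplies the missing details in the first half.
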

\begin{proof}
  The first statement is an immediate consequence of Corollary \ref{cor:tame-stack-2}~\ref{item:3} remembering that the divisor $D$ is $\OO_S$-flat and the morphism $\pi$ is flat too. To prove the second let $F_\bullet$ be a flat family of torsion free parabolic sheaves and compute $\FF=G_{\mathcal{D}}(F_\bullet)$. If this is not flat there is some ideal $I$ in $\OO_S$ corresponding to a closed subscheme $Z$ such that in the sequence:
  \begin{displaymath}
    \xymatrix{
 0\ar[r] & \Tor^{\OO_S}_1{(\FF,\OO_Z)} \ar[r] &  \FF\otimes_{\OO_S} I \ar[r] & \FF \ar[r] & \FF\otimes_{\OO_S} \OO_Z \ar[r] & 0 \\
}
  \end{displaymath}
the sheaf $\Tor^{\OO_S}_1{(\FF,\OO_Z)}$ is not zero. We can apply the functor $F_{\mathcal{D}}$ to this sequence to obtain exact sequences:
  \begin{displaymath}
    \xymatrix{
 0\ar[r] & F_{\mathcal{D}}(\Tor^{\OO_S}_1{(\FF,\OO_Z)})_i \ar[r] &  F_i\otimes_{\OO_S} I \ar[r] & F_i \ar[r] & F_i\otimes_{\OO_S} \OO_Z \ar[r] & 0 \\
}
  \end{displaymath}
where we have used the projection formula and Proposition \ref{prop:cohom-base-change-spcoarse} on the last term on the right. As we have observed before $F_{\mathcal{D}}(\Tor^{\OO_S}_1{(\FF,\OO_Z)})_i$ can vanish for some $i$ but not for every $i$. This implies that some $F_i$ is not $\OO_S$-flat against the hypothesis. 
\end{proof}
At this point we have proven that stability condition for sheaves on a root stack $\XX$ is equivalent to parabolic stability when the generating sheaf is chosen to be $\EE=\bigoplus_{i=0}^{d} \OO_\XX(i\mathcal{D})$, and flat families of torsion free sheaves on $\XX$ are equivalent to flat families of torsion free parabolic sheaves (in the sense of Maruyma and Yokogawa); we can conclude that the moduli space of sheaves on a root stack is equivalent to the moduli space of parabolic sheaves.
\begin{cor}
  Let $X$ be a smooth projective variety over an algebraically closed field $k$, $D$ an effective smooth Cartier divisor, $d\geq 1$ an integer and $\OO_X(1)$ a very ample invertible sheaf. Denote with $\XX$ the root stack associated to these data and $\EE=\bigoplus_{i=0}^{d} \OO_\XX(i\mathcal{D})$ is the chosen generating sheaf. Fix some polynomial $P\in\mathbb{Q}[m]$, the moduli space  $M^{ss}(\OO_X(1),\EE,P)$ of semistable sheaves with modified Hilbert polynomial $P$ is isomorphic to the coarse moduli space of the functor of semistable parabolic sheaves defined in \cite[Def 1.14]{MR1162674} and $M^{s}(\OO_X(1),\EE)$ is isomorphic to the moduli space of stable parabolic sheaves in \cite[Thm 3.6]{MR1162674}.     
\end{cor}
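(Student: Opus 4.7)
The plan is to produce an isomorphism at the level of moduli functors (restricted to semistable objects), match Hilbert polynomials, and then descend to coarse moduli schemes using their universal property.

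First I would assemble the three ingredients already proven in the preceding text. By the last two lemmas of this section, the functors $F_{\mathcal D}$ and $G_{\mathcal D}$ restrict to an equivalence of categories between torsion-free coherent sheaves on $\XX_U$ and torsion-free parabolic sheaves on $X_U$, and both directions send $\OO_U$-flat families to $\OO_U$-flat families. This gives an isomorphism of groupoid-valued functors on $S$-schemes between ``flat families of torsion-free sheaves on $\XX$'' and ``flat families of torsion-free parabolic sheaves on $X$ with multiplicity $d$ along $D$'', compatible with base change. Second, by the remark on parabolic stability in Section~3 (the reference to \cite{MR1162674}) together with the choice $\EE=\bigoplus_{i=0}^{d}\OO_\XX(i\mathcal D)$, the modified Hilbert polynomial decomposes as $P_\EE(\FF,m)=\sum_{i=0}^{d}\chi(X,F_i(m))$ where $F_\bullet=F_{\mathcal D}(\FF)$, which coincides (up to the standard repackaging) with the parabolic Hilbert polynomial of Maruyama--Yokogawa with weights $i/d$. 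Consequently $p_\EE(\FF)\leq p_\EE(\FF')$ translates verbatim into the inequality of reduced parabolic Hilbert polynomials defining (semi)stability in \cite[Def 1.14]{MR1162674}, and the same for stability. Third, the equivalence $F_{\mathcal D}$ sends subsheaves to parabolic subsheaves (subobjects in $\mathrm{Par}_{\frac{1}{d}}(X,D)$) bijectively, so the (semi)stable locus corresponds under the equivalence.

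Next I would combine these to get an isomorphism of the moduli stack $\mathcal S(\EE,\OO_X(1),P)$ with the Maruyama--Yokogawa moduli functor of semistable parabolic sheaves with the parabolic Hilbert polynomial corresponding to $P$, and similarly on the stable locus. At this point both moduli problems have a coarse moduli space: on the stack side the GIT quotient $M^{ss}(\OO_X(1),\EE,P)=\QQ/\GL_{N,k}$ constructed in the previous section, and on the parabolic side the space built in \cite[Thm 3.6]{MR1162674}. Because both spaces satisfy the universal property of corepresenting the corresponding moduli functor (\ie categorical quotient / coarse moduli space, and good moduli space in the sense of Alper on the stack side), and since the two moduli functors are naturally isomorphic, a pair of mutually inverse morphisms between the coarse moduli schemes is produced by the universal property; these compose to the identities again by the universal property. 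The same argument applied to the stable locus yields $M^{s}(\OO_X(1),\EE)\cong M^{s}_{\text{par}}$, where on both sides the coarse moduli scheme is actually a fine (up to the $\mathbb G_m$-gerbe) moduli space.

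The main technical obstacle I expect is verifying the identification of $S$-equivalence classes: two semistable sheaves $\FF_1,\FF_2$ on $\XX$ are $S$-equivalent in our sense iff $\gr^{JH}(\FF_1)\cong \gr^{JH}(\FF_2)$, and one must check that $F_{\mathcal D}$ sends Jordan--H\"older filtrations of semistable sheaves on $\XX$ to Jordan--H\"older filtrations of parabolic sheaves (in the sense of Maruyama--Yokogawa) and conversely. This comes down to the fact that $F_{\mathcal D}$ is an equivalence of abelian categories (so it preserves subobjects, quotients, and simple objects among semistable objects with fixed reduced Hilbert polynomial), together with the numerical matching of the reduced Hilbert polynomials already discussed; the point is that, although $F_\EE$ does not in general respect Jordan--H\"older filtrations (as warned in Remark~\ref{rem:harder-ecc}), the refined functor $F_{\mathcal D}$ into the parabolic category does, precisely because the parabolic stability encodes the full data of the $\OO_\XX(i\mathcal D)$-isotypic pieces. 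Once this is in hand, $S$-equivalent semistable sheaves on $\XX$ correspond to $S$-equivalent semistable parabolic sheaves and the bijection of closed points of the two moduli spaces matches, confirming the isomorphism is one of schemes and not merely of their underlying sets.
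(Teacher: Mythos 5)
Your proposal is correct and follows essentially the same route as the paper, which gives no separate proof of this corollary beyond the preceding paragraph: the equivalence $F_{\mathcal D}\leftrightarrow G_{\mathcal D}$ on torsion-free objects, its compatibility with flat families, and the matching of the stability conditions are combined and the coarse moduli spaces are identified via their universal (corepresenting) property. Your write-up is in fact more explicit than the paper's on the two points it leaves implicit --- the numerical identification of $P_\EE$ with the parabolic Hilbert polynomial and the correspondence of Jordan--H\"older filtrations and hence of $S$-equivalence classes.
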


\bibliographystyle{amsalpha}
\bibliography{bibliostack}

\providecommand{\bysame}{\leavevmode\hbox to3em{\hrulefill}\thinspace}
\providecommand{\MR}{\relax\ifhmode\unskip\space\fi MR }
\providecommand{\MRhref}[2]{%
  \href{http://www.ams.org/mathscinet-getitem?mr=#1}{#2}
}
\providecommand{\href}[2]{#2}
\begin{thebibliography}{EHKV01}

\bibitem[AGV08]{MR2450211}
Dan Abramovich, Tom Graber, and Angelo Vistoli, \emph{Gromov-{W}itten theory of
  {D}eligne-{M}umford stacks}, Amer. J. Math. \textbf{130} (2008), no.~5,
  1337--1398. \MR{MR2450211 (2009k:14108)}

\bibitem[Alp08]{alper-2008}
Jarod Alper, \emph{Good moduli spaces for Artin stacks}, 2008.

\bibitem[AOV08]{MR2427954}
Dan Abramovich, Martin Olsson, and Angelo Vistoli, \emph{Tame stacks in
  positive characteristic}, Ann. Inst. Fourier (Grenoble) \textbf{58} (2008),
  no.~4, 1057--1091. \MR{MR2427954}

\bibitem[AV02]{MR1862797}
Dan Abramovich and Angelo Vistoli, \emph{Compactifying the space of stable
  maps}, J. Amer. Math. Soc. \textbf{15} (2002), no.~1, 27--75 (electronic).
  \MR{MR1862797 (2002i:14030)}

\bibitem[Bis97]{MR1455522}
Indranil Biswas, \emph{Parabolic bundles as orbifold bundles}, Duke Math. J.
  \textbf{88} (1997), no.~2, 305--325. \MR{MR1455522 (98m:14045)}

\bibitem[Bor06]{borne-2006}
Niels Borne, \emph{Fibr{\'e}s paraboliques et champ des racines}, 2006.

\bibitem[Bor07]{borne-2007}
\bysame, \emph{Sur les repr{\'e}sentations du groupe fondamental d'une
  vari{\'e}t{\'e} priv{\'e}e d'un diviseur {\`a} croisements normaux simples},
  2007.

\bibitem[Cad07]{Cstc-2007}
Charles Cadman, \emph{{Using stacks to impose tangency conditions on curves}},
  Amer. J. Math. \textbf{129} (2007), no.~2, 405--427.

\bibitem[C{\u{a}}l00]{cualduararu-dcotsocm2000}
Andrei C{\u{a}}ld{\u{a}}raru, \emph{Derived categories of twisted sheaves on
  Calabi-Yau manifolds}, 2000, Ph.D. Thesis, Cornell University.

\bibitem[DP03]{donagi-2003}
Ron Donagi and Tony Pantev, \emph{Torus fibrations, gerbes, and duality}, 2003.

\bibitem[EG95]{MR1355920}
Geir Ellingsrud and Lothar G{\"o}ttsche, \emph{Variation of moduli spaces and
  {D}onaldson invariants under change of polarization}, J. Reine Angew. Math.
  \textbf{467} (1995), 1--49. \MR{MR1355920 (96h:14009)}

\bibitem[EHKV01]{MR1844577}
Dan Edidin, Brendan Hassett, Andrew Kresch, and Angelo Vistoli, \emph{Brauer
  groups and quotient stacks}, Amer. J. Math. \textbf{123} (2001), no.~4,
  761--777. \MR{MR1844577 (2002f:14002)}

\bibitem[Gir71]{Gcna-1971}
Jean Giraud, \emph{Cohomologie non ab\'elienne}, Springer-Verlag, Berlin, 1971,
  Die Grundlehren der mathematischen Wissenschaften, Band 179.

\bibitem[EGAIII.1]{MR0217085}
A.~Grothendieck, \emph{{\'E}l\'ements de g\'eom\'etrie alg\'ebrique. iii.
  {\'e}tude cohomologique des faisceaux coh\'erents. i}, Inst. Hautes \'Etudes
  Sci. Publ. Math. (1961), no.~11, 167. \MR{MR0217085 (36 \#177c)}

\bibitem[EGAIII.2]{MR0163911}
\bysame, \emph{{\'E}l\'ements de g\'eom\'etrie alg\'ebrique. iii. {\'e}tude
  cohomologique des faisceaux coh\'erents. ii}, Inst. Hautes \'Etudes Sci.
  Publ. Math. (1963), no.~17, 91. \MR{MR0163911 (29 \#1210)}

\bibitem[EGAIV.2]{MR0199181}
\bysame, \emph{{\'E}l\'ements de g\'eom\'etrie alg\'ebrique. iv. {\'e}tude
  locale des sch\'emas et des morphismes de sch\'emas. ii}, Inst. Hautes
  \'Etudes Sci. Publ. Math. (1965), no.~24, 231. \MR{MR0199181 (33 \#7330)}

\bibitem[EGAIV.3]{MR0217086}
\bysame, \emph{{\'E}l\'ements de g\'eom\'etrie alg\'ebrique. iv. {\'e}tude
  locale des sch\'emas et des morphismes de sch\'emas. iii}, Inst. Hautes
  \'Etudes Sci. Publ. Math. (1966), no.~28, 255. \MR{MR0217086 (36 \#178)}

\bibitem[Har77]{Hag}
Robin Hartshorne, \emph{Algebraic geometry}, Springer-Verlag, New York, 1977,
  Graduate Texts in Mathematics, No. 52. \MR{57 \#3116}

\bibitem[HL97]{MR1450870}
Daniel Huybrechts and Manfred Lehn, \emph{The geometry of moduli spaces of
  sheaves}, Aspects of Mathematics, E31, Friedr. Vieweg \& Sohn, Braunschweig,
  1997. \MR{MR1450870 (98g:14012)}

\bibitem[KM97]{MR1432041}
Se{\'a}n Keel and Shigefumi Mori, \emph{Quotients by groupoids}, Ann. of Math.
  (2) \textbf{145} (1997), no.~1, 193--213. \MR{MR1432041 (97m:14014)}

\bibitem[Kre99]{MR1719823}
Andrew Kresch, \emph{Cycle groups for Artin stacks}, Invent. Math. \textbf{138}
  (1999), no.~3, 495--536. \MR{MR1719823 (2001a:14003)}

\bibitem[Kre06]{geomDM}
\bysame, \emph{On the geometry of Deligne-Mumford stacks}, 2006, to appear on
  the Seattle proceedings.

\bibitem[Lan04a]{MR2085175}
Adrian Langer, \emph{Moduli spaces of sheaves in mixed characteristic}, Duke
  Math. J. \textbf{124} (2004), no.~3, 571--586. \MR{MR2085175 (2005g:14082)}

\bibitem[Lan04b]{MR2051393}
\bysame, \emph{Semistable sheaves in positive characteristic}, Ann. of Math.
  (2) \textbf{159} (2004), no.~1, 251--276. \MR{MR2051393 (2005c:14021)}

\bibitem[Lie06]{MR2233719}
Max Lieblich, \emph{Remarks on the stack of coherent algebras}, Int. Math. Res.
  Not. (2006), Art. ID 75273, 12. \MR{MR2233719}

\bibitem[Lie07]{MR2309155}
\bysame, \emph{Moduli of twisted sheaves}, Duke Math. J. \textbf{138} (2007),
  no.~1, 23--118. \MR{MR2309155}

\bibitem[LMB00]{LMBca}
G{\'e}rard Laumon and Laurent Moret-Bailly, \emph{Champs alg\'ebriques},
  Ergebnisse der Mathematik und ihrer Grenzgebiete. 3. Folge. A Series of
  Modern Surveys in Mathematics [Results in Mathematics and Related Areas. 3rd
  Series. A Series of Modern Surveys in Mathematics], vol.~39, Springer-Verlag,
  Berlin, 2000.

\bibitem[Mat80]{MR575344}
Hideyuki Matsumura, \emph{Commutative algebra}, second ed., Mathematics Lecture
  Note Series, vol.~56, Benjamin/Cummings Publishing Co., Inc., Reading, Mass.,
  1980. \MR{MR575344 (82i:13003)}

\bibitem[Mat89]{MR1011461}
\bysame, \emph{Commutative ring theory}, second ed., Cambridge Studies in
  Advanced Mathematics, vol.~8, Cambridge University Press, Cambridge, 1989,
  Translated from the Japanese by M. Reid. \MR{MR1011461 (90i:13001)}

\bibitem[MFK94]{MR1304906}
D.~Mumford, J.~Fogarty, and F.~Kirwan, \emph{Geometric invariant theory}, third
  ed., Ergebnisse der Mathematik und ihrer Grenzgebiete (2) [Results in
  Mathematics and Related Areas (2)], vol.~34, Springer-Verlag, Berlin, 1994.
  \MR{MR1304906 (95m:14012)}

\bibitem[Mil80]{Met}
James~S. Milne, \emph{{\'E}tale cohomology}, Princeton Mathematical Series,
  vol.~33, Princeton University Press, Princeton, N.J., 1980. \MR{MR559531
  (81j:14002)}

\bibitem[ML98]{MR1712872}
Saunders Mac~Lane, \emph{Categories for the working mathematician}, second ed.,
  Graduate Texts in Mathematics, vol.~5, Springer-Verlag, New York, 1998.
  \MR{MR1712872 (2001j:18001)}

\bibitem[MR071]{MR0354655}
\emph{Th\'eorie des intersections et th\'eor\`eme de Riemann-Roch},
  Springer-Verlag, Berlin, 1971, S\'eminaire de G\'eom\'etrie Alg\'ebrique du
  Bois-Marie 1966--1967 (SGA 6), Dirig\'e par P. Berthelot, A. Grothendieck et
  L. Illusie. Avec la collaboration de D. Ferrand, J. P. Jouanolou, O. Jussila,
  S. Kleiman, M. Raynaud et J. P. Serre, Lecture Notes in Mathematics, Vol.
  225. \MR{MR0354655 (50 \#7133)}

\bibitem[MS80]{MR575939}
V.~B. Mehta and C.~S. Seshadri, \emph{Moduli of vector bundles on curves with
  parabolic structures}, Math. Ann. \textbf{248} (1980), no.~3, 205--239.
  \MR{MR575939 (81i:14010)}

\bibitem[Mum66]{MR0209285}
David Mumford, \emph{Lectures on curves on an algebraic surface}, With a
  section by G. M. Bergman. Annals of Mathematics Studies, No. 59, Princeton
  University Press, Princeton, N.J., 1966. \MR{MR0209285 (35 \#187)}

\bibitem[Mum70]{MR0282985}
\bysame, \emph{Abelian varieties}, Tata Institute of Fundamental Research
  Studies in Mathematics, No. 5, Published for the Tata Institute of
  Fundamental Research, Bombay, 1970. \MR{MR0282985 (44 \#219)}

\bibitem[MW97]{MR1433203}
Kenji Matsuki and Richard Wentworth, \emph{Mumford-Thaddeus principle on the
  moduli space of vector bundles on an algebraic surface}, Internat. J. Math.
  \textbf{8} (1997), no.~1, 97--148. \MR{MR1433203 (97m:14010)}

\bibitem[MY92]{MR1162674}
M.~Maruyama and K.~Yokogawa, \emph{Moduli of parabolic stable sheaves}, Math.
  Ann. \textbf{293} (1992), no.~1, 77--99. \MR{MR1162674 (93d:14022)}

\bibitem[Nir09]{groth-duality-stack}
Fabio Nironi, \emph{{Grothendieck duality for Deligne-Mumford stacks}},  arXiv:0811.1955v2 (2009).

\bibitem[Ols07]{Osas05}
Martin Olsson, \emph{{Sheaves on Artin stacks}}, J. Reine Angew. Math.
  \textbf{603} (2007), 55--112.

\bibitem[OS03]{MR2007396}
Martin Olsson and Jason Starr, \emph{Quot functors for Deligne-Mumford stacks},
  Comm. Algebra \textbf{31} (2003), no.~8, 4069--4096, Special issue in honor
  of Steven L. Kleiman. \MR{MR2007396 (2004i:14002)}

\bibitem[OSS80]{MR561910}
Christian Okonek, Michael Schneider, and Heinz Spindler, \emph{Vector bundles
  on complex projective spaces}, Progress in Mathematics, vol.~3, Birkh\"auser
  Boston, Mass., 1980. \MR{MR561910 (81b:14001)}

\bibitem[Pot92]{potier-ldmds1992}
J.~Le Potier, \emph{L'espace de modules de Simpson}, S{\'e}minaire de
  g{\'e}om{\'e}trie alg{\'e}brique, Jussieu (1992).

\bibitem[Sim94]{MR1307297}
Carlos~T. Simpson, \emph{Moduli of representations of the fundamental group of
  a smooth projective variety. i}, Inst. Hautes \'Etudes Sci. Publ. Math.
  (1994), no.~79, 47--129. \MR{MR1307297 (96e:14012)}

\bibitem[Toe99]{MR1710187}
B.~Toen, \emph{Th\'eor\`emes de Riemann-Roch pour les champs de
  Deligne-Mumford}, $K$-Theory \textbf{18} (1999), no.~1, 33--76. \MR{MR1710187
  (2000h:14010)}

\bibitem[Vis89]{Vitas-1989}
Angelo Vistoli, \emph{Intersection theory on algebraic stacks and on their
  moduli spaces}, Invent. Math. \textbf{97} (1989), no.~3, 613--670.

\bibitem[Yos03]{MR2019443}
K{\=o}ta Yoshioka, \emph{Twisted stability and Fourier-Mukai transform. i},
  Compositio Math. \textbf{138} (2003), no.~3, 261--288. \MR{MR2019443
  (2004j:14015)}

\bibitem[Yos06]{MR2306170}
\bysame, \emph{Moduli spaces of twisted sheaves on a projective variety},
  Moduli spaces and arithmetic geometry, Adv. Stud. Pure Math., vol.~45, Math.
  Soc. Japan, Tokyo, 2006, pp.~1--30. \MR{MR2306170}

\end{thebibliography}

\end{document}